\newtheorem{theorem}{Theorem}[section]
\newtheorem{proposition}[theorem]{Proposition}
\newtheorem{lemma}[theorem]{Lemma}
\newtheorem{corollary}[theorem]{Corollary}
\theoremstyle{definition}
\newtheorem{definition}[theorem]{Definition}
\theoremstyle{remark}
\newtheorem{remark}{Remark}
\DeclareMathOperator*{\dom}{\textbf{dom}}
\DeclareMathOperator*{\argmin}{argmin}
\newcommand*\diff{\mathop{}\!\mathrm{d}}
\def\0{\boldsymbol{0}}
\def\M{\mathcal{M}}
\def\Mup{\mathcal{M}^{\uparrow}}
\def\Mdo{\mathcal{M}_{\downarrow}}
\def\R{\mathbb{R}}
\def\A{\boldsymbol{A}}
\def\B{\mathcal{B}}
\def\C{\mathcal{C}}
\def\F{\mathcal{F}}
\def\H{\mathcal{H}}
\def\Q{\mathcal{Q}}
\def\S{\mathcal{S}}
\def\T{\mathcal{T}}
\def\b{\boldsymbol{b}}
\def\g{\boldsymbol{g}}
\def\p{\boldsymbol{p}}
\def\u{\boldsymbol{u}}
\def\v{\boldsymbol{v}}
\def\x{\boldsymbol{x}}
\def\y{\boldsymbol{y}}
\def\z{\boldsymbol{z}}
\def\xprj{\boldsymbol{x}^*_{\text{prj}}}
\def\tilphi{\tilde{\phi}}
\title{The Minimizer of the Sum of Two Strongly Convex Functions}
\date{} 
\author{ \href{https://orcid.org/0000-0002-0769-1399}{\includegraphics[scale=0.06]{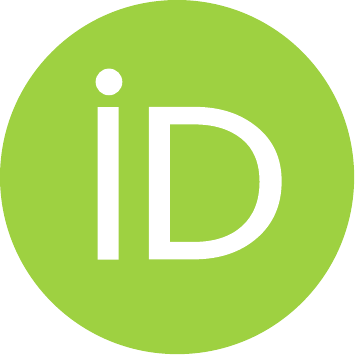}\hspace{1mm}Kananart~Kuwaranancharoen} \\
	Intel Labs\\
	Intel Corporation\\
	Hillsboro, OR 97124 \\
        \href{mailto:kananart.kuwaranancharoen@intel.com}{\texttt{kananart.kuwaranancharoen@intel.com}} \\
        \And
	\href{https://orcid.org/0000-0002-5390-2505}{\includegraphics[scale=0.06]{orcid.pdf}\hspace{1mm}Shreyas~Sundaram} \\
	School of Electrical and Computer Engineering\\
	Purdue University\\
	West Lafayette, IN 47907 \\
	\href{mailto:sundara2@purdue.edu}{\texttt{sundara2@purdue.edu}} \\
}
\begin{document}
\maketitle

\begin{abstract}
  The optimization problem concerning the determination of the minimizer for the sum of convex functions holds significant importance in the realm of distributed and decentralized optimization. In scenarios where full knowledge of the functions is not available, limiting information to individual minimizers and convexity parameters -- either due to privacy concerns or the nature of solution analysis -- necessitates an exploration of the region encompassing potential minimizers based solely on these known quantities. The characterization of this region becomes notably intricate when dealing with multivariate strongly convex functions compared to the univariate case. This paper contributes outer and inner approximations for the region harboring the minimizer of the sum of two strongly convex functions, given a constraint on the norm of the gradient at the minimizer of the sum. Notably, we explicitly delineate the boundaries and interiors of both the outer and inner approximations. Intriguingly, the boundaries as well as the interiors turn out to be identical. Furthermore, we establish that the boundary of the region containing potential minimizers aligns with that of the outer and inner approximations.
\end{abstract}

\keywords{Convex Analysis \and Decentralized Optimization \and Fault-Tolerant Systems \and Quadratic Functions \and Strongly Convex Functions}

\section{Introduction}

Optimization of a sum of functions is a ubiquitous challenge with applications spanning machine learning \cite{shalev2012online, boyd2011distributed, sayed2014adaptive}, control of large-scale systems \cite{molzahn2017survey, li2011optimal}, and cooperative robotic systems \cite{zavlanos2012network, tron2016distributed}. In these scenarios, each node in a network is assumed to possess a local objective function. Following the framework described in \cite{nedic2018network}, the problem is commonly explored within two main architectural paradigms: the \emph{distributed} setting, marked by a client-server architecture \cite{zhang2012communication, shamir2014communication, konevcny2016federated}, and the \emph{decentralized} setting, characterized by a peer-to-peer architecture \cite{nedic2009distributed, zhu2011distributed, nedic2014distributed}.

\subsection{Byzantine-resilient decentralized optimization}
\label{subsec: byzantine decentralized}

In decentralized optimization problems, nodes in the network are connected to neighbors through communication links, allowing them to exchange information such as estimates of the solution at the current time-step (referred to as state) \cite{nedic2018Adistributed, nedic2018Bdistributed, yang2019survey}. Unlike the distributed setting, there is no centralized server to aggregate and distribute the updated states in this scenario. However, many algorithms proposed to address decentralized optimization problems, such as \cite{nedic2009distributed, duchi2011dual, nedic2017achieving}, assume the reliability of all nodes in the network -- they follow the designed algorithm. Consequently, these algorithms are vulnerable and prone to failure in the presence of a malicious node in the network \cite{su2016fault, sundaram2018distributed, liu2021survey}. Addressing this security concern, a branch of study focuses on Byzantine-resilient decentralized optimization, where malicious nodes can behave arbitrarily, including sending incorrect information to neighboring nodes, updating their states as they wish, and possessing complete knowledge of the network, including the algorithm deployed, states of all nodes, and communication topology. The primary goal in this context is to solve the decentralized optimization objective of minimizing the sum of cost functions associated with regular (non-Byzantine) nodes.

The Byzantine-resilient decentralized optimization community has primarily focused on proposing new resilient algorithms under various assumptions, encompassing network topology, local function properties, and problem dimensionality. However, a fundamental limitation persists -- the true solution of the sum of regular nodes in the network cannot be determined, irrespective of the deployed resilient algorithms \cite{su2016multi, sundaram2018distributed}, without employing very strong assumptions on the local functions \cite{yang2019byrdie, gupta2020fault, gupta2021byzantine}.

In general cases, the proposed algorithms can guide the states to converge towards a region, the size of which depends on the specific resilient algorithm considered \cite{kuwaran2020byzantine, su2020byzantine}. Previous works have primarily provided upper bounds on the size of such regions for each algorithm. It is natural to inquire about the lower bound of this region, aiming to understand the best achievable outcome, regardless of the chosen algorithm and network topology (as discussed in \cite{kuwaran2023scalable}). This inquiry is based on mild assumptions about local functions, such as strong convexity \cite{fang2022bridge, kuwaran2023geometric}. Understanding how the best convergence region scales with fundamental parameters that specify the class of functions under consideration is a key aspect of this research endeavor. Importantly, this serves as a fundamental limit for Byzantine-resilient decentralized optimization problems, providing more insightful information than the general impossibility theorem \cite{su2016multi, sundaram2018distributed}.

Traditionally, studies in decentralized optimization have often defined the convergence region based on the diameter of the set of local minimizers for regular nodes \cite{sundaram2018distributed, kuwaran2023scalable, kuwaran2023geometric, kuwaran2023analyses}. It seems natural, therefore, to establish the lower bound of this region by considering the diameter and essential parameters that characterize the function class. To elaborate, with a minimizer identified for each node (or given the diameter of the set of local minimizers), each node has the flexibility to choose any function from a specified class (e.g., the strongly convex class). The key inquiry is the maximum potential distance from the minimizer of the sum to the center of set of local minimizers. 

Our work takes a pioneering step in addressing this fundamental question within Byzantine-resilient decentralized optimization, with a specific focus on fully characterizing the potential solution region for the sum of two strongly convex functions. Intriguingly, our initial discoveries unveil a non-convex potential solution set, even in cases where the participating functions are strongly convex (as shown in Fig.~\ref{fig: boundary M}). We anticipate that the insights from this exploration can be harnessed to tackle more intricate challenges, such as the sum of multiple functions, with the goal of discerning the optimal dependency of the convergence region size on fundamental parameters.

\subsection{Privacy-preserving federated learning}
\label{subsec: privacy federated}

In the context of a distributed optimization problem, envision a scenario where nodes operate similar machine learning models, trained on comparable data types, with a shared objective of achieving an improved machine learning model. Due to privacy concerns \cite{shokri2015privacy}, the dataset used for training each local model may not be shared. In this setup, the centralized server aggregates the current parameters of the models or the gradients computed with respect to local loss functions and mini-batches of data. The server then updates the model or combines the received gradients before distributing the updated information back to the nodes.

Consider, for instance, a federated learning setup \cite{konevcny2016federated, kairouz2021advances}, which is a distributed setting where nodes submit their current parameters to a centralized computing server, receiving an updated parameter based on the aggregated parameters. Typically, each node is required to periodically submit its current model to the server for updates \cite{reisizadeh2020fedpaq}. Recent research \cite{zhu2019deep, geiping2020inverting, wei2020framework} has highlighted privacy concerns in federated learning, particularly vulnerability to gradient leakage attacks. Specifically, if an adversary intercepts the local gradient update of a node before the server performs the federated aggregation to generate the global parameter update for the next round, the adversary can steal the sensitive local training data of this node. To enhance privacy, one approach is to minimize communication with the server \cite{guha2019one, zhang2022dense}, allowing nodes to train local models until achieving optimal performance before sending corresponding parameters.

In the specific case we analyze in this work, we assume the loss functions are strongly convex. Despite this simplification, it aligns effectively with traditional machine learning models, including linear regression \cite{weisberg2005applied, montgomery2021introduction}, logistic regression \cite{menard2002applied, hosmer2013applied}, and support vector machines (SVM) \cite{hearst1998support, steinwart2008support, mountrakis2011support}. From the server's perspective, it solely observes the minimizers submitted by participating nodes and may possess knowledge of the general model class. In this scenario, the server's objective is to select a suitable parameter based on the available information. As detailed in Section~\ref{subsec: one-shot learning}, our results offer a set of legitimate solutions that can be employed as selected parameters, particularly in the straightforward case involving only two nodes. Additionally, we propose a compelling candidate for use as the selected parameter, drawing from insights gained through the analysis in this work.

\subsection{Inherent challenges}

Returning to the general setting of our problem, a unique challenge arises when determining the potential solution region in cases of multivariate functions. In the scenario where local functions $f_i$ at each node $v_i$ are univariate (i.e., $f_i: \R \rightarrow \R$) and strongly convex, it is straightforward to argue that the minimizer of the sum must lie in the interval bracketed by the smallest and largest minimizers of the functions \cite{sundaram2018distributed}. This is because the gradients of all the functions will have the same sign outside that region, preventing them from summing to zero. However, a similar characterization of the region containing the minimizer of multivariate functions is lacking in the literature and is significantly more challenging to obtain.

For instance, the conjecture that the minimizer of a sum of convex functions lies in the convex hull of their local minimizers can be easily disproved with simple examples. Consider $f_1(x, y) = x^2 - xy + \frac{1}{2} y^2$ and $f_2(x, y) = x^2 + xy + \frac{1}{2} y^2 - 4x - 2y$ with minimizers $(0, 0)$ and $(2, 0)$, respectively. The sum has a minimizer at $(1, 1)$ which is obviously outside the convex hull of $\{ (0, 0), (2, 0) \}$. In our recent work \cite{kuwaran2018location, kuwaran2020set}, we delved into this problem and provided an \emph{outer approximation} of the region containing the minimizer of two strongly convex functions in a specific case. This region is determined by the minimizers of the individual functions, their strong convexity parameters, and the specified bound on the norms of the gradients of the functions at the location of the minimizer.

\subsection{Objective}

In this study, \textbf{our objective is to delineate the potential solution region (i.e., the set of all minimizers) for the sum of two unknown strongly convex functions.} To be more specific, we present an \emph{outer approximation} (i.e., a region containing all valid minimizers) as well as an \emph{inner approximation} (i.e., a region where every point is a valid minimizer) for this problem. The outer approximation characterized in this work is more general than the one provided in \cite{kuwaran2018location}. As we will observe, the inner approximation essentially almost coincides with the outer approximation. To be precise, the boundary of both outer and inner approximations is the same under the assumption that the gradients of the two original functions are bounded by a finite number at the potential minimizer of the sum. Thus, our analysis in this paper complements and completes the analysis in \cite{kuwaran2018location} by fully characterizing the region where the minimizer of the sum of two strongly convex functions can lie. 

It is imperative to highlight that while our question is motivated by the specific scenario of the sum of two strongly convex functions, our results extend to hold under a more general class of functions. For instance, functions satisfying the restricted secant inequality \cite{zhang2013gradient, zhang2015restricted} or restricted strongly convex functions \cite{lai2013augmented, zhang2017restricted} also fall within the scope of our analysis. Moreover, our approach potentially applies to practical scenarios involving (deep) supervised learning. In Section~\ref{subsec: applicability}, we will formally articulate the connections to these general classes of functions and discuss relevant applications. Despite the broad applicability, we acknowledge that our work is currently limited to the context of the sum of two functions due to the intricacies of the analysis, as demonstrated throughout this work. However, we view this work as a pioneering effort in addressing such problems, and in Section~\ref{subsec: multiple functions}, we will briefly explore the idea of extending our analysis to cases involving multiple functions.

\subsection{Paper structure and organization}

The paper is structured as follows. Section~\ref{sec: preliminaries} introduces the notations used throughout the paper and provides preliminaries. Section~\ref{sec: problem} outlines the problem formulation. Our main results regarding the outer approximation are presented in Section~\ref{sec: outer}, while the inner approximation is discussed in Section~\ref{sec: inner}. Section~\ref{sec: solution region} presents the characterization of the potential solution region. The discussion is covered in Section~\ref{sec: discussion}, and the conclusion is summarized in Section~\ref{sec: conclusion}.
\section{Preliminaries} \label{sec: preliminaries}

\subsection{Sets}
Let $\mathbb R$ denotes the set of real numbers. We denote by $\R^n$ the $n$-dimensional Euclidean space. For a subset $\mathcal{E}$ of a topological space, we denote the complement, closure and interior of a set $\mathcal{E}$ by $\mathcal{E}^{c}$, $\overline{\mathcal{E}}$ and ${\mathcal{E}}^\circ$, respectively. The boundary of $\mathcal{E}$ is defined as $\partial \mathcal{E} = \overline{\mathcal{E}} \setminus {\mathcal{E}}^\circ$. We also use $\textbf{dom}(f)$ to denote the domain of function $f$. In addition, we use $\sqcup$ to denote the disjoint union operation. We will use this simple lemma later in the paper.

\begin{lemma}
Let $\mathcal{G}$ and $\H$ be subsets of a topological space $X$ such that $\mathcal{G} \subseteq \H$. Let $\mathfrak{P}$ be a partition of $\H$. Then, 
\begin{equation*}
    \mathcal{G}^{\circ} = \bigsqcup_{\mathcal{Z} \in \mathfrak{P}} 
    \big( (\mathcal{G} \cap \mathcal{Z}) \setminus (\partial \mathcal{G} \cap \mathcal{Z}) \big).
\end{equation*}
\label{lem: interior eqn}
\end{lemma}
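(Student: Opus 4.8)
The plan is to reduce the claim to the single elementary identity $\mathcal{G}^{\circ} = \mathcal{G} \setminus \partial \mathcal{G}$, after which the partition structure enters only through a routine distributivity argument. First I would verify this identity directly from the definition $\partial \mathcal{G} = \overline{\mathcal{G}} \setminus \mathcal{G}^{\circ}$. Taking complements gives $(\partial \mathcal{G})^{c} = (\overline{\mathcal{G}})^{c} \cup \mathcal{G}^{\circ}$, so that
\begin{equation*}
\mathcal{G} \setminus \partial \mathcal{G} = \mathcal{G} \cap \big( (\overline{\mathcal{G}})^{c} \cup \mathcal{G}^{\circ} \big) = \big( \mathcal{G} \cap (\overline{\mathcal{G}})^{c} \big) \cup \big( \mathcal{G} \cap \mathcal{G}^{\circ} \big).
\end{equation*}
Here the first term vanishes because $\mathcal{G} \subseteq \overline{\mathcal{G}}$, while the second equals $\mathcal{G}^{\circ}$ because $\mathcal{G}^{\circ} \subseteq \mathcal{G}$; hence $\mathcal{G} \setminus \partial \mathcal{G} = \mathcal{G}^{\circ}$.

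Next I would simplify a single summand of the right-hand side. Fixing $\mathcal{Z} \in \mathfrak{P}$, a point lies in $(\mathcal{G} \cap \mathcal{Z}) \setminus (\partial \mathcal{G} \cap \mathcal{Z})$ exactly when it lies in $\mathcal{Z}$, in $\mathcal{G}$, and not in $\partial \mathcal{G}$ (the qualifier ``$\cap\, \mathcal{Z}$'' on the subtracted set being redundant once membership in $\mathcal{Z}$ is assumed). Thus
\begin{equation*}
(\mathcal{G} \cap \mathcal{Z}) \setminus (\partial \mathcal{G} \cap \mathcal{Z}) = (\mathcal{G} \setminus \partial \mathcal{G}) \cap \mathcal{Z} = \mathcal{G}^{\circ} \cap \mathcal{Z},
\end{equation*}
where the last equality invokes the identity from the first step.

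Finally I would assemble the union. Since $\mathfrak{P}$ partitions $\H$, the sets $\mathcal{G}^{\circ} \cap \mathcal{Z}$ are pairwise disjoint, which justifies writing the union as a disjoint union $\sqcup$; and by distributivity of intersection over union,
\begin{equation*}
\bigsqcup_{\mathcal{Z} \in \mathfrak{P}} \big( \mathcal{G}^{\circ} \cap \mathcal{Z} \big) = \mathcal{G}^{\circ} \cap \bigcup_{\mathcal{Z} \in \mathfrak{P}} \mathcal{Z} = \mathcal{G}^{\circ} \cap \H.
\end{equation*}
Because $\mathcal{G}^{\circ} \subseteq \mathcal{G} \subseteq \H$, the intersection with $\H$ is redundant and the right-hand side collapses to $\mathcal{G}^{\circ}$, completing the argument.

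I do not anticipate a genuine obstacle here; the only point requiring care is the first step, where the always-true inclusion $\mathcal{G} \subseteq \overline{\mathcal{G}}$ together with $\mathcal{G}^{\circ} \subseteq \mathcal{G}$ is precisely what forces $\mathcal{G} \setminus \partial \mathcal{G}$ to collapse to $\mathcal{G}^{\circ}$ rather than to something larger. The hypothesis $\mathcal{G} \subseteq \H$ is used only at the very end to discard the factor $\H$, and the partition hypothesis is used only to guarantee the disjointness of the summands and that the $\mathcal{Z}$ cover $\H$.
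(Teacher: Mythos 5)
Your proof is correct and follows essentially the same route as the paper: both arguments rest on the identity $\mathcal{G}^{\circ} = \mathcal{G} \setminus \partial \mathcal{G}$, show that each summand equals $\mathcal{G}^{\circ} \cap \mathcal{Z}$, and reassemble over the partition using $\mathcal{G}^{\circ} \subseteq \mathcal{G} \subseteq \H$. The only difference is that you prove the identity $\mathcal{G}^{\circ} = \mathcal{G} \setminus \partial \mathcal{G}$ explicitly, whereas the paper uses it without comment.
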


\begin{proof}
For $\mathcal{Z} \in \mathfrak{P}$, since $\mathcal{G} \cap \mathcal{Z} \cap \mathcal{Z}^c = \emptyset$, we have 
\begin{equation*}
    \mathcal{G}^{\circ} \cap \mathcal{Z}
    = (\mathcal{G} \cap (\partial \mathcal{G})^c \cap \mathcal{Z}) \cup (\mathcal{G} \cap \mathcal{Z} \cap \mathcal{Z}^c)
    = (\mathcal{G} \cap \mathcal{Z}) \cap (\partial \mathcal{G} \cap \mathcal{Z})^c
    = (\mathcal{G} \cap \mathcal{Z}) \setminus (\partial \mathcal{G} \cap \mathcal{Z}).
\end{equation*}
Using the above equation, we can write
\begin{align}
    \mathcal{G}^{\circ}
    = \bigsqcup_{\mathcal{Z} \in \mathfrak{P}} ( \mathcal{G}^{\circ} \cap \mathcal{Z} )
    = \bigsqcup_{\mathcal{Z} \in \mathfrak{P}} \big( (\mathcal{G} \cap \mathcal{Z}) \setminus (\partial \mathcal{G} \cap \mathcal{Z}) \big). \nonumber 
\end{align}
\end{proof}

\subsection{Linear algebra}
For simplicity, we often use $(x_1, \ldots, x_n)$ to represent the column vector 
$\x = \begin{bmatrix} x_1 & x_2 & \cdots & x_n \end{bmatrix}^\intercal$. We use $\boldsymbol{0}$ to denote the all-zero vector with appropriate dimension and $\boldsymbol{e}_i$ to denote the $i$-th basis vector (the vector of all zeros except for a one in the $i$-th position). We denote by $\langle \u, \v \rangle$ the Euclidean inner product of vectors $\u$ and $\v$, i.e., $\langle \u, \v \rangle \triangleq \u^\intercal \v$, 
by $\Vert \u \Vert$ the Euclidean norm of $\u$, i.e., $\| \u \| \triangleq \sqrt{ \langle \u, \u \rangle} = (\sum_i u_i^2)^{1/2}$.
We define the functions $\angle : (\R^n \setminus \{ \mathbf{0} \}) \times (\R^n \setminus \{ \mathbf{0} \}) \to [0, \pi]$ and $\measuredangle : (\R^2 \setminus \{ \mathbf{0} \}) \times (\R^2 \setminus \{ \mathbf{0} \}) \to \big[ -\frac{\pi}{2}, \frac{\pi}{2} \big]$ as 
\begin{equation}
    \angle (\u, \v) 
    \triangleq \arccos \bigg( \frac{ \langle \u, \v \rangle}{\Vert \u \Vert \Vert \v \Vert}  \bigg)
    \quad \text{and} \quad 
    \measuredangle (\u, \v) 
    \triangleq \arcsin \bigg( \frac{u_2 v_1 - u_1 v_2}{\| \u \|  \| \v \|} \bigg),
    \label{def: angle func}
\end{equation}
respectively. Note that $\angle (\u, \v) = \angle (\v, \u)$ but $\measuredangle (\u, \v) = - \measuredangle (\v, \u)$.
We use 
\begin{equation}
    \B (\x_0, r_0) \triangleq \{ \x \in \R^n: \Vert \x - \x_0 \Vert < r_0  \}
    \label{def: ball B}
\end{equation}
and $\overline{\B} (\x_0, r_0)$ to denote the open and closed balls, respectively, in $\R^n$ centered at $\x_0 \in \R^n$ and with radius $r_0 \in \R_{>0}$. We use $\boldsymbol{I}$ to denote the identity matrix of appropriate dimension. For square matrix $\A \in \R^{n \times n}$, we use $\lambda (\A)$, $\lambda_{\text{min}} (\A)$ and $\text{Tr}(\A)$ to denote an eigenvalue, the minimum eigenvalue and the trace of matrix $\A$, respectively. For $\A \in \R^{m \times n}$, we use $\mathcal{R}(\A)$ and $\mathcal{N}(\A)$ to denote the column space and null space of matrix $\A$, respectively.

\subsection{Convex sets and functions}  \label{subsec: convex}
A set $\C \subseteq \R^n$ is said to be convex if, for all $\x$ and $\y$ in $\C$ and all $t$ in the interval $(0, 1)$, the point $(1- t) \x + t \y$ also belongs to $\C$. A differentiable function $f$ is called strongly convex with parameter $\sigma \in \R_{>0}$ (or $\sigma$-strongly convex) if 
\begin{equation}
    \langle \nabla f(\x) - \nabla f(\y), \; \boldsymbol{x } - \y \rangle \geq \sigma \Vert \x - \y \Vert^2 
    \label{def: strongly cvx}
\end{equation}
holds for all points $\x, \y$ in its domain. We use $\S(\x^*, \sigma)$ to denote the set of all differentiable and $\sigma$-strongly convex functions that have their minimizer at $\x^* \in \R^n$. Define $\mathsf{S}^n$ to be the set of symmetric matrices in $\R^{n \times n}$, and $\mathsf{Q}^{n}$ to be the set of all quadratic functions that map $\R^n$ to $\R$.
A quadratic function $f \in \mathsf{Q}^{n}$ parameterized by $\boldsymbol{Q} \in \mathsf{S}^n$, $\b \in \R^n$, and $c \in \R$ is given by
\begin{displaymath}
    f( \x ; \boldsymbol{Q}, \b, c) = \frac{1}{2} \x^\intercal \boldsymbol{Q} \x + \b^\intercal \x + c.
\end{displaymath}
For $\x^* \in \R^n$ and $\sigma \in \R_{> 0}$, define
\begin{equation}
    \Q^{(n)}(\x^*, \sigma) 
    \triangleq \big\{ f (\x ; \boldsymbol{Q}, \b, c) \in \mathsf{Q}^{n} : 
    \lambda_{\text{min}}( \boldsymbol{Q} ) = \sigma, \;\; \boldsymbol{Q} \x^* = - \b \big\}.
    \label{def: Quadratic Coll}
\end{equation}
We will omit the superscript $(n)$ of $\Q^{(n)}$ when it is clear from contexts.
Note that every function in $\Q(\x^*, \sigma)$ is $\sigma$-strongly convex quadratic and has the minimizer at $\x^*$, and 
\begin{equation}
    \Q(\x^*, \sigma) \subset \bigcup_{\Tilde{\sigma} \geq \sigma} \Q(\x^*, \Tilde{\sigma}) \subset \S(\x^*, \sigma).
\label{eqn: set of functions inclusion}
\end{equation}

The following lemma shows that the strong convexity of functions is invariant under some particular affine transformations. This property will help us to simplify the analysis throughout the paper.
\begin{lemma}
Let $\A \in \R^{n \times n}$ be an orthogonal matrix and $\b \in \R^n$. Suppose $f: \R^n \to \R$ is a differentiable function and define $h (\x) = f ( \A \x + \b)$. Then, $f$ is $\sigma$-strongly convex if and only if $h$ is $\sigma$-strongly convex.
\label{lem: strong cvx}
\end{lemma}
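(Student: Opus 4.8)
The plan is to reduce the strong-convexity inequality for $h$ to the one for $f$ through an explicit change of variables, exploiting that an orthogonal $\A$ preserves inner products and norms. First I would compute the gradient of $h$. Since $h(\x) = f(\A \x + \b)$ and $f$ is differentiable, $h$ is differentiable, and the chain rule gives $\nabla h(\x) = \A^\intercal \nabla f(\A \x + \b)$, because the Jacobian of the affine map $\x \mapsto \A \x + \b$ is $\A$.

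Next I would evaluate the defining quantity $\langle \nabla h(\x) - \nabla h(\y), \x - \y \rangle$ for arbitrary $\x, \y \in \R^n$. Substituting the gradient formula and using the adjoint identity $\langle \A^\intercal \g, \z \rangle = \langle \g, \A \z \rangle$, which holds for any matrix, this equals $\langle \nabla f(\A \x + \b) - \nabla f(\A \y + \b),\, \A(\x - \y) \rangle$. Introducing $\u = \A \x + \b$ and $\v = \A \y + \b$, and noting $\u - \v = \A(\x - \y)$, the expression becomes exactly $\langle \nabla f(\u) - \nabla f(\v),\, \u - \v \rangle$.

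For the right-hand side of the strong-convexity inequality I would invoke orthogonality: since $\A$ is orthogonal it is an isometry, so $\Vert \u - \v \Vert = \Vert \A(\x - \y) \Vert = \Vert \x - \y \Vert$, whence $\sigma \Vert \x - \y \Vert^2 = \sigma \Vert \u - \v \Vert^2$. Therefore the inequality in the definition \eqref{def: strongly cvx} for $h$ at the pair $(\x, \y)$ is \emph{identical} to that for $f$ at the pair $(\u, \v)$.

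Finally, I would close the equivalence with a quantifier argument. The affine map $\x \mapsto \A \x + \b$ is a bijection of $\R^n$ (as $\A$ is invertible), so as $(\x, \y)$ ranges over all of $\R^n \times \R^n$, the image pair $(\u, \v)$ does as well. Consequently the inequality holds for every $(\x, \y)$ if and only if it holds for every $(\u, \v)$, which is precisely the claim that $h$ is $\sigma$-strongly convex if and only if $f$ is. There is no serious obstacle here; the only point requiring care is keeping the two roles of $\A$ distinct — the adjoint identity, valid for any matrix, handles the left-hand side, whereas orthogonality is exactly what is needed both to preserve the norm on the right-hand side and to guarantee that the change of variables sweeps out all of $\R^n \times \R^n$.
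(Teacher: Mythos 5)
Your proposal is correct and follows essentially the same route as the paper's proof: compute $\nabla h(\x) = \A^\intercal \nabla f(\A\x+\b)$, move $\A^\intercal$ across the inner product, use orthogonality to preserve the norm on the right-hand side, and use invertibility of the affine map to match the universal quantifiers. The only cosmetic difference is that you start from the inequality for $h$ and transform to the one for $f$, whereas the paper substitutes into the inequality for $f$; the content is identical.
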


\begin{proof}
By the definition of strongly convex functions in \eqref{def: strongly cvx}, we have that
\begin{equation*}
    \langle \nabla f ( \x ) - \nabla f ( \y ), \; \x - \y \rangle 
    \geq \sigma \| \x - \y \|^2
    \quad \text{for all} \quad \x, \y \in \R^n.
\end{equation*}
Since $\A$ is invertible, we can replace $\x$ and $\y$ by $\A \x + \b$ and $\A \y + \b$, respectively, and the above inequality is equivalent to
\begin{equation}
    \big\langle \nabla f ( \A \x + \b ) - \nabla f ( \A \y + \b ), \; \A (\x - \y) \big\rangle 
    \geq \sigma \| \A (\x - \y) \|^2
    \;\; \text{for all} \;\; \x, \y \in \R^n.
    \label{eqn: strongly cvx ineq}
\end{equation}
Since $\nabla h(\x) = \A^\intercal \nabla f ( \A \x + \b )$, we can rewrite the LHS of \eqref{eqn: strongly cvx ineq} as $\big\langle \A^\intercal \big( \nabla f ( \A \x + \b ) - \nabla f ( \A \y + \b ) \big), \; \x - \y \big\rangle = \langle \nabla h ( \x ) - \nabla h ( \y ), \; \x - \y \rangle$.
On the other hand, since $\A$ is an orthogonal matrix, the RHS of \eqref{eqn: strongly cvx ineq} becomes $\sigma \| \x - \y \|^2$.
\end{proof}

\section{Problem formulation} \label{sec: problem}
Consider two (unknown) functions $f_1$ and $f_2$. In order to investigate the minimizer of the sum of two unknown functions $f_1 + f_2$, we will impose the following assumptions on the structure of both functions.
\begin{enumerate}
    \item Given $\sigma_1, \sigma_2 \in \R_{>0}$, the functions $f_1: \R^n \rightarrow \R$ and $f_2: \R^n \rightarrow \R$ are differentiable and strongly convex with parameters $\sigma_1$ and $\sigma_2$, respectively. \label{asm: function}
    \item \label{asm: minimizer} Given $\x_1^*, \x_2^* \in \R^n$, the minimizers of $f_1$ and $f_2$ are at $\x_1^*$ and $\x_2^*$, respectively.
    \item Suppose $\x^* \in \R^n$ is the minimizer of $f_1 + f_2$. There is a finite (given) number $L \in \R_{>0}$ such that the norm of gradient of $f_1$ and $f_2$ evaluated at $\x^*$ is no larger than $L$. \label{asm: grad norm}
\end{enumerate}
Assumption \ref{asm: function} and \ref{asm: minimizer} will be captured using the notations introduced earlier: $f_1 \in \S (\x_1^*, \sigma_1)$ and $f_2 \in \S (\x_2^*, \sigma_2)$. 
For Assumption \ref{asm: grad norm}, since $\x^*$ is the minimizer of $f_1 + f_2$, we have that $\nabla f_1(\x^*) = - \nabla f_2(\x^*)$. In addition, we can rewrite the bounded gradient at $\x^*$ condition as $\| \nabla f_1(\x^*) \| =$ $\| \nabla f_2(\x^*) \| \leq L$. Essentially, our goal is to estimate the region $\M$ containing all possible values $\x^*$ satisfying the above conditions. More specifically, given $\x_1^*, \x_2^*  \in \R^n$, $\sigma_1, \sigma_2 \in \R_{>0}$, and $L \in \R_{>0}$, we wish to estimate the potential solution region 
\begin{multline}
    \M(\x_1^*, \x_2^*, \sigma_1, \sigma_2, L) \triangleq \big\{ \x \in \R^n : \exists f_1 \in \S( \x_1^*, \sigma_1 ), \quad \exists f_2 \in \S( \x_2^*, \sigma_2 ), \\
    \nabla f_1(\x) = -\nabla f_2(\x), \quad \Vert \nabla f_1(\x) \Vert = \Vert \nabla f_2(\x) \Vert \leq L \big\}. \label{def: set M}
\end{multline}
For simplicity of notation, we will omit the argument of the set $\M(\x_1^*, \x_2^*, \sigma_1, \sigma_2, L)$ and write it as $\M$.

It is crucial to emphasize the nature of the solution region $\M \subset \R^n$ defined in \eqref{def: set M}. $\M$ comprises points where each $\x \in \M$ corresponds to at least one pair of functions $(f_1, f_2)$ satisfying the following three properties: (1) $f_i \in \S( \x_i^*, \sigma_i )$ for $i \in \{ 1, 2 \}$, (2) $\nabla f_1(\x) = - \nabla f_2(\x)$, and (3) $\| \nabla f_1(\x) \| = \| \nabla f_2(\x) \| \leq L$. In simpler terms, a point within $\M$ serves as the minimizer of the sum of two strongly convex functions chosen from specific classes. However, as the definition of $\M$ guarantees the existence of such pairs, it is conceivable that multiple pairs of functions correspond to a single point $\x$ within the potential solution region $\M$. With this consideration, we will formally investigate this question in Section~\ref{subsec: correspondence}.

\subsection{Discussion of assumptions}
Functions that satisfy both differentiable and strongly convex conditions (Assumption \ref{asm: function}) are common in many applications. In machine learning applications, for example, linear regression and logistic regression models with $L_2$-regularization are commonly used when only a small amount of training data is available \cite{hastie2009elements, ng2004feature}.

Assumption~\ref{asm: minimizer} can be generalized by assuming that for $i \in \{ 1, 2 \}$, the minimizer $\x_i^*$ of the function $f_i$ is not available but instead $\x_i^*$ is located in a known compact set $\mathcal{A}_i \subset \R^n$ as in \cite{kuwaran2020set}. However, the analysis will be more involved, so we defer these assumptions to our future works.

Assumption~\ref{asm: grad norm} is a technical assumption. Given $\x_1^*, \x_2^* \in \R^n$ such that $\x_1^* \neq \x_2^*$, let
\begin{equation*}
    \mathcal{L} = \big\{ \x \in \R^n :  \text{there exists} \; k \in \R \setminus (-1, 1) \;\; \text{such that} \;\; \x - \x_1^* = k (\x_2^* - \x_1^*)   \big\}. 
\end{equation*}
Without Assumption~\ref{asm: grad norm} -- meaning the norm of the gradient of each function at the minimizer of the sum can be arbitrarily large -- one can utilize the result from Proposition~\ref{prop: function exist n-dim} to demonstrate that $\M = \R^n \setminus \mathcal{L}$. It is noteworthy that for $n \in \mathbb{N} \setminus \{ 1 \}$, the set $\mathcal{L}$ has measure zero, implying that $\M$ covers almost the entire space. In simpler terms, almost all points have the potential to be minimizers. A detailed exploration of this perspective is provided in Section~\ref{subsec: assumption3}. While imposing bounds on gradients can be viewed as a means of implicitly limiting the functions within $\S (\x_1^*, \sigma_1)$ and $\S (\x_2^*, \sigma_2)$, alternative methods may exist to constrain the class of functions, as we will discuss in some intriguing alternatives in Section~\ref{subsec: alternative assumptions}. However, for now, we confine ourselves to the simpler Assumption~\ref{asm: grad norm}, leaving exploration of such alternative assumptions for future work.

\subsection{A preview of the solution}
Recall the definition of the potential solution region $\M$ from \eqref{def: set M}. One way to characterize the set $\M$ is to provide an explicit formula for the boundary $\partial \M$ in terms of $\x_1^*$, $\x_2^*$, $\sigma_1$, $\sigma_2$ and $L$. In Fig.~\ref{fig: boundary M}, we provide a preview of the boundary $\partial \M$ in $\R^2$ given fixed parameters $\sigma_1 = 1.5$, $\sigma_2 = 1$, and $L = 10$, and a variable parameter $r \in \R_{>0}$. Suppose $\x_1^* = (-r, 0)$ and $\x_2^* = (r, 0)$. We illustrate $\partial \M$ for the case where $r = 2, 4$, and $6$ in Fig.~\ref{fig: boundary M Case1}, Fig.~\ref{fig: boundary M Case2} and Fig.~\ref{fig: boundary M Case3}, respectively. The different colors in the figures indicate different equations that combine together to yield the boundary (as we will explicitly characterize in the rest of the paper).

\begin{figure}
\centering
\subfloat[For $r=2$, $\partial \M$ consists of $1$ curve (blue curve) and $\{ \x_1^*, \x_2^* \}$.]{\includegraphics[width=.30\textwidth]{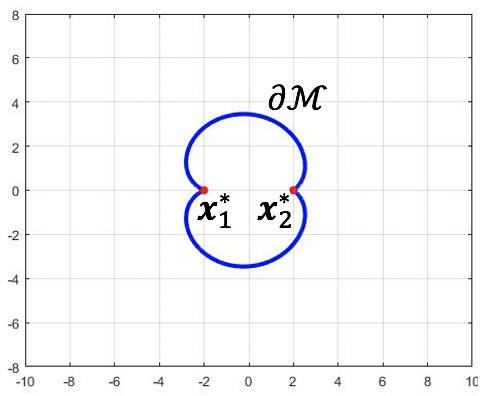} \label{fig: boundary M Case1}}\;\;
\subfloat[For $r=4$, $\partial \M$ consists of $2$ curves (blue and cyan curves) and $\{ \x_1^* \}$.]{\includegraphics[width=.30\textwidth]{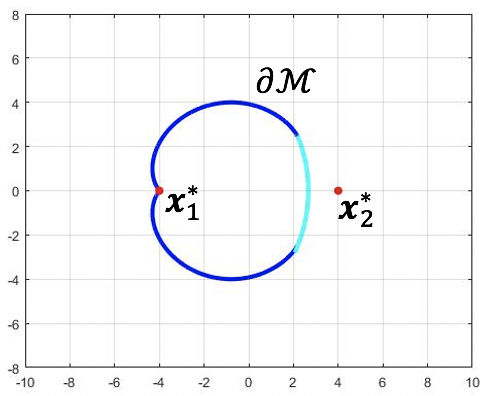} \label{fig: boundary M Case2}}\;\;
\subfloat[For $r=6$, $\partial \M$ consists of $3$ curves (blue, cyan and magenta curves).]{\includegraphics[width=.30\textwidth]{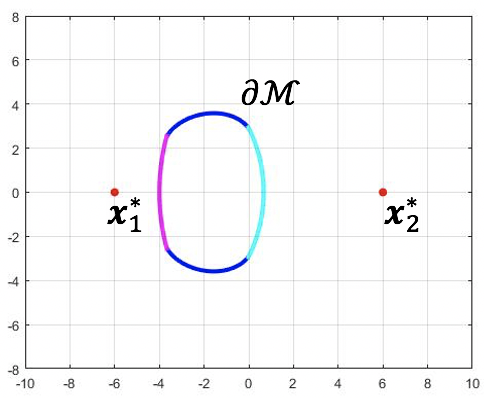} \label{fig: boundary M Case3}}
\caption{The boundary $\partial \M$ in $\R^2$ is plotted given minimizers $\x_1^* = (-r,0)$ and $\x_2^* = (r,0)$ and fixed parameters $\sigma_1 = 1.5$, $\sigma_2 = 1$, and $L = 10$. Different colors denote different equations that combine together to yield the boundary $\partial \M$.}
\label{fig: boundary M}
\end{figure}

\subsection{Solution approach}  
\label{subsec: solution approach}

Since the analysis of the case $\x_1^* = \x_2^*$ is trivial (i.e., the potential solution region is $\M = \{ \x_1^* \}$), without loss of generality, we assume that $\x_1^* = (-r, 0, \ldots, 0) \in \R^n$ and $\x_2^* = (r, 0, \ldots, 0) \in \R^n$ with $r = \frac{1}{2} \| \x_2^* - \x_1^* \| > 0$. 

To show this, given general $\x_1^*, \x_2^* \in \R^n$ with $\x_1^* \neq \x_2^*$, let the set of new bases $\mathcal{J} = \{ \boldsymbol{e}'_1, \boldsymbol{e}'_2, \ldots, \boldsymbol{e}'_n \}$ be such that $\boldsymbol{e}'_1 = \frac{ \x_2^* - \x_1^* }{ \| \x_2^* - \x_1^* \|}$ and $\{ \boldsymbol{e}'_2, \boldsymbol{e}'_3, \ldots, \boldsymbol{e}'_n \}$ is obtained by Gram-Schmidt orthogonalization. Let 
\begin{equation*}
    \boldsymbol{E} = \begin{bmatrix} \boldsymbol{e}_1' & \boldsymbol{e}_2' & \cdots & \boldsymbol{e}_n' \end{bmatrix}
    \quad \text{and} \quad
    \b = \frac{1}{2} (\x_1^* + \x_2^*).
\end{equation*}
We let $\x_{\mathcal{J}} = \boldsymbol{E}^\intercal (\x - \b)$ be the coordinate transformation. One can verify that if $\x = \x_1^*$ then $\x_{\mathcal{J}} = \big( - \frac{1}{2} \| \x_2^* - \x_1^* \|, \; \mathbf{0} \big) = (-r, \mathbf{0} )$ and if $\x = \x_2^*$ then $\x_{\mathcal{J}} = \big( \frac{1}{2} \| \x_2^* - \x_1^* \|, \; \mathbf{0} \big) = (r, \mathbf{0} )$.
For $i \in \{ 1, 2 \}$, let $\Tilde{f}_i : \R^n \to \R$ be the function such that $\tilde{f}_i (\x_{\mathcal{J}}) = f_i (\x)$ for all $\x \in \R^n$, i.e., $\tilde{f}_i$'s value at the coordinate of point $\x$ on the new bases $\mathcal{J}$ is the same as $f_i$ at point $\x$. We can write $\tilde{f}_i (\x) = f_i ( \boldsymbol{Ex} + \b)$ for $i \in \{ 1 ,2 \}$. Applying Lemma~\ref{lem: strong cvx}, we have that $\tilde{f}_i$ is $\sigma_i$-strongly convex for $i \in \{ 1 ,2 \}$. Once we attain the potential solution region $\M$ in terms of $\x_{\mathcal{J}}$, we can always use the transformation to obtain the region in terms of $\x$, i.e., the original coordinate system.

For convenience, we introduce the shorthand notation of sets that will be encountered throughout the paper. Recall the definition of $\B$ from \eqref{def: ball B}. For $i \in \{ 1, 2 \}$, define
\begin{equation}
    \B_i \triangleq \B \Big( \x_i^*, \frac{L}{\sigma_i} \Big).
    \label{def: set B}
\end{equation}
Now, we introduce the functions that will be used to define the outer and inner approximations of $\M$.
For $i \in \{ 1, 2 \}$, define the functions $\tilphi_i: \; \overline{\B}_i \to \big[ 0, \frac{\pi}{2} \big]$ to be such that
\begin{equation}
    \tilphi_i (\x) \triangleq \arccos{ \Big( \frac{\sigma_i}{L} \| \x - \x_i^* \| \Big) },
    \label{def: phi tilde}
\end{equation}
and the functions $\alpha_i: \; \R^n \setminus \{ \x_i^* \} \to [0, \pi]$ to be such that
\begin{equation}
    \alpha_i(\x) \triangleq \angle ( \x - \x_i^*, \; \x_2^* - \x_1^* ),
    \label{def: alpha_i}
\end{equation}
i.e., the angle between vectors $\x - \x_i^*$ and $\x_2^* - \x_1^*$. Note that $\alpha_2 (\x) \geq \alpha_1 (\x)$ for all $\x \in \R^n \setminus \{ \x_1^*, \x_2^* \}$ due to the assumption that $\x_1^* = (-r, \mathbf{0})$ and $\x_2^* = (r, \mathbf{0})$.
We define $\psi: \R^n \setminus \{ \x_1^*, \x_2^* \} \to [0, \pi]$ to be the function such that
\begin{equation}
    \psi( \x ) \triangleq \pi - \big( \alpha_2(\x) - \alpha_1(\x) \big). \label{def: psi}
\end{equation}
The interpretation of the angles $\tilphi_i (\x)$ and $\psi(\x)$ will be clarified later (in Fig.~\ref{fig: angle and nc}).
In addition, given $\x_1^*, \x_2^*  \in \R^n$, $\sigma_1, \sigma_2 \in \R_{>0}$, and $L \in \R_{>0}$, we define 
\begin{equation}
    \mathcal{X} \triangleq \begin{cases} 
    \Big\{ \x \in \R^n : 
    \| \x - \x_i^* \| = \frac{L}{\sigma_i} 
    \;\; \text{for all} \; i \in \{ 1, 2 \} \Big\}
    \; &\text{if} \;
    \| \x_2^* - \x_1^* \| = L \big( \frac{1}{\sigma_1} + \frac{1}{\sigma_2} \big),  \\ 
    \emptyset &\text{otherwise}.
    \end{cases}
    \label{def: set X}
\end{equation}
Due to the assumption that $\x_1^* = (-r, \mathbf{0})$ and $\x_2^* = (r, \mathbf{0})$, for $\| \x_2^* - \x_1^* \| = L \big( \frac{1}{\sigma_1} + \frac{1}{\sigma_2} \big)$, we have $\mathcal{X} = \big\{ \big( -r + \frac{L}{\sigma_1}, \; \mathbf{0} \big) \big\}$.

With these definitions in place, given $\x_1^*, \x_2^*  \in \R^n$, $\sigma_1, \sigma_2 \in \R_{>0}$, and $L \in \R_{>0}$, we define the outer and inner approximations of $\M$ as
\begin{equation}
    \Mup (\x_1^*, \x_2^*, \sigma_1, \sigma_2, L) \triangleq \big\{ \x \in 
    \R^n : \;
    \tilphi_1 (\x) + \tilphi_2 (\x) \geq \psi(\x) \big\} 
    \label{def: set M up}
\end{equation}
and
\begin{equation}
    \Mdo (\x_1^*, \x_2^*, \sigma_1, \sigma_2, L) \triangleq \big\{ \x \in \R^n : \;
    \tilphi_1 (\x) + \tilphi_2 (\x) > \psi(\x) \big\} 
    \cup \mathcal{X},
    \label{def: set M down}
\end{equation}
respectively. As before, we will omit the argument of the sets $\Mup(\x_1^*, \x_2^*, \sigma_1, \sigma_2, L)$ and $\Mdo(\x_1^*, \x_2^*, \sigma_1, \sigma_2, L)$, and write them as $\Mup$ and $\Mdo$, respectively.

\begin{remark}
Recall the definition of $\tilphi_i$ for $i \in \{ 1, 2 \}$ and $\psi$ from \eqref{def: phi tilde} and \eqref{def: psi}, respectively. Since $\Mup$ and $\Mdo$ are defined using $\tilphi_1$, $\tilphi_2$ and $\psi$, implicitly, they must be subsets of $\textbf{dom} (\tilphi_1) \cap \textbf{dom} (\tilphi_2) \cap \textbf{dom} (\psi)$. In other words, the sets $\Mup \subseteq (\overline{\B}_1 \cap \overline{\B}_2) \setminus \{ \x_1^*, \x_2^* \}$ and $\Mdo \subseteq (\overline{\B}_1 \cap \overline{\B}_2) \setminus \{ \x_1^*, \x_2^* \}$ where $\B_i$ for $i \in \{ 1 ,2 \}$ are defined in \eqref{def: set B}.
\end{remark}

In order to characterize the potential solution region $\M$, we proceed as follows. First, in Proposition~\ref{prop: angle nc}, we show that $\M \subseteq \Mup$ by considering a property of strongly convex functions. Then, we characterize the boundary and interior of the outer approximation ($\partial \Mup$ and $(\Mup )^{\circ}$) for each value of $r$ in Theorem~\ref{thm: up BD}. In Proposition~\ref{prop: function exist n-dim}, we consider quadratic functions and show that $\Mdo \subseteq \M$ in Proposition~\ref{prop: angle sc}. We use a similar approach as in Theorem~\ref{thm: up BD} to characterize the boundary and interior of the inner approximation ($\partial \Mdo$ and $(\Mdo )^{\circ}$) for each value of $r$ which is presented in Theorem~\ref{thm: down BD}. Finally, by observing that $\partial \Mup = \partial \Mdo$ and $(\Mup )^{\circ} = (\Mdo )^{\circ}$ from Theorem~\ref{thm: up BD} and Theorem~\ref{thm: down BD}, we conclude the paper by showing that, in fact, the boundary of the potential solution region, outer approximation, and inner approximation are identical, i.e., $\partial \M = \partial \Mup = \partial \Mdo$, in Theorem~\ref{thm: boundary M}.
\section{Outer approximation} \label{sec: outer}

In this section, we derive necessary conditions for a point to be in the potential solution region $\M$ and show that $\M \subseteq \Mup$ in Proposition~\ref{prop: angle nc}. 
Then, we explicitly characterize an important part of $\partial \Mup$ (and also $\partial \Mdo$) in Proposition~\ref{prop: T_n}. In Theorem~\ref{thm: up BD}, which is the main result of this section, we identify $\partial \Mup$ and $(\Mup )^{\circ}$, and also provide a property of $\Mup$.
Other lemmas in this section are presented as tools that will be utilized in the proof of Theorem~\ref{thm: up BD} (and also Theorem~\ref{thm: down BD}).

We will be using the following functions throughout our analysis. For $i \in \{1, 2\}$, define $\u_i: \; \R^n \setminus \{ \x_i^* \} \to \R^n$ to be the function such that
\begin{equation}
    \u_i(\x) \triangleq \frac{\x - \x_i^*}{ \| \x - \x_i^* \|},
    \label{def: unit vector}
\end{equation}
i.e., the unit vector in the direction of $\x - \x_i^*$. 
Recall the definition of $\angle (\cdot, \cdot)$ from \eqref{def: angle func}. For $i \in \{ 1, 2 \}$, we define $\phi_i: \R^n \setminus \{ \x_i^* \} \to \big[ 0, \frac{\pi}{2} \big]$ to be the function such that
\begin{equation}
    \phi_i( \x ) \triangleq \angle \big( \nabla f_i (\x), \; \u_i(\x) \big),
    \label{def: phi_i}
\end{equation}
and
$\underline{L}_i: \R^n \to \R$ to be the function such that 
\begin{equation}
    \underline{L}_i (\x) \triangleq \sigma_i  \Vert \x - \x_i^* \Vert. 
    \label{eqn: L_lb}
\end{equation}
Note that for $i \in \{ 1, 2 \}$, the quantity $\underline{L}_i (\x)$ is a lower bound on the norm of the gradient of $f_i$ at $\x \in \R^n$ if $f_i \in \S (\x_i^*, \sigma_i)$.

In Fig.~\ref{fig: angle and nc}, we illustrate the definition of $\u_i$, $\phi_i$, $\tilphi_i$, $\alpha_i$ for $i \in \{ 1, 2 \}$, and $\psi$. Moreover, we illustrate the inequality $\tilphi_1 (\x) + \tilphi_2 (\x) \geq \psi(\x)$ which is used to describe the outer approximation $\Mup$ in \eqref{def: set M up}.

In the following proposition, we show a crucial result that the set $\Mup$ covers the set that we want to characterize, $\M$. In other words, the points in the set $\Mup$ satisfy necessary conditions of a point to be a minimizer of the sum $f_1 + f_2$.

\begin{proposition}
Suppose the sets $\M$ and $\Mup$ are defined as in \eqref{def: set M} and \eqref{def: set M up}, respectively. Then, $\M \subseteq \Mup$.
\label{prop: angle nc}
\end{proposition}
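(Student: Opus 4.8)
The plan is to fix an arbitrary $\x \in \M$ and verify the defining inequality $\tilphi_1(\x) + \tilphi_2(\x) \geq \psi(\x)$ of $\Mup$ directly. By the definition of $\M$ in \eqref{def: set M}, there exist $f_1 \in \S(\x_1^*, \sigma_1)$ and $f_2 \in \S(\x_2^*, \sigma_2)$ with $\nabla f_1(\x) = -\nabla f_2(\x)$ and $\|\nabla f_1(\x)\| = \|\nabla f_2(\x)\| \leq L$. First I would dispense with the degenerate possibilities: if $\x = \x_i^*$ for some $i$, then $\nabla f_i(\x) = \0$, which forces $\nabla f_j(\x) = \0$ and hence $\x_1^* = \x_2^*$, contradicting the standing assumption $r > 0$; thus $\x \neq \x_1^*, \x_2^*$ and all of $\u_i(\x)$, $\phi_i(\x)$, $\alpha_i(\x)$, $\psi(\x)$ are well defined. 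Moreover, applying the strong convexity inequality \eqref{def: strongly cvx} at $\x$ and $\x_i^*$ (where $\nabla f_i(\x_i^*) = \0$) gives $\langle \nabla f_i(\x), \x - \x_i^* \rangle \geq \sigma_i \|\x - \x_i^*\|^2 > 0$, which shows $\nabla f_i(\x) \neq \0$ and, together with $\|\nabla f_i(\x)\| \leq L$, yields $\sigma_i \|\x - \x_i^*\| \leq L$, i.e. $\x \in \overline{\B}_1 \cap \overline{\B}_2$. Hence $\x$ lies in the common domain of $\tilphi_1, \tilphi_2, \psi$ and the inequality to be checked is meaningful.

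The first substantive step is to bound $\phi_i(\x)$ by $\tilphi_i(\x)$. Dividing the strong convexity estimate above by $\|\nabla f_i(\x)\|\, \|\x - \x_i^*\|$ gives $\cos \phi_i(\x) = \frac{\langle \nabla f_i(\x), \x - \x_i^*\rangle}{\|\nabla f_i(\x)\|\,\|\x - \x_i^*\|} \geq \frac{\sigma_i \|\x - \x_i^*\|}{\|\nabla f_i(\x)\|} \geq \frac{\sigma_i}{L}\|\x - \x_i^*\| = \cos \tilphi_i(\x)$, where the second inequality uses $\|\nabla f_i(\x)\| \leq L$ and the last equality is the definition \eqref{def: phi tilde}. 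Since both $\phi_i(\x), \tilphi_i(\x) \in [0, \frac{\pi}{2}]$ and $\cos$ is decreasing there, this gives $\phi_i(\x) \leq \tilphi_i(\x)$ for $i \in \{1, 2\}$.

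The second, more geometric step is to express $\psi$ through the directions $\u_1, \u_2$ and then to exploit the antiparallel gradients. Considering the (possibly degenerate) triangle with vertices $\x_1^*, \x_2^*, \x$, the interior angle at $\x$ equals $\angle(\u_1(\x), \u_2(\x))$, while the interior angles at $\x_1^*$ and $\x_2^*$ are $\alpha_1(\x)$ and $\pi - \alpha_2(\x)$, respectively; summing to $\pi$ yields $\angle(\u_1(\x), \u_2(\x)) = \alpha_2(\x) - \alpha_1(\x)$, so the definition \eqref{def: psi} gives $\psi(\x) = \pi - \angle(\u_1(\x), \u_2(\x))$. Now, since the angle $\angle(\cdot, \cdot)$ between rays obeys the triangle inequality on the unit sphere, and since $\nabla f_1(\x) = -\nabla f_2(\x)$ forces $\angle(\nabla f_1(\x), \nabla f_2(\x)) = \pi$, chaining through $\u_1(\x)$ and $\u_2(\x)$ gives $\pi = \angle(\nabla f_1(\x), \nabla f_2(\x)) \leq \phi_1(\x) + \angle(\u_1(\x), \u_2(\x)) + \phi_2(\x)$. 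Rearranging and invoking $\phi_i(\x) \leq \tilphi_i(\x)$ from the previous step yields $\angle(\u_1(\x), \u_2(\x)) \geq \pi - \phi_1(\x) - \phi_2(\x) \geq \pi - \tilphi_1(\x) - \tilphi_2(\x)$; substituting $\angle(\u_1(\x), \u_2(\x)) = \pi - \psi(\x)$ then gives exactly $\tilphi_1(\x) + \tilphi_2(\x) \geq \psi(\x)$, i.e. $\x \in \Mup$.

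I expect the main obstacle to be the purely geometric bookkeeping of the third step: correctly identifying $\psi(\x)$ as the supplement of the interior angle $\angle(\u_1(\x), \u_2(\x))$, including the collinear configurations of $\x, \x_1^*, \x_2^*$ for which the triangle collapses and the angle identity must be checked directly, and justifying the angular triangle inequality in $\R^n$ rather than only in the plane spanned by the three points. By contrast, the strong-convexity estimate of the first step and the antiparallel-gradient observation are comparatively routine once the angle functions $\phi_i$, $\tilphi_i$, and $\psi$ are in place.
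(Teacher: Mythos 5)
Your proposal is correct and follows essentially the same route as the paper's proof: the strong-convexity estimate giving $\phi_i(\x) \leq \tilphi_i(\x)$ and $\x \in \overline{\B}_1 \cap \overline{\B}_2$, the triangle identity $\psi(\x) = \pi - \angle(\u_1(\x),\u_2(\x))$, and the angular triangle inequality combined with $\angle(\nabla f_1(\x), \nabla f_2(\x)) = \pi$. The only cosmetic differences are that you argue the final step directly rather than by contradiction and rule out $\x \in \{\x_1^*, \x_2^*\}$ via uniqueness of the minimizer instead of the paper's explicit inner-product computation; both are fine.
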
 

\begin{proof}
Recall the definition of the sets $\B_i$ for $i \in \{ 1, 2 \}$, the angles $\tilphi_i$ for $i \in \{ 1, 2 \}$, and the angle $\psi$ from \eqref{def: set B}, \eqref{def: phi tilde}, and \eqref{def: psi}, respectively. First, we want to show that the necessary conditions for a point $\x \in \R^n \setminus \{ \x_1^*, \x_2^* \}$ to be in $\M$ are
\begin{enumerate}[label=(\roman*)]
    \item $\x \in \overline{\B}_1 \cap \overline{\B}_2$, and 
    \label{lem: necessary part1}
    \item $\tilphi_1 (\x) + \tilphi_2 (\x) \geq \psi(\x)$. 
    \label{lem: necessary part2}
\end{enumerate}

From the definition of strongly convex functions in \eqref{def: strongly cvx}, we have 
\begin{equation*}
\big\langle \nabla f_i(\x) - \nabla f_i(\y) , \; \x - \y \big\rangle \geq \sigma_i \Vert \x - \y \Vert^2
\end{equation*}
for all $\x, \y \in \R^n$ and for $i \in \{1, 2 \}$. For $i \in \{ 1, 2 \}$, recall the definition of $\u_i(\x)$ and $\phi_i (\x)$ from \eqref{def: unit vector} and \eqref{def: phi_i}, respectively. Since $\x_1^*$ and $\x_2^*$ are the minimizers of $f_1$ and  $f_2$, respectively, for $\x \notin \{ \x_1^*, \x_2^* \}$, we get 
\begin{align}
    \big\langle \nabla f_i(\x) - \nabla f_i(\x_i^*), \; \x - \x_i^* \big\rangle &\geq \sigma_i \Vert \x - \x_i^* \Vert^2, \nonumber \\
    \Leftrightarrow \quad 
    \Vert \nabla f_i(\x) \Vert \cos(\phi_i (\x))
    = \langle \nabla f_i(\x), \; \u_i (\x) \rangle 
    &\geq \sigma_i \Vert \x - \x_i^* \Vert > 0. \label{eqn: strongly convex eq1}
\end{align}
Suppose $\x$ is a candidate minimizer. Then, we have that $\Vert \nabla f_i(\x) \Vert \leq L$ for $i \in \{ 1, 2 \}$ by our assumption. Recall the definition of $\underline{L}_i$ for $i \in \{ 1, 2 \}$ from \eqref{eqn: L_lb}. Inequality \eqref{eqn: strongly convex eq1} becomes
\begin{equation}
\cos(\phi_i (\x)) \geq \frac{ \sigma_i}{L} \Vert \x - \x_i^* \Vert = \frac{\underline{L}_i (\x)}{L}. \label{eqn: angle eq1}
\end{equation}
If $\underline{L}_1 (\x) > L$ or $\underline{L}_2 (\x) > L$, we have that $\x$ cannot be the minimizer of the function $f_1 + f_2$ since there is no $\phi_i(\x)$ that can satisfy inequality \eqref{eqn: angle eq1}. Thus, a necessary condition for $\x \in \R^n \setminus \{ \x_1^*, \x_2^* \}$ to be a minimizer of $f_1 + f_2$ is that $\underline{L}_i (\x) \leq L$ for $i \in \{1, 2 \}$ or equivalently, $\x \in \overline{\B}_1 \cap \overline{\B}_2$, yielding part~\ref{lem: necessary part1} of the claim. We now prove part \ref{lem: necessary part2}. 

From the definition of $\psi(\x)$ in \eqref{def: psi} and that $\angle (-\u_1 (\x), -\u_2 (\x) )$, $\alpha_1(\x)$ and $\pi - \alpha_2(\x)$ are the angles of the triangle formed by the points $\x$, $\x_1^*$ and $\x_2^*$, we can write that for all $\x \in \R^n \setminus \{ \x_1^*, \x_2^* \}$,
\begin{equation}
    \psi(\x) 
    = (\pi - \alpha_2(\x)) + \alpha_1(\x) 
    = \pi -  \angle (-\u_1 (\x), -\u_2 (\x)) 
    = \angle (\u_1 (\x), -\u_2 (\x)).
    \label{eqn: psi equality}
\end{equation}

Suppose that $\x \in \overline{\B}_1 \cap \overline{\B}_2$. Recall the definition of $\tilphi_i$ for $i \in \{ 1 ,2 \}$ from \eqref{def: phi tilde}. From inequality \eqref{eqn: angle eq1}, we have $\phi_i (\x) \leq \tilphi_i (\x)$ for $i \in \{ 1, 2 \}$. 
If $\tilphi_1 (\x) + \tilphi_2 (\x) < \psi(\x)$, then using \eqref{def: phi_i} and \eqref{eqn: psi equality}, we have
\begin{equation*}
    \angle ( \nabla f_1 (\x), \u_1 (\x) ) + \angle ( -\nabla f_2 (\x), -\u_2 (\x) )
    < \angle ( \u_1 (\x), - \u_2 (\x) ).
\end{equation*}
However, using \cite[Corollary~12]{castano2016angles}, we can write $\angle ( \u_1 (\x), - \u_2 (\x) ) \leq$ $\angle ( \nabla f_1 (\x), \u_1 (\x))$ $+ \angle ( \nabla f_1 (\x), - \u_2 (\x))$. 
Therefore, if $\tilphi_1 (\x) + \tilphi_2 (\x) < \psi(\x)$, we have that $\nabla f_1 (\x) \neq - \nabla f_2 (\x)$ which implies that $\x$ is not the minimizer of $f_1 + f_2$. This means that one of the necessary conditions is that $\tilphi_1 (\x) + \tilphi_2 (\x) \geq \psi(\x)$ which completes the proof of the claim.

In the above analysis, we considered the case when $\x \in \R^n \setminus \{ \x_1^*, \x_2^* \}$. We are left with the case when $\x \in \{ \x_1^*, \x_2^* \}$.
From the definition of strongly convex functions, for all $\x, \y \in \R^n$,
\begin{equation*}
    \big\langle \nabla f_2(\x) - \nabla f_2(\y) , \; \x - \y \big\rangle \geq \sigma_2 \Vert \x - \y \Vert^2.
\end{equation*}
Since $\x_2^*$ is the minimizer of $f_2$ and $\x_1^* \neq \x_2^*$, we get 
\begin{displaymath}
    \big\langle \nabla f_2(\x_1^*) , \; \x_1^* - \x_2^* \big\rangle
    = \big\langle \nabla f_2(\x_1^*) - \nabla f_2(\x_2^*) , \; \x_1^* - \x_2^* \big\rangle
    \geq \sigma_2 \Vert \x_1^* - \x_2^* \Vert^2
    > 0,
\end{displaymath}
and thus, $\nabla f_2(\x_1^*) \neq \boldsymbol{0}$. This implies that $\nabla f_2(\x_1^*) + \nabla f_1(\x_1^*) \neq \boldsymbol{0}$ and $\x_1^*$ is not the minimizer of $f_1 + f_2$. By using similar approach, we can also conclude that $\x_2^*$ is not the minimizer of $f_1 + f_2$.
\end{proof}

\begin{figure}
\centering
\subfloat[]{\includegraphics[width=.45\linewidth]{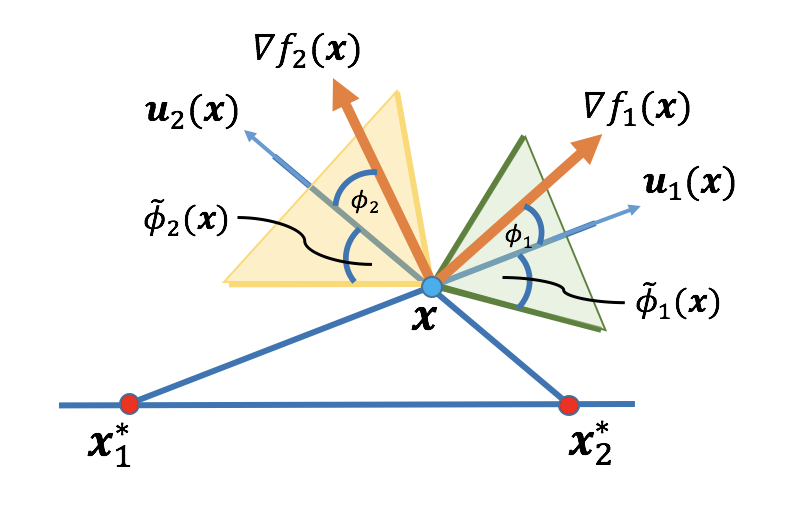} \label{fig: angle and nc fig1}}\quad
\subfloat[]{\includegraphics[width=.45\linewidth]{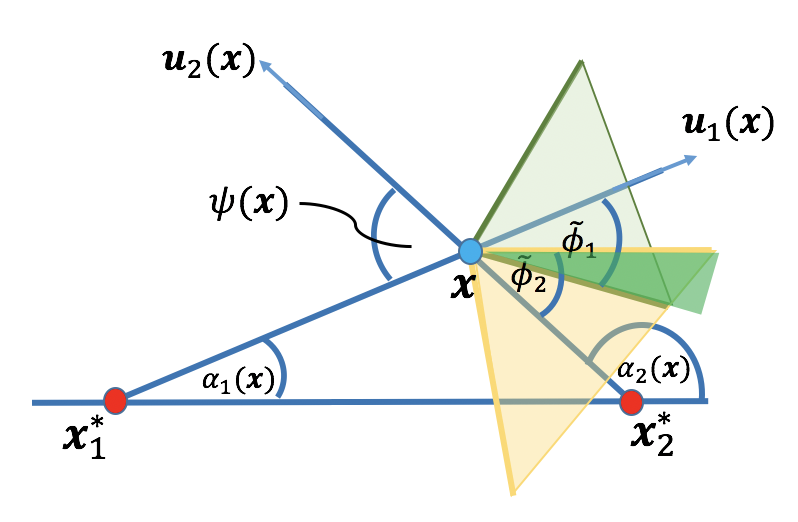} \label{fig: angle and nc fig2}}
\caption{(a) The figure illustrates the definition of $\u_i$, $\phi_i$, and $\tilphi_i$ for $i \in \{ 1, 2 \}$. In particular, inequality \eqref{eqn: angle eq1} implies that $\phi_i (\x) \in [0, \tilphi_i (\x) ]$ for $i \in \{ 1, 2 \}$, i.e., the gradient vectors $\nabla f_1 (\x)$ and $\nabla f_2 (\x)$ must lie in the corresponding shaded regions.
(b) The figure illustrates the definition of $\alpha_i$ for $i \in \{ 1, 2 \}$ and $\psi$. In addition, the inequality $\tilphi_1 (\x) + \tilphi_2 (\x) \geq \psi(\x)$ in $\Mup$ means that there is an overlapping region (light green region in the figure) caused by one shaded region and the mirror of the other shaded region.} 
\label{fig: angle and nc}
\end{figure}

\begin{remark}
The angle functions $\tilphi_i$ and $\alpha_i$ for $i \in \{ 1, 2\}$ defined in \eqref{def: phi_i} and \eqref{def: alpha_i}, respectively, can be expressed as functions of the distances $\Vert \x_1^* - \x_2^* \Vert$, $\Vert \x - \x_1^* \Vert$, and $\Vert \x - \x_2^* \Vert$. 
This means that the inequality $\tilphi_1 (\x) + \tilphi_2 (\x) \geq \psi (\x)$ depends only on the distance among three points $\x$, $\x_1^*$ and $\x_2^*$.
Since $\x_1^* = (-r, \mathbf{0})$ and $\x_2^* = (r, \mathbf{0})$, we conclude that the shape of $\Mup$ (and $\Mdo$) is symmetric around $x_1$-axis.
\end{remark}

From this point, we will denote $\x = (x_1, \tilde{\mathbf{x}}) \in \R^n$ where $x_1 \in \R$ and $\tilde{\mathbf{x}} = (x_2, x_3, \ldots, x_n) \in \R^{n-1}$. Next, we will provide an algebraic expression for a certain portion of $\partial \Mup$ (and $\partial \Mdo$) based on the geometric equation $\tilphi_1 (\x) + \tilphi_2 (\x) = \psi (\x)$, where $\tilphi_i$ for $i \in \{ 1, 2 \}$ and $\psi$ are defined in \eqref{def: phi tilde} and \eqref{def: psi}, respectively. For convenience, we define
\begin{equation}
\begin{aligned}
    d_1(\x) &\triangleq \| \x - \x_1^* \|
    = \sqrt{(x_1 + r)^2 + \Vert \tilde{\mathbf{x}} \Vert^2}
    \quad \text{and} \\
    d_2(\x) &\triangleq \| \x - \x_2^* \|
    = \sqrt{(x_1 - r)^2 + \Vert \tilde{\mathbf{x}} \Vert^2}.
\end{aligned}
\label{def: distance}
\end{equation}
Define the set of points 
\begin{equation}
    \T \triangleq \bigg\{ \x \in \R^n: \;
    \frac{\Vert \x \Vert^2 - r^2}{d_1^2 (\x) \cdot d_2^2 (\x)}  + \frac{\sigma_1 \sigma_2}{L^2} 
    = \sqrt{\frac{1}{d_1^2(\x)} - \frac{\sigma_1^2}{L^2}} \cdot \sqrt{\frac{1}{d_2^2(\x)} - \frac{\sigma_2^2}{L^2}}  \bigg\}. \label{def: set T}
\end{equation}

\begin{proposition}  \label{prop: T_n}
The set $\T$ defined in \eqref{def: set T} can equivalently be written as $\T = \big\{ \x \in \R^n : \tilphi_1 (\x) + \tilphi_2 (\x) = \psi (\x) \big\}$.
\end{proposition}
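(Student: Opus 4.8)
The plan is to reduce the trigonometric equation $\tilphi_1(\x) + \tilphi_2(\x) = \psi(\x)$ to the algebraic identity defining $\T$ in \eqref{def: set T} by taking cosines of both sides, and then to argue that this passage is reversible. First I would pin down the common domain. Outside $\overline{\B}_1 \cap \overline{\B}_2$ one of the quantities $\frac{1}{d_i^2(\x)} - \frac{\sigma_i^2}{L^2}$ is strictly negative, so the right-hand side of the equation in \eqref{def: set T} fails to be a (real) number and the equation cannot hold; at $\x \in \{\x_1^*, \x_2^*\}$ one of $d_1(\x), d_2(\x)$ vanishes, so the left-hand side is undefined. Hence $\T \subseteq (\overline{\B}_1 \cap \overline{\B}_2) \setminus \{\x_1^*, \x_2^*\}$, which is exactly $\dom(\tilphi_1) \cap \dom(\tilphi_2) \cap \dom(\psi)$, so it suffices to prove the equivalence pointwise on this set.

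Next, for $\x$ in this domain I would set $a = \tilphi_1(\x)$ and $b = \tilphi_2(\x)$, both in $\big[ 0, \frac{\pi}{2} \big]$. By \eqref{def: phi tilde}, $\cos a = \frac{\sigma_1}{L} d_1(\x)$ and $\cos b = \frac{\sigma_2}{L} d_2(\x)$, and since $a, b \in \big[ 0, \frac{\pi}{2} \big]$ their sines are the nonnegative roots $\sin a = \sqrt{1 - \frac{\sigma_1^2}{L^2} d_1^2(\x)}$ and $\sin b = \sqrt{1 - \frac{\sigma_2^2}{L^2} d_2^2(\x)}$. For the right-hand side I would invoke \eqref{eqn: psi equality}, giving $\psi(\x) = \angle(\u_1(\x), -\u_2(\x))$ and therefore $\cos\psi(\x) = \langle \u_1(\x), -\u_2(\x) \rangle = -\frac{\langle \x - \x_1^*, \x - \x_2^* \rangle}{d_1(\x)\,d_2(\x)}$. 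Substituting $\x_1^* = (-r, \0)$ and $\x_2^* = (r, \0)$ collapses the inner product to $\langle \x - \x_1^*, \x - \x_2^* \rangle = \Vert \x \Vert^2 - r^2$, so $\cos\psi(\x) = -\frac{\Vert \x \Vert^2 - r^2}{d_1(\x)\,d_2(\x)}$.

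The crucial observation is that $a + b \in [0, \pi]$ (since $a, b \in \big[ 0, \frac{\pi}{2} \big]$) and $\psi(\x) \in [0, \pi]$, and cosine is strictly decreasing, hence injective, on $[0, \pi]$; therefore $a + b = \psi(\x)$ holds \emph{if and only if} $\cos(a+b) = \cos\psi(\x)$. Expanding $\cos(a+b) = \cos a \cos b - \sin a \sin b$ and substituting the expressions above turns this equality into
\[
    \frac{\sigma_1\sigma_2}{L^2}\, d_1(\x)\, d_2(\x) - \sqrt{1 - \frac{\sigma_1^2}{L^2} d_1^2(\x)}\,\sqrt{1 - \frac{\sigma_2^2}{L^2} d_2^2(\x)} = -\frac{\Vert \x \Vert^2 - r^2}{d_1(\x)\,d_2(\x)}.
\]
Dividing through by $d_1(\x)\, d_2(\x) > 0$, moving the radical term across, and using the identity $\frac{1}{d_i(\x)}\sqrt{1 - \frac{\sigma_i^2}{L^2} d_i^2(\x)} = \sqrt{\frac{1}{d_i^2(\x)} - \frac{\sigma_i^2}{L^2}}$ (valid since $d_i(\x) > 0$) produces exactly the defining equation of $\T$.

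I expect the only genuinely delicate point to be this injectivity/reversibility step: the implication from the angle equation to its cosine is immediate, but the converse is what carries the argument, and it rests entirely on confining both $a + b$ and $\psi(\x)$ to $[0, \pi]$, where cosine is one-to-one. The remaining work — the inner-product evaluation of $\cos\psi(\x)$ and the cancellation of the $d_1(\x)\, d_2(\x)$ factors — is routine once the ranges are fixed and the domain restriction has eliminated division by zero and imaginary radicals. Since every manipulation after the injectivity step is an equivalence, chaining them yields $\T = \big\{ \x \in \R^n : \tilphi_1(\x) + \tilphi_2(\x) = \psi(\x) \big\}$.
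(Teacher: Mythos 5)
Your proof is correct and follows essentially the same route as the paper: both restrict to the common domain, use injectivity of cosine on $[0,\pi]$ to convert the angle equation into $\cos(\tilphi_1+\tilphi_2)=\cos\psi$, expand, and rearrange into the algebraic identity defining $\T$. The only cosmetic difference is that you evaluate $\cos\psi(\x)$ directly as the inner product $\langle \u_1(\x),-\u_2(\x)\rangle$ via \eqref{eqn: psi equality}, whereas the paper expands $-\cos(\alpha_2(\x)-\alpha_1(\x))$ using the component formulas \eqref{eqn: cos alpha}--\eqref{eqn: sin alpha}; these yield the same expression.
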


\begin{proof}
Based on the definition of $\alpha_i (\x)$ for $i \in \{ 1, 2 \}$ in \eqref{def: alpha_i}, for any point $\x \in \R^n \setminus \{ \x_1^*, \x_2^* \}$, we have
\begin{align}
    x_1 = d_1 (\x) \cos (\alpha_1 (\x) ) - r 
    =& \; d_2 (\x) \cos (\alpha_2 (\x) ) + r,  \nonumber \\
    \Leftrightarrow \quad 
    \cos (\alpha_1 (\x) ) = \frac{x_1 + r}{d_1 (\x)}
    \quad \text{and}& \quad
    \cos (\alpha_2 (\x) ) = \frac{x_1 - r}{d_2 (\x)}. 
    \label{eqn: cos alpha}
\end{align}
Similarly,
\begin{align}
    \Vert \tilde{\mathbf{x}} \Vert 
    = d_1(\x) \sin (\alpha_1 (\x) ) 
    =& \; d_2(\x) \sin ( \alpha_2 (\x) ), 
    \nonumber \\
    \Leftrightarrow \quad 
    \sin ( \alpha_1 (\x) ) = \frac{\Vert \tilde{\mathbf{x}} \Vert}{d_1 (\x)}
    \quad \text{and}& \quad
    \sin (\alpha_2 (\x) ) = \frac{\Vert \tilde{\mathbf{x}} \Vert}{d_2 (\x)}. 
    \label{eqn: sin alpha}
\end{align}
Since $\tilphi_i (\x) \in \big[0, \frac{\pi}{2} \big]$ for $i \in \{1, 2\}$, we get $\tilphi_1 (\x) + \tilphi_2 (\x) \in [0, \pi]$. 
Recall from \eqref{def: psi} that $\psi (\x) \in [0, \pi]$. Since the cosine function is one-to-one for this range of angles, equation $\tilphi_1 (\x) + \tilphi_2 (\x) = \psi (\x)$ is equivalent to
\begin{displaymath}
	\cos \big( \tilphi_1 ( \x) + \tilphi_2 (\x) \big) 
	= \cos \big(\pi - (\alpha_2 (\x) - \alpha_1 (\x)) \big) 
    = - \cos \big(\alpha_2 (\x) - \alpha_1 (\x) \big). 
\end{displaymath}
Expanding this equation and substituting \eqref{eqn: cos alpha}, \eqref{eqn: sin alpha}, and $\cos(\tilphi_i (\x)) = \frac{ \sigma_i}{L} d_i (\x)$ for $i \in \{ 1,2 \}$, we get
\begin{equation*}
	\frac{\sigma_1}{L} d_1 (\x) \cdot \frac{\sigma_2}{L} d_2 (\x) - \sqrt{1 - \Big(\frac{\sigma_1}{L} d_1 (\x) \Big)^2} \cdot \sqrt{1 - \Big(\frac{\sigma_2}{L} d_2 (\x) \Big)^2}  
    = - \frac{x_1 -r}{d_2 (\x) } \cdot\frac{x_1 +r}{d_1 (\x)}  - \frac{\Vert \tilde{\mathbf{x}} \Vert}{d_2 (\x)} \cdot \frac{\Vert \tilde{\mathbf{x}} \Vert}{d_1 (\x)}. 
\end{equation*}
Dividing the above equation by $d_1 (\x) \cdot d_2 (\x)$ and rearranging it yields the result.
\end{proof}

The subsequent lemmas (Lemma~\ref{lem: 1D analyze} - \ref{lem: boundary inclusion}) are useful ingredients for proving the characterization of the outer approximation $\Mup$ (defined in \eqref{def: set M up}) given in Theorem~\ref{thm: up BD}, and their proofs are provided in Appendix~\ref{sec: proof lemma outer}.

The following lemma provides a sufficient condition for the minimizers $\x_1^*$ and $\x_2^*$ to be on the boundary of the outer approximation $\Mup$ and the inner approximation $\Mdo$.

\begin{lemma}
Let $\Mup$ and $\Mdo$ be as defined in \eqref{def: set M up} and \eqref{def: set M down}, respectively. 
\begin{enumerate}[label=(\roman*)]
    \item If $r \in \big( 0, \; \frac{L}{2 \sigma_2} \big]$ then $\x_1^* \in \partial \Mup$ and $\x_1^* \in \partial \Mdo$.
    \label{lem: 1D case1}
    \item If $r \in \big( 0, \; \frac{L}{2 \sigma_1} \big]$ then $\x_2^* \in \partial \Mup$ and $\x_2^* \in \partial \Mdo$.
    \label{lem: 1D case2}
\end{enumerate}
\label{lem: 1D analyze}
\end{lemma}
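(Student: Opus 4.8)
The plan is to show each minimizer is a boundary point by exhibiting it as a limit point of the set that is not itself a member. By the Remark following \eqref{def: set M down}, since $\tilphi_1, \tilphi_2, \psi$ are undefined at $\x_1^*$ and $\x_2^*$, both $\Mup$ and $\Mdo$ are contained in $(\overline{\B}_1 \cap \overline{\B}_2) \setminus \{ \x_1^*, \x_2^* \}$; in particular $\x_1^* \notin \Mup$ and $\x_1^* \notin \Mdo$, so a fortiori $\x_1^* \notin (\Mup)^\circ$ and $\x_1^* \notin (\Mdo)^\circ$ because interiors are contained in their sets. Since $\partial \mathcal{E} = \overline{\mathcal{E}} \setminus \mathcal{E}^\circ$, it then suffices to produce points arbitrarily close to $\x_1^*$ that satisfy the strict inequality $\tilphi_1 + \tilphi_2 > \psi$, as such points belong to both $\Mup$ and $\Mdo$ and their existence places $\x_1^*$ in $\overline{\Mup} \cap \overline{\Mdo}$.

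First I would construct the approaching points along the open segment joining the two minimizers. For part~\ref{lem: 1D case1}, set $\x(t) = \x_1^* + t \boldsymbol{e}_1 = (-r + t, \mathbf{0})$ for small $t > 0$. Along this segment $\x(t) - \x_1^*$ and $\x(t) - \x_2^*$ point in the $+\boldsymbol{e}_1$ and $-\boldsymbol{e}_1$ directions respectively, so $\alpha_1(\x(t)) = 0$ and $\alpha_2(\x(t)) = \pi$, whence $\psi(\x(t)) = \pi - (\pi - 0) = 0$ by \eqref{def: psi}. On the other hand $\tilphi_1(\x(t)) = \arccos\!\big( \tfrac{\sigma_1}{L} t \big) > 0$ for $0 < t < L/\sigma_1$ and $\tilphi_2(\x(t)) \geq 0$, so $\tilphi_1(\x(t)) + \tilphi_2(\x(t)) > 0 = \psi(\x(t))$, i.e.\ the strict defining inequality holds. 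The crux is checking that $\x(t)$ actually lies in the common domain $\overline{\B}_1 \cap \overline{\B}_2$: here $\| \x(t) - \x_1^* \| = t \le L/\sigma_1$ for small $t$, while $\| \x(t) - \x_2^* \| = 2r - t$. This is exactly where the hypothesis enters — the bound $r \le \tfrac{L}{2\sigma_2}$ gives $2r - t < 2r \le L/\sigma_2$, so $\x(t) \in \overline{\B}_2$ and $\tilphi_2$ is well defined. Hence $\x(t)$ satisfies the strict inequality and lies in both $\Mup$ and $\Mdo$ for all sufficiently small $t > 0$.

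Letting $t \to 0^+$ gives $\x(t) \to \x_1^*$, so $\x_1^* \in \overline{\Mup} \cap \overline{\Mdo}$; combined with $\x_1^* \notin (\Mup)^\circ$ and $\x_1^* \notin (\Mdo)^\circ$ this yields $\x_1^* \in \partial \Mup$ and $\x_1^* \in \partial \Mdo$, proving part~\ref{lem: 1D case1}. Part~\ref{lem: 1D case2} follows by the mirror-symmetric argument: approach $\x_2^*$ along $\x(t) = \x_2^* - t \boldsymbol{e}_1 = (r - t, \mathbf{0})$, where again $\psi(\x(t)) = 0$ and $\tilphi_2(\x(t)) = \arccos\!\big( \tfrac{\sigma_2}{L} t \big) > 0$, and the hypothesis $r \le \tfrac{L}{2\sigma_1}$ guarantees $\| \x(t) - \x_1^* \| = 2r - t \le L/\sigma_1$ so that $\x(t) \in \overline{\B}_1$. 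I do not anticipate any genuine obstacle; the only point requiring care — and the reason the radius hypotheses are exactly the stated ones — is that the approaching sequence must remain inside the ball centered at the \emph{far} minimizer, which forces the limiting distance $2r$ to respect the radius $L/\sigma_2$ (resp.\ $L/\sigma_1$). The geometric fact that $\psi$ vanishes identically on the open segment between $\x_1^*$ and $\x_2^*$ is what makes the strict inequality trivial to verify there.
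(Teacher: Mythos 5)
Your proof is correct, and it uses the same basic witness construction as the paper: points $\x_1^* + t\boldsymbol{e}_1$ with $t>0$ on the segment toward $\x_2^*$, where $\alpha_1=0$, $\alpha_2=\pi$, hence $\psi=0$, while $\tilphi_1+\tilphi_2>0$, together with the observation that the hypothesis $r\le \frac{L}{2\sigma_2}$ is exactly what keeps these points inside $\overline{\B}_2$. Where you genuinely diverge is in how you certify that $\x_1^*$ is not an interior point. The paper does this by exhibiting nearby points \emph{outside} the sets, which forces it into a further case split: for $r<\frac{L}{2\sigma_2}$ it takes $\epsilon<0$ and shows $\tilphi_1+\tilphi_2<\psi=\pi$ there, and for $r=\frac{L}{2\sigma_2}$ it instead argues those points leave $\overline{\B}_1\cap\overline{\B}_2$ altogether. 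You bypass all of that by invoking the Remark after \eqref{def: set M down}: since $\Mup,\Mdo\subseteq(\overline{\B}_1\cap\overline{\B}_2)\setminus\{\x_1^*,\x_2^*\}$, the point $\x_1^*$ is not in either set, hence not in either interior, and membership in the closure alone then places it on the boundary via $\partial\mathcal{E}=\overline{\mathcal{E}}\setminus\mathcal{E}^\circ$. This is sound (note that under your hypothesis $2r\le \frac{L}{\sigma_2}<L(\frac{1}{\sigma_1}+\frac{1}{\sigma_2})$, so $\mathcal{X}=\emptyset$ and the exceptional point in $\Mdo$ causes no trouble), and it buys you a shorter argument with no case analysis on the sign of $\epsilon$ or on whether $r$ sits at the endpoint $\frac{L}{2\sigma_2}$. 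The paper's longer route has the side benefit of recording explicitly that points just outside $\x_1^*$ along the axis violate the defining inequality, a fact it does not strictly need here either; for the lemma as stated, your version is the more economical one.
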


In the next lemma, we provide a property of points in a particular set which will be used to characterize the sets $\Mup$ and $\Mdo$ defined in \eqref{def: set M up} and \eqref{def: set M down}, respectively. Roughly speaking, if $\x \in \Mup$ and $x_1 \in [-r, r]$, then each point that has the same first component and is closer to the $x_1$-axis is also in $\Mup$.
\begin{lemma}
Consider two points $\x = (x_1, \tilde{\mathbf{x}} )$ and $\y = ( y_1, \tilde{\mathbf{y}})$. Suppose $-r \leq x_1 = y_1 \leq r$ and $\Vert \tilde{\mathbf{x}} \Vert > \Vert \tilde{\mathbf{y}} \Vert$. If $\tilphi_1 (\x) + \tilphi_2 (\x) \geq \psi (\x)$ then either $\tilphi_1 (\y) + \tilphi_2 (\y) > \psi (\y)$ or $\y \in \{ \x_1^*, \x_2^* \}$.
\label{lem: point below}
\end{lemma}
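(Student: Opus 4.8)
The plan is to collapse the claim to a one-variable monotonicity argument along the transverse direction and then chain three elementary monotonicity facts. Since $x_1 = y_1 =: a$ is held fixed, I would set $s \triangleq \| \tilde{\mathbf{x}} \|$ and $t \triangleq \| \tilde{\mathbf{y}} \|$, so the hypothesis $\| \tilde{\mathbf{x}} \| > \| \tilde{\mathbf{y}} \|$ reads $0 \le t < s$. By the Remark following Proposition~\ref{prop: angle nc}, the quantities $\tilphi_i(\x)$, $\alpha_i(\x)$ and $\psi(\x)$ depend on $\x$ only through the distances $d_1(\x), d_2(\x)$ (together with the fixed value $\| \x_1^* - \x_2^* \| = 2r$), and by \eqref{def: distance} these depend only on $a$ and the transverse norm. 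Hence I may view $\tilphi_i$, $\alpha_i$ and $\psi$ as functions of the single scalar variable. I would first dispose of the domain bookkeeping: for the hypothesis $\tilphi_1(\x) + \tilphi_2(\x) \ge \psi(\x)$ to be meaningful, $\x$ must lie in $(\overline{\B}_1 \cap \overline{\B}_2) \setminus \{ \x_1^*, \x_2^* \}$ by the earlier Remark; and since $d_i(\y) < d_i(\x) \le L/\sigma_i$, the point $\y$ automatically lies in $\B_1 \cap \B_2$, so $\tilphi_1(\y)$ and $\tilphi_2(\y)$ are well defined.

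Next I would establish strict monotonicity of the $\tilphi_i$. From \eqref{def: distance}, $d_i(\x) = \sqrt{(a \pm r)^2 + s^2}$ is strictly increasing in the transverse norm on $[0, \infty)$, so $d_i(\y) < d_i(\x)$; since $\tilphi_i = \arccos\!\big( \tfrac{\sigma_i}{L} d_i \big)$ by \eqref{def: phi tilde} and $\arccos$ is strictly decreasing on $[0,1]$, this gives the strict inequalities $\tilphi_i(\y) > \tilphi_i(\x)$ for $i \in \{ 1, 2 \}$.

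The crux is the opposite (weak) monotonicity of $\psi$, and this is where the hypothesis $x_1 \in [-r, r]$ is essential. Using the relations $\cos \alpha_1(\x) = \tfrac{a + r}{d_1(\x)}$ and $\cos \alpha_2(\x) = \tfrac{a - r}{d_2(\x)}$ recorded in \eqref{eqn: cos alpha}, the conditions $a + r \ge 0$ and $a - r \le 0$ (exactly what $-r \le a \le r$ provides) combine with $d_i(\y) < d_i(\x)$ to yield $\cos \alpha_1(\y) \ge \cos \alpha_1(\x)$ and $\cos \alpha_2(\y) \le \cos \alpha_2(\x)$. Applying monotonicity of $\arccos$ again gives $\alpha_1(\y) \le \alpha_1(\x)$ and $\alpha_2(\y) \ge \alpha_2(\x)$, and substituting into $\psi = \pi - ( \alpha_2 - \alpha_1 )$ from \eqref{def: psi} produces $\psi(\y) \le \psi(\x)$. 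I expect this to be the main obstacle: it is precisely the interval $x_1 \in [-r, r]$ that fixes the signs of $a \pm r$; outside this interval one of the sign conditions flips and $\psi$ need no longer be monotone in the required direction, so the comparison would fail.

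Finally I would chain the three facts to obtain
\begin{equation*}
    \tilphi_1(\y) + \tilphi_2(\y) > \tilphi_1(\x) + \tilphi_2(\x) \ge \psi(\x) \ge \psi(\y),
\end{equation*}
which is the desired strict inequality $\tilphi_1(\y) + \tilphi_2(\y) > \psi(\y)$. The only caveat is that $\alpha_i$ and $\psi$ are undefined at $\y$ when $\y \in \{ \x_1^*, \x_2^* \}$ (equivalently $t = 0$ with $a = -r$ or $a = r$); but this is exactly the excluded alternative stated in the conclusion, so handling it amounts merely to recording it as the second case.
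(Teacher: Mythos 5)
Your proposal is correct and follows essentially the same route as the paper's proof: fix the first coordinate, show $\tilphi_i(\y) > \tilphi_i(\x)$ via the strict decrease of the distances $d_i$, show $\psi(\y) \le \psi(\x)$ via the monotonicity of $\alpha_1$ and $\alpha_2$, and chain the inequalities, with $\y \in \{\x_1^*, \x_2^*\}$ set aside as the excluded alternative. The only difference is that you spell out the sign argument $\cos\alpha_1 = (a+r)/d_1$, $\cos\alpha_2 = (a-r)/d_2$ behind the $\alpha$-monotonicity, which the paper asserts without detail; this is a welcome elaboration rather than a departure.
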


Recall the definition of $\B$ from \eqref{def: set B}. Since $\x_1^* = (-r, \mathbf{0})$ and $\x_2^* = (r, \mathbf{0})$ by our assumption, we can explicitly write $\partial \B_i$ for $i \in \{1,2 \}$ as follows:
\begin{equation}
\begin{aligned}
    \partial \B_1
    = \partial \B \Big(\x_1^*, \frac{L}{\sigma_1} \Big) &= \bigg\{ \x \in \R^n: (x_1+r)^2 + \Vert \tilde{\mathbf{x}} \Vert^2 
    = \frac{L^2}{\sigma_1^2} \bigg\}, \\
    \partial \B_2
    = \partial \B \Big(\x_2^*, \frac{L}{\sigma_2} \Big) &= \bigg\{ \x \in \R^n: (x_1-r)^2 + \Vert \tilde{\mathbf{x}} \Vert^2 
    = \frac{L^2}{\sigma_2^2} \bigg\}. 
    \label{def: set B boundary}
\end{aligned}
\end{equation}

For convenience, we define 
\begin{equation}
    \gamma_i \triangleq \frac{L^2}{\sigma_i^2} 
    \;\; \text{for} \;\; i \in \{1, 2 \} 
    \quad \text{and} \quad 
    \beta \triangleq \frac{\sigma_2}{\sigma_1}.
    \label{def: gamma beta}
\end{equation}
By using the definitions above, we define 
\begin{equation}
    \lambda_1 \triangleq \Big(\frac{1 + \beta}{1 + 2 \beta} \Big) \frac{\gamma_1}{2 r} - \frac{r}{1 + 2 \beta} 
    \quad \text{and} \quad
    \lambda_2 \triangleq -\Big(\frac{1 + \beta}{2 +  \beta} \Big) \frac{\gamma_2}{2 r} + \frac{\beta r}{2 + \beta}. 
\label{var: lambda}
\end{equation}

In the following lemma, we will show that if $\x \in \partial \B_1 \cup \partial \B_2$, the value of the first component $x_1$ is necessary and sufficient to determine whether $\x$ is in $\Mup$ and $\Mdo$, which are defined in \eqref{def: set M up} and \eqref{def: set M down}, respectively. In other words, the angle condition $\tilphi_1 (\x) + \tilphi_2 (\x) \lessgtr \psi(\x)$ can be simplified if we consider a point in $\partial \B_1$ or $\partial \B_2$.
\begin{lemma}
Let $\lambda_1$ and $\lambda_2$ be as defined in \eqref{var: lambda}. Consider $\x = (x_1, \tilde{\mathbf{x}}) \in (\overline{\B}_1 \cap \overline{\B}_2) \setminus \{ \x_1^*, \x_2^* \}$.
\begin{enumerate}[label=(\roman*)]
    \item If $\x \in \partial \B_1$ then $\tilphi_1 (\x) + \tilphi_2 (\x) \lessgtr \psi(\x)$ if and only if $x_1 \lessgtr \lambda_1$.
    \label{lem: lambda_1}
    \item If $\x \in \partial \B_2$ then $\tilphi_1 (\x) + \tilphi_2 (\x) \lessgtr \psi(\x)$ if and only if $x_1 \gtrless \lambda_2$.
    \label{lem: lambda_2}
\end{enumerate}
\label{lem: point on R_n}
\end{lemma}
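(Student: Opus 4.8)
The plan is to reduce each angle inequality $\tilphi_1(\x) + \tilphi_2(\x) \lessgtr \psi(\x)$ to a single linear inequality in the first coordinate $x_1$, using the fact that restricting $\x$ to $\partial\B_i$ collapses one of the two angles to zero. Both parts follow from the same mechanism, so I would write out part \ref{lem: lambda_1} in full and obtain part \ref{lem: lambda_2} by the symmetric argument with the indices exchanged. The common starting point is the cosine identity already extracted in the proof of Proposition~\ref{prop: T_n}: since $\tilphi_i(\x)\in[0,\frac\pi2]$ and $\psi(\x)\in[0,\pi]$ both lie where $\cos$ is strictly decreasing, any comparison of these angles is equivalent to the reversed comparison of their cosines, with $\cos\tilphi_i(\x)=\frac{\sigma_i}{L}d_i(\x)$ from \eqref{def: phi tilde} and $\cos\psi(\x)=-\frac{\|\x\|^2-r^2}{d_1(\x)d_2(\x)}$ obtained from \eqref{def: psi} together with \eqref{eqn: cos alpha}--\eqref{eqn: sin alpha}.

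For part \ref{lem: lambda_1}, membership $\x\in\partial\B_1$ forces $d_1(\x)=L/\sigma_1$, hence $\tilphi_1(\x)=\arccos(1)=0$, so the inequality becomes $\tilphi_2(\x)\lessgtr\psi(\x)$, equivalently $\frac{\sigma_2}{L}d_2(\x)\gtrless-\frac{\|\x\|^2-r^2}{d_1(\x)d_2(\x)}$. Clearing the strictly positive factor $d_1(\x)d_2(\x)$ and inserting $d_1(\x)=L/\sigma_1$ yields $\beta\,d_2^2(\x)+\|\x\|^2-r^2\gtrless 0$, using $\beta=\sigma_2/\sigma_1$ from \eqref{def: gamma beta}. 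I would then use $\x\in\partial\B_1$ a second time, in the form $\|\tilde{\mathbf{x}}\|^2=\gamma_1-(x_1+r)^2$ from \eqref{def: set B boundary}, to eliminate $\|\tilde{\mathbf{x}}\|$; a short computation gives $\|\x\|^2-r^2=\gamma_1-2rx_1-2r^2$ and $d_2^2(\x)=\gamma_1-4rx_1$. Substituting produces the linear inequality $\gamma_1(1+\beta)-2r^2-2r(1+2\beta)x_1\gtrless 0$, and since the coefficient of $x_1$ is the negative number $-2r(1+2\beta)$, dividing reverses the sign once more and delivers $x_1\lessgtr\frac{\gamma_1(1+\beta)-2r^2}{2r(1+2\beta)}$, which is exactly $x_1\lessgtr\lambda_1$ by \eqref{var: lambda}.

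For part \ref{lem: lambda_2} the identical computation runs with the two functions interchanged: $\x\in\partial\B_2$ gives $d_2(\x)=L/\sigma_2$ and $\tilphi_2(\x)=0$, so the inequality reduces to $\frac{\sigma_1}{L}d_1(\x)\gtrless\cos\psi(\x)$, then after clearing denominators to $\frac1\beta d_1^2(\x)+\|\x\|^2-r^2\gtrless 0$. Using $\|\tilde{\mathbf{x}}\|^2=\gamma_2-(x_1-r)^2$, hence $\|\x\|^2-r^2=\gamma_2+2rx_1-2r^2$ and $d_1^2(\x)=\gamma_2+4rx_1$, and multiplying by $\beta>0$, I obtain $\gamma_2(1+\beta)-2\beta r^2+2r(2+\beta)x_1\gtrless 0$. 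Here the coefficient $2r(2+\beta)$ of $x_1$ is positive, so division does not reverse the inequality, giving $x_1\gtrless\frac{2\beta r^2-\gamma_2(1+\beta)}{2r(2+\beta)}=\lambda_2$; this sign difference relative to part \ref{lem: lambda_1} is precisely why the statement reads $x_1\gtrless\lambda_2$.

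The only genuinely delicate aspect is the bookkeeping of inequality directions. Passing from angles to cosines flips $\lessgtr$ to $\gtrless$ once by monotonicity; in part \ref{lem: lambda_1} the negative sign of the $x_1$-coefficient flips it back, recovering $\lessgtr$, whereas in part \ref{lem: lambda_2} the coefficient is positive and no second flip occurs, producing $\gtrless$. I would also record the elementary facts that keep every step legitimate on $(\overline{\B}_1\cap\overline{\B}_2)\setminus\{\x_1^*,\x_2^*\}$: the distances $d_1(\x),d_2(\x)$ are strictly positive there so the divisions are valid, $r>0$ and $\beta>0$ so the multipliers have definite sign, and the surviving angle $\tilphi_i(\x)$ lies in $[0,\frac\pi2]$ so the cosine substitution is applied exactly where $\cos$ is strictly monotone. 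With these observations in place, the two chains of equivalences close the proof.
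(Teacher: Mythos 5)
Your proposal is correct and follows essentially the same route as the paper's proof: on $\partial\B_i$ one of the $\tilphi$'s vanishes, taking cosines reverses the inequality, and substituting $d_i$ and $\Vert\tilde{\mathbf{x}}\Vert^2$ from the sphere equation reduces everything to a linear inequality in $x_1$ whose threshold is $\lambda_i$. The algebra and the sign bookkeeping (including why part (ii) ends with $\gtrless$ rather than $\lessgtr$) all check out against \eqref{var: lambda}.
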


In the following lemma, we will show that the points in the set of intersection between $\T$ and $\partial \B_1$ (resp. $\T$ and $\partial \B_2$) have the same first component, if the intersection is non-empty. Moreover, the first component of these points is $\lambda_1$ (resp. $\lambda_2$) where $\lambda_i$ for $i \in \{ 1, 2 \}$ are defined in \eqref{var: lambda}. By using the definition of $\gamma_1$, $\gamma_2$ and $\beta$ in \eqref{def: gamma beta}, define
\begin{equation}
\begin{aligned}
    \nu_1 &\triangleq \frac{r}{2(1+2 \beta)} \sqrt{- \Big(\frac{\gamma_1}{r^2} - 4 \Big) \Big((1+\beta)^2 \frac{\gamma_1}{r^2}  - 4 \beta^2 \Big)}
    \quad \text{and}\\
    \nu_2 &\triangleq \frac{r}{2(2+ \beta)} \sqrt{- \Big(\frac{\gamma_2}{r^2} - 4 \Big) \Big((1+\beta)^2 \frac{\gamma_2}{r^2}  - 4  \Big)}.  
    \label{var: nu}
\end{aligned}
\end{equation}

\begin{lemma}
Consider the sets of points $\T$ and $\partial \B_i$ for $i \in \{1, 2\}$ defined in \eqref{def: set T} and \eqref{def: set B boundary}, respectively. 
Let $\lambda_i$ and $\nu_i$ for $i \in \{ 1, 2 \}$ be as defined in \eqref{var: lambda} and \eqref{var: nu}, respectively.
\begin{enumerate}[label=(\roman*)]
    \item For $i \in \{ 1, 2 \}$, if $r \in \big( 0, \frac{L}{2 \sigma_i} \big]$, then $\T \cap \partial \B_i = \emptyset$.
    \label{lem: intersection 1}
    \item For $i \in \{ 1, 2 \}$, if $r \in \big( \frac{L}{2 \sigma_i}, \frac{L}{2} (\frac{1}{\sigma_1} + \frac{1}{\sigma_2}) \big]$, then 
    $\T \cap \partial \B_i
    = \big\{ \x \in \R^n: x_1 = \lambda_i, \;\; \Vert \tilde{\mathbf{x}} \Vert = \nu_i \big\}$.
    \label{lem: intersection 2}
\end{enumerate}
\label{lem: intersection}
\end{lemma}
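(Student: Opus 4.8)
The plan is to exploit a collapse in the defining equation of $\T$ that occurs precisely on the boundary spheres. The crucial observation is that for $\x \in \partial \B_1$ we have $d_1(\x) = L/\sigma_1$, so the factor $\sqrt{1/d_1^2(\x) - \sigma_1^2/L^2}$ on the right-hand side of \eqref{def: set T} vanishes (equivalently, $\tilphi_1(\x) = 0$ there), and symmetrically on $\partial \B_2$ the factor carrying $d_2$ vanishes. Hence, by Proposition~\ref{prop: T_n} together with the equality case of Lemma~\ref{lem: point on R_n}\ref{lem: lambda_1} -- which follows from its two strict equivalences by trichotomy -- a point of $(\overline{\B}_1 \cap \overline{\B}_2)\setminus\{\x_1^*,\x_2^*\}$ lying on $\partial \B_1$ belongs to $\T$ if and only if $x_1 = \lambda_1$ (and likewise $x_1 = \lambda_2$ on $\partial \B_2$). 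Since $\T \subseteq (\overline{\B}_1\cap\overline{\B}_2)\setminus\{\x_1^*,\x_2^*\}$, this already pins down the first coordinate of every point of $\T \cap \partial \B_i$.

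I would then recover the remaining coordinates. On $\partial \B_1$, substituting $\Vert\tilde{\mathbf{x}}\Vert^2 = \gamma_1 - (x_1 + r)^2$ gives $\Vert\x\Vert^2 - r^2 = \gamma_1 - 2rx_1 - 2r^2$ and $d_2^2(\x) = \gamma_1 - 4rx_1$, so (using $\sigma_1\sigma_2/L^2 = \beta/\gamma_1$ and clearing the positive denominator $d_2^2$) the collapsed equation is linear in $x_1$ with unique root $x_1 = \lambda_1$, confirming the reduction. Back-substituting into the sphere equation yields $\Vert\tilde{\mathbf{x}}\Vert^2 = \gamma_1 - (\lambda_1 + r)^2$, and a direct expansion (writing $u = \gamma_1/r^2$) shows this equals $\frac{r^2}{4(1+2\beta)^2}\big[-(u-4)((1+\beta)^2 u - 4\beta^2)\big] = \nu_1^2$. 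Matching $\gamma_1 - (\lambda_1+r)^2$ to the product form defining $\nu_1$ in \eqref{var: nu}, and the analogous identity $\gamma_2 - (\lambda_2 - r)^2 = \nu_2^2$ for $i=2$, is the most computation-heavy step, and I expect it to be the main obstacle.

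It then remains to decide, for each range of $r$, whether $\{x_1 = \lambda_1,\ \Vert\tilde{\mathbf{x}}\Vert = \nu_1\}$ is a genuine real set. The expression $\nu_1^2$ is a downward parabola in $u$ with roots $u = 4$ and $u = 4\beta^2/(1+\beta)^2 < 4$, so $\nu_1^2 \geq 0$ exactly when $4\beta^2/(1+\beta)^2 \leq u \leq 4$. Translating via $u \geq 4 \Leftrightarrow r \leq L/(2\sigma_1)$ and $u \geq 4\beta^2/(1+\beta)^2 \Leftrightarrow r \leq \frac{L}{2}(\frac{1}{\sigma_1}+\frac{1}{\sigma_2})$ gives the two regimes. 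For $r \in (0, L/(2\sigma_1)]$ one has $u \geq 4$: if $u > 4$ then $\nu_1^2 < 0$ and no real point exists, while at $u = 4$ one computes $\lambda_1 = r$, so the only candidate is $(r,\mathbf{0}) = \x_2^*$, which is excluded from $\T$; hence $\T \cap \partial \B_1 = \emptyset$, proving \ref{lem: intersection 1}. For $r \in (L/(2\sigma_1),\ \frac{L}{2}(\frac{1}{\sigma_1}+\frac{1}{\sigma_2})]$ one has $4\beta^2/(1+\beta)^2 \leq u < 4$, so $\nu_1^2 \geq 0$ and the set is nonempty.

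Finally I would confirm admissibility, i.e. that every candidate point lies in $\overline{\B}_2\setminus\{\x_1^*,\x_2^*\}$ so the equivalence of Lemma~\ref{lem: point on R_n} legitimately applies. For a point on $\partial \B_1$ with $x_1 = \lambda_1$ one computes $d_2^2 = (4r^2 - \gamma_1)/(1+2\beta)$, and the requirement $d_2^2 \leq \gamma_2$ simplifies to exactly $u \geq 4\beta^2/(1+\beta)^2$ -- the same lower bound guaranteeing $\nu_1^2 \geq 0$ -- so admissibility is automatic in this regime. When $\nu_1 > 0$ the points satisfy $\tilde{\mathbf{x}} \neq \mathbf{0}$ and so differ from $\x_1^*,\x_2^*$; at the upper endpoint $\nu_1 = 0$ the single point is $(\lambda_1,\mathbf{0})$ with $\lambda_1 = r(\beta-1)/(1+\beta) \in (-r,r)$, which coincides with $\mathcal{X}$ and is distinct from both minimizers. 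This establishes $\T \cap \partial \B_1 = \{\x : x_1 = \lambda_1,\ \Vert\tilde{\mathbf{x}}\Vert = \nu_1\}$, proving \ref{lem: intersection 2} for $i=1$. The case $i = 2$ is identical after replacing \ref{lem: lambda_1} by Lemma~\ref{lem: point on R_n}\ref{lem: lambda_2}, substituting $\Vert\tilde{\mathbf{x}}\Vert^2 = \gamma_2 - (x_1 - r)^2$ on $\partial \B_2$, and working with $v = \gamma_2/r^2$ whose relevant parabola has roots $v = 4$ and $v = 4/(1+\beta)^2$.
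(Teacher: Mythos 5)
Your proposal is correct and follows essentially the same route as the paper's proof: substitute the sphere constraint into the defining equation of $\T$ so that the right-hand side collapses to zero, solve the resulting linear equation to get $x_1 = \lambda_i$, back-substitute to obtain $\Vert\tilde{\mathbf{x}}\Vert = \nu_i$, read off the admissible range of $r$ from non-negativity of $\nu_i^2$, and exclude the degenerate endpoint where the sole candidate coincides with $\x_2^*$ (resp. $\x_1^*$). Your explicit check that the candidates lie in $\overline{\B}_2$ (via $d_2^2 \le \gamma_2 \Leftrightarrow u \ge 4\beta^2/(1+\beta)^2$) is a small refinement the paper leaves implicit, but it does not change the argument.
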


Recall that $\lambda_1$ and $\lambda_2$ are defined in \eqref{var: lambda}. In the following lemma, for $i \in \{ 1, 2 \}$, we consider a relationship between $\frac{L}{\sigma_i r}$ and $\frac{\lambda_i}{r}$. In particular, for $r \in \big( 0, \frac{L}{2} (\frac{1}{\sigma_1} + \frac{1}{\sigma_2}) \big]$, recall from Lemma~\ref{lem: intersection} that if $\T \cap \partial \B_1 \neq \emptyset$ (resp. $\T \cap \partial \B_2 \neq \emptyset$), then every point in the intersection has the first component equal to $\lambda_1$ (resp. $\lambda_2$).
The next lemma compares $\lambda_1$ to the maximum value of the first component over all points of $\partial \B_1$ (which is $-r + \frac{L}{\sigma_1}$), and compares $\lambda_2$ to the minimum value of the first component over all points of $\partial \B_2$ (which is $r - \frac{L}{\sigma_1}$), respectively. 
\begin{lemma}
Let $\lambda_i$ for $i \in \{1, 2 \}$ be as defined in \eqref{var: lambda}.
\begin{enumerate}[label=(\roman*)]
    \item If $r \in \big( 0, \; \frac{L}{2 \sigma_1} \big]$ then $\lambda_1 \geq \frac{L}{\sigma_1} -r$, with equality only if $r = \frac{L}{2 \sigma_1}$.
    \label{lem: compare x1 part1}
    \item $r \in \big( \frac{L}{2 \sigma_1}, \; \frac{L}{2} \big(\frac{1}{\sigma_1} + \frac{1}{\sigma_2} \big) \big)$
    if and only if $\lambda_1 < \frac{L}{\sigma_1} -r$. 
    \label{lem: compare x1 part2}
    \item If $r \in \big( 0, \; \frac{L}{2 \sigma_2} \big]$ then $\lambda_2 \leq r- \frac{L}{\sigma_2}$, with equality only if $r = \frac{L}{2 \sigma_2}$.
    \label{lem: compare x1 part3}
    \item $r \in \big( \frac{L}{2 \sigma_2}, \; \frac{L}{2} \big(\frac{1}{\sigma_1} + \frac{1}{\sigma_2} \big) \big)$ if and only if $\lambda_2 > r - \frac{L}{\sigma_2} $.
    \label{lem: compare x1 part4}
\end{enumerate}
\label{lem: compare x1}
\end{lemma}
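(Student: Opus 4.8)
The plan is to treat all four parts as sign determinations of a single quadratic expression, exploiting the fact that the relevant quadratic factors with a perfect-square discriminant. I focus on parts \ref{lem: compare x1 part1} and \ref{lem: compare x1 part2}, which both concern the quantity $g(r) \triangleq \lambda_1 - \big( \tfrac{L}{\sigma_1} - r \big)$; parts \ref{lem: compare x1 part3} and \ref{lem: compare x1 part4} are handled by the same computation after an obvious relabeling.

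First I would write $s \triangleq \frac{L}{\sigma_1} = \sqrt{\gamma_1}$ and substitute the definition of $\lambda_1$ from \eqref{var: lambda}. Clearing the strictly positive factor $2r(1+2\beta)$ turns the inequality $g(r) \lessgtr 0$ into a sign question for the quadratic $Q(s) = (1+\beta)s^2 - 2r(1+2\beta)s + 4\beta r^2$, viewed as a quadratic in $s$ with $r$ as a parameter. The key computation is its discriminant, which collapses to $4r^2\big[(1+2\beta)^2 - 4\beta(1+\beta)\big] = 4r^2$, a perfect square. Hence $Q$ factors cleanly as $Q(s) = (1+\beta)(s - 2r)\big(s - \tfrac{2\beta r}{1+\beta}\big)$, so the sign of $g(r)$ equals the sign of $(s-2r)\big(s - \tfrac{2\beta r}{1+\beta}\big)$.

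The decisive observation is that the two roots correspond exactly to the two interval endpoints appearing in the statement: $s = 2r$ is the same as $r = \frac{L}{2\sigma_1}$, while $s = \frac{2\beta r}{1+\beta}$ rearranges, using $\beta = \sigma_2/\sigma_1$, to $r = \frac{s(1+\beta)}{2\beta} = \frac{L}{2}\big(\frac{1}{\sigma_1} + \frac{1}{\sigma_2}\big)$. Since $\frac{2\beta r}{1+\beta} < 2r$ for all $r > 0$, the factor $s - 2r$ is always the smaller one. Reading off signs then gives everything: on $\big(0, \frac{L}{2\sigma_1}\big]$ both factors are nonnegative with the first vanishing only at $r = \frac{L}{2\sigma_1}$, yielding part \ref{lem: compare x1 part1}; on $\big(\frac{L}{2\sigma_1}, \frac{L}{2}(\frac{1}{\sigma_1}+\frac{1}{\sigma_2})\big)$ the first factor is negative and the second positive, so $g(r) < 0$, and conversely a negative product forces the smaller factor negative and the larger positive, pinning $r$ to precisely that open interval, which is the iff of part \ref{lem: compare x1 part2}.

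For parts \ref{lem: compare x1 part3} and \ref{lem: compare x1 part4}, I would set $t \triangleq \frac{L}{\sigma_2} = \sqrt{\gamma_2}$, consider $h(r) \triangleq \lambda_2 - \big(r - \frac{L}{\sigma_2}\big)$, and clear $2r(2+\beta) > 0$ to obtain $2r(2+\beta)h(r) = -P(t)$ with $P(t) = (1+\beta)t^2 - 2r(2+\beta)t + 4r^2$. Its discriminant is $4r^2\beta^2$, again a perfect square, so $P(t) = (1+\beta)(t-2r)\big(t - \frac{2r}{1+\beta}\big)$ with roots matching $r = \frac{L}{2\sigma_2}$ and $r = \frac{L}{2}(\frac{1}{\sigma_1}+\frac{1}{\sigma_2})$, the latter via $\frac{L}{\sigma_1} = \beta\frac{L}{\sigma_2}$. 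The extra minus sign flips the conclusion to $h(r) \leq 0$ on the small interval and $h(r) > 0$ on the large one, giving parts \ref{lem: compare x1 part3} and \ref{lem: compare x1 part4}. There is no serious obstacle here: the only thing to get right is the algebra of the discriminant, since the perfect-square collapse is exactly what makes the factorization, and hence the clean endpoint matching, work; so I would double-check those two discriminant computations carefully, as everything else is routine sign reading.
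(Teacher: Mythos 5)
Your proposal is correct and follows essentially the same route as the paper: the paper substitutes the normalized variable $\chi_1 = \frac{L}{\sigma_1 r}$ and arrives at the factorization $(\chi_1 - 2)\big(\tfrac{1}{2}(1+\beta)\chi_1 - \beta\big) \lessgtr 0$, which is exactly your $Q(s) = (1+\beta)(s-2r)\big(s - \tfrac{2\beta r}{1+\beta}\big)$ after dividing by $r^2$, with the same root-to-endpoint matching and the same derivation of part (i) from the sign analysis plus the equality case. Your explicit perfect-square discriminant check is a minor presentational addition, not a different argument, and all of your algebra (both discriminants, both factorizations, and the sign flip for $\lambda_2$) checks out.
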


In the next lemma, we will consider a relationship between $\T$ and $\partial \Mup$ (the boundary of the outer approximation), and $\T$ and $\partial \Mdo$ (the boundary of the inner approximation). In particular, we will show that $\T \subseteq \partial \Mup$ and $\T \subseteq \partial \Mdo$ for a particular range of $r$.
\begin{lemma}
Let $\Mup$, $\Mdo$ and $\T$ be defined as in \eqref{def: set M up}, \eqref{def: set M down} and \eqref{def: set T}, respectively. If $r \in \Big(0, \; \frac{L}{2} \big( \frac{1}{\sigma_1} + \frac{1}{\sigma_2} \big) \Big)$, then $\T \subseteq \partial \Mup$ and $\T \subseteq \partial \Mdo$.
\label{lem: boundary}
\end{lemma}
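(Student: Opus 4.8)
The plan is to reduce everything to sign changes of a single continuous function. Write $g(\x) \triangleq \tilphi_1(\x) + \tilphi_2(\x) - \psi(\x)$, which is continuous on $D \triangleq (\overline{\B}_1 \cap \overline{\B}_2) \setminus \{\x_1^*, \x_2^*\}$. Because $r < \frac{L}{2}\big(\frac{1}{\sigma_1}+\frac{1}{\sigma_2}\big)$ gives $\Vert \x_2^* - \x_1^* \Vert = 2r < L\big(\frac{1}{\sigma_1}+\frac{1}{\sigma_2}\big)$, the set $\mathcal{X}$ is empty by \eqref{def: set X}; hence $\Mup = \{\x \in D : g(\x) \geq 0\}$ and $\Mdo = \{\x \in D : g(\x) > 0\}$, while $\T = \{\x \in D : g(\x) = 0\}$ by Proposition~\ref{prop: T_n}. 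Fix $\x \in \T$. From $g(\x) = 0$ we get $\x \in \Mup \subseteq \overline{\Mup}$ and $\x \notin \Mdo$, so $\x \notin (\Mdo)^{\circ}$. Thus it suffices to produce, in every neighbourhood of $\x$, both (a) a point $\y$ with $g(\y) > 0$, which places $\x$ in $\overline{\Mdo}$, and (b) a point $\z \notin \Mup$, which keeps $\x$ out of $(\Mup)^{\circ}$; together these give $\x \in \partial \Mdo$ and $\x \in \partial \Mup$.

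I would first treat the case where $\x$ lies in the open lens $\B_1 \cap \B_2$ with $x_1 \in [-r, r]$. Such an $\x$ is off the $x_1$-axis: on the axis $\psi = 0$ when $x_1 \in (-r,r)$ and $\psi = \pi$ when $|x_1| > r$, so $g > 0$ in the former case and $g < 0$ in the latter, and $\T$ cannot meet the axis inside $D$; hence $\Vert \tilde{\mathbf{x}} \Vert > 0$. Holding $x_1$ fixed and perturbing $\tilde{\mathbf{x}}$ keeps us inside $\B_1 \cap \B_2$. Choosing $\y$ with the same first coordinate and $0 < \Vert \tilde{\mathbf{y}} \Vert < \Vert \tilde{\mathbf{x}} \Vert$, Lemma~\ref{lem: point below} (applied with $g(\x) \geq 0$) yields $g(\y) > 0$, giving (a); choosing $\z$ with the same first coordinate and $\Vert \tilde{\mathbf{z}} \Vert > \Vert \tilde{\mathbf{x}} \Vert$, the contrapositive of Lemma~\ref{lem: point below} (applied with $g(\x) \leq 0$ and $\x$ not a minimizer) yields $g(\z) < 0$, giving (b).

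Next I would treat $\x \in \partial \B_1 \cup \partial \B_2$; by symmetry take $\x \in \partial \B_1$. Since $\T \cap \partial \B_1 \neq \emptyset$, Lemma~\ref{lem: intersection}\ref{lem: intersection 2} forces $r > \frac{L}{2\sigma_1}$ and $x_1 = \lambda_1$, and Lemma~\ref{lem: compare x1}\ref{lem: compare x1 part2} then gives $\lambda_1 < \frac{L}{\sigma_1} - r$, i.e. $x_1 = \lambda_1$ is strictly interior to the range of first coordinates attained on $\partial \B_1$. Moreover $\x \notin \partial \B_2$ (the endpoint forcing $\partial \B_1 \cap \partial \B_2 \neq \emptyset$ is excluded), so $\x$ lies in the open ball $\B_2$ and small motions of $\x$ along $\partial \B_1$ stay in $D$. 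Perturbing $\x$ along $\partial \B_1$ to first coordinates slightly above and slightly below $\lambda_1$ and invoking Lemma~\ref{lem: point on R_n}\ref{lem: lambda_1} (on $\partial \B_1$ one has $g \gtrless 0 \iff x_1 \gtrless \lambda_1$) produces nearby points with $g > 0$ and with $g < 0$, establishing (a) and (b). The case $\x \in \partial \B_2$ is identical, using Lemma~\ref{lem: compare x1}\ref{lem: compare x1 part4} and Lemma~\ref{lem: point on R_n}\ref{lem: lambda_2}. For (b) one may alternatively move $\x$ radially outward from the relevant centre, leaving $D$ and hence $\Mup$.

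The only remaining possibility is that $\x$ lies in the open lens $\B_1 \cap \B_2$ with $|x_1| > r$, and this is where I expect the main obstacle: here Lemma~\ref{lem: point below} does not apply, so one must rule the case out by showing $g < 0$ on the entire part of the open lens with $|x_1| > r$, whence $\T$ contains no such point. On the axis this is immediate, since for $|x_1| > r$ one has $\alpha_1(\x) = \alpha_2(\x)$, hence $\psi(\x) = \pi$ and $g(\x) = \tilphi_1(\x) + \tilphi_2(\x) - \pi < 0$. The difficulty is to propagate this strict negativity off the axis. I expect to resolve it either by extending the monotonicity behind Lemma~\ref{lem: point below} to show that, for fixed $|x_1| > r$, the map $\Vert \tilde{\mathbf{x}} \Vert \mapsto g$ is nonincreasing (so its supremum is the nonpositive on-axis value), or by a direct sign analysis of the explicit function $F$ defining $\T$ in \eqref{def: set T}: clearing the positive factor $d_1(\x)\, d_2(\x)$ shows that the difference of the two sides of \eqref{def: set T} equals $\big(\cos(\tilphi_1 + \tilphi_2) - \cos \psi\big)/\big(d_1 d_2\big)$, whose sign is exactly opposite to that of $g$, reducing the claim to a sign estimate on an explicit function. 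With this case excluded, the three cases are exhaustive and every $\x \in \T$ satisfies (a) and (b).
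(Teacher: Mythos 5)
Your reduction to producing, in every neighbourhood of $\x \in \T$, a point with $g>0$ and a point outside $\Mup$ is the right skeleton, and your first two cases are essentially sound: for $x_1 \in [-r,r]$ the vertical perturbation plus Lemma~\ref{lem: point below} (and its contrapositive) works, and for $\x \in \T \cap \partial\B_i$ the combination of Lemma~\ref{lem: intersection}, Lemma~\ref{lem: compare x1} and Lemma~\ref{lem: point on R_n} works, modulo a sloppy justification that $\x \notin \partial\B_2$ (the two spheres \emph{do} intersect for a range of $r$ strictly below $\frac{L}{2}(\frac{1}{\sigma_1}+\frac{1}{\sigma_2})$; what saves you is that $\T \cap \partial\B_1 \cap \partial\B_2$ would force $\psi(\x)=0$, hence $\x$ on the open segment between the minimizers, hence $2r = \frac{L}{\sigma_1}+\frac{L}{\sigma_2}$, which is excluded).

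The genuine gap is your third case, and it cannot be repaired along the lines you propose. You hope to show that $g<0$ on the whole part of the open lens with $|x_1|>r$, so that $\T$ avoids that region; but this is false. The potential solution region $\M$ (and hence its boundary piece $\T$) extends beyond both minimizers in the first coordinate — see Fig.~\ref{fig: boundary M Case1}, where for $r=2$ the curve $\T$ reaches well past $x_1 = \pm 2$ — so $\T \cap \{|x_1|>r\} \cap (\B_1 \cap \B_2)$ is nonempty in general. Concretely, for fixed $x_1$ with $|x_1|>r$ one has $g<0$ on the axis (since $\psi=\pi$ there), but $g$ becomes positive as $\Vert\tilde{\mathbf{x}}\Vert$ grows before turning negative again, so the map $\Vert\tilde{\mathbf{x}}\Vert \mapsto g$ is not monotone and your first proposed fix fails; your second proposed fix aims at the same false inequality. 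For these points a fixed-$x_1$ perturbation gives you nothing, and you have no argument. The paper avoids the case split entirely: for any $\x\in\T$ it perturbs along the direction $\cos(\theta(\x))\,\boldsymbol{e}_1 + \sin(\theta(\x))\,\boldsymbol{e}_2'(\x)$ with $\theta(\x)=\frac{1}{2}(\alpha_1(\x)+\alpha_2(\x))$, and shows that moving inward ($\epsilon<0$) strictly decreases both $d_1,d_2$ and strictly decreases $\psi$ while increasing $\tilphi_1+\tilphi_2$ (so $g>0$), and moving outward does the opposite or exits the domain. That single direction works uniformly, including for $|x_1|>r$, which is exactly what your vertical-perturbation strategy cannot deliver. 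To complete your proof you would need an analogous two-sided monotone direction for the points of $\T$ with $|x_1|>r$, which in substance means reconstructing the paper's argument.
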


For $i \in \{ 1, 2 \}$, define half-planes
\begin{equation}
    \H_i^+ \triangleq \{ \x \in \R^n: x_1 \geq \lambda_i \}
    \quad \text{and} \quad 
    \H_i^- \triangleq \{ \x \in \R^n: x_1 \leq \lambda_i \},
    \label{def: set H_i}
\end{equation}
where $\lambda_i$ for $i \in \{1, 2 \}$ are defined in \eqref{var: lambda}. In the lemma below, we examine properties of points $\x \in \partial \B_1 \cap \H_1^+$ (resp. $\x \in \partial \B_2 \cap \H_2^-$).

\begin{lemma}
Let the sets $\B_i$ for $i \in \{1,2\}$, and   $\H_1^+$ and $\H_2^-$ be defined as in \eqref{def: set B} and \eqref{def: set H_i}, respectively.
\begin{enumerate}[label=(\roman*)]
    \item If $r \in \Big( \frac{L}{2 \sigma_1}, \frac{L}{2} \big( \frac{1}{\sigma_1} + \frac{1}{\sigma_2} \big) \Big)$
    then $\big[ \lambda_1, \; -r + \frac{L}{\sigma_1} \big] \subseteq (-r, r)$ and
    $\partial \B_1 \cap \H_1^+ \subseteq (\overline{\B}_1 \cap \overline{\B}_2) \setminus 
    \{ \x_1^*, \x_2^* \}$.
    \label{lem: boundary inclusion part1}
    \item If $r \in \Big( \frac{L}{2 \sigma_2}, \frac{L}{2} \big( \frac{1}{\sigma_1} + \frac{1}{\sigma_2} \big) \Big)$
    then $\big[ r - \frac{L}{\sigma_2}, \; \lambda_2  \big] \subseteq (-r, r)$ and 
    $\partial \B_2 \cap \H_2^- \subseteq (\overline{\B}_1 \cap \overline{\B}_2) \setminus 
    \{ \x_1^*, \x_2^* \}$.
    \label{lem: boundary inclusion part2}
\end{enumerate}
\label{lem: boundary inclusion}
\end{lemma}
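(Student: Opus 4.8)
The plan is to establish part \ref{lem: boundary inclusion part1} in full; part \ref{lem: boundary inclusion part2} then follows by the symmetric argument obtained by interchanging the roles of $(\x_1^*, \sigma_1)$ and $(\x_2^*, \sigma_2)$. For the interval inclusion $\big[ \lambda_1, -r + \frac{L}{\sigma_1} \big] \subseteq (-r, r)$, I would first note that the interval is well-defined because $\lambda_1 < \frac{L}{\sigma_1} - r$ by Lemma~\ref{lem: compare x1}\ref{lem: compare x1 part2}. Since $(-r, r)$ is an interval, it suffices to check both endpoints: the upper endpoint satisfies $-r + \frac{L}{\sigma_1} < r$ precisely because $r > \frac{L}{2\sigma_1}$, while the lower endpoint satisfies $\lambda_1 > -r$ after clearing denominators, where the inequality reduces to a strictly positive quantity exceeding a strictly negative one.

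For the set inclusion, I would fix $\x = (x_1, \tilde{\mathbf{x}}) \in \partial \B_1 \cap \H_1^+$ and verify the three membership requirements in turn. Membership in $\overline{\B}_1$ is immediate from $\partial \B_1 \subseteq \overline{\B}_1$. To exclude $\{ \x_1^*, \x_2^* \}$, I would observe that the first coordinate of any point of $\partial \B_1$ lies in $\big[ -r - \frac{L}{\sigma_1}, -r + \frac{L}{\sigma_1} \big]$, so the constraint $x_1 \geq \lambda_1$ coming from $\H_1^+$ forces $x_1 \in \big[ \lambda_1, -r + \frac{L}{\sigma_1} \big] \subseteq (-r, r)$ by the interval inclusion above; since $\x_1^*$ and $\x_2^*$ have first coordinates $-r$ and $r$, the point $\x$ can equal neither.

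The key step is membership in $\overline{\B}_2$, and this is where I expect the main obstacle. Substituting $\Vert \tilde{\mathbf{x}} \Vert^2 = \gamma_1 - (x_1 + r)^2$ (from $\x \in \partial \B_1$) into $d_2^2(\x) = (x_1 - r)^2 + \Vert \tilde{\mathbf{x}} \Vert^2$ collapses the quadratic terms and yields the affine relation $d_2^2(\x) = \gamma_1 - 4 r x_1$. Hence $\x \in \overline{\B}_2$ is equivalent to $x_1 \geq \frac{\gamma_1 - \gamma_2}{4r}$, and since $x_1 \geq \lambda_1$ it suffices to show $\lambda_1 \geq \frac{\gamma_1 - \gamma_2}{4r}$. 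Clearing denominators by multiplying through by $4 r (1 + 2 \beta) > 0$ turns this into $4 r^2 \leq \gamma_1 + (1 + 2\beta)\gamma_2$. The crux is the perfect-square identity $\gamma_1 + (1 + 2\beta)\gamma_2 = \big( \frac{L}{\sigma_1} + \frac{L}{\sigma_2} \big)^2$, which follows from $\beta \gamma_2 = \frac{L^2}{\sigma_1 \sigma_2}$; the inequality is therefore exactly $2r \leq L \big( \frac{1}{\sigma_1} + \frac{1}{\sigma_2} \big)$, which holds under the right-hand bound on $r$. For part \ref{lem: boundary inclusion part2}, the same template applies: from $\x \in \partial \B_2$ one obtains $d_1^2(\x) = \gamma_2 + 4 r x_1$, so $\x \in \overline{\B}_1$ reduces to $x_1 \leq \frac{\gamma_1 - \gamma_2}{4r}$, which holds because $x_1 \leq \lambda_2$ and $\lambda_2 \leq \frac{\gamma_1 - \gamma_2}{4r}$ reduces, after clearing denominators, to the same inequality $2r \leq L \big( \frac{1}{\sigma_1} + \frac{1}{\sigma_2} \big)$.
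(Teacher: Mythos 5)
Your proposal is correct and follows essentially the same route as the paper: the endpoint checks for the interval inclusion (including $\lambda_1 > -r$ via a positive quantity exceeding a negative one), the reduction of membership in $\overline{\B}_2$ to the affine condition $x_1 \geq \frac{1}{4r}(\gamma_1 - \gamma_2)$ on $\partial \B_1$, and the comparison of $\lambda_1$ with that threshold all match the paper's argument. The only cosmetic difference is that you make the perfect-square identity $\gamma_1 + (1+2\beta)\gamma_2 = L^2\big(\tfrac{1}{\sigma_1}+\tfrac{1}{\sigma_2}\big)^2$ explicit where the paper leaves the equivalent algebraic verification to the reader.
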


In the theorem below, we give the characterization of the boundary $\partial \Mup$ and interior $( \Mup )^{\circ}$, and also a property of the set $\Mup$ for each range of $r$. Define the set
\begin{equation}
    \widetilde{\T} \triangleq \{ \x \in \R^n : \tilphi_1 (\x) + \tilphi_2 (\x) > \psi (\x) \},
    \label{def: set T tilde}
\end{equation}
which will be used especially in Theorem~\ref{thm: up BD} and Theorem~\ref{thm: down BD}.

\begin{theorem} 
Assume $\sigma_1 \geq \sigma_2$. Let the sets $\Mup$, $\T$, $\widetilde{\T}$, and $\B_i$ for $i \in \{ 1, 2 \}$ be defined as in \eqref{def: set M up}, \eqref{def: set T}, \eqref{def: set T tilde}, and \eqref{def: set B}, respectively. Also, let the sets $\H_i^+$ and $\H_i^-$ for $i \in \{ 1, 2 \}$ be defined as in \eqref{def: set H_i}.
\begin{enumerate}[label=(\roman*)]
    \item \label{thm: up BD part1} If $r \in \big( 0, \; \frac{L}{2 \sigma_1} \big]$ then  $\Mup \sqcup \{ \x^*_1, \x^*_2 \}$ is closed,
    \begin{displaymath}
        \partial \Mup = \T \sqcup \{ \x^*_1, \x^*_2 \}
        \quad \text{and} \quad
        (\Mup)^{\circ} = \widetilde{\T}.
    \end{displaymath}
    
    \item \label{thm: up BD part2} If $r \in \big( \frac{L}{2 \sigma_1}, \; \frac{L}{2 \sigma_2} \big]$ then $\Mup \sqcup \{ \x^*_1 \}$ is closed,
    \begin{equation*}
        \begin{aligned}
        \partial \Mup 
        &= \big[ \partial \B_1 \cap (\H_1^-)^c \big]
        \sqcup \T \sqcup \{ \x^*_1 \} 
        \quad \text{and} \\
        (\Mup)^{\circ}
        &= \big[ \B_1 \cap (\H_1^-)^c \big] \sqcup \big[ \widetilde{\T} \cap \H_1^- \big].
        \end{aligned}
    \end{equation*}
    
    \item \label{thm: up BD part3} If $r \in \Big( \frac{L}{2 \sigma_2}, \; \frac{L}{2} \big( \frac{1}{\sigma_1} + \frac{1}{\sigma_2} \big) \Big)$ then $\Mup$ is closed,
    \begin{equation*}
        \begin{aligned}
        \partial \Mup 
        &= \big[ \partial \B_1 \cap (\H_1^-)^c \big]
        \sqcup \big[ \partial \B_2 \cap (\H_2^+)^c \big]
        \sqcup \T
        \quad \text{and} \\
        (\Mup)^{\circ}
        &= \big[ \B_1 \cap (\H_1^-)^c \big] 
        \sqcup \big[ \B_2 \cap (\H_2^+)^c \big] 
        \sqcup \big[ \widetilde{\T} \cap ( \H_1^- \cap \H_2^+ ) \big].
        \end{aligned}
    \end{equation*}
    
    \item \label{thm: up BD part4} If $r = \frac{L}{2} \big( \frac{1}{\sigma_1} + \frac{1}{\sigma_2} \big)$ then $\Mup = \Big\{ \Big( \frac{L}{2} \big(\frac{1}{\sigma_1} - \frac{1}{\sigma_2} \big), \; \mathbf{0} \Big) \Big\}$.
    \item \label{thm: up BD part5} If $r \in \Big( \frac{L}{2} \big( \frac{1}{\sigma_1} + \frac{1}{\sigma_2} \big), \; \infty \Big)$ then $\Mup = \emptyset$.
\end{enumerate}
\label{thm: up BD}
\end{theorem}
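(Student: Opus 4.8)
The plan is to separate the two degenerate ranges (iv) and (v) from the three substantive ranges (i)--(iii) and to treat the latter by a single unified scheme. For (v), recall that every point of $\Mup$ must lie in $\overline{\B}_1\cap\overline{\B}_2$; since the centers $\x_1^*,\x_2^*$ are separated by $2r$ while the radii are $L/\sigma_1$ and $L/\sigma_2$, the hypothesis $r>\frac{L}{2}(\frac{1}{\sigma_1}+\frac{1}{\sigma_2})$ forces $\overline{\B}_1\cap\overline{\B}_2=\emptyset$, whence $\Mup=\emptyset$. For (iv) the two closed balls are externally tangent, so $\overline{\B}_1\cap\overline{\B}_2$ is the single point $(\frac{L}{2}(\frac{1}{\sigma_1}-\frac{1}{\sigma_2}),\mathbf{0})$; there $\tilphi_1=\tilphi_2=0$ and, since the point lies on the segment between the minimizers, $\psi=0$, so the defining inequality holds with equality and $\Mup$ equals that point. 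Throughout, the standing assumption $\sigma_1\geq\sigma_2$ yields $\frac{L}{2\sigma_1}\leq\frac{L}{2\sigma_2}$, so the thresholds delimiting the ranges are correctly ordered.

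For (i)--(iii) I first settle the closedness claims. The function $g\triangleq\tilphi_1+\tilphi_2-\psi$ is continuous on the domain $(\overline{\B}_1\cap\overline{\B}_2)\setminus\{\x_1^*,\x_2^*\}$, so $\Mup=\{g\geq 0\}$ is closed \emph{in} that domain; the domain is closed in $\R^n$ except at the two removed points, hence the only possible failures of closedness of $\Mup$ occur at $\x_1^*$ and/or $\x_2^*$. A minimizer $\x_i^*$ can be a limit point of $\Mup$ only if it lies in $\overline{\B}_1\cap\overline{\B}_2$, so comparing $2r=\|\x_1^*-\x_2^*\|$ with $L/\sigma_1$ and $L/\sigma_2$ shows that in (i) both minimizers qualify, in (ii) only $\x_1^*$ does (as $2r>L/\sigma_1$ places $\x_2^*$ outside $\overline{\B}_1$), and in (iii) neither lies in the domain. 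Lemma~\ref{lem: 1D analyze} then certifies the qualifying minimizers as elements of $\partial\Mup$, which is exactly what the ``appended-and-closed'' statements assert.

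Next I assemble $\partial\Mup$ from three kinds of contributions. The curve $\T$ always lies in the boundary by Lemma~\ref{lem: boundary}, valid for all $r<\frac{L}{2}(\frac{1}{\sigma_1}+\frac{1}{\sigma_2})$, and the qualifying minimizers contribute via Lemma~\ref{lem: 1D analyze}. The sphere contributions are located by Lemma~\ref{lem: point on R_n}: on $\partial\B_1$ one has $\tilphi_1+\tilphi_2>\psi$ precisely where $x_1>\lambda_1$, and on $\partial\B_2$ precisely where $x_1<\lambda_2$. Lemma~\ref{lem: compare x1} decides whether such portions are nonempty: in (i) every point of $\partial\B_1$ has $x_1\leq\lambda_1$ (and symmetrically for $\partial\B_2$), so the spheres lie strictly outside $\Mup$ and contribute nothing, leaving $\partial\Mup=\T\sqcup\{\x_1^*,\x_2^*\}$; in (ii) the arc $\partial\B_1\cap(\H_1^-)^c$ appears while $\partial\B_2$ still contributes nothing; in (iii) both arcs $\partial\B_1\cap(\H_1^-)^c$ and $\partial\B_2\cap(\H_2^+)^c$ appear. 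Lemma~\ref{lem: boundary inclusion} certifies that these arcs genuinely lie in $\overline{\B}_1\cap\overline{\B}_2$ (so they are boundary points of $\Mup$, not stray sphere points), and Lemma~\ref{lem: intersection} shows $\T$ meets $\partial\B_i$ only on the hyperplane $x_1=\lambda_i$, making the displayed unions disjoint.

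Finally I compute $(\Mup)^\circ$ via Lemma~\ref{lem: interior eqn} applied with $\mathcal{G}=\Mup$ and the partition of $\R^n$ into $(\H_1^-)^c=\{x_1>\lambda_1\}$, $\H_1^-\cap\H_2^+=\{\lambda_2\leq x_1\leq\lambda_1\}$, and $(\H_2^+)^c=\{x_1<\lambda_2\}$ (with the obvious degeneracies in (i) and (ii)). In the middle slab, subtracting $\partial\Mup\cap(\H_1^-\cap\H_2^+)=\T\cap(\H_1^-\cap\H_2^+)$ from $\Mup$ leaves exactly $\widetilde{\T}\cap(\H_1^-\cap\H_2^+)$; in the extreme slab $\{x_1>\lambda_1\}$ the key identity is $\Mup\cap\{x_1>\lambda_1\}=\overline{\B}_1\cap\{x_1>\lambda_1\}$, after which subtracting $\partial\B_1\cap\{x_1>\lambda_1\}$ gives $\B_1\cap(\H_1^-)^c$ (and its mirror for $\{x_1<\lambda_2\}$). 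Case (i) is the one-slab specialization, using that $\widetilde{\T}$ is open there, to conclude $(\Mup)^\circ=\widetilde{\T}$. I expect the main obstacle to be precisely these slab identities: showing that in the region $x_1>\lambda_1$ the angle condition together with membership in the far ball $\overline{\B}_2$ collapses to membership in the near ball $\overline{\B}_1$. Proving this requires combining the monotonicity Lemma~\ref{lem: point below} (moving toward the $x_1$-axis keeps a point strictly inside $\Mup$) with the geometric inclusions of Lemma~\ref{lem: boundary inclusion}, and carefully matching the pieces at the corners where $\T$ meets the spheres, where several inequalities become simultaneously tight.
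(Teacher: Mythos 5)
Your proposal is correct and follows essentially the same route as the paper's own proof: the same five-way case split on $r$, the same decomposition of $\R^n$ by the hyperplanes $x_1=\lambda_i$, and the same supporting results (Lemmas~\ref{lem: 1D analyze}, \ref{lem: point below}, \ref{lem: point on R_n}, \ref{lem: intersection}, \ref{lem: compare x1}, \ref{lem: boundary}, \ref{lem: boundary inclusion} and Lemma~\ref{lem: interior eqn}) deployed for the same purposes, including the key slab identity $\Mup\cap(\H_1^-)^c=\overline{\B}_1\cap(\H_1^-)^c$ that you correctly single out as the crux. The only notable difference is organizational: you obtain the closedness claims upfront from the observation that $\Mup$ is a closed-condition set inside a domain that is closed in $\R^n$ except at the two punctured minimizers, whereas the paper reads closedness off the explicit boundary formulas at the end of each case; this is a mild streamlining rather than a different argument.
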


\begin{proof}
For convenience, we define function $\varphi: ( \overline{\B}_1 \cap \overline{\B}_2 ) \setminus \{ \x_1^*, \x_2^* \} \to [-\pi, \pi]$ to be such that
\begin{equation}
    \varphi (\x) \triangleq \tilphi_1 (\x) + \tilphi_2 (\x) - \psi (\x),
    \label{def: varphi}
\end{equation}
where $\tilphi_i$ for $i \in \{1 ,2\}$ and $\psi$ are defined in \eqref{def: phi tilde} and $\eqref{def: psi}$, respectively.

\textbf{Part \ref{thm: up BD part1}:} $r \in \big( 0, \; \frac{L}{2 \sigma_1} \big]$. First, we want to show that 
\begin{equation}
    \text{if} \quad
    \x \in \partial (\B_1 \cap \B_2)
    \quad \text{then} \quad
    \x \in \{ \z \in \R^n :
    \varphi (\z) < 0 \} 
    \cup \{ \x_1^*, \x_2^* \}.
    \label{eqn: part 1 claim 1}
\end{equation}
Suppose $\x \in \partial (\B_1 \cap \B_2)$ and $\x \in \partial \B_1$. Since $\overline{\B}_1$ is closed and $x_1 \in \big[ -r- \frac{L}{\sigma_1}, -r+ \frac{L}{\sigma_1} \big]$, from Lemma~\ref{lem: compare x1} part~\ref{lem: compare x1 part1}, we get $x_1 \leq \frac{L}{\sigma_1}-r \leq \lambda_1$. 
If $x_1 < \lambda_1$, from Lemma~\ref{lem: point on R_n} part~\ref{lem: lambda_1}, we obtain $\varphi (\x) < 0$. On the other hand, if $x_1 = \lambda_1$ (i.e., $\frac{L}{\sigma_1}-r = \lambda_1$), from Lemma~\ref{lem: compare x1} part~\ref{lem: compare x1 part1}, we get $r = \frac{L}{2 \sigma_1}$.
Substituting into $x_1 = \frac{L}{\sigma_1} - r$, we obtain that $x_1 = \frac{L}{2 \sigma_1} = r$. Since $\x \in \partial \B (\x_1^*, 2r )$ and $x_1 = r$, we conclude that $\x = \x_2^* = (r, \mathbf{0})$. 

From the assumption $\sigma_1 \geq \sigma_2$ and the inequality $r \leq \frac{L}{2 \sigma_1}$, we get $r \leq \frac{L}{2 \sigma_2}$. We can similarly show that if $\x \in \partial (\B_1 \cap \B_2)$ and $\x \in \partial \B_2$ then either $\varphi (\x) < 0$ or $\x = \x_1^* = (-r, \mathbf{0})$ by using Lemma~\ref{lem: compare x1} part~\ref{lem: compare x1 part3} and Lemma~\ref{lem: point on R_n} part~\ref{lem: lambda_2}. Since $\partial (\B_1 \cap \B_2) \subseteq \partial \B_1 \cup \partial \B_2$, we have proved our claim.

Since $\Mup \subseteq \overline{\B}_1 \cap \overline{\B}_2$ from the definition of $\Mup$ in \eqref{def: set M up} and $\partial ( \B_1 \cap \B_2 ) \subset (\Mup)^{c}$ from \eqref{eqn: part 1 claim 1}, we have $\Mup \subseteq \B_1 \cap \B_2$. Recall the definition of $\varphi$ in \eqref{def: varphi}. Let $\mathcal{R} = (\B_1 \cap \B_2) \setminus \{ \x_1^*, \x_2^* \}$. We then partition the set $\mathcal{R}$ into 3 parts as follows:
\begin{equation*}
    \mathcal{R}_1 = \big\{ \z \in \mathcal{R}: \varphi(\z) > 0 \big\}, \;\;
    \mathcal{R}_2 = \big\{ \z \in \mathcal{R} : \varphi(\z) < 0 \big\},  
    \quad \text{and} \quad
    \mathcal{R}_3 = \big\{ \z \in \mathcal{R} : \varphi(\z) = 0 \big\} = \T,
\end{equation*}
where the last equality comes from Proposition~\ref{prop: T_n}. We will show that 
\begin{equation}
    \begin{cases}
    \mathcal{R}_1 \subset ( \partial \Mup )^c, \\
    \mathcal{R}_2 \subset ( \partial \Mup )^c, \\
    \mathcal{R}_3 \subseteq \partial \Mup, \\
    \partial ( \B_1 \cap \B_2 ) \setminus \{ \x_1^*, \x_2^* \} \subset ( \partial \Mup )^c, \\
    ( \dom (\varphi) )^c \setminus \{ \x_1^*, \x_2^* \} \subset ( \partial \Mup )^c.
    \end{cases}
    \label{eqn: R inclusion 1}
\end{equation}

Suppose $\x \in \mathcal{R}_1$. Since $\varphi$ is continuous, there exists $\epsilon > 0$ such that for all $\x_0 \in \B(\x, \epsilon)$, we have $\x_0 \in \mathcal{R}_1$ and $\varphi(\x_0) > 0$.
Since $\mathcal{R}_1 \subseteq \Mup$ and is open, we have $\mathcal{R}_1 \subseteq (\Mup)^{\circ}$. Similarly, we have $\mathcal{R}_2 \subseteq ( \mathcal{R} \setminus \Mup )^{\circ}$. 
Suppose $\x \in \mathcal{R}_3 = \T$. Using Lemma~\ref{lem: boundary}, we have that  $\mathcal{R}_3 \subseteq \partial \Mup$. Since $(\Mup)^{\circ}$, $( \mathcal{R} \setminus \Mup )^{\circ}$, and $\partial \Mup$ are disjoint, we conclude that $\mathcal{R}_1 \subset (\partial \Mup)^c$ and $\mathcal{R}_2 \subset (\partial \Mup)^c$.

Consider $\x \in \partial (\B_1 \cap \B_2) \setminus \{ \x_1^*, \x_2^* \}$. From \eqref{eqn: part 1 claim 1}, we have $\x \in \{ \z \in \R^n: \varphi (\z) < 0 \}$.
Since $\textbf{dom} (\varphi) = (\overline{\B}_1 \cap \overline{\B}_2) \setminus \{ \x_1^*, \x_2^* \}$ and $\varphi$ is continuous, there exists $\epsilon > 0$ such that for all $\x_0 \in \B(\x, \epsilon) \cap \textbf{dom} (\varphi)$, we have $\varphi(\x_0) < 0$. 
Thus, $\partial (\B_1 \cap \B_2) \setminus \{ \x_1^*, \x_2^* \} \subset ((\Mup)^c)^{\circ}$ which implies that $\partial (\B_1 \cap \B_2) \setminus \{ \x_1^*, \x_2^* \} \subset (\partial \Mup)^c$.
In addition, we have $(\textbf{dom} (\varphi))^c \setminus \{ \x_1^*, \x_2^* \} \subseteq ((\Mup)^c)^{\circ}$ (since $(\textbf{dom} (\varphi))^c \setminus \{ \x_1^*, \x_2^* \} \subseteq (\Mup)^c$ and is open) which implies $(\textbf{dom} (\varphi))^c \setminus \{ \x_1^*, \x_2^* \} \subset (\partial \Mup)^c$. Therefore, we have proved the claim \eqref{eqn: R inclusion 1}.

Since  we can partition $\R^n$ into $\mathcal{R}$, $\partial (\B_1 \cap \B_2) \setminus \{ \x_1^*, \x_2^* \}$, $(\textbf{dom} (\varphi))^c \setminus \{ \x_1^*, \x_2^* \}$ and $\{ \x_1^*, \x_2^* \}$, using \eqref{eqn: R inclusion 1}, we obtain that $\partial \Mup \subseteq \mathcal{R}_3 \sqcup \{ \x_1^*, \x_2^* \}$. 
However, we know that $\mathcal{R}_3 \subseteq \partial \Mup$ from the above analysis and $\{ \x_1^*, \x_2^* \} \subseteq \partial \Mup$ from Lemma~\ref{lem: 1D analyze}. Thus, we have $\partial \Mup = \mathcal{R}_3 \sqcup \{ \x_1^*, \x_2^* \} = \T \sqcup \{ \x_1^*, \x_2^* \}$ by Proposition~\ref{prop: T_n}.

From Proposition~\ref{prop: T_n}, we have $\T = \{ \z \in \R^n: \varphi (\z) = 0 \}$. Using the definition of $\Mup$ in \eqref{def: set M up} and $\partial \Mup = \T \sqcup \{ \x_1^*, \x_2^* \}$, we can write $(\Mup)^{\circ} = \Mup \setminus \partial \Mup = \widetilde{\T}$ where $\widetilde{\T}$ is defined in \eqref{def: set T tilde}.
Since $\T \subseteq \Mup$, this implies that $\partial \Mup = \T \sqcup \{ \x^*_1, \x^*_2 \} \subseteq \Mup \sqcup \{ \x^*_1, \x^*_2 \}$. Thus, the set $\Mup \sqcup \{ \x^*_1, \x^*_2 \}$ is closed.

\textbf{Part \ref{thm: up BD part2}:} $r \in \big( \frac{L}{2 \sigma_1}, \frac{L}{2 \sigma_2} \big]$. We separately consider three disjoint regions: $(\H_1^-)^c$, $\H_1^+ \cap \H_1^-$, $(\H_1^+)^c$. For the first region, we want to show that
\begin{equation}
    \T \cap (\H_1^-)^c = \emptyset
    \quad \text{and} \quad 
    \partial \Mup \cap (\H_1^-)^c = \partial \B_1 \cap (\H_1^-)^c.
    \label{eqn: nc thm part2.1}
\end{equation}
From Lemma~\ref{lem: boundary inclusion} part~\ref{lem: boundary inclusion part1}, we have $\partial \B_1 \cap (\H_1^-)^c \subseteq \partial \B_1 \cap \H_1^+ \subseteq \textbf{dom}(\varphi)$. Consider $\x \in \partial \B_1 \cap ( \H_1^- )^c$ and note that $x_1 \in \big( \lambda_1, \; -r + \frac{L}{\sigma_1} \big]$. From Lemma~\ref{lem: point on R_n} part~\ref{lem: lambda_1}, we have $\varphi(\x) > 0$.
Since $\big[ \lambda_1, \; -r + \frac{L}{\sigma_1} \big] \subseteq (-r, r)$ from Lemma~\ref{lem: boundary inclusion} part~\ref{lem: boundary inclusion part1}, we can apply Lemma~\ref{lem: point below} to get that for all $\x \in \overline{\B}_1 \cap (\H_1^-)^c$, we have $\varphi(\x) > 0$. This implies that  
\begin{equation}
    \overline{\B}_1 \cap (\H_1^-)^c \subseteq \T^c
    \quad \text{and} \quad
    \overline{\B}_1 \cap (\H_1^-)^c
    \subseteq \Mup \cap (\H_1^-)^c,
    \label{eqn: B1 inclusion}
\end{equation}
by Proposition~\ref{prop: T_n} and the definition of $\Mup$ in \eqref{def: set M up}, respectively. Since $\T \subseteq \textbf{dom} (\varphi)$, we have $( \overline{\B}_1 )^c \cap ( \H_1^- )^c \subseteq \T^c$. Using this inclusion and the first inclusion in \eqref{eqn: B1 inclusion}, we can write
\begin{equation}
    \emptyset = \big[ \T \cap \big( \overline{\B}_1 \cap (\H_1^-)^c \big) \big] \cup \big[ \T \cap \big( ( \overline{\B}_1 )^c \cap ( \H_1^- )^c \big) \big]
    = \T \cap ( \H_1^- )^c,
    \label{eqn: T empty}
\end{equation}
which completes the first part of claim \eqref{eqn: nc thm part2.1}.
Next, note that since $-r + \frac{L}{\sigma_1} < r$, we have  $\x_2^* \in \H_1^+$.
However, we have $\Mup \subseteq \overline{\B}_1$ from the definition of $\Mup$ in \eqref{def: set M up}. Combine this argument with the second inclusion in \eqref{eqn: B1 inclusion} yields
\begin{equation}
    \Mup \cap (\H_1^-)^c 
    = \overline{\B}_1 \cap (\H_1^-)^c.
    \label{eqn: M cap H}
\end{equation}
Since $(\H_1^-)^c$ is open, \eqref{eqn: M cap H} implies that $( \Mup )^\circ \cap (\H_1^-)^c = \B_1 \cap (\H_1^-)^c$. Then, subtracting this equation from \eqref{eqn: M cap H}, we obtain that $\partial \Mup \cap (\H_1^-)^c = \partial \B_1 \cap (\H_1^-)^c$, which completes the second part of claim \eqref{eqn: nc thm part2.1}.

Next, consider the second region $\H_1^+ \cap \H_1^- = \{ \z \in \R^n : z_1 = \lambda_1 \}$. Recall the definition of $\nu_1$ in \eqref{var: nu}. Consider the following three cases.
\begin{itemize}
    \item Suppose $\x \in \{ \z \in \R^n : z_1 = \lambda_1, \; \| \tilde{\mathbf{z}} \| > \nu_1 \}$. Then, $\x \in (\textbf{dom} (\varphi))^c \setminus \{ \x_1^*, \x_2^* \}$ which implies that $\x \notin \T$. Since $(\textbf{dom} (\varphi))^c \setminus \{ \x_1^*, \x_2^* \} \subseteq (\Mup)^c$ and is open, we also have $\x \notin \partial \Mup$.
    
    \item Suppose $\x \in \{ \z \in \R^n : z_1 = \lambda_1, \; \| \tilde{\mathbf{z}} \| = \nu_1 \}$. From Lemma~\ref{lem: intersection} part~\ref{lem: intersection 2}, we have $\x \in \T$. Using Lemma~\ref{lem: boundary}, we obtain that $\x \in \partial \Mup$.
    
    \item Suppose $\x \in \{ \z \in \R^n : z_1 = \lambda_1, \; \| \tilde{\mathbf{z}} \| < \nu_1 \}$. Since $\{ \z \in \R^n : z_1 = \lambda_1, \; \| \tilde{\mathbf{z}} \| = \nu_1 \} \subseteq \T$, from Lemma~\ref{lem: point below}, we have $\tilphi_1 (\x) + \tilphi_2 (\x) > \psi (\x)$ which implies that $\x \notin \T$. Since $\x \in (\B_1 \cap \B_2) \setminus \{ \x_1^*, \x_2^* \}$ and $\varphi$ is continuous, there exists $\epsilon \in \R_{>0}$ such that for all $\x_0 \in \B (\x, \epsilon)$, we have $\x_0 \in \Mup$ by the definition of $\Mup$ in \eqref{def: set M up}. This means that $\x \in (\Mup)^{\circ}$ and thus, $\x \notin \partial \Mup$. 
\end{itemize}
Combining the analysis of these three cases, we have that 
\begin{equation}
    \partial \Mup \cap ( \H_1^+ \cap \H_1^-) 
    = \{ \z \in \R^n : z_1 = \lambda_1, \; \| \tilde{\mathbf{z}} \| = \nu_1 \}
    = \T \cap ( \H_1^+ \cap \H_1^-).
    \label{eqn: nc thm part2.2}
\end{equation}

Next, consider the third region $(\H_1^+)^c = \R^n \setminus \H_1^+$. First, we want to show that 
\begin{equation}
    \text{if} \quad
    \x \in \partial (\B_1 \cap \B_2) \cap (\H_1^+)^c
    \quad \text{then} \quad
    \x \in \{ \z \in \R^n :
    \varphi (\z) < 0 \} \cup \{ \x_1^* \}.
    \label{eqn: part 2 claim 2}
\end{equation}
Suppose $\x \in \partial (\B_1 \cap \B_2)$ and $\x \in \partial \B_1 \cap (\H_1^+)^c$.
Since $x_1 < \lambda_1$, from Lemma~\ref{lem: point on R_n} part~\ref{lem: lambda_1}, we obtain $\varphi (\x) < 0$. By using \eqref{eqn: part 1 claim 1} from part~\ref{thm: up BD part1}, we have that if $\x \in \partial (\B_1 \cap \B_2)$ and $\x \in \partial \B_2$  then either $\varphi (\x) < 0$ or $\x = \x_1^*$. Combining the two results, we have proved the claim.

Since $\Mup \subseteq \overline{\B}_1 \cap \overline{\B}_2$ from the definition of $\Mup$ in \eqref{def: set M up} and $\partial (\B_1 \cap \B_2) \cap (\H_1^+)^c \subset (\Mup)^{c}$ from \eqref{eqn: part 2 claim 2}, we have $\Mup \cap (\H_1^+)^c \subseteq (\B_1 \cap \B_2) \cap (\H_1^+)^c$. Let $\mathcal{R}' = \mathcal{R} \cap (\H_1^+)^c$. We then partition the set $\mathcal{R}'$ into $\mathcal{R}_1'$, $\mathcal{R}_2'$, and $\mathcal{R}_3'$ where $\mathcal{R}_i' = \mathcal{R}_i \cap (\H_1^+)^c$ for $i \in \{1, 2, 3 \}$. We can use a similar argument as in the proof of \eqref{eqn: R inclusion 1} to show that
\begin{equation}
    \begin{cases}
    \mathcal{R}_1' \subset (\partial \Mup)^c \cap (\H_1^+)^c, \\
    \mathcal{R}_2' \subset (\partial \Mup)^c \cap (\H_1^+)^c, \\
    \mathcal{R}_3' \subseteq \partial \Mup \cap (\H_1^+)^c, \\
    ( \partial (\B_1 \cap \B_2) \setminus \{ \x_1^* \} ) \cap (\H_1^+)^c \subset (\partial \Mup)^c \cap (\H_1^+)^c, \\
    \big( (\dom (\varphi))^c \setminus \{ \x_1^* \} \big) \cap (\H_1^+)^c \subset (\partial \Mup)^c \cap (\H_1^+)^c.
    \end{cases}
    \label{eqn: R inclusion 2}
\end{equation}
Since  we can partition $(\H_1^+)^c$ into $\mathcal{R}'$, $( \partial (\B_1 \cap \B_2) \setminus \{ \x_1^* \} ) \cap (\H_1^+)^c$, $\big( (\textbf{dom} (\varphi))^c \setminus \{ \x_1^* \} \big) \cap (\H_1^+)^c$ and $\{ \x_1^* \}$, using \eqref{eqn: R inclusion 2}, we obtain that $\partial \Mup \cap (\H_1^+)^c \subseteq \mathcal{R}_3' \sqcup \{ \x_1^* \}$. 
However, we know that $\mathcal{R}_3' \subseteq \partial \Mup \cap (\H_1^+)^c$ from \eqref{eqn: R inclusion 2} and $\{ \x_1^* \} \subseteq \partial \Mup \cap (\H_1^+)^c$ from Lemma~\ref{lem: 1D analyze}. Thus, using   
$\mathcal{R}_3' = \mathcal{R}_3 \cap ( \H_1^+ )^c$ and Proposition~\ref{prop: T_n} we have 
\begin{equation}
    \partial \Mup \cap (\H_1^+)^c 
    = \mathcal{R}_3' \sqcup \{ \x_1^* \} 
    = \big[ \T \cap ( \H_1^+ )^c \big] \sqcup \{ \x^*_1 \}.
    \label{eqn: nc thm part2.3}
\end{equation}

Since $\R^n = (\H_1^-)^c \sqcup ( \H_1^+ \cap \H_1^- ) \sqcup (\H_1^+)^c$, using \eqref{eqn: nc thm part2.1}, \eqref{eqn: nc thm part2.2} and \eqref{eqn: nc thm part2.3}, we obtain that
\begin{equation}
    \partial \Mup 
    = \big[ \partial \B_1 \cap (\H_1^-)^c \big]
    \sqcup \big[ \T \cap ( \H_1^+ \cap \H_1^-) \big]
    \sqcup \big[ \T \cap ( \H_1^+ )^c \big] 
    \sqcup \{ \x^*_1 \}.
    \label{eqn: up bd temp}
\end{equation}
However, from \eqref{eqn: T empty}, we can write $\T = \big[ \T \cap ( \H_1^+ \cap \H_1^-) \big] \sqcup \big[ \T \cap ( \H_1^+ )^c \big]$ which means that we can rewrite \eqref{eqn: up bd temp} as 
\begin{equation}
    \partial \Mup 
    = \big[ \partial \B_1 \cap (\H_1^-)^c \big]
    \sqcup \T \sqcup \{ \x^*_1 \}.
    \label{eqn: up bd case2}
\end{equation}

From \eqref{eqn: M cap H} and \eqref{eqn: up bd case2}, we can write $\big[ \Mup \cap (\H_1^-)^c \big] \setminus \big[ \partial \Mup \cap (\H_1^-)^c \big] = \big[ \overline{\B}_1 \cap (\H_1^-)^c \big] \setminus \big[ \partial \B_1 \cap (\H_1^-)^c \big] = \B_1 \cap (\H_1^-)^c$.
From the definition of $\Mup$ in \eqref{def: set M up} and equation \eqref{eqn: nc thm part2.2}, we can write $\big[ \Mup \cap ( \H_1^+ \cap \H_1^-) \big] \setminus \big[ \partial \Mup \cap ( \H_1^+ \cap \H_1^-) \big] = \widetilde{\T} \cap ( \H_1^+ \cap \H_1^-)$.
From the definition of $\Mup$ in \eqref{def: set M up} and equation \eqref{eqn: nc thm part2.3}, we can write $( \Mup \cap (\H_1^+)^c ) \setminus ( \partial \Mup \cap (\H_1^+)^c ) = \widetilde{\T} \cap (\H_1^+)^c$. Applying the above three equations to Lemma~\ref{lem: interior eqn}, we obtain the result of $( \Mup )^{\circ}$.
Consider the characterization of $\partial \Mup$ in \eqref{eqn: up bd case2}. Since $\partial \B_1 \cap ( \H_1^- )^c \subseteq \Mup$ from \eqref{eqn: M cap H}, and $\T \subseteq \Mup$ from Proposition~\ref{prop: T_n} and the definition of $\Mup$, we can write $\partial \Mup \subseteq \Mup \sqcup \{ \x^*_1 \}$ and thus, $\Mup \sqcup \{ \x^*_1 \}$ is closed.

\textbf{Part \ref{thm: up BD part3}:} $r \in \Big( \frac{L}{2 \sigma_2}, \; \frac{L}{2} \big( \frac{1}{\sigma_1} + \frac{1}{\sigma_2} \big) \Big)$. We can use a similar argument as in the proof of part~\ref{thm: up BD part2} to show that
\begin{equation*}
    \begin{cases}
    \partial \Mup \cap (\H_1^-)^c = \partial \B_1 \cap (\H_1^-)^c \subseteq \Mup,
    \quad \text{(similar to proving \eqref{eqn: nc thm part2.1})} \\
    \partial \Mup \cap (\H_2^+)^c = \partial \B_2 \cap (\H_2^+)^c \subseteq \Mup, 
    \quad \text{(similar to proving \eqref{eqn: nc thm part2.1})} \\
    \partial \Mup \cap ( \H_1^+ \cap \H_1^-) = \T \cap ( \H_1^+ \cap \H_1^-) \subseteq \Mup, 
    \quad \text{(similar to proving \eqref{eqn: nc thm part2.2})} \\
    \partial \Mup \cap ( \H_2^+ \cap \H_2^-) = \T \cap ( \H_2^+ \cap \H_2^-) \subseteq \Mup, 
    \quad \text{(similar to proving \eqref{eqn: nc thm part2.2})} \\
    \partial \Mup \cap ( \H_1^+ \cup \H_2^- )^c = \T \cap ( \H_1^+ \cup \H_2^- )^c \subseteq \Mup.
    \quad \text{(similar to proving \eqref{eqn: nc thm part2.3})} \\
    \end{cases}
\end{equation*}
Similar to \eqref{eqn: T empty}, in this case, we have that $\T \cap ( \H_1^- )^c = \emptyset$ and $\T \cap ( \H_2^+ )^c = \emptyset$. This means that the last three equations regarding $\partial \Mup$ above can be combined into $\partial \Mup \cap (\H_1^- \cap \H_2^+) = \T$. 
Combining this equation with the first two equations regarding $\partial \Mup$ above, we obtain the characterization of $\partial \Mup$. For the characterization of $( \Mup )^{\circ}$, we can use the same technique as shown in the analysis of part~\ref{thm: up BD part2} to obtain the result. From the five inclusions regarding $\partial \Mup$ above, we can write $\partial \Mup \subseteq \Mup$ and we conclude that $\Mup$ is closed.

\textbf{Part \ref{thm: up BD part4}:} $r = \frac{L}{2} \big( \frac{1}{\sigma_1} + \frac{1}{\sigma_2} \big)$. In this case, we have $\overline{\B}_1 \cap  \overline{\B}_2  = \Big\{ \Big( \frac{L}{2} \big(\frac{1}{\sigma_1} - \frac{1}{\sigma_2} \big), \; \mathbf{0} \Big) \Big\}$. 
Suppose $\x = \Big( \frac{L}{2} \big(\frac{1}{\sigma_1} - \frac{1}{\sigma_2} \big), \; \mathbf{0} \Big)$.
Since $\Mup \subseteq \overline{\B}_1 \cap  \overline{\B}_2$, we only need to check point $\x$.
At this point, we get $\tilphi_1 (\x) + \tilphi_2 (\x) = \psi(\x) = 0$ (since $d_1(\x) = \frac{L}{\sigma_1}$, $d_2(\x) = \frac{L}{\sigma_2}$, $\alpha_1 (\x) = 0$ and $\alpha_2 (\x) = \pi$). 
So, we conclude that $\Mup = \Big\{ \Big( \frac{L}{2} \big(\frac{1}{\sigma_1} - \frac{1}{\sigma_2} \big), \; \mathbf{0} \Big) \Big\}$.

\textbf{Part \ref{thm: up BD part5}:} $r \in \Big( \frac{L}{2} \big( \frac{1}{\sigma_1} + \frac{1}{\sigma_2} \big), \; \infty \Big)$. Since $r > \frac{L}{2} \big(\frac{1}{\sigma_1} + \frac{1}{\sigma_2} \big)$, we have $\overline{\B}_1 \cap  \overline{\B}_2 = \emptyset$. Since $\Mup \subseteq \overline{\B}_1 \cap  \overline{\B}_2$, we conclude that $\Mup = \emptyset$.
\end{proof}

Examples of the boundary $\partial \Mup$ in $\R^2$ for the first three cases of Theorem~\ref{thm: up BD} are shown in Fig.~\ref{fig: outer boundary}. We consider parameters $\sigma_1 = 1.5$, $\sigma_2 = 1$ and $L = 10$. 
For $r = 2$, as we can see from Fig.~\ref{fig: boundary NC Case1}, we have $\partial \Mup = \T \sqcup \{ \x_1^*, \x_2^* \}$ (i.e., solid blue line + two red dots) consistent with part~\ref{thm: up BD part1} of Theorem~\ref{thm: up BD}. 
For $r = 4$, as we can see from Fig.~\ref{fig: boundary NC Case2}, we have $\partial \Mup = \big[ \partial \B_1 \cap (\H_1^-)^c \big] \sqcup \T \sqcup \{ \x^*_1 \}$ (i.e., solid cyan line + solid blue line + left red dot) consistent with part~\ref{thm: up BD part2} of Theorem~\ref{thm: up BD}.
For $r = 6$, as we can see from Fig.~\ref{fig: boundary NC Case3}, we have $\partial \Mup = \big[ \partial \B_1 \cap (\H_1^-)^c \big] \sqcup \big[ \partial \B_2 \cap (\H_2^+)^c \big] \sqcup \T$ (i.e., solid cyan line + solid magenta line + solid blue line) consistent with part~\ref{thm: up BD part3} of Theorem~\ref{thm: up BD}.
Note that the solid blue line, solid cyan line and solid magenta line in the figures indicate that the corresponding sets of points are subsets of the outer approximation $\Mup$, i.e., $\T \subseteq \Mup$, $\partial \B_1 \cap ( \H_1^- )^c \subseteq \Mup$ and $\partial \B_2 \cap ( \H_2^+ )^c \subseteq \Mup$, respectively.

\begin{figure}
\centering
\subfloat[For $r=2$, the characterization of boundary $\partial \Mup$ corresponds to Theorem~\ref{thm: up BD} part~\ref{thm: up BD part1}.]{\includegraphics[width=.30\textwidth]{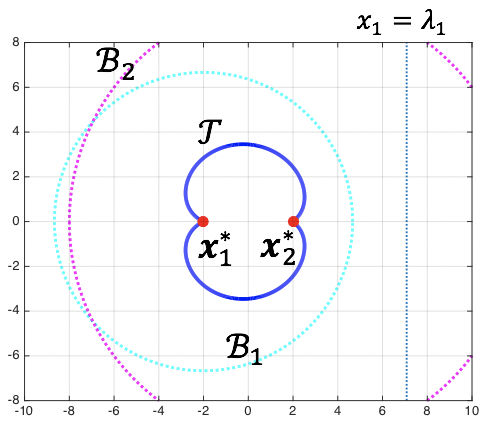} \label{fig: boundary NC Case1}}\;\;
\subfloat[For $r=4$, the characterization of boundary $\partial \Mup$ corresponds to Theorem~\ref{thm: up BD} part~\ref{thm: up BD part2}.]{\includegraphics[width=.30\textwidth]{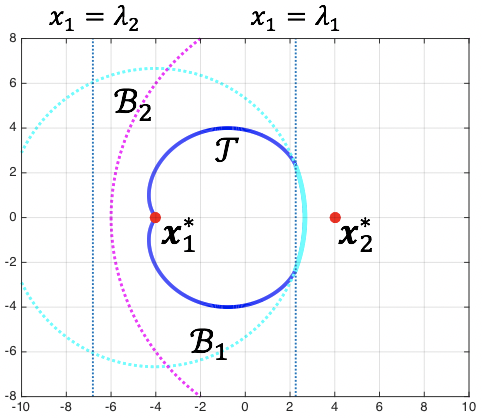} \label{fig: boundary NC Case2}}\;\;
\subfloat[For $r=6$, the characterization of boundary $\partial \Mup$ corresponds to Theorem~\ref{thm: up BD} part~\ref{thm: up BD part3}.]{\includegraphics[width=.30\textwidth]{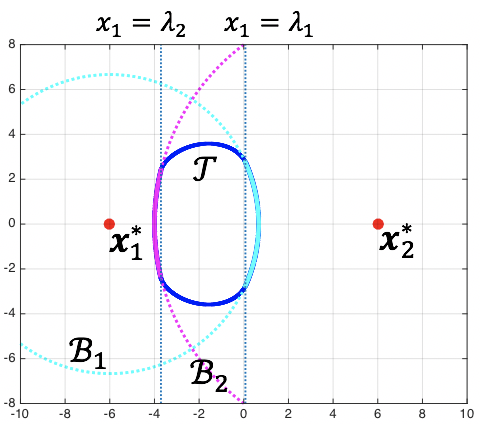} \label{fig: boundary NC Case3}}
\caption{The boundary $\partial \Mup$ in $\R^2$ with different values of $r$ for two original minimizers $x_1^*=(-r,0)$ and $x_2^*=(r,0)$ is plotted given fixed parameters $\sigma_1 = 1.5$, $\sigma_2 = 1$, and $L = 10$. The sets $\T$, $\partial \B_1 \cap (\H_1^-)^c$ and $\partial \B_2 \cap (\H_2^+)^c$ are shown by solid blue, cyan and magenta lines, respectively. The vertical dotted lines represent the equations $x_1 = \lambda_1$ and $x_1 = \lambda_2$ and note that the value of $\lambda_1$ and $\lambda_2$ depends on $r$.} 
\label{fig: outer boundary}
\end{figure}
\section{Inner approximation and potential solution region} \label{sec: inner}

In the previous section, we showed that the set $\Mup$ defined in \eqref{def: set M up} is an outer approximation for the desired set $\M$ defined in \eqref{def: set M}, in that $\M \subseteq \Mup$. We now turn our attention to the set $\Mdo$ defined in \eqref{def: set M down}. We will show that $\Mdo \subseteq \M$, and consequently, provide a tight characterization of $\M$.

Since we have $\bigcup_{\Tilde{\sigma} \geq \sigma} \Q(\x^*, \Tilde{\sigma}) \subset \S(\x^*, \sigma)$ for all $\x^* \in \R^n$ and $\sigma \in \R_{>0}$ from \eqref{eqn: set of functions inclusion}, we can provide a region contained in the potential solution region $\M$ by restricting our consideration to only some classes of quadratic functions. In Section~\ref{subsec: Quadratic}, we analyze a sufficient and necessary condition for constructing a quadratic function with a given minimizer, gradient and curvature. Then, using the result from Section~\ref{subsec: Quadratic}, in Section~\ref{subsec: characterization}, we prove a relationship between the potential solution region $\M$ and the inner approximation $\Mdo$, and also provide a characterization of $\Mdo$.

\subsection{Quadratic functions analysis} \label{subsec: Quadratic}

In this subsection, first we consider an equivalent condition for the existence of a quadratic function with a given minimizer, gradient at a specific point and the smallest eigenvalue associated to the quadratic term in $n$-dimensional space (i.e., in $\R^n$) which is presented in Proposition~\ref{prop: function exist n-dim}. Then, we present Corollary~\ref{cor: function exist union} in which we provide a similar equivalent condition for the class $\bigcup_{\Tilde{\sigma} \geq \sigma} \Q(\x^*, \Tilde{\sigma})$ for a given $\x^* \in \R^n$ and $\sigma \in \R_{>0}$.

In the following proposition (whose proof is provided in Appendix~\ref{subsec: proof quad}), we consider an equivalent condition for the existence of a quadratic function with more than one independent variable satisfying certain properties.

\begin{proposition}
Let $\Q$ be defined as in \eqref{def: Quadratic Coll}.
For $n \in \mathbb{N} \setminus \{1 \}$, suppose we are given points $\x^* \in \R^n$ and $\x_0 \in \R^n$ such that $\x_0 \neq \x^*$, vector $\g \in \R^n$, and scalar $\sigma \in \R_{>0}$.  Then, there exists a function $f \in  \Q^{(n)}(\x^*, \sigma)$ with a gradient $\nabla f(\x_0) = \g$ if and only if 
\begin{enumerate}[label=(\roman*)]
    \item $\x_0 \in \overline{\B} \big( \x^*, \frac{\| \g \|}{\sigma} \big)$ and
    \item $\angle ( \g, \x_0 - \x^*) \in \{ 0 \} \cup \Big[0, \; \arccos \big( \frac{\sigma}{\Vert \g \Vert} \Vert \x_0 - \x^* \Vert \big) \Big)$.
\end{enumerate}
Note that if $\sigma \Vert \x_0 - \x^* \Vert = \Vert \g \Vert$, then $\Big[0, \; \arccos (\frac{\sigma}{\Vert \g \Vert} \Vert \x_0 - \x^* \Vert) \Big) = \emptyset$.
\label{prop: function exist n-dim}
\end{proposition}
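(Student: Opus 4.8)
The plan is to reduce the statement to a purely linear-algebraic question about symmetric matrices and then treat the two implications separately. Recall that $f \in \Q^{(n)}(\x^*,\sigma)$ means $f(\x) = \frac{1}{2}\x^\intercal \boldsymbol{Q}\x + \b^\intercal\x + c$ with $\boldsymbol{Q} \in \mathsf{S}^n$, $\lambda_{\text{min}}(\boldsymbol{Q}) = \sigma$, and $\boldsymbol{Q}\x^* = -\b$, so that $\nabla f(\x) = \boldsymbol{Q}\x + \b = \boldsymbol{Q}(\x - \x^*)$. Writing $\boldsymbol{d} \triangleq \x_0 - \x^* \neq \0$, the existence of an $f \in \Q^{(n)}(\x^*,\sigma)$ with $\nabla f(\x_0) = \g$ is therefore equivalent to the existence of a matrix $\boldsymbol{Q} \in \mathsf{S}^n$ with $\boldsymbol{Q} \succeq \sigma \boldsymbol{I}$, with $\boldsymbol{Q} - \sigma\boldsymbol{I}$ singular (these two together being $\lambda_{\text{min}}(\boldsymbol{Q}) = \sigma$), and with $\boldsymbol{Q}\boldsymbol{d} = \g$; the remaining parameters are then recovered as $\b = -\boldsymbol{Q}\x^*$ and $c$ arbitrary. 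I would also record once and for all the elementary dictionary between the angle condition and an inner-product inequality: condition (i) says exactly $\frac{\sigma\|\boldsymbol{d}\|}{\|\g\|} \le 1$, and condition (ii) is equivalent to $\langle\g,\boldsymbol{d}\rangle > \sigma\|\boldsymbol{d}\|^2$ when $\sigma\|\boldsymbol{d}\| < \|\g\|$, and to $\g = \sigma\boldsymbol{d}$ when $\sigma\|\boldsymbol{d}\| = \|\g\|$ (the branch where the $\arccos$-interval is empty), both obtained by clearing denominators in $\cos(\angle(\g,\boldsymbol{d})) > \frac{\sigma\|\boldsymbol{d}\|}{\|\g\|}$ and monotonicity of $\arccos$.

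For necessity, suppose such a $\boldsymbol{Q}$ exists, so $\boldsymbol{Q} \succeq \sigma \boldsymbol{I}$. Condition (i) follows from $\|\g\|^2 = \boldsymbol{d}^\intercal\boldsymbol{Q}^2\boldsymbol{d} \ge \sigma^2\|\boldsymbol{d}\|^2$, since $\boldsymbol{Q}^2 \succeq \sigma^2\boldsymbol{I}$. For condition (ii), I would start from $\langle\g,\boldsymbol{d}\rangle = \boldsymbol{d}^\intercal\boldsymbol{Q}\boldsymbol{d} \ge \sigma\|\boldsymbol{d}\|^2$. If this is strict, the dictionary above gives $\angle(\g,\boldsymbol{d}) \in [0,\arccos(\frac{\sigma}{\|\g\|}\|\boldsymbol{d}\|))$; if it holds with equality, then $\boldsymbol{d}^\intercal(\boldsymbol{Q} - \sigma\boldsymbol{I})\boldsymbol{d} = 0$ with $\boldsymbol{Q} - \sigma\boldsymbol{I} \succeq 0$, which forces $(\boldsymbol{Q} - \sigma\boldsymbol{I})\boldsymbol{d} = \0$ (for a positive semidefinite $M$, $\boldsymbol{d}^\intercal M\boldsymbol{d}=0$ implies $M\boldsymbol{d}=\0$, e.g.\ by factoring $M = R^\intercal R$), i.e.\ $\g = \sigma\boldsymbol{d}$, which is precisely the degenerate branch of (ii).

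For sufficiency I would exhibit $\boldsymbol{Q}$ explicitly according to the same dichotomy. If $\g = \sigma\boldsymbol{d}$, take $\boldsymbol{Q} = \sigma\boldsymbol{I}$, which trivially has $\lambda_{\text{min}} = \sigma$ and $\boldsymbol{Q}\boldsymbol{d} = \g$. Otherwise conditions (i) and (ii) give $\langle\g,\boldsymbol{d}\rangle > \sigma\|\boldsymbol{d}\|^2$; I set $\boldsymbol{h} \triangleq \g - \sigma\boldsymbol{d} \neq \0$, observe $\langle\boldsymbol{h},\boldsymbol{d}\rangle = \langle\g,\boldsymbol{d}\rangle - \sigma\|\boldsymbol{d}\|^2 > 0$, and define the rank-one update $\boldsymbol{Q} \triangleq \sigma\boldsymbol{I} + \frac{\boldsymbol{h}\boldsymbol{h}^\intercal}{\langle\boldsymbol{h},\boldsymbol{d}\rangle}$. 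This $\boldsymbol{Q}$ is symmetric, satisfies $\boldsymbol{Q}\boldsymbol{d} = \sigma\boldsymbol{d} + \boldsymbol{h} = \g$, and has eigenvalue $\sigma$ on the hyperplane $\boldsymbol{h}^{\perp}$ together with the single eigenvalue $\sigma + \|\boldsymbol{h}\|^2/\langle\boldsymbol{h},\boldsymbol{d}\rangle > \sigma$ along $\boldsymbol{h}$, so $\lambda_{\text{min}}(\boldsymbol{Q}) = \sigma$ exactly.

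The only place where the hypotheses bite — and where I expect the sole genuine subtlety — is the requirement that the minimum eigenvalue be \emph{attained} rather than merely lower-bounded: $\lambda_{\text{min}}(\boldsymbol{Q}) = \sigma$ demands that $\boldsymbol{Q} - \sigma\boldsymbol{I}$ be singular, and the rank-one matrix $\boldsymbol{h}\boldsymbol{h}^\intercal$ has nontrivial kernel exactly because $n \ge 2$. This is precisely why the statement excludes $n = 1$: in one dimension the construction would be forced to take $\boldsymbol{Q}=\g/\boldsymbol{d}$ as a scalar, whose value cannot simultaneously equal $\sigma$ and exceed it. Everything else is routine bookkeeping in converting condition (ii) into $\langle\g,\boldsymbol{d}\rangle > \sigma\|\boldsymbol{d}\|^2$ and isolating the degenerate endpoint $\sigma\|\boldsymbol{d}\| = \|\g\|$.
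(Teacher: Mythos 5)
Your proof is correct, and it takes a genuinely different and more economical route than the paper's. You reduce the statement to the existence of a symmetric $\boldsymbol{Q}$ with $\boldsymbol{Q} \succeq \sigma \boldsymbol{I}$, $\boldsymbol{Q} - \sigma\boldsymbol{I}$ singular, and $\boldsymbol{Q}(\x_0 - \x^*) = \g$, prove necessity from $\boldsymbol{Q}^2 \succeq \sigma^2\boldsymbol{I}$ and the fact that $\boldsymbol{d}^\intercal M \boldsymbol{d} = 0$ forces $M\boldsymbol{d} = \0$ for $M \succeq 0$, and prove sufficiency by the explicit rank-one update $\boldsymbol{Q} = \sigma\boldsymbol{I} + \boldsymbol{h}\boldsymbol{h}^\intercal/\langle \boldsymbol{h}, \boldsymbol{d}\rangle$ with $\boldsymbol{h} = \g - \sigma\boldsymbol{d}$; all steps check out, including the translation of condition (ii) into $\langle \g, \x_0-\x^*\rangle > \sigma\|\x_0-\x^*\|^2$ versus the degenerate branch $\g = \sigma(\x_0-\x^*)$, and your observation that $n\ge 2$ is needed exactly so that $\boldsymbol{h}\boldsymbol{h}^\intercal$ has a kernel on which the eigenvalue $\sigma$ is attained. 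The paper instead proves the two-dimensional case first (Lemma~\ref{lem: function exist}) by writing out the entries $p_{11}, p_{12}, p_{22}$ of a $2\times 2$ matrix, parameterizing the gradient by a rotation angle $\phi$, and solving the trace/determinant eigenvalue relations explicitly; it then lifts to $\R^n$ by embedding the plane spanned by $\x_0 - \x^*$ and $\g$ via an isometry $\boldsymbol{T}$ and padding with a block $\widetilde{\boldsymbol{\Lambda}} \succeq \sigma\boldsymbol{I}$. What the paper's longer computation buys is an explicit coordinate description of the admissible $2\times 2$ Hessians (including the formula \eqref{eqn: maxeig explicit} for the second eigenvalue in terms of the angle $\phi$), which feeds the geometric discussion in Section~\ref{subsec: quadratic-discussion}; what your argument buys is a direct, coordinate-free $n$-dimensional construction that avoids the two-stage reduction entirely and makes the role of the hypothesis $n \neq 1$ transparent. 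Both are complete proofs of the proposition as stated.
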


Recall from \eqref{eqn: set of functions inclusion} that $\bigcup_{\hat{\sigma} \geq \sigma} \Q(\x^*, \hat{\sigma}) \subset \S(\x^*, \sigma)$ for all $\x^* \in \R^n$ and $\sigma \in \R_{>0}$. One way to characterize the inner approximation $\Mdo$ is to utilize sufficient conditions for the construction of a function $f \in \bigcup_{\hat{\sigma} \geq \sigma} \Q (\x^*, \hat{\sigma})$. More generally, the following corollary presents necessary and sufficient conditions for such construction.

\begin{corollary}
Let $\Q$ be defined as in \eqref{def: Quadratic Coll}.
For $n \in \mathbb{N} \setminus \{1 \}$, suppose we are given points $\x^* \in \R^n$ and $\x_0 \in \R^n$ such that $\x_0 \neq \x^*$, a vector $\g \in \R^n$, scalar $L \in \R_{>0}$ such that $\| \g \| = L$, and a scalar $\sigma \in \R_{>0}$.  Then, there exists a function $f \in \bigcup_{\hat{\sigma} \geq \sigma} \Q (\x^*, \hat{\sigma})$ with $\nabla f(\x_0) = \g$ if and only if 
\begin{enumerate}[label=(\roman*)]
\item $\x_0 \in \overline{\B} \big( \x^*, \frac{L}{\sigma} \big)$ and
\item $\angle ( \g, \; \x_0 - \x^*) \in \{ 0 \} \cup \Big[0, \; \arccos \big( \frac{\sigma}{L} \Vert \x_0 - \x^* \Vert \big) \Big)$.
\end{enumerate}
Note that if $\sigma \Vert \x_0 - \x^* \Vert = L$, then $\Big[0, \; \arccos (\frac{\sigma}{L} \Vert \x_0 - \x^* \Vert) \Big) = \emptyset$.
\label{cor: function exist union}
\end{corollary}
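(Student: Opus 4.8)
The plan is to obtain Corollary~\ref{cor: function exist union} as an immediate consequence of Proposition~\ref{prop: function exist n-dim} together with a monotonicity argument in the curvature parameter. The crucial reduction is that a function $f \in \bigcup_{\hat{\sigma} \geq \sigma} \Q(\x^*, \hat{\sigma})$ with $\nabla f(\x_0) = \g$ exists if and only if there is \emph{some} $\hat{\sigma} \geq \sigma$ for which a function $f \in \Q(\x^*, \hat{\sigma})$ with $\nabla f(\x_0) = \g$ exists. Applying Proposition~\ref{prop: function exist n-dim} (with $\hat{\sigma}$ in place of $\sigma$ and using $\| \g \| = L$), the task therefore reduces to determining for which $\hat{\sigma} \geq \sigma$ its two conditions can be met simultaneously, and then showing that this feasibility set is nonempty exactly when the two conditions stated in the corollary hold at the endpoint $\hat{\sigma} = \sigma$.

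Write $d = \| \x_0 - \x^* \|$ and $\theta = \angle(\g, \x_0 - \x^*)$. For a fixed $\hat{\sigma}$, Proposition~\ref{prop: function exist n-dim} requires (i$'$) $d \leq L/\hat{\sigma}$ and (ii$'$) $\theta \in \{ 0 \} \cup \big[ 0, \arccos(\frac{\hat{\sigma}}{L} d) \big)$. The key observation is that both requirements \emph{relax} as $\hat{\sigma}$ decreases: condition (i$'$) is equivalent to $\hat{\sigma} \leq L/d$, which is least restrictive for the smallest admissible curvature; and since $\arccos$ is decreasing, the interval $\big[ 0, \arccos(\frac{\hat{\sigma}}{L} d) \big)$ grows as $\hat{\sigma}$ shrinks. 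Consequently, among all $\hat{\sigma} \geq \sigma$, the weakest demands are those at $\hat{\sigma} = \sigma$, so it suffices to compare feasibility at $\hat{\sigma} = \sigma$ with the corollary's hypotheses.

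For the forward implication, if the two conditions of the corollary hold I would simply take $\hat{\sigma} = \sigma$: conditions (i$'$)--(ii$'$) then coincide verbatim with the corollary's hypotheses, so Proposition~\ref{prop: function exist n-dim} produces the desired $f \in \Q(\x^*, \sigma) \subseteq \bigcup_{\hat{\sigma} \geq \sigma} \Q(\x^*, \hat{\sigma})$. For the converse, suppose some $\hat{\sigma} \geq \sigma$ satisfies (i$'$)--(ii$'$). From (i$'$) and $\hat{\sigma} \geq \sigma$ I get $d \leq L/\hat{\sigma} \leq L/\sigma$, which is condition (i) of the corollary. For condition (ii), if $\theta = 0$ it holds at once; otherwise $\theta < \arccos(\frac{\hat{\sigma}}{L} d) \leq \arccos(\frac{\sigma}{L} d)$, where the last inequality uses $\hat{\sigma} \geq \sigma$ with monotonicity of $\arccos$, placing $\theta$ inside the corollary's interval.

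The only delicate point is the degenerate case flagged in the statement: when $\sigma d = L$ the target interval $\big[ 0, \arccos(\frac{\sigma}{L} d) \big)$ collapses to $\emptyset$, and I must confirm that the inequality chain still routes every admissible $\theta$ into $\{ 0 \}$. This is automatic, since $\arccos(\frac{\hat{\sigma}}{L} d) \leq \arccos(\frac{\sigma}{L} d) = 0$ forces the nondegenerate branch of (ii$'$) to be vacuous, leaving only $\theta = 0$. I expect no substantive obstacle beyond this bookkeeping, as the whole argument amounts to the monotonicity of the feasibility region of Proposition~\ref{prop: function exist n-dim} in the smallest-eigenvalue parameter.
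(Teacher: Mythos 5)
Your proposal is correct and follows essentially the same route as the paper: reduce the statement to Proposition~\ref{prop: function exist n-dim} applied with curvature parameter $\hat{\sigma} \geq \sigma$, and observe that both feasibility conditions are monotone in $\hat{\sigma}$ so that the union over $\hat{\sigma} \geq \sigma$ is attained at $\hat{\sigma} = \sigma$. If anything, your write-up is slightly more careful than the paper's one-line union computation, since you explicitly address that the two conditions must hold for the \emph{same} $\hat{\sigma}$ and that the shared monotonicity is what permits taking the two unions separately.
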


\begin{proof}
From Proposition~\ref{prop: function exist n-dim}, we can write that there exists a function $f \in \bigcup_{\hat{\sigma} \geq \sigma} \Q (\x^*, \hat{\sigma})$ with a gradient $\nabla f(\x_0) = \g$ and $\| \nabla f(\x_0) \| = L$ if and only if 
\begin{equation*}
    \x_0 \in \bigcup_{\hat{\sigma} \geq \sigma} \overline{\B} \Big( \x^*, \frac{\| \g \|}{\hat{\sigma}} \Big)
    = \overline{\B} \Big( \x^*, \frac{L}{\sigma} \Big),
\end{equation*}
and 
\begin{align*}
    \angle ( \g, \; \x_0 - \x^*) 
    &\in \{ 0 \} \cup \bigcup_{\hat{\sigma} \geq \sigma} \bigg[0, \; \arccos \Big( \frac{\hat{\sigma}}{\Vert \g \Vert} \Vert \x_0 - \x^* \Vert \Big) \bigg) \\
    &= \{ 0 \} \cup \bigg[0, \; \arccos \Big( \frac{\sigma}{L} \Vert \x_0 - \x^* \Vert \Big) \bigg).
\end{align*}
\end{proof}

Given the generality of Proposition~\ref{prop: function exist n-dim} concerning the quadratic function class, we reserve the discussion and its geometric interpretation for Section~\ref{subsec: quadratic-discussion}.

\subsection{Inner approximation characterization} \label{subsec: characterization}

In this subsection, we use results from Section~\ref{subsec: Quadratic} to derive a sufficient condition for a point to be in the potential solution region $\M$, defined in \eqref{def: set M}. In fact, the sufficient condition is encapsulated in the description of the inner approximation $\Mdo$; therefore, $\Mdo \subseteq \M$ which is presented in Proposition~\ref{prop: angle sc}. Then, in Theorem~\ref{thm: down BD}, we characterize the boundary $\partial \Mdo$ and interior $( \Mdo )^{\circ}$, and provide a property of $\Mdo$ similar to Theorem~\ref{thm: up BD}. 

Recall the definition of $\underline{L}_i$ for $i \in \{ 1, 2 \}$ from \eqref{eqn: L_lb}. Given $i \in \{ 1, 2 \}$, $\x_i^* \in \R^n$, $\sigma_i \in \R_{> 0}$, and $L \in \R_{> 0}$, from Corollary~\ref{cor: function exist union}, we define the set of gradient angles $\angle ( \nabla f_i (\x), \; \x - \x_i^*)$ that we can choose to construct a quadratic function $f_i \in \bigcup_{\hat{\sigma}_i \geq \sigma_i} \Q (\x_i^*, \hat{\sigma}_i)$ with $\| \nabla f_i (\x) \| \leq L$ as $\Phi_i : \overline{\B} \big( \x^*_i, \frac{L}{\sigma_i} \big) \to 2^{[0, \frac{\pi}{2} )}$ with
\begin{align}
    \Phi_i (\x) \triangleq \begin{cases}
        \Big[ 0, \; \arccos \big(\frac{ \underline{L}_i (\x) }{L}  \big) \Big) \quad 
        &\text{if} \quad \underline{L}_i (\x) < L, \\
        \{ 0 \} \quad 
        &\text{if} \quad \underline{L}_i (\x) = L.
    \end{cases} \label{def: Phi}
\end{align}
Notice that the supremum of the set of angles $\Phi_i (\x)$ is equal to $\tilphi_i (\x)$ which is defined in \eqref{def: phi tilde}. That is, for $i \in \{ 1,2 \}$, for all $\x \in \overline{\B} \big( \x_i^*, \frac{L}{\sigma_i} \big)$, we have
\begin{align*}
    \sup \Phi_i (\x) 
    = \arccos \Big( \frac{ \underline{L}_i (\x) }{L}  \Big)
    = \tilphi_i(\x).
\end{align*}
In two-dimensional space, for a given $\x \in \overline{\B}_1 \cap \overline{\B}_2$, for $i \in \{ 1, 2 \}$, the set of admissible angles $\Phi_i (\x)$ and the quantity $\tilphi_i (\x)$ are shown in Fig.~\ref{fig: angle and sc fig1}.
In the next proposition, using Corollary~\ref{cor: function exist union}, we show that the inner approximation $\Mdo$ is contained in the potential solution region $\M$.

\begin{proposition}
Suppose the sets $\M$ and $\Mdo$ are defined as in \eqref{def: set M} and \eqref{def: set M down}, respectively. Then, $\M \supseteq \Mdo$.
\label{prop: angle sc}
\end{proposition}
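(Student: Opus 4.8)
The plan is to prove the reverse inclusion $\Mdo \subseteq \M$ constructively: for each $\x \in \Mdo$ I will exhibit a single vector $\g \in \R^n$ with $\|\g\| = L$ together with quadratic functions $f_1, f_2$ satisfying $\nabla f_1(\x) = \g$ and $\nabla f_2(\x) = -\g$, drawn respectively from $\bigcup_{\hat\sigma \geq \sigma_1}\Q(\x_1^*,\hat\sigma) \subset \S(\x_1^*,\sigma_1)$ and $\bigcup_{\hat\sigma \geq \sigma_2}\Q(\x_2^*,\hat\sigma) \subset \S(\x_2^*,\sigma_2)$. Since then $\nabla f_1(\x) = -\nabla f_2(\x)$ and $\|\nabla f_1(\x)\| = \|\nabla f_2(\x)\| = L$, this certifies $\x \in \M$. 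The existence of each $f_i$ will be supplied by Corollary~\ref{cor: function exist union}, whose hypotheses are exactly (i) $\x \in \overline{\B}_i$ and (ii) the gradient angle $\angle(\nabla f_i(\x), \x - \x_i^*)$ lies in $\Phi_i(\x)$. By the Remark following \eqref{def: set M down} we already have $\Mdo \subseteq (\overline{\B}_1 \cap \overline{\B}_2)\setminus\{\x_1^*,\x_2^*\}$, so (i) holds automatically and each $\u_i(\x)$ is well defined. Everything thus reduces to choosing a single unit direction $\hat\g = \g/L$ with $\angle(\hat\g,\u_1(\x)) \in \Phi_1(\x)$ and $\angle(-\hat\g,\u_2(\x)) \in \Phi_2(\x)$ simultaneously.

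For the main part of $\Mdo$, i.e. $\x$ with $\tilphi_1(\x) + \tilphi_2(\x) > \psi(\x)$, I will use the identity $\psi(\x) = \angle(\u_1(\x), -\u_2(\x))$ established in \eqref{eqn: psi equality}. Writing $\boldsymbol{w} = -\u_2(\x)$, I seek a unit vector at angular distance $\theta_1$ from $\u_1(\x)$ and $\theta_2$ from $\boldsymbol{w}$; the two spherical caps of radii $\tilphi_1(\x)$ and $\tilphi_2(\x)$ about $\u_1(\x)$ and $\boldsymbol{w}$ overlap in their interiors precisely because their centers are $\psi(\x)$ apart while $\tilphi_1 + \tilphi_2 > \psi$. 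Concretely, when $\psi(\x) > 0$ I set $\hat\g = \cos\theta_1\,\u_1(\x) + \sin\theta_1\,\hat{\boldsymbol{w}}_\perp$, where $\hat{\boldsymbol{w}}_\perp$ is the unit vector in $\mathrm{span}\{\u_1(\x),\boldsymbol{w}\}$ obtained by orthonormalizing $\boldsymbol{w}$ against $\u_1(\x)$, and choose the proportional split $\theta_1 = \frac{\tilphi_1(\x)}{\tilphi_1(\x)+\tilphi_2(\x)}\,\psi(\x)$, $\theta_2 = \psi(\x) - \theta_1$. A direct computation gives $\langle\hat\g,\boldsymbol{w}\rangle = \cos(\psi-\theta_1)$, hence $\angle(\hat\g,\u_1(\x)) = \theta_1$ and $\angle(\hat\g,\boldsymbol{w}) = \theta_2$, while the strict inequality $\psi < \tilphi_1+\tilphi_2$ forces $\theta_1 < \tilphi_1(\x)$ and $\theta_2 < \tilphi_2(\x)$; when $\psi(\x) = 0$ I simply take $\hat\g = \u_1(\x) = \boldsymbol{w}$, giving both angles equal to $0$. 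Since $\angle(-\hat\g,\u_2(\x)) = \angle(\hat\g,\boldsymbol{w}) = \theta_2$, both memberships hold: if $\underline{L}_i(\x) < L$ then $\theta_i \in [0,\tilphi_i(\x)) = \Phi_i(\x)$, whereas if $\underline{L}_i(\x) = L$ then $\tilphi_i(\x) = 0$, forcing $\theta_i = 0 \in \{0\} = \Phi_i(\x)$ (note $\tilphi_1+\tilphi_2 > \psi \geq 0$ rules out both $\tilphi_i$ vanishing). Invoking Corollary~\ref{cor: function exist union} twice, with $\g = L\hat\g$ and with $-\g$, then produces $f_1, f_2$ and shows $\x \in \M$.

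It remains to treat $\x \in \mathcal{X}$, which is nonempty only when $\|\x_2^*-\x_1^*\| = L(\frac{1}{\sigma_1}+\frac{1}{\sigma_2})$; under the normalization $\x_1^*=(-r,\mathbf{0})$, $\x_2^*=(r,\mathbf{0})$ this is the single point $\x = (-r+\frac{L}{\sigma_1},\mathbf{0})$. Here $\x \in \partial\B_1 \cap \partial\B_2$, so $\underline{L}_1(\x) = \underline{L}_2(\x) = L$ and $\tilphi_1(\x)=\tilphi_2(\x)=0$, whence $\Phi_1(\x)=\Phi_2(\x)=\{0\}$; a short computation gives $\u_1(\x) = \boldsymbol{e}_1 = -\u_2(\x)$, so taking $\g = L\boldsymbol{e}_1$ makes both gradient angles exactly $0$ and Corollary~\ref{cor: function exist union} applies once more. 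I expect the main obstacle to be the directional construction in the second paragraph: checking that the two angle constraints are met \emph{simultaneously} in $\R^n$ (not merely that each is individually feasible), handling the degenerate configuration $\psi(\x) = 0$ and the boundary points where some $\tilphi_i(\x) = 0$, and respecting the half-open versus singleton structure of $\Phi_i(\x)$ so that the strict inequality $\tilphi_1 + \tilphi_2 > \psi$ is exactly what is needed. The remaining steps are routine verifications of the hypotheses of Corollary~\ref{cor: function exist union}.
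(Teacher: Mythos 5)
Your proposal is correct and follows essentially the same route as the paper's proof: both reduce the problem to constructing quadratic functions via Corollary~\ref{cor: function exist union}, using the identity $\psi(\x) = \angle(\u_1(\x), -\u_2(\x))$ from \eqref{eqn: psi equality} to find a single gradient direction $\g$ with $\|\g\| = L$ lying in both admissible angular cones $\Phi_1(\x)$ and $\Phi_2(\x)$, and treating $\mathcal{X}$ separately with $\g = L\boldsymbol{e}_1$. The only difference is that the paper simply asserts the existence of such a $\g$, whereas you supply the explicit proportional-split construction and carefully handle the degenerate cases $\tilphi_i(\x)=0$ and $\psi(\x)=0$ (noting only that $\tilphi_1+\tilphi_2 > \psi$ also forces $\psi(\x) < \pi$, so the orthonormalization of $-\u_2(\x)$ against $\u_1(\x)$ is well defined).
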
 

\begin{proof}
Suppose $\x \in ( \overline{\B}_1 \cap \overline{\B}_2 ) \setminus \{ \x_1^*, \x_2^* \}$. Recall the definition of $\mathcal{X}$ from \eqref{def: set X}. We want to show that if $\tilphi_1 (\x) + \tilphi_2 (\x) > \psi (\x)$ or $\x \in \mathcal{X}$, then $\x \in \M$. Recall the definition of $\u_i (\x)$ for $i \in \{ 1, 2 \}$ from \eqref{def: unit vector}. Consider the following two cases.
\begin{itemize}
    \item Suppose $\tilphi_1 (\x) + \tilphi_2 (\x) > \psi (\x)$. Since $\psi (\x) = \angle (\u_1 (\x), - \u_2 (\x) )$ from \eqref{eqn: psi equality}, there exists a vector $\g \in \R^n$ with $\| \g \| = L$ such that $\angle (\g, \u_1 (\x) ) < \tilphi_1 (\x)$ and $\angle (\g, - \u_2 (\x) ) < \tilphi_2 (\x)$. By the definition of $\Phi_i$ in \eqref{def: Phi}, this means that $\angle (\g, \u_1 (\x) ) \in \Phi_1 (\x)$ and $\angle (- \g, \u_2 (\x) ) \in \Phi_2 (\x)$.
    
    \item Suppose $\x \in \mathcal{X}$. Then, the point $\x = \big( -r + \frac{L}{\sigma_1}, \mathbf{0} \big)$ and  $-r + \frac{L}{\sigma_1} \in (-r, r)$ as discussed below \eqref{def: set X}. In this case, we choose $\g = L \boldsymbol{e}'_1$ where $\boldsymbol{e}'_1 = \frac{\x_2^* - \x_1^*}{\| \x_2^* - \x_1^* \|}$. This implies that $\angle ( \g, \u_1 (\x)) = 0 \in \Phi_1 (\x)$ and $\angle ( - \g, \u_2 (\x)) = 0 \in \Phi_2 (\x)$ by the definition of $\Phi_i$ in \eqref{def: Phi}.
\end{itemize}
Using Corollary~\ref{cor: function exist union}, for both cases, we have that for $i \in \{ 1, 2 \}$, there exist functions $f_i \in \bigcup_{\hat{\sigma} > \sigma_i} \Q (\x_i^*, \hat{\sigma} )$  such that $\g = \nabla f_1 (\x) = - \nabla f_2 (\x)$ and $\| \nabla f_1 (\x) \| = \| \nabla f_2 (\x) \| = L$. 
Using \eqref{eqn: set of functions inclusion}, we have that there exist $f_i \in \S (\x_i^*, \sigma )$ with $\| \nabla f_i (\x) \| \leq L$ for $i \in \{ 1, 2 \}$ and $\x$ is the minimizer of $f_1 + f_2$. Therefore, $\x \in \M$.
Since $\Mdo \subseteq ( \overline{\B}_1 \cap \overline{\B}_2 ) \setminus \{ \x_1^*, \x_2^* \}$, we conclude that $\M \supseteq \Mdo$.
\end{proof}

\begin{figure}
\centering
\subfloat[]{\includegraphics[width=.45\linewidth]{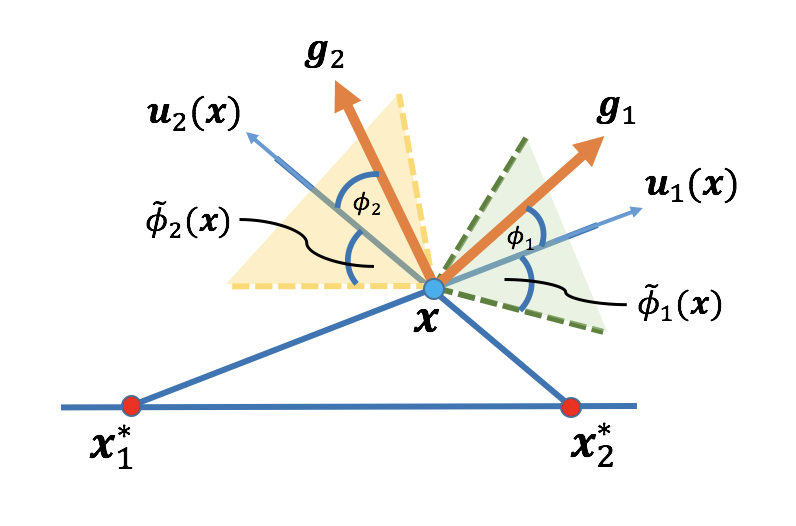} \label{fig: angle and sc fig1}}\quad
\subfloat[]{\includegraphics[width=.45\linewidth]{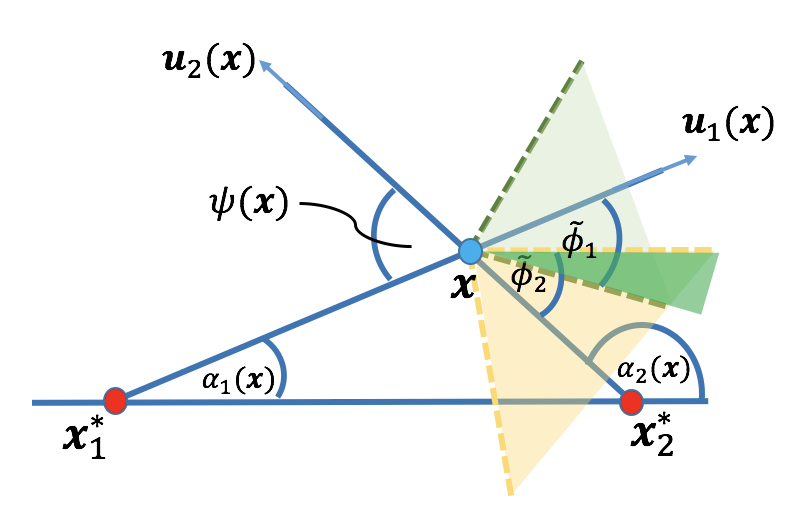} \label{fig: angle and sc fig2}}
\caption{(a) For given $\x$, $\x_1^*$, $\x_2^*$ and $\sigma$, the figure illustrates the regions where the vectors $\g_1$ and $\g_2$ with $\| \g_1 \| = \| \g_2 \| = L$ must lie, so that we can construct quadratic functions $f_i \in \bigcup_{\hat{\sigma} \geq \sigma} \Q (\x_i^*, \hat{\sigma})$ with $\nabla f_i (\x) = \g_i$ for $i \in \{ 1, 2 \}$.
Recall the definition of $\phi_i$ and $\Phi_i$ from \eqref{def: phi_i} and \eqref{def: Phi}, respectively. In particular, for $i \in \{ 1, 2 \}$, it is sufficient to have $\phi_i (\x) \in \Phi_i (\x)$ from Corollary~\ref{cor: function exist union}, i.e., pictorially, the vectors $\g_1$ and $\g_2$ must \textbf{strictly} lie in the corresponding shaded regions.
(b) The figure illustrates the inequality $\tilphi_1 (\x) + \tilphi_2 (\x) > \psi(\x)$ in the description of $\Mdo$ which means that there is an overlapping region (light green region in the figure) caused by one shaded region and the mirror of the other shaded region.}
\end{figure}

In two-dimensional space, for a given $\x \in ( \overline{\B}_1 \cap \overline{\B}_2 ) \setminus \{ \x_1^*, \x_2^* \}$, the geometrical interpretation of the inequality $\tilphi_1 (\x) + \tilphi_2 (\x) > \psi(\x)$, which is used to describe the inner approximation $\Mdo$, is represented in Fig.~\ref{fig: angle and sc fig2}.

Before characterizing the set $\Mdo$, recall the definition of $\lambda_i$ and $\nu_i$ in \eqref{var: lambda} and \eqref{var: nu}, respectively. For $i \in \{ 1, 2 \}$, we define
\begin{equation}
    \C_i \triangleq \big\{ \x \in \R^n: x_1 = \lambda_i, \;\; 
    \Vert \tilde{\mathbf{x}} \Vert = \nu_i \big\}.
    \label{def: set C_i}
\end{equation}

Comparing the definition of $\Mup$ in \eqref{def: set M up} to that of $\Mdo$ in \eqref{def: set M down}, we see that the description of $\Mdo$ involves a strict inequality while it is not for $\Mup$. Since the only difference is the inequality sign, we could expect to see similar results as in Theorem~\ref{thm: up BD}. Specifically, the following theorem provides a characterization of $\partial \Mdo$ and $( \Mdo )^\circ$ explicitly, and also a property of $\Mdo$. Since most parts of the proof are similar to that of Theorem~\ref{thm: up BD}, we defer the proof to Appendix~\ref{subsec: proof inner}.

\begin{theorem}
Assume $\sigma_1 \geq \sigma_2$. Let the sets $\Mdo$, $\T$, $\widetilde{\T}$, and $\B_i$ for $i \in \{ 1, 2 \}$ be defined as in \eqref{def: set M up}, \eqref{def: set T}, \eqref{def: set T tilde}, and \eqref{def: set B}, respectively. Also, let the sets $\H_i^+$ and $\H_i^-$ for $i \in \{ 1, 2 \}$ be defined as in \eqref{def: set H_i}, and the sets $\C_i$ for $i \in \{ 1, 2 \}$ be defined as in \eqref{def: set C_i}.
\begin{enumerate}[label=(\roman*)]
    \item \label{thm: down BD part1} If $r \in \big( 0, \; \frac{L}{2 \sigma_1} \big]$ then $\Mdo$ is open and
    \begin{displaymath}
        \partial \Mdo = \T \sqcup \{ \x_1^*, \x_2^* \}
        \quad \text{and} \quad
        (\Mdo)^{\circ} = \widetilde{\T}.
    \end{displaymath} 
    \item \label{thm: down BD part2} If $r \in \big( \frac{L}{2 \sigma_1}, \; \frac{L}{2 \sigma_2} \big]$ then $ (\Mdo \cup \C_1) \cap \H_1^+$ is closed while $\Mdo \cap (\H_1^+)^c$ is open, and
    \begin{align*}
        \partial \Mdo 
        &= \big[ \partial \B_1 \cap (\H_1^-)^c \big]
        \sqcup \T \sqcup \{ \x^*_1 \}
        \quad \text{and} \\
        (\Mdo)^{\circ}
        &= \big[ \B_1 \cap (\H_1^-)^c \big] \sqcup 
        \big[ \widetilde{\T} \cap \H_1^- \big].
    \end{align*}
    \item \label{thm: down BD part3} If $r \in \Big( \frac{L}{2 \sigma_2}, \; \frac{L}{2} \big( \frac{1}{\sigma_1} + \frac{1}{\sigma_2} \big) \Big)$ then $(\Mdo \cup \C_1) \cap \H_1^+$ and $(\Mdo \cup \C_2) \cap \H_2^-$ are closed while $\Mdo \cap ( \H_1^+ \cup \H_2^- )^c$ is open, and
    \begin{align*}
        \partial \Mdo 
        &= \big[ \partial \B_1 \cap (\H_1^-)^c \big]
        \sqcup \big[ \partial \B_2 \cap (\H_2^+)^c \big]
        \sqcup \T
        \quad \text{and} \\
        (\Mdo)^{\circ}
        &= \big[ \B_1 \cap (\H_1^-)^c \big] 
        \sqcup \big[ \B_2 \cap (\H_2^+)^c \big] 
        \sqcup \big[ \widetilde{\T} \cap ( \H_1^- \cap \H_2^+ ) \big].
    \end{align*}
    \item \label{thm: down BD part4} If $r = \frac{L}{2} \big( \frac{1}{\sigma_1} + \frac{1}{\sigma_2} \big)$ then $\Mdo = \Big\{ \Big( \frac{L}{2} \big(\frac{1}{\sigma_1} - \frac{1}{\sigma_2} \big), \; \mathbf{0} \Big) \Big\}$.
    \item \label{thm: down BD part5} If $r \in \Big( \frac{L}{2} \big( \frac{1}{\sigma_1} + \frac{1}{\sigma_2} \big), \; \infty \Big)$ then $\Mdo = \emptyset$.
\end{enumerate}
\label{thm: down BD}
\end{theorem}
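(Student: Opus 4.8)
The plan is to piggyback on Theorem~\ref{thm: up BD}, exploiting the fact that $\Mup$ and $\Mdo$ differ only by the curve $\T$ and the set $\mathcal{X}$. Writing $\varphi = \tilphi_1 + \tilphi_2 - \psi$ as in the proof of Theorem~\ref{thm: up BD}, Proposition~\ref{prop: T_n} gives $\T = \{ \varphi = 0 \}$ and $\widetilde{\T} = \{ \varphi > 0 \}$, so on $\dom(\varphi)$ we have $\Mup = \widetilde{\T} \sqcup \T$ while $\Mdo = \widetilde{\T} \cup \mathcal{X}$. The crucial bookkeeping observation is that $\mathcal{X} \neq \emptyset$ precisely when $2r = L \big( \frac{1}{\sigma_1} + \frac{1}{\sigma_2} \big)$, i.e. only in part~\ref{thm: down BD part4}; in every other part $\mathcal{X} = \emptyset$, whence $\Mdo = \widetilde{\T} = \Mup \setminus \T$.

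For parts~\ref{thm: down BD part1}--\ref{thm: down BD part3} I would obtain the boundary and interior directly from their $\Mup$ counterparts. Monotonicity of the interior under $\Mdo \subseteq \Mup$ gives $(\Mdo)^\circ \subseteq (\Mup)^\circ$; conversely, since $\T \subseteq \partial \Mup$ by Lemma~\ref{lem: boundary}, the open set $(\Mup)^\circ$ avoids $\T$ and so lies in $\widetilde{\T} = \Mdo$, whence $(\Mup)^\circ \subseteq (\Mdo)^\circ$. Thus $(\Mdo)^\circ = (\Mup)^\circ$. For the closure, $\overline{\Mdo} \subseteq \overline{\Mup}$ is immediate, while $\T \subseteq \partial \Mdo \subseteq \overline{\Mdo}$ (again Lemma~\ref{lem: boundary}) forces $\overline{\Mup} = \overline{\Mdo \cup \T} = \overline{\Mdo}$. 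Therefore $\partial \Mdo = \partial \Mup$, and since the formulas in parts~\ref{thm: down BD part1}--\ref{thm: down BD part3} coincide verbatim with those of Theorem~\ref{thm: up BD}, the boundary and interior characterizations follow at once.

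The genuinely new content is the openness/closedness dichotomy, which I would settle with two clean identities rather than a fresh region-by-region argument. First, using that $\T$ meets the relevant half-space only along the corner set $\C_i$ (namely $\T \cap \H_1^+ = \C_1$ and $\T \cap \H_2^- = \C_2$, which follow from $\T \cap (\H_1^-)^c = \emptyset$ and $\T \cap (\H_1^+ \cap \H_1^-) = \C_1$ in the proof of Theorem~\ref{thm: up BD}, together with Lemma~\ref{lem: intersection} part~\ref{lem: intersection 2} and the symmetric statements for $\C_2$), I would obtain $(\Mdo \cup \C_1) \cap \H_1^+ = \Mup \cap \H_1^+$ and $(\Mdo \cup \C_2) \cap \H_2^- = \Mup \cap \H_2^-$; each is closed as the intersection of a closed half-space with the closed set $\Mup$ (or with $\Mup \sqcup \{ \x_1^* \}$ in part~\ref{thm: down BD part2}, after noting $\x_1^* \notin \H_1^+$). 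Second, for the open pieces I would intersect the explicit formula for $(\Mup)^\circ$ with the complementary open region, $(\H_1^+)^c$ in part~\ref{thm: down BD part2} and $(\H_1^+ \cup \H_2^-)^c$ in part~\ref{thm: down BD part3}; the open-ball pieces $\B_i \cap (\cdot)$ drop out, leaving exactly $\widetilde{\T} \cap (\cdot) = \Mdo \cap (\cdot)$, which is open as an intersection of open sets. Part~\ref{thm: down BD part1} degenerates to $\Mdo = \widetilde{\T} = (\Mup)^\circ$, so $\Mdo$ is open. The junction-point bookkeeping — deciding which boundary piece (the curve $\T$ and the corners $\C_i$, excluded by the strict inequality, versus the sphere arcs, which satisfy $\varphi > 0$ and are retained in $\Mdo$) belongs where — is the main obstacle and the only place strictness matters.

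Finally, parts~\ref{thm: down BD part4} and~\ref{thm: down BD part5} are immediate. When $2r = L \big( \frac{1}{\sigma_1} + \frac{1}{\sigma_2} \big)$, the set $\overline{\B}_1 \cap \overline{\B}_2$ reduces to the single point at which $\tilphi_1 + \tilphi_2 = \psi = 0$, so $\widetilde{\T} = \emptyset$ and $\Mdo = \mathcal{X} = \big\{ \big( \frac{L}{2} \big( \frac{1}{\sigma_1} - \frac{1}{\sigma_2} \big), \mathbf{0} \big) \big\}$; when $2r > L \big( \frac{1}{\sigma_1} + \frac{1}{\sigma_2} \big)$ we have $\overline{\B}_1 \cap \overline{\B}_2 = \emptyset$ and $\mathcal{X} = \emptyset$, so $\Mdo = \emptyset$.
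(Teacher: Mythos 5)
Your proposal is correct, but it takes a genuinely different route from the paper. The paper's proof of Theorem~\ref{thm: down BD} re-runs the region-by-region machinery of Theorem~\ref{thm: up BD} almost verbatim: it re-partitions space into $(\H_1^-)^c$, $\H_1^+\cap\H_1^-$, $(\H_1^+)^c$ (and their part-(iii) analogues), re-derives $\partial\Mdo$ on each piece via Lemmas~\ref{lem: point on R_n}--\ref{lem: boundary inclusion}, and then computes $\Mdo\cap\H_1^+$ and $\Mdo\cap(\H_1^+)^c$ explicitly to settle the closedness/openness claims. You instead extract everything from the already-proved Theorem~\ref{thm: up BD} plus two set identities: $\Mup=\Mdo\sqcup\T$ when $\mathcal{X}=\emptyset$ (i.e., in all parts except~\ref{thm: down BD part4}), and $\T\cap\H_1^+=\C_1$, $\T\cap\H_2^-=\C_2$ (which are indeed available from $\T\cap(\H_1^-)^c=\emptyset$, $\T\cap(\H_1^+\cap\H_1^-)=\C_1$, and their symmetric counterparts inside the proof of Theorem~\ref{thm: up BD}). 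Your sandwich argument for the interior ($(\Mup)^\circ$ avoids $\T\subseteq\partial\Mup$, hence lands in $\widetilde{\T}=\Mdo$) and for the closure ($\T\subseteq\partial\Mdo\subseteq\overline{\Mdo}$ by Lemma~\ref{lem: boundary}, so adjoining $\T$ does not change the closure) is sound and involves no circularity, since Lemma~\ref{lem: boundary} establishes $\T\subseteq\partial\Mdo$ independently. The identities $(\Mdo\cup\C_1)\cap\H_1^+=\Mup\cap\H_1^+=(\Mup\sqcup\{\x_1^*\})\cap\H_1^+$ (using $\lambda_1>-r$, available from the computation in Lemma~\ref{lem: boundary inclusion}) and $\Mdo\cap(\H_1^+)^c=(\Mup)^\circ\cap(\H_1^+)^c$ then deliver the topological dichotomy as intersections of closed (resp.\ open) sets. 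What your approach buys is economy and transparency: it makes manifest that the two theorems must share identical boundary and interior formulas because $\Mup$ and $\Mdo$ differ only by the measure-zero curve $\T$ sitting on both boundaries — essentially anticipating the mechanism behind Theorem~\ref{thm: boundary M}. What the paper's approach buys is locality: each region is verified independently, which is easier to audit piecewise and does not require first proving that $\x_1^*\notin\H_1^+$ or tracking which corner sets $\C_i$ absorb the intersection $\T\cap\H_i^{\pm}$.
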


Examples of the boundary $\partial \Mdo$ in $\R^2$ for the first three cases of Theorem~\ref{thm: down BD} are shown in Fig.~\ref{fig: inner boundary}. Again, we consider parameters $\sigma_1 = 1.5$, $\sigma_2 = 1$ and $L = 10$.
For $r = 2$, as we can see from Fig.~\ref{fig: boundary SC Case1}, we have $\partial \Mdo = \T \sqcup \{ \x_1^*, \x_2^* \}$ (i.e., dotted blue line + two red dots) consistent with part~\ref{thm: down BD part1} of Theorem~\ref{thm: down BD}.
For $r = 4$, as we can see from Fig.~\ref{fig: boundary SC Case2}, we have $\partial \Mdo = \big[ \partial \B_1 \cap (\H_1^-)^c \big] \sqcup \T \sqcup \{ \x^*_1 \}$ (i.e., solid cyan line + dotted blue line + left red dot) consistent with part~\ref{thm: down BD part2} of Theorem~\ref{thm: down BD}.
For $r = 6$, as we can see from Fig.~\ref{fig: boundary SC Case3}, we have $\partial \Mdo = \big[ \partial \B_1 \cap (\H_1^-)^c \big] \sqcup \big[ \partial \B_2 \cap (\H_2^+)^c \big] \sqcup \T$ (i.e., solid cyan line + solid magenta line + dotted blue line) consistent with part~\ref{thm: down BD part3} of Theorem~\ref{thm: down BD}.
Note that the dotted blue line in the figures indicates that the corresponding set of points is not a subset of the inner approximation $\Mdo$, i.e., $\T \not\subseteq \Mdo$ whereas the solid cyan line and solid magenta line indicate that $\partial \B_1 \cap ( \H_1^- )^c \subseteq \Mdo$ and $\partial \B_2 \cap ( \H_2^+ )^c \subseteq \Mdo$, respectively.

\begin{figure}
\centering
\subfloat[For $r=2$, the characterization of boundary $\partial \Mdo$ corresponds to Theorem~\ref{thm: down BD} part~\ref{thm: down BD part1}.]{\includegraphics[width=.30\textwidth]{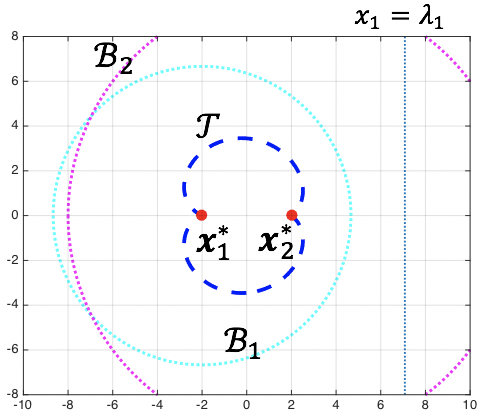} \label{fig: boundary SC Case1}}\;\;
\subfloat[For $r=4$, the characterization of boundary $\partial \Mdo$ corresponds to Theorem~\ref{thm: down BD} part~\ref{thm: down BD part2}.]{\includegraphics[width=.30\textwidth]{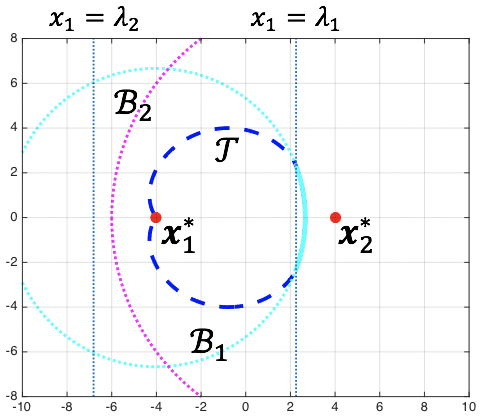} \label{fig: boundary SC Case2}}\;\;
\subfloat[For $r=6$, the characterization of boundary $\partial \Mdo$ corresponds to Theorem~\ref{thm: down BD} part~\ref{thm: down BD part3}.]{\includegraphics[width=.30\textwidth]{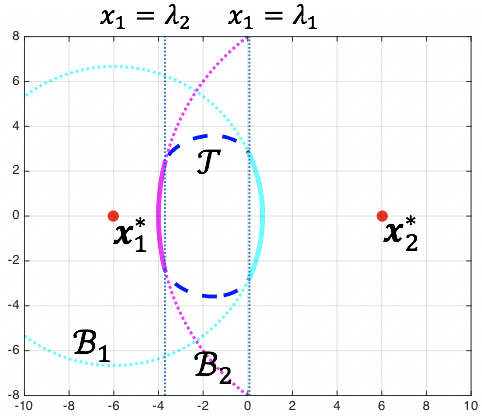} \label{fig: boundary SC Case3}}
\caption{The boundary $\partial \Mdo$ in $\R^2$ with different values of $r$ for two original minimizers $x_1^*=(-r,0)$ and $x_2^*=(r,0)$ is plotted given fixed parameters $\sigma_1 = 1.5$, $\sigma_2 = 1$, and $L = 10$. The set $\T$ is represented by blue dashed lines since $\T \subseteq (\Mdo)^c$, while the sets $\partial \B_1 \cap (\H_1^-)^c$ and $\partial \B_2 \cap (\H_2^+)^c$ are represented by cyan and magenta solid lines, respectively since they are both subsets of $\Mdo$. The vertical dotted lines represent the equations $x_1 = \lambda_1$ and $x_1 = \lambda_2$ and note that the value of $\lambda_1$ and $\lambda_2$ depends on $r$.}
\label{fig: inner boundary}
\end{figure}
\section{Potential solution region}  \label{sec: solution region}

In this section, using results from analyzing the outer approximation $\Mup$ in Section~\ref{sec: outer} and the inner approximation $\Mdo$ in Section~\ref{subsec: characterization}, we derive relationships among the potential solution region $\M$ (which is the set that we want to identify), outer approximation $\Mup$ and inner approximation $\Mdo$.

Before stating the main theorem, we summarize the inclusions among the three sets. Specifically, based on Proposition~\ref{prop: angle nc} and Proposition~\ref{prop: angle sc}, we get $\M \subseteq \Mup$ and $\Mdo \subseteq \M$, respectively, and we can state the following proposition.

\begin{proposition}
Suppose the sets $\M$, $\Mup$ and $\Mdo$ are defined as in \eqref{def: set M}, \eqref{def: set M up} and \eqref{def: set M down}, respectively. Then, $\Mdo \subseteq \M \subseteq \Mup$.
\label{prop: M inclusion} 
\end{proposition}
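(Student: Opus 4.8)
The plan is to obtain this proposition as an immediate consequence of the two inclusions already established earlier in the paper, combining them by transitivity of set inclusion. First I would invoke Proposition~\ref{prop: angle sc}, which asserts $\M \supseteq \Mdo$, i.e., $\Mdo \subseteq \M$. Next I would invoke Proposition~\ref{prop: angle nc}, which asserts $\M \subseteq \Mup$. Chaining these two inclusions yields $\Mdo \subseteq \M \subseteq \Mup$, which is precisely the claim. No fresh construction or estimate is needed, since all the analytical content lives in the two component propositions: Proposition~\ref{prop: angle sc} is proved by explicitly building quadratic functions via Corollary~\ref{cor: function exist union} whenever $\tilphi_1(\x) + \tilphi_2(\x) > \psi(\x)$ (or $\x \in \mathcal{X}$), while Proposition~\ref{prop: angle nc} is proved by deriving the necessary angle condition $\tilphi_1(\x) + \tilphi_2(\x) \geq \psi(\x)$ from strong convexity together with the gradient bound.

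Consequently there is no genuine obstacle here; the proof is a single line of reasoning. The only point worth verifying explicitly is a bookkeeping one: that all three sets $\M$, $\Mup$, and $\Mdo$ are taken with respect to the \emph{same} parameter tuple $(\x_1^*, \x_2^*, \sigma_1, \sigma_2, L)$ and under the same normalization $\x_1^* = (-r, \mathbf{0})$, $\x_2^* = (r, \mathbf{0})$ used throughout Sections~\ref{sec: outer} and~\ref{sec: inner}. Since both Proposition~\ref{prop: angle nc} and Proposition~\ref{prop: angle sc} are stated for this common instance, the two inclusions refer to the identical $\M$ and compose directly, completing the argument.
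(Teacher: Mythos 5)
Your proposal is correct and matches the paper exactly: the paper also obtains Proposition~\ref{prop: M inclusion} directly by combining Proposition~\ref{prop: angle nc} ($\M \subseteq \Mup$) with Proposition~\ref{prop: angle sc} ($\Mdo \subseteq \M$), with no additional argument. Your remark about checking that all sets use the same parameter tuple is a reasonable bookkeeping note but adds nothing beyond what the paper implicitly assumes.
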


Since Theorem~\ref{thm: up BD} and Theorem~\ref{thm: down BD} are similar, we will see that, in fact, the boundary of the outer approximation $\partial \Mup$ and inner approximation $\partial \Mdo$ are equal to the boundary of the potential solution region $\partial \M$ for all values of $r$. This means that we obtain the explicit characterization of $\partial \M$ from Theorem~\ref{thm: up BD} or Theorem~\ref{thm: down BD}. We present this result in the following theorem. 

\begin{theorem}
Suppose $\M$, $\Mup$ and $\Mdo$ are defined as in \eqref{def: set M}, \eqref{def: set M up} and \eqref{def: set M down}, respectively. Then, $\partial \M = \partial \Mup = \partial \Mdo$.
\label{thm: boundary M}
\end{theorem}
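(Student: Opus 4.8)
The plan is to reduce the statement to a purely topological sandwiching argument, using the explicit formulas already computed in Theorem~\ref{thm: up BD} and Theorem~\ref{thm: down BD} together with the inclusion $\Mdo \subseteq \M \subseteq \Mup$ from Proposition~\ref{prop: M inclusion}. The first and decisive step is to observe, by comparing the two theorems case by case over the ranges of $r$, that the outer and inner approximations have identical boundaries and identical interiors. Indeed, in part~\ref{thm: up BD part1}/\ref{thm: down BD part1} both give $\partial\Mup = \partial\Mdo = \T \sqcup \{\x_1^*,\x_2^*\}$ and interior $\widetilde{\T}$; in parts \ref{thm: up BD part2}/\ref{thm: down BD part2} and \ref{thm: up BD part3}/\ref{thm: down BD part3} the stated formulas for $\partial\Mup$, $(\Mup)^\circ$ coincide verbatim with those for $\partial\Mdo$, $(\Mdo)^\circ$; and in parts \ref{thm: up BD part4}/\ref{thm: down BD part4} and \ref{thm: up BD part5}/\ref{thm: down BD part5} both sets are the same singleton or both empty, so their interiors (empty) and boundaries agree trivially. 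Hence I record the two facts $\partial \Mup = \partial \Mdo$ and $(\Mup)^\circ = (\Mdo)^\circ$, valid for every $r > 0$.

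Next I would invoke the elementary identity $\overline{A} = A^\circ \sqcup \partial A$, which holds for any subset $A$ of a topological space because $\partial A = \overline{A}\setminus A^\circ$ by the definition adopted in Section~\ref{sec: preliminaries}. Applying this to $\Mup$ and $\Mdo$ and using the two equalities just established yields
\begin{equation*}
    \overline{\Mup} = (\Mup)^\circ \sqcup \partial\Mup = (\Mdo)^\circ \sqcup \partial\Mdo = \overline{\Mdo}.
\end{equation*}
Thus the outer and inner approximations share the same closure as well as the same interior.

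Now I would exploit monotonicity of the closure and interior operators on the chain $\Mdo \subseteq \M \subseteq \Mup$ from Proposition~\ref{prop: M inclusion}. Taking closures gives $\overline{\Mdo} \subseteq \overline{\M} \subseteq \overline{\Mup}$, and since the two ends coincide we conclude $\overline{\M} = \overline{\Mup} = \overline{\Mdo}$. Taking interiors gives $(\Mdo)^\circ \subseteq \M^\circ \subseteq (\Mup)^\circ$, and again the ends coincide, so $\M^\circ = (\Mup)^\circ = (\Mdo)^\circ$. Combining these, $\partial\M = \overline{\M}\setminus\M^\circ = \overline{\Mup}\setminus(\Mup)^\circ = \partial\Mup$, and identically $\partial\M = \partial\Mdo$, which is the claim.

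The topological half of the argument is completely routine; the genuine work — already done in the two preceding theorems — is the case analysis establishing $\partial\Mup=\partial\Mdo$ and $(\Mup)^\circ=(\Mdo)^\circ$, so the main ``obstacle'' here is simply the careful bookkeeping of reading off these equalities across all five ranges of $r$. One point worth flagging is the degenerate regime $r \ge \frac{L}{2}\big(\frac{1}{\sigma_1}+\frac{1}{\sigma_2}\big)$ (parts \ref{thm: up BD part4}--\ref{thm: up BD part5}): there $\Mup$ and $\Mdo$ are a common singleton or both empty, so the sandwich forces $\M$ to equal that same set, and the boundary identity holds because a point in $\R^n$ has empty interior and the empty set has empty boundary. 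Pleasantly, the topological sandwich treats these edge cases on exactly the same footing as the generic ones, so no separate handling is required beyond confirming the interior/closure equalities.
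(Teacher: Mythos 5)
Your proposal is correct and follows essentially the same route as the paper: read off $\partial \Mup = \partial \Mdo$ and $(\Mup)^{\circ} = (\Mdo)^{\circ}$ from Theorems~\ref{thm: up BD} and \ref{thm: down BD}, deduce $\overline{\Mup} = \overline{\Mdo}$ via $\overline{A} = A^{\circ} \sqcup \partial A$, and then sandwich $\M$ using Proposition~\ref{prop: M inclusion} and monotonicity of closure and interior. Your explicit handling of the degenerate ranges of $r$ is a small courtesy the paper leaves implicit, but it changes nothing substantive.
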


\begin{proof}
Recall from Proposition~\ref{prop: M inclusion} that $\Mdo \subseteq \M \subseteq \Mup$. This entails that $(\Mdo)^{\circ} \subseteq (\M)^{\circ} \subseteq (\Mup)^{\circ}$ and $\overline{\Mdo} \subseteq \overline{\M} \subseteq \overline{\Mup}$. On the other hand, we have $(\Mdo)^{\circ} = (\Mup )^{\circ}$ and $\partial \Mdo = \partial \Mup$ from Theorem~\ref{thm: up BD} and Theorem~\ref{thm: down BD}.
Combine the facts regarding the interiors to obtain that $(\Mdo)^{\circ} = (\M )^{\circ} =  (\Mup )^{\circ}$. Then, we can write
\begin{equation*}
    \overline{\Mdo}
    = (\Mdo)^{\circ} \sqcup \partial \Mdo
    = (\Mup)^{\circ} \sqcup \partial \Mup
    = \overline{\Mup}.
\end{equation*}
Combining the above equation with $\overline{\Mdo} \subseteq \overline{\M} \subseteq \overline{\Mup}$, we can write $\overline{\Mdo} = \overline{\M} = \overline{\Mup}$. Since $\partial \mathcal{E} = \overline{\mathcal{E}} \setminus \mathcal{E}^{\circ}$ for any subset $\mathcal{E}$ in a topological space, we conclude that $\partial \M = \partial \Mup = \partial \Mdo$.
\end{proof}

Recall the definition of $\M$, $\T$, $\{ \partial \B_i \; \text{for} \; i \in \{ 1, 2 \} \}$ and $\{ \H_1^-, \H_2^+ \}$ from \eqref{def: set M}, \eqref{def: set T}, \eqref{def: set B boundary}, and \eqref{def: set H_i}, respectively.
Assuming that $\sigma_1 \geq \sigma_2$, we summarize a characterization of the potential solution region $\M$ as follows:
\begin{equation}
    \begin{cases}
    \partial \M = \T \sqcup \{ \x_1^*, \x_2^* \}
    & \text{if} \quad r \in \big( 0, \; \frac{L}{2 \sigma_1} \big], \\
    \partial \M = \big[ \partial \B_1 \cap (\H_1^-)^c \big] \sqcup \T \sqcup \{ \x^*_1 \}
    & \text{if} \quad r \in \big( \frac{L}{2 \sigma_1}, \; \frac{L}{2 \sigma_2} \big], \\
    \partial \M = \big[ \partial \B_1 \cap (\H_1^-)^c \big] \sqcup \big[ \partial \B_2 \cap (\H_2^+)^c \big] \sqcup \T
    & \text{if} \quad r \in \Big( \frac{L}{2 \sigma_2}, \; \frac{L}{2} \big( \frac{1}{\sigma_1} + \frac{1}{\sigma_2} \big) \Big), \\
    \M = \Big\{ \Big( \frac{L}{2} \big(\frac{1}{\sigma_1} - \frac{1}{\sigma_2} \big), \; \mathbf{0} \Big) \Big\} 
    & \text{if} \quad r = \frac{L}{2} \big( \frac{1}{\sigma_1} + \frac{1}{\sigma_2} \big), \\
    \M = \emptyset 
    & \text{if} \quad r \in \Big( \frac{L}{2} \big( \frac{1}{\sigma_1} + \frac{1}{\sigma_2} \big), \; \infty \Big),
    \end{cases}
    \label{eqn: final M}
\end{equation}
where the first three equations are obtained by applying Theorem~\ref{thm: up BD} and Theorem~\ref{thm: down BD} to Theorem~\ref{thm: boundary M}, and the last two equations are obtained by applying Theorem~\ref{thm: up BD} and Theorem~\ref{thm: down BD} to Proposition~\ref{prop: M inclusion}.
Examples of the boundary $\partial \M$ in $\R^2$ with different values of $r$ are shown in Fig.~\ref{fig: boundary M}. In particular, the solid blue, cyan and magenta curves in the figure correspond to the sets $\T$, $\partial \B_1 \cap (\H_1^-)^c$ and $\partial \B_2 \cap (\H_2^+)^c$, respectively.
\section{Discussion} \label{sec: discussion}

\subsection{A different perspective of Assumption~\ref{asm: grad norm}}
\label{subsec: assumption3}

Rather than viewing $L$ solely as a bound on the gradient norm of $f_1$ and $f_2$ at the minimizer $\x^* = \argmin_{\x} (f_1 + f_2) (\x)$ as discussed in Section~\ref{sec: problem}, it is insightful to consider $L$ as a parameter influencing the size of the potential solution region $\M$. In essence, an increase in the parameter $L$ results in a larger potential solution region $\M$. This interpretation is based on the following analysis.

As $L$ increases, the quantities $\Tilde{\phi}_i (\x) = \arccos \left( \frac{\sigma_i}{L} \| \x - \x_i^* \| \right)$ for $i \in \{ 1, 2 \}$ (defined in \eqref{def: phi tilde}) increase, while the quantity $\psi (\x) = \pi - \left( \alpha_2 (\x) - \alpha_1 (\x) \right)$ (as defined in \eqref{def: psi}) remains unchanged. Given that both the outer approximation $\Mup$ \eqref{def: set M up} and the inner approximation $\Mdo$ \eqref{def: set M down} are constructed based on the inequality $\Tilde{\phi}_1 (\x) + \Tilde{\phi}_2 (\x) > \psi (\x)$ (differing only at a point and the equality sign), an increase in $L$ results in the expansion of both $\Mup$ and $\Mdo$. Consequently, the potential solution region $\M$ (defined in \eqref{def: set M}) enlarges, given the coincided boundary with both the outer and inner approximations, as established by Theorem~\ref{thm: boundary M}. Specifically, we have the following relationships:
\begin{equation}
    \begin{cases}
        \emptyset \neq \M(\x_1^*, \x_2^*, \sigma_1, \sigma_2, L_{\text{a}}) \subset \M(\x_1^*, \x_2^*, \sigma_1, \sigma_2, L_{\text{b}})
        & \text{if} \quad 2r \big( \frac{1}{\sigma_1} + \frac{1}{\sigma_2} \big)^{-1} \leq L_{\text{a}} < L_{\text{b}}, \\
        \M(\x_1^*, \x_2^*, \sigma_1, \sigma_2, L) = \emptyset
        & \text{if} \quad L < 2r \big( \frac{1}{\sigma_1} + \frac{1}{\sigma_2} \big)^{-1}.
    \end{cases}
    \label{eqn: L increase}
\end{equation}
Thus, the choice of $L$ effectively serves as a means to control the conservativeness of the resulting solution region, with larger $L$ leading to a more conservative and larger solution region. 

In addition, consider the characterization of the potential solution region $\M$ in \eqref{eqn: final M}. Rather than formulating the conditions in terms of the distance between the two minimizers $\x_1^*$ and $\x_2^*$, we can express them in relation to the parameter $L$ to gain insights into the composition of the potential solution region $\M$ for a given set of parameters. However, it is important to note that the dependency of the size of the potential solution region $\M$ on the distance $r$ is not monotonic, unlike its relationship with $L$.

\subsection{Alternative assumptions}
\label{subsec: alternative assumptions}

Conceptually, we can derive an implicit bound on the gradients $L$ (as introduced in Assumption~\ref{asm: grad norm}) by assuming the smoothness of the functions defined on a compact convex domain. This smoothness property is a standard assumption for strongly convex functions in the optimization literature \cite{bubeck2015convex, nesterov2018lectures}. Suppose $\mathcal{C} \subset \R^n$ denotes a compact convex set with diameter $D$. In other words, $D = \sup \{ \| \x - \y \| \in \R : \x, \y \in \mathcal{C} \}$. Furthermore, for $i \in \{ 1, 2 \}$, suppose a differentiable function $f_i: \mathcal{C} \to \R$ has an $L_i$-Lipschitz continuous gradient (i.e., $L_i$-smooth). Mathematically, we can write $\| \nabla f_i(\x) - \nabla f_i(\y) \| \leq L_i \| \x - \y \|$ for all $\x, \y \in \mathcal{C}$. Based on these assumptions, we have $\| \nabla f_i(\x) \| \leq D \max_j L_j$ for all $\x \in \mathcal{C}$ and $i \in \{ 1, 2 \}$, and thus, we can set $L = D \max_i L_i$. However, it is important to note that the parameter $L$ in this setting is relatively more conservative, as the bound applies not only to the point corresponding to the minimizer of the sum, $\argmin (f_1 + f_2)$, but also to every point in the domain $\mathcal{C}$. 

As another alternative, considering that our analysis revolves around examining the gradient angle $\phi_i$ as defined in \eqref{def: phi_i}, an appealing departure from Assumption~\ref{asm: grad norm} could involve directly constraining the class of functions by imposing a bound on the gradient angle, as opposed to enforcing a limit on the gradient norm at the minimizer of the sum. Specifically, we could set conditions such as
\begin{equation*}
    \phi_i (\x) = \angle ( \nabla f_i (\x), \x - \x_i^* ) \leq \theta_i 
    \quad \text{for all} \quad \x \in \dom (f_i) \setminus { \x_i^* }
    \quad \text{and} \quad i \in \{ 1, 2 \},
\end{equation*}
where $\theta_i$ is a parameter defining the class of functions. This condition has the advantage of interpretability, with $\theta_i$ representing the maximum allowable sensitivity of the function $f_i$. Interestingly, it also has a close connection to the condition number of $f_i$ as discussed further below.

For simplicity, we will omit the agent index subscript $i$ in the subsequent analysis and assume $\x_i^* = \0$. Consider a quadratic function $f(\x) = \frac{1}{2} \| \A \x \|^2$, where $\A \in \R^{d \times d}$ is a positive definite matrix. Now, we will examine the quantity $\sup_{\x \neq \0} \angle (\g(\x), \x)$, where $\g(\x) = \nabla f(\x) = \A^\intercal \A \x$. We aim to demonstrate that 
\begin{equation*}
    \sec \Big( \sup_{\x \neq \0} \angle (\g (\x), \x) \Big) \leq (\| \A \| \cdot \| \A^{-1} \|)^2 \triangleq \kappa,
\end{equation*}
where ${\| \; \cdot \; \|}$ denotes the induced matrix norm, and $\kappa$ is the condition number associated with the function $f$ \cite{gutman2021condition}. It is noteworthy that this inequality, with the replacement of $\sup_{\x \neq \0} \angle (\g (\x), \x)$ by $\sup_{\x \neq \x_*} \angle (\g (\x), \x - \x_*)$, holds for the more general case of $f(\x) = \frac{1}{2} \| \A ( \x - \x_* ) + \b \|^2$ with $\x_* \in \R^d$ and $\b \in \R^d$.

To show such result, we proceed as follows:
\begin{align*}
    \cos \Big( \sup_{\x \neq \0} \angle ( \g (\x), \x ) \Big)
    &= \inf_{\x \neq \0} \big( \cos \angle ( \g (\x), \x ) \big) 
    = \inf_{\x \neq \0} \left ( \frac{ \langle \g (\x), \x \rangle }{\| \g (\x) \| \cdot \| \x \|} \right ) \\
    &= \inf_{\x \neq \0} \left ( \frac{\| \A \x \|^2}{\| \x \|^2} \cdot \frac{\| \x \|}{\| \A^\intercal \A \x \|} \right )  
    \geq \bigg ( \inf_{\x \neq \0} \frac{\| \A \x \|}{\| \x \|} \bigg )^2 \bigg ( \sup_{\x \neq \0} \frac{\| \A^\intercal \A \x \|}{\| \x \|} \bigg )^{-1} \\
    &\geq \big( \| \A^{-1} \| \cdot \| \A \| \big)^{-2}.
\end{align*}
In the last inequality, we utilize the properties that $\inf_{\x \neq \0} \frac{\| \A \x \|}{\| \x \|} = \frac{1}{\| \A^{-1} \|}$ due to the invertibility of $\A$ and $\sup_{\x \neq \0} \frac{\| \A^\intercal \A \x \|}{\| \x \|} = \| \A^\intercal \A \| \leq \| \A \|^2$ due to the sub-multiplicative property of induced matrix norm, and $\| \A^\intercal \| = \| \A \|$.

\subsection{Discussion on quadratic function results}
\label{subsec: quadratic-discussion}

While the inner approximation result, Proposition~\ref{prop: angle sc}, specifically employs the reverse direction of the quadratic function result, Proposition~\ref{prop: function exist n-dim}, to establish the existence of a quadratic function with a specified minimizer, curvature, and gradient at a particular point, it is worth noting that Proposition~\ref{prop: function exist n-dim} applies to both directions and generally pertains to quadratic functions with positive definite Hessian matrices. Given its significance, we take a moment to summarize our findings here.

Essentially, the proposition offers an equivalent condition for the existence of a quadratic function $f$ with a specified gradient $\g \in \R^n$ at a given point $\x_0 \in \R^n$, namely $\nabla f (\x_0) = \g$. The two conditions are restated here for convenience (assuming $\x^*$ is the minimizer with $\x^* \neq \x_0$, and $\sigma$ is the minimum eigenvalue of the Hessian matrix of quadratic function $f$).
\begin{enumerate}[label=(\roman*)]
    \item $\x_0 \in \overline{\B} \big( \x^*, \frac{\| \g \|}{\sigma} \big)$ and
    \item $\angle ( \g, \x_0 - \x^*) \in \{ 0 \} \cup \Big[0, \; \arccos \big( \frac{\sigma}{\Vert \g \Vert} \Vert \x_0 - \x^* \Vert \big) \Big)$.
\end{enumerate}
It is important to note that these two conditions are straightforward to verify, as the second condition reduces to confirming that $\langle \g, \x_0 - \x^* \rangle > \sigma \| \x_0 - \x^* \|^2$ or $\g = k (\x_0 - \x^*)$ for some $k \in \R_+$. 

Next, we describe the geometric interpretation of the two conditions as follows: The first condition, relevant to the location of the point $\x_0$, arises due to the lower limit of the gradient norm's linear growth associated with the quadratic function. On the other hand, the second condition is related to the upper limit on the gradient deviation from the vector $\x_0 - \x^*$. We recall $\underline{L} (\x) = \sigma \| \x - \x^* \|$ from \eqref{eqn: L_lb} (with the omitted subscripted index), which in this case can be interpreted as the lower bound of the norm of the gradient of quadratic function $f \in \Q (\x^*, \sigma)$ at point $\x \in \R^n$. Thus, the second condition actually depends on the gradient ratio $\sfrac{\underline{L} (\x)}{\| \g \|}$. Interestingly, the first condition forces $\| \g \|$ to exceed the lower bound $\underline{L} (\x)$ ensuring the quantity $\arccos \big( \sfrac{\underline{L} (\x)}{\| \g \|} \big)$ is well-defined. Lastly, it is worth noting that as the gradient ratio is large (i.e., close to $1$), the allowable angle $\angle ( \g, \x_0 - \x^*)$ becomes small, and vice versa.

\subsection{Correspondence of function pairs and potential solutions}
\label{subsec: correspondence}

The definition of the potential solution region $\M$ in \eqref{def: set M} implies that if a point $\x \in \R^n$ lies within $\M$, there exists at least one pair of functions $( f_1, f_2 )$ such that $f_i \in \S_i (\x_i^*, \sigma_i)$ for $i \in \{ 1, 2 \}$, $\nabla f_1 (\x) = - \nabla f_2 (\x)$, and $\| \nabla f_1 (\x) \| = \| \nabla f_2 (\x) \| \leq L$. However, the question of how many function pairs can correspond to each point $\x$ within the potential solution region $\M$ remains unresolved. To shed light on this matter, we present the following theorem, the proof of which is provided in Appendix~\ref{sec: proof-correspondance}.

\begin{theorem}  \label{thm: correspondance}
Let $\F_{\S} (\x) = \big\{ (f_1, f_2) \in \S (\x_1^*, \sigma_1) \times \S (\x_2^*, \sigma_2): \nabla f_1 (\x) = - \nabla f_2 (\x), \; \| \nabla f_1 (\x) \| = \| \nabla f_2 (\x) \| \leq L \big\}$. Then, it holds that $| \F_{\S} (\x) | = \mathfrak{c}$ almost everywhere within the potential solution region $\M$, where $\mathfrak{c}$ denotes the cardinality of the continuum.
\end{theorem}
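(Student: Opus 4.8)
The plan is to sandwich the cardinality by proving $|\F_\S(\x)| \le \mathfrak{c}$ for every $\x$ and $|\F_\S(\x)| \ge \mathfrak{c}$ for almost every $\x \in \M$. The upper bound is immediate and dimension-free: every $f_i \in \S(\x_i^*, \sigma_i)$ is differentiable, hence continuous, and a continuous function $\R^n \to \R$ is determined by its restriction to the countable dense set $\mathbb{Q}^n$; thus the restriction map embeds $\S(\x_i^*, \sigma_i)$ into $\R^{\mathbb{Q}^n}$, giving $|\S(\x_i^*, \sigma_i)| \le \mathfrak{c}^{\aleph_0} = \mathfrak{c}$ and therefore $|\F_\S(\x)| \le \mathfrak{c} \cdot \mathfrak{c} = \mathfrak{c}$. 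It then remains to produce continuum-many pairs at almost every point.

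For the lower bound I would first discard a Lebesgue-null set. By Theorem~\ref{thm: up BD}, $\partial \Mup$ is a finite union of pieces of the spheres $\partial \B_1, \partial \B_2$, the set $\T$, and finitely many points; each sphere is $(n-1)$-dimensional and $\T$ is the zero set of a nonconstant real-analytic function (the defining relation in \eqref{def: set T}, which is not satisfied identically since $\T \subsetneq \R^n$), so all of these are null, and by Theorem~\ref{thm: boundary M} so is $\partial \M = \partial \Mup$. Hence almost every $\x \in \M$ lies in $\M^\circ = (\Mup)^\circ$. Since $\T \subseteq \partial \Mup$ by Lemma~\ref{lem: boundary} and $\Mup = \widetilde{\T} \sqcup \T$, we get $(\Mup)^\circ \subseteq \widetilde{\T}$, i.e. $\tilphi_1(\x) + \tilphi_2(\x) > \psi(\x)$. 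Discarding the further null sets $\partial \B_1, \partial \B_2, \{ \x_1^*, \x_2^* \}$, I may assume in addition that $\x \in (\B_1 \cap \B_2) \setminus \{ \x_1^*, \x_2^* \}$, so that $\tilphi_1(\x), \tilphi_2(\x) \in (0, \tfrac{\pi}{2}]$. (For the last two ranges of $r$ in Theorem~\ref{thm: up BD}, $\M$ is a single point or empty, hence null, and the claim is vacuous.)

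Fixing such an $\x$, I would then consider on the sphere $\{ \g \in \R^n : \| \g \| = L \}$ the set $G(\x) = \{ \g : \angle(\g, \u_1(\x)) < \tilphi_1(\x), \ \angle(\g, -\u_2(\x)) < \tilphi_2(\x) \}$. This is the intersection of two open spherical caps of positive angular radii $\tilphi_1(\x), \tilphi_2(\x)$ whose centers $\u_1(\x)$ and $-\u_2(\x)$ subtend the angle $\psi(\x) = \angle(\u_1(\x), -\u_2(\x))$ by \eqref{eqn: psi equality}; because $\psi(\x) < \tilphi_1(\x) + \tilphi_2(\x)$, this intersection is a nonempty open subset of the sphere, hence of cardinality $\mathfrak{c}$ when $n \ge 2$. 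For each $\g \in G(\x)$, Corollary~\ref{cor: function exist union}, applied exactly as in the proof of Proposition~\ref{prop: angle sc}, yields $f_1 \in \bigcup_{\hat{\sigma} \ge \sigma_1} \Q(\x_1^*, \hat{\sigma}) \subseteq \S(\x_1^*, \sigma_1)$ with $\nabla f_1(\x) = \g$ and $f_2 \in \S(\x_2^*, \sigma_2)$ with $\nabla f_2(\x) = -\g$, so $(f_1, f_2) \in \F_\S(\x)$. Distinct values of $\g$ force distinct $f_1$ (their gradients at $\x$ differ), so $\g \mapsto (f_1, f_2)$ is injective and $|\F_\S(\x)| \ge |G(\x)| = \mathfrak{c}$; with the upper bound this gives $|\F_\S(\x)| = \mathfrak{c}$.

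The main obstacle is the reduction of the second paragraph rather than the construction, since the latter is essentially the existence argument already present in Proposition~\ref{prop: angle sc}. Concretely, the delicate points are certifying that $\T$ is Lebesgue-null and that $(\Mup)^\circ \subseteq \widetilde{\T}$, which together guarantee that the strict angle inequality and $\tilphi_i(\x) > 0$ hold on a full-measure subset of $\M$. A minor caveat is the dimension hypothesis: the cap-intersection $G(\x)$ is infinite only for $n \ge 2$; should $n = 1$ need coverage, one instead varies the magnitude $\| \g \|$ over an interval $(\ell_0, L]$, each value producing a pair with a distinct gradient norm and hence $\mathfrak{c}$ distinct pairs.
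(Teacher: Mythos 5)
Your proposal is correct and its skeleton matches the paper's: reduce to a full-measure subset of $\M$ on which the strict inequality $\tilphi_1(\x)+\tilphi_2(\x)>\psi(\x)$ holds together with $\x\in\B_1\cap\B_2$ (the paper calls this set $\mathcal{A}_1$ and sandwiches $\mathcal{A}_1\subseteq\M\subseteq\mathcal{A}_1\cup\mathcal{A}_2$ with $\mathcal{A}_2=\T\cup\partial(\B_1\cap\B_2)$ null), then use Corollary~\ref{cor: function exist union} as in Proposition~\ref{prop: angle sc} to turn each admissible gradient into a pair in $\F_{\S}(\x)$, with injectivity coming from the gradient value at $\x$. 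The substantive divergence is in certifying $\mu(\T)=0$: the paper proves this by hand in Lemma~\ref{lem: measure-T} (after fixing $\|\x\|^2$, the defining relation reduces to a cubic in $x_1$ with at most three roots, and a change of variables plus Fubini finishes), whereas you invoke the fact that the zero set of a non-identically-vanishing real-analytic function on a connected open set is Lebesgue-null. Your route is shorter but needs one small repair: the defining expression in \eqref{def: set T} is analytic only on the open set $(\B_1\cap\B_2)\setminus\{\x_1^*,\x_2^*\}$ (the square roots fail to be analytic where the radicands vanish, i.e.\ on $\partial\B_1\cup\partial\B_2$), so you should split $\T$ into its intersection with that open convex-minus-two-points (hence connected, $n\ge 2$) set, where the analytic zero-set fact applies since $\widetilde{\T}\neq\emptyset$ there whenever $\M$ is non-null, and its intersection with the two spheres, which is null for free. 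Two further harmless differences: you supply the upper bound $|\F_{\S}(\x)|\le\mathfrak{c}$ explicitly (via restriction to $\mathbb{Q}^n$), which the paper leaves implicit; and you parametrize gradients by the full open intersection of two spherical caps $G(\x)$, while the paper uses only a one-parameter rotation family $\boldsymbol{R}_n(\delta)\g$, $\delta\in[-\delta',\delta']$ --- both have cardinality $\mathfrak{c}$, so either suffices. Your remark on $n=1$ is also consistent with the paper, which likewise restricts its quadratic construction to $n\ge 2$.
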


In essence, this theorem reveals that for almost every point $\x$ in the potential solution region $\M$, there exist infinitely many function pairs that can yield $\x$ as a minimizer of the sum of those functions.

\subsection{Insight into multiple functions case}
\label{subsec: multiple functions}

From an intuitive standpoint, when assessing whether a point $\x \in \R^n$ lies within the potential solution region $\M \subset \R^n$, the outcome should be contingent solely on the distances between three key points: $\x_1^*$, $\x_2^*$, and $\x$ where $\x_1^*$ and $\x_2^*$ are the minimizers of the functions $f_1$ and $f_2$, respectively. In other words, this dependence should be independent of the specific locations of these points. Given the same set of distances constructed from $\{ \x_1^*, \x_2^*, \x \}$ and $\{ \x_1^*, \x_2^*, \x' \}$, if one can find a pair of functions $f_1 \in \S (\x_1^*, \sigma_1)$ and $f_2 \in \S (\x_2^*, \sigma_2)$ that realize $\x = \argmin (f_1 + f_2)$, then there exists a pair of functions $f_1' \in \S (\x_1^*, \sigma_1)$ and $f_2' \in \S (\x_2^*, \sigma_2)$ that realize $\x' = \argmin (f_1' + f_2')$. This symmetry results in the potential solution region $\M$ being symmetric around the line passing through $\x_1^*$ and $\x_2^*$ (as shown in Fig.~\ref{fig: boundary M} for $\R^2$ case).

In Section~\ref{subsec: solution approach}, we formally substantiate this assertion by considering arbitrary locations for the minimizers $\x_1^* \in \R^n$ and $\x_2^* \in \R^n$, and show that the sets of strongly convex functions $\S (\x_i^*, \sigma_i)$ are closed under translation and rotation of all points in the functions' domain. Consequently, we can, without loss of generality, assume nonzero values only in the first dimension for both $\x_1^*$ and $\x_2^*$, signifying their transformation from the original locations through an affine transformation (corresponding to translation and rotation). Thus, after obtaining the potential solution region (from Theorem~\ref{thm: boundary M}), it becomes imperative to apply the corresponding inverse transformation to the set of points within the region to achieve the desired outcome. 

Similarly, in the context of $m$ functions within at least an $(m-1)$-dimensional space, it is possible, without loss of generality, to assume that the given $m$ minimizers have nonzero values exclusively in the first $m-1$ dimensions. Employing an inverse transformation allows the derivation of the final potential solution region. However, it is important to note that the analytical technique employed in this paper may encounter challenges in directly addressing scenarios involving multiple functions due to increased problem complexity. Roughly speaking, the number of combinations of gradients in which their sum is zero increases with the number of functions. Consequently, there may be a need to develop a novel approach tailored for such cases, as obtaining a tight approximation of the potential solution region, as achieved in the case of two functions, may be more intricate. 

One prospective avenue is to focus on acquiring straightforward inner and outer approximations of the potential solution region. For instance, in the scenario involving $m$ functions, a plausible approach could involve the use of $(m - 1)$-spheres. These two spheres would be centered at a shared midpoint among the $m$ minimizers but might have different radii, serving as inner and outer approximations. The disparity between the radii of these spheres would determine the precision of these approximations. It is hoped that such a streamlined approach would be feasible, providing meaningful outcomes without excessive conservatism.

\subsection{One-shot federated learning}
\label{subsec: one-shot learning}

As discussed in Section~\ref{subsec: privacy federated}, addressing privacy concerns may lead to a desire to minimize the number of communication rounds between each node and the centralized server in the federated setting. In an extreme scenario, one might aim to reduce the communication rounds to a minimum, potentially just one round, to maximize privacy preservation and mitigate the impact of gradient leakage attacks \cite{zhu2019deep, geiping2020inverting, wei2020framework}. In such cases, from the server's perspective, the problem is simplified to proposing a new parameter given available information, including the minimizer and general structure of the function from each node.

While it is possible to estimate the entire potential solution region $\M$ and choose any point within this region, such an approach may require substantial computational resources. Since only one point is required, in general, it is feasible to leverage the sufficient condition, i.e., the inner approximation $\Mdo$ defined in \eqref{def: set M down}, as every point inside $\Mdo$ is guaranteed to be in the potential solution region $\M$ (Proposition~\ref{prop: M inclusion}). Considering that the selected point corresponds to a new model with enhanced performance used for inference at each node, this approach could be considered a form of \emph{one-shot federated learning} \cite{guha2019one, zhang2022dense}.

However, intuitively, a point closer to the boundary of the potential solution region $\partial \M$ might be perceived as more vulnerable than one in the middle. It is desirable to select a point that is not only meaningful but also robust to parameter variations. While various criteria exist for choosing such a point, in the specific scenario involving two nodes, we propose the selection of the point
\begin{equation}
    \p = \Big( \frac{\sigma_1}{\sigma_1 + \sigma_2} \Big) \x_1^* + \Big( \frac{\sigma_2}{\sigma_1 + \sigma_2} \Big) \x_2^*
    \quad \text{for general} \quad \x_1^*, \x_2^* \in \R^n
    \label{eqn: point p}
\end{equation}
as the parameter corresponding to the new model. In addition to its simplicity, this point $\p$ possesses several favorable properties. First, the point $\p \in \R^n$ lies in the convex hull of $\{ \x_1^*, \x_2^* \}$, and the weight assigned to $\x_i^*$ is proportional to its strong convexity parameter $\sigma_i$. Second, this point $\p$ belongs to the potential solution region $\M$ and is robust to variations in the parameter $L$, as we will discuss next.

As discussed in Section~\ref{subsec: assumption3}, the size of the potential solution region $\M$ increases with the parameter $L$. Recall the minimizers $\x_1^* = (-r, \0)$, $\x_2^* = (r, \0)$, and $\| \x_2^* - \x_1^* \| = 2r$. From the relationship provided in \eqref{eqn: L increase}, it implies that choosing a point $\p'$ in the set $\M(\x_1^*, \x_2^*, \sigma_1, \sigma_2, L')$ when $L' = 2r \big( \frac{1}{\sigma_1} + \frac{1}{\sigma_2} \big)^{-1}$ corresponds to selecting the \emph{middle} point of the potential solution region $\M$. Moreover, this point is robust to the parameter $L$ in the sense that it belongs to $\M$ regardless of the specific value of $L$, provided that $\M$ is non-empty. In fact, the set $\M(\x_1^*, \x_2^*, \sigma_1, \sigma_2, L')$ for $L' = 2r \big( \frac{1}{\sigma_1} + \frac{1}{\sigma_2} \big)^{-1}$ contains only one point, corresponding to the fourth case in \eqref{eqn: final M}. Substituting $L' = 2r \big( \frac{1}{\sigma_1} + \frac{1}{\sigma_2} \big)^{-1}$ into the expression for $\p' = \Big( \frac{L'}{2} \big(\frac{1}{\sigma_1} - \frac{1}{\sigma_2} \big), \; \0 \Big)$ yields $\p' = \Big( \big( \frac{\sigma_2 - \sigma_1}{\sigma_2 + \sigma_1} \big) r, \0 \Big)$. Note that this point
\begin{equation*}
    \p' = \left( \Big( \frac{\sigma_2 - \sigma_1}{\sigma_2 + \sigma_1} \Big) r, \0 \right)
    = \Big( \frac{\sigma_1}{\sigma_1 + \sigma_2} \Big) (-r, \0) + \Big( \frac{\sigma_2}{\sigma_1 + \sigma_2} \Big) (r, \0)
\end{equation*}
is identical to the point $\p$ defined in \eqref{eqn: point p} for the case when $\x_1^* = (-r, \0)$, $\x_2^* = (r, \0)$. Furthermore, it is evident that the point $\p$ is a generalization of $\p'$ and inherits the same properties for arbitrary $\x_1^*, \x_2^* \in \R^n$.

\subsection{Applicability of our approach}
\label{subsec: applicability}

Our analysis in this work primarily focuses on strongly convex loss functions commonly encountered in traditional machine learning models, such as linear regression \cite{weisberg2005applied, montgomery2021introduction}, logistic regression \cite{menard2002applied, hosmer2013applied}, and support vector machines (SVM) \cite{hearst1998support, steinwart2008support, mountrakis2011support}. However, extending our approach to more complex loss functions, such as those arising from neural networks, presents challenges due to the intractability of searching for optimal parameters for each function \cite{murty1985some, nesterov2018lectures}.

Nonetheless, our analysis remains relevant in the context of \textit{locally strongly convex functions} \cite{vial1982strong, goebel2008local, yan2014extension}. In distributed scenarios, we may consider a restricted domain containing a local minimum of each (potentially stochastic) non-convex loss function, as identified by practical optimization algorithms like Stochastic Gradient Descent (SGD) \cite{bottou2010large} or Adam \cite{kingma2014adam}. Within this restricted region, if the two loss functions can be approximated by strongly convex functions or a more general class of functions (as we will introduce shortly), our analysis, including Theorem~\ref{thm: up BD}, Theorem~\ref{thm: down BD}, and our proposed solution for one-shot federated learning (discussed in Section~\ref{subsec: one-shot learning}), can be applied. While this idea is conceptually plausible, a detailed exploration of this intricate case is beyond the scope of our current work, and we leave it as an intriguing avenue for future research.

Apart from supervised learning problems, our approach can be applied to some important classes of problems closely related to machine learning in general. Before delving into the detailed discussion, we introduce a class of functions called \textit{restricted secant inequality (RSI)} \cite{zhang2013gradient, zhang2015restricted} and \textit{restricted strong convexity (RSC)} \cite{lai2013augmented, zhang2017restricted} as follows:
\begin{definition}
A function $f: \R^n \to \R$ satisfies the restricted secant inequality (RSI) if $f$ is differentiable and there exists a positive constant $\sigma \in \R_+$ such that
\begin{equation}
    \big\langle \nabla f(\x) - \nabla f(\xprj), \; \x - \xprj \big\rangle 
    \geq \sigma \| \x - \xprj \|^2,
    \label{def: RSI}
\end{equation}
where $\xprj = \Pi_{\mathcal{X}^*} (\x)$ and $\Pi_{\mathcal{X}^*} ( \cdot )$ is the projection operator onto the set of minimizers of function $f$, denoted as $\mathcal{X}^*$.
\end{definition}

It is important to note that $\nabla f(\xprj) = \mathbf{0}$ by the definition of $\xprj$. Additionally, the constant $\sigma$ in \eqref{def: RSI} can be interpreted as a lower bound on the average curvature of $f$ between $\x$ and $\xprj$. However, the inequality \eqref{def: RSI} does not imply that $f$ grows everywhere faster than the quadratic function $\Tilde{f} (\x) = \frac{\sigma}{2} \| \x - \xprj \|^2$. Essentially, RSI functions, in general, can be non-convex, as illustrated in \cite{zhang2013gradient, zhang2015restricted}.

\begin{definition}
A function $f: \R^n \to \R$ is restricted strongly convex (RSC) if it is convex, has a finite minimizer, and satisfies the restricted secant inequality \eqref{def: RSI}.
\end{definition}

It is noteworthy that the definitions of RSC functions used here follow those given in \cite{lai2013augmented, zhang2017restricted} rather than those in \cite{negahban2009unified, negahban2012unified, negahban2012restricted}. Interestingly, the work \cite{lai2013augmented} has demonstrated the applicability of a smooth version (achieved by augmenting with an $L_2$-regularization) of basis pursuit (BP) problems \cite{chen2001atomic} and low-rank matrix recovery problems \cite{fazel2002matrix, recht2010guaranteed, candes2012exact}.

Here, we denote \textit{SC} as the class of strongly convex functions, i.e., $\bigcup_{\x^* \in \R^n} \bigcup_{\sigma \in \R_+} \S (\x^*, \sigma)$, where the set of strongly convex functions $\S$ is defined in Section~\ref{subsec: convex}. Similarly, we denote \textit{Q-PD} as the class of quadratic functions with positive definite (PD) Hessian matrices, i.e., $\bigcup_{\x^* \in \R^n} \bigcup_{\sigma \in \R_+} \Q (\x^*, \sigma)$, where the set of quadratic functions $\Q$ is defined in Section~\ref{subsec: convex}. One important application of quadratic programming includes least squares problems \cite{lawson1995solving, bjorck1996numerical}, which are applicable to a wide range of engineering problems, including optimal control \cite{willems1971least}, system identification \cite{aastrom1971system, chen1989orthogonal}, and signal processing \cite{chen1991orthogonal}.

Based on these definitions, it can be concluded that RSI is weaker than RSC, RSC is weaker than SC, and SC is weaker than Q-PD. In fact, the core of our analysis of the outer approximation $\Mup$ (defined in \eqref{def: set M up}) is based on the inequality \eqref{eqn: strongly convex eq1}, which is essentially equivalent to RSI functions with $\mathcal{X}^*$ being a singleton. Hence, our conclusions regarding the outer approximation $\Mup$, e.g., Theorem~\ref{thm: up BD}, also hold particularly for RSI functions with a singleton $\mathcal{X}^*$ and RSC functions.

Specifically, our results are applicable to the distributed version of basis pursuit (BP) problems, low-rank matrix recovery problems, as well as the aforementioned applications involving distributed least squares problems \cite{sayed2006distributed, cattivelli2008diffusion, mateos2009distributed, mateos2012distributed}. Interestingly, our analysis techniques involving angle analysis as depicted in Figure~\ref{fig: angle and nc} used in Section~\ref{sec: outer} can be extended to general RSI functions, although some complications may arise due to the arbitrary form of the set of minimizers $\mathcal{X}_i^*$ of functions $f_i$.
\section{Conclusion} \label{sec: conclusion}

In this work, we studied the possible locations of the minimizer of the sum of two strongly convex functions. Based on the location of the two minimizers of the individual functions, strong convexity parameters and a bound on the gradients at the minimizer of the sum, we established a necessary condition and a sufficient condition for a given point to be a minimizer, and called the set of points that satisfies the conditions as the outer approximation $\Mup$ and inner approximation $\Mdo$, respectively. We then explicitly characterized the boundary and interior of the outer and inner approximations.
The characterization of these boundaries and interiors turned out to be identical. Subsequently, we showed that the boundary of the potential solution region $\partial \M$ is also identical to those boundaries. In particular, we showed that it is sufficient to consider quadratic functions to establish (almost) the entire set of potential minimizers. To visualize the boundary of the potential solution region $\partial \M$, we provided examples with different distances between the two original minimizers in Fig.~\ref{fig: boundary M}.

Our work in this paper has focused on the case of two functions. Future research avenues could include identifying the region within which the minimizer of the sum can lie in the case of multiple strongly convex functions. Alternatively, exploring different classes of functions, such as those satisfying the Polyak-Lojasiewicz (PL) inequality \cite{karimi2016linear}, pre-invex functions \cite{weir1988pre, weir1988class, yang2001properties}, or general non-convex functions \cite{rockafellar1980generalized, jain2017non}, could provide valuable insights. Additionally, one might consider modifying certain assumptions, such as incorporating strongly convex functions with Lipschitz continuous gradient conditions \cite{bauschke2017descent} or imposing constraints on the condition number of the functions \cite{demmel1987condition}.

\bibliographystyle{unsrtnat}
\bibliography{main}  

\begin{thebibliography}{91}
\providecommand{\natexlab}[1]{#1}
\providecommand{\url}[1]{\texttt{#1}}
\expandafter\ifx\csname urlstyle\endcsname\relax
  \providecommand{\doi}[1]{doi: #1}\else
  \providecommand{\doi}{doi: \begingroup \urlstyle{rm}\Url}\fi

\bibitem[Shalev-Shwartz(2011)]{shalev2012online}
Shai Shalev-Shwartz.
\newblock Online learning and online convex optimization.
\newblock \emph{Foundations and Trends® in Machine Learning}, 4\penalty0
  (2):\penalty0 107–194, 2011.
\newblock ISSN 1935-8245.
\newblock \doi{10.1561/2200000018}.
\newblock URL \url{http://dx.doi.org/10.1561/2200000018}.

\bibitem[Boyd(2010)]{boyd2011distributed}
Stephen Boyd.
\newblock Distributed optimization and statistical learning via the alternating
  direction method of multipliers.
\newblock \emph{Foundations and Trends® in Machine Learning}, 3\penalty0
  (1):\penalty0 1–122, 2010.
\newblock ISSN 1935-8245.
\newblock \doi{10.1561/2200000016}.
\newblock URL \url{http://dx.doi.org/10.1561/2200000016}.

\bibitem[Sayed(2014)]{sayed2014adaptive}
Ali~H. Sayed.
\newblock Adaptive networks.
\newblock \emph{Proceedings of the IEEE}, 102\penalty0 (4):\penalty0 460–497,
  April 2014.
\newblock ISSN 1558-2256.
\newblock \doi{10.1109/jproc.2014.2306253}.
\newblock URL \url{http://dx.doi.org/10.1109/jproc.2014.2306253}.

\bibitem[Molzahn et~al.(2017)Molzahn, Dorfler, Sandberg, Low, Chakrabarti,
  Baldick, and Lavaei]{molzahn2017survey}
Daniel~K. Molzahn, Florian Dorfler, Henrik Sandberg, Steven~H. Low, Sambuddha
  Chakrabarti, Ross Baldick, and Javad Lavaei.
\newblock A survey of distributed optimization and control algorithms for
  electric power systems.
\newblock \emph{IEEE Transactions on Smart Grid}, 8\penalty0 (6):\penalty0
  2941–2962, November 2017.
\newblock ISSN 1949-3061.
\newblock \doi{10.1109/tsg.2017.2720471}.
\newblock URL \url{http://dx.doi.org/10.1109/tsg.2017.2720471}.

\bibitem[Li et~al.(2011)Li, Chen, and Low]{li2011optimal}
Na~Li, Lijun Chen, and Steven~H. Low.
\newblock Optimal demand response based on utility maximization in power
  networks.
\newblock In \emph{2011 IEEE Power and Energy Society General Meeting}, pages
  1--8, Detroit, MI, USA, July 2011. IEEE.
\newblock \doi{10.1109/PES.2011.6039082}.
\newblock URL \url{http://dx.doi.org/10.1109/pes.2011.6039082}.

\bibitem[Zavlanos et~al.(2013)Zavlanos, Ribeiro, and
  Pappas]{zavlanos2012network}
Michael~M. Zavlanos, Alejandro Ribeiro, and George~J. Pappas.
\newblock Network integrity in mobile robotic networks.
\newblock \emph{IEEE Transactions on Automatic Control}, 58\penalty0
  (1):\penalty0 3–18, January 2013.
\newblock ISSN 1558-2523.
\newblock \doi{10.1109/tac.2012.2203215}.
\newblock URL \url{http://dx.doi.org/10.1109/tac.2012.2203215}.

\bibitem[Tron et~al.(2016)Tron, Thomas, Loianno, Daniilidis, and
  Kumar]{tron2016distributed}
Roberto Tron, Justin Thomas, Giuseppe Loianno, Kostas Daniilidis, and Vijay
  Kumar.
\newblock A distributed optimization framework for localization and formation
  control: Applications to vision-based measurements.
\newblock \emph{IEEE Control Systems}, 36\penalty0 (4):\penalty0 22–44,
  August 2016.
\newblock ISSN 1941-000X.
\newblock \doi{10.1109/mcs.2016.2558401}.
\newblock URL \url{http://dx.doi.org/10.1109/mcs.2016.2558401}.

\bibitem[Nedic et~al.(2018)Nedic, Olshevsky, and Rabbat]{nedic2018network}
Angelia Nedic, Alex Olshevsky, and Michael~G. Rabbat.
\newblock Network topology and communication-computation tradeoffs in
  decentralized optimization.
\newblock \emph{Proceedings of the IEEE}, 106\penalty0 (5):\penalty0 953–976,
  May 2018.
\newblock ISSN 1558-2256.
\newblock \doi{10.1109/jproc.2018.2817461}.
\newblock URL \url{http://dx.doi.org/10.1109/jproc.2018.2817461}.

\bibitem[Zhang et~al.(2012)Zhang, Duchi, and
  Wainwright]{zhang2012communication}
Yuchen Zhang, John~C. Duchi, and Martin~J. Wainwright.
\newblock Communication-efficient algorithms for statistical optimization.
\newblock In \emph{2012 IEEE 51st IEEE Conference on Decision and Control
  (CDC)}, pages 6792--6792, Maui, HI, USA, December 2012. IEEE.
\newblock \doi{10.1109/cdc.2012.6426691}.
\newblock URL \url{http://dx.doi.org/10.1109/cdc.2012.6426691}.

\bibitem[Shamir et~al.(2014)Shamir, Srebro, and Zhang]{shamir2014communication}
Ohad Shamir, Nati Srebro, and Tong Zhang.
\newblock Communication-efficient distributed optimization using an approximate
  newton-type method.
\newblock In \emph{Proceedings of the 31st International Conference on Machine
  Learning}, volume~32 of \emph{Proceedings of Machine Learning Research},
  pages 1000--1008, Bejing, China, 22--24 Jun 2014. PMLR.
\newblock URL \url{https://proceedings.mlr.press/v32/shamir14.html}.

\bibitem[Konecný et~al.(2017)Konecný, McMahan, Yu, Richtarik, Suresh, and
  Bacon]{konevcny2016federated}
Jakub Konecný, H.~Brendan McMahan, Felix~X. Yu, Peter Richtarik,
  Ananda~Theertha Suresh, and Dave Bacon.
\newblock Federated learning: Strategies for improving communication
  efficiency.
\newblock \emph{arXiv preprint arXiv:1610.05492}, 2017.
\newblock URL \url{https://arxiv.org/abs/1610.05492}.

\bibitem[Nedic and Ozdaglar(2009)]{nedic2009distributed}
Angelia Nedic and Asuman Ozdaglar.
\newblock Distributed subgradient methods for multi-agent optimization.
\newblock \emph{IEEE Transactions on Automatic Control}, 54\penalty0
  (1):\penalty0 48–61, January 2009.
\newblock ISSN 0018-9286.
\newblock \doi{10.1109/tac.2008.2009515}.
\newblock URL \url{http://dx.doi.org/10.1109/tac.2008.2009515}.

\bibitem[Zhu and Martinez(2012)]{zhu2011distributed}
Minghui Zhu and S.~Martinez.
\newblock On distributed convex optimization under inequality and equality
  constraints.
\newblock \emph{IEEE Transactions on Automatic Control}, 57\penalty0
  (1):\penalty0 151–164, January 2012.
\newblock ISSN 1558-2523.
\newblock \doi{10.1109/tac.2011.2167817}.
\newblock URL \url{http://dx.doi.org/10.1109/tac.2011.2167817}.

\bibitem[Nedic and Olshevsky(2015)]{nedic2014distributed}
Angelia Nedic and Alex Olshevsky.
\newblock Distributed optimization over time-varying directed graphs.
\newblock \emph{IEEE Transactions on Automatic Control}, 60\penalty0
  (3):\penalty0 601–615, March 2015.
\newblock ISSN 1558-2523.
\newblock \doi{10.1109/tac.2014.2364096}.
\newblock URL \url{http://dx.doi.org/10.1109/tac.2014.2364096}.

\bibitem[Nedić and Liu(2018)]{nedic2018Adistributed}
Angelia Nedić and Ji~Liu.
\newblock Distributed optimization for control.
\newblock \emph{Annual Review of Control, Robotics, and Autonomous Systems},
  1:\penalty0 77--103, 2018.
\newblock ISSN 2573-5144".
\newblock \doi{https://doi.org/10.1146/annurev-control-060117-105131}.
\newblock URL \url{https://doi.org/10.1146/annurev-control-060117-105131}.

\bibitem[Nedić(2018)]{nedic2018Bdistributed}
Angelia Nedić.
\newblock \emph{Distributed Optimization Over Networks}, chapter~1, page
  1–84.
\newblock Springer International Publishing, Cham, Switzerland, 2018.
\newblock ISBN 9783319971421.
\newblock \doi{10.1007/978-3-319-97142-1_1}.
\newblock URL \url{http://dx.doi.org/10.1007/978-3-319-97142-1_1}.

\bibitem[Yang et~al.(2019)Yang, Yi, Wu, Yuan, Wu, Meng, Hong, Wang, Lin, and
  Johansson]{yang2019survey}
Tao Yang, Xinlei Yi, Junfeng Wu, Ye~Yuan, Di~Wu, Ziyang Meng, Yiguang Hong,
  Hong Wang, Zongli Lin, and Karl~H. Johansson.
\newblock A survey of distributed optimization.
\newblock \emph{Annual Reviews in Control}, 47:\penalty0 278–305, 2019.
\newblock ISSN 1367-5788.
\newblock \doi{10.1016/j.arcontrol.2019.05.006}.
\newblock URL \url{http://dx.doi.org/10.1016/j.arcontrol.2019.05.006}.

\bibitem[Duchi et~al.(2012)Duchi, Agarwal, and Wainwright]{duchi2011dual}
J.~C. Duchi, A.~Agarwal, and M.~J. Wainwright.
\newblock Dual averaging for distributed optimization: Convergence analysis and
  network scaling.
\newblock \emph{IEEE Transactions on Automatic Control}, 57\penalty0
  (3):\penalty0 592–606, March 2012.
\newblock ISSN 1558-2523.
\newblock \doi{10.1109/tac.2011.2161027}.
\newblock URL \url{http://dx.doi.org/10.1109/tac.2011.2161027}.

\bibitem[Nedić et~al.(2017)Nedić, Olshevsky, and Shi]{nedic2017achieving}
Angelia Nedić, Alex Olshevsky, and Wei Shi.
\newblock Achieving geometric convergence for distributed optimization over
  time-varying graphs.
\newblock \emph{SIAM Journal on Optimization}, 27\penalty0 (4):\penalty0
  2597–2633, January 2017.
\newblock ISSN 1095-7189.
\newblock \doi{10.1137/16m1084316}.
\newblock URL \url{http://dx.doi.org/10.1137/16m1084316}.

\bibitem[Su and Vaidya(2016{\natexlab{a}})]{su2016fault}
Lili Su and Nitin~H. Vaidya.
\newblock Fault-tolerant multi-agent optimization: Optimal iterative
  distributed algorithms.
\newblock In \emph{Proceedings of the 2016 ACM Symposium on Principles of
  Distributed Computing}, PODC ’16, page 425–434, Chicago, Illinois, USA,
  July 2016{\natexlab{a}}. ACM.
\newblock \doi{10.1145/2933057.2933105}.
\newblock URL \url{http://dx.doi.org/10.1145/2933057.2933105}.

\bibitem[Sundaram and Gharesifard(2019)]{sundaram2018distributed}
Shreyas Sundaram and Bahman Gharesifard.
\newblock Distributed optimization under adversarial nodes.
\newblock \emph{IEEE Transactions on Automatic Control}, 64\penalty0
  (3):\penalty0 1063–1076, March 2019.
\newblock ISSN 2334-3303.
\newblock \doi{10.1109/tac.2018.2836919}.
\newblock URL \url{http://dx.doi.org/10.1109/tac.2018.2836919}.

\bibitem[Liu(2021)]{liu2021survey}
Shuo Liu.
\newblock A survey on fault-tolerance in distributed optimization and machine
  learning.
\newblock \emph{arXiv preprint arXiv:2106.08545}, 2021.
\newblock URL \url{https://arxiv.org/abs/2106.08545}.

\bibitem[Su and Vaidya(2016{\natexlab{b}})]{su2016multi}
Lili Su and Nitin Vaidya.
\newblock Multi-agent optimization in the presence of byzantine adversaries:
  Fundamental limits.
\newblock In \emph{2016 American Control Conference (ACC)}, volume~87, page
  7183–7188, Boston, MA, USA, July 2016{\natexlab{b}}. IEEE.
\newblock \doi{10.1109/acc.2016.7526806}.
\newblock URL \url{http://dx.doi.org/10.1109/acc.2016.7526806}.

\bibitem[Yang and Bajwa(2019)]{yang2019byrdie}
Zhixiong Yang and Waheed~U. Bajwa.
\newblock Byrdie: Byzantine-resilient distributed coordinate descent for
  decentralized learning.
\newblock \emph{IEEE Transactions on Signal and Information Processing over
  Networks}, 5\penalty0 (4):\penalty0 611–627, December 2019.
\newblock ISSN 2373-7778.
\newblock \doi{10.1109/tsipn.2019.2928176}.
\newblock URL \url{http://dx.doi.org/10.1109/tsipn.2019.2928176}.

\bibitem[Gupta and Vaidya(2020)]{gupta2020fault}
Nirupam Gupta and Nitin~H. Vaidya.
\newblock Fault-tolerance in distributed optimization: The case of redundancy.
\newblock In \emph{Proceedings of the 39th Symposium on Principles of
  Distributed Computing}, PODC ’20, page 365–374, New York, NY, USA, July
  2020. ACM.
\newblock \doi{10.1145/3382734.3405748}.
\newblock URL \url{http://dx.doi.org/10.1145/3382734.3405748}.

\bibitem[Gupta et~al.(2021)Gupta, Doan, and Vaidya]{gupta2021byzantine}
Nirupam Gupta, Thinh~T. Doan, and Nitin~H. Vaidya.
\newblock Byzantine fault-tolerance in decentralized optimization under
  2f-redundancy.
\newblock In \emph{2021 American Control Conference (ACC)}, pages 3632--3637,
  New Orleans, LA, USA, May 2021. IEEE.
\newblock \doi{10.23919/acc50511.2021.9483067}.
\newblock URL \url{http://dx.doi.org/10.23919/acc50511.2021.9483067}.

\bibitem[Kuwaranancharoen et~al.(2020)Kuwaranancharoen, Xin, and
  Sundaram]{kuwaran2020byzantine}
Kananart Kuwaranancharoen, Lei Xin, and Shreyas Sundaram.
\newblock Byzantine-resilient distributed optimization of multi-dimensional
  functions.
\newblock In \emph{2020 American Control Conference (ACC)}, pages 4399--4404,
  Online, July 2020. IEEE.
\newblock \doi{10.23919/acc45564.2020.9147396}.
\newblock URL \url{http://dx.doi.org/10.23919/acc45564.2020.9147396}.

\bibitem[Su and Vaidya(2021)]{su2020byzantine}
Lili Su and Nitin~H. Vaidya.
\newblock Byzantine-resilient multiagent optimization.
\newblock \emph{IEEE Transactions on Automatic Control}, 66\penalty0
  (5):\penalty0 2227–2233, May 2021.
\newblock ISSN 2334-3303.
\newblock \doi{10.1109/tac.2020.3008139}.
\newblock URL \url{http://dx.doi.org/10.1109/tac.2020.3008139}.

\bibitem[Kuwaranancharoen et~al.(2024)Kuwaranancharoen, Xin, and
  Sundaram]{kuwaran2023scalable}
Kananart Kuwaranancharoen, Lei Xin, and Shreyas Sundaram.
\newblock Scalable distributed optimization of multi-dimensional functions
  despite byzantine adversaries.
\newblock \emph{IEEE Transactions on Signal and Information Processing over
  Networks}, 10:\penalty0 360–375, 2024.
\newblock ISSN 2373-7778.
\newblock \doi{10.1109/tsipn.2024.3379844}.
\newblock URL \url{http://dx.doi.org/10.1109/tsipn.2024.3379844}.

\bibitem[Fang et~al.(2022)Fang, Yang, and Bajwa]{fang2022bridge}
Cheng Fang, Zhixiong Yang, and Waheed~U. Bajwa.
\newblock Bridge: Byzantine-resilient decentralized gradient descent.
\newblock \emph{IEEE Transactions on Signal and Information Processing over
  Networks}, 8:\penalty0 610–626, 2022.
\newblock ISSN 2373-7778.
\newblock \doi{10.1109/tsipn.2022.3188456}.
\newblock URL \url{http://dx.doi.org/10.1109/tsipn.2022.3188456}.

\bibitem[Kuwaranancharoen and Sundaram(2024)]{kuwaran2023geometric}
Kananart Kuwaranancharoen and Shreyas Sundaram.
\newblock On the geometric convergence of byzantine-resilient distributed
  optimization algorithms.
\newblock \emph{arXiv preprint arXiv:2305.10810}, 2024.
\newblock URL \url{https://arxiv.org/abs/2305.10810}.

\bibitem[Kuwaranancharoen(2023)]{kuwaran2023analyses}
Kananart Kuwaranancharoen.
\newblock \emph{Analyses and Scalable Algorithms for Byzantine-Resilient
  Distributed Optimization}.
\newblock PhD thesis, Purdue University, 7 2023.

\bibitem[Shokri and Shmatikov(2015)]{shokri2015privacy}
Reza Shokri and Vitaly Shmatikov.
\newblock Privacy-preserving deep learning.
\newblock In \emph{2015 53rd Annual Allerton Conference on Communication,
  Control, and Computing (Allerton)}, pages 1310--1321, Monticello, IL, USA,
  September 2015. IEEE.
\newblock \doi{10.1109/allerton.2015.7447103}.
\newblock URL \url{http://dx.doi.org/10.1109/allerton.2015.7447103}.

\bibitem[Kairouz et~al.(2021)Kairouz, McMahan, Avent, Bellet, Bennis, Bhagoji,
  Bonawitz, Charles, Cormode, Cummings, et~al.]{kairouz2021advances}
Peter Kairouz, H~Brendan McMahan, Brendan Avent, Aur{\'e}lien Bellet, Mehdi
  Bennis, Arjun~Nitin Bhagoji, Kallista Bonawitz, Zachary Charles, Graham
  Cormode, Rachel Cummings, et~al.
\newblock Advances and open problems in federated learning.
\newblock \emph{Foundations and Trends® in Machine Learning}, 14\penalty0
  (1--2):\penalty0 1--210, 2021.
\newblock ISSN 1935-8237.
\newblock \doi{10.1561/2200000083}.
\newblock URL \url{http://dx.doi.org/10.1561/2200000083}.

\bibitem[Reisizadeh et~al.(2020)Reisizadeh, Mokhtari, Hassani, Jadbabaie, and
  Pedarsani]{reisizadeh2020fedpaq}
Amirhossein Reisizadeh, Aryan Mokhtari, Hamed Hassani, Ali Jadbabaie, and
  Ramtin Pedarsani.
\newblock Fedpaq: A communication-efficient federated learning method with
  periodic averaging and quantization.
\newblock In \emph{Proceedings of the Twenty Third International Conference on
  Artificial Intelligence and Statistics}, volume 108 of \emph{Proceedings of
  Machine Learning Research}, pages 2021--2031, Online, 26--28 Aug 2020. PMLR.
\newblock URL \url{https://proceedings.mlr.press/v108/reisizadeh20a.html}.

\bibitem[Zhu and Han(2020)]{zhu2019deep}
Ligeng Zhu and Song Han.
\newblock \emph{Deep Leakage from Gradients}, chapter~2, page 17–31.
\newblock Springer International Publishing, Cham, Switzerland, 2020.
\newblock ISBN 9783030630768.
\newblock \doi{10.1007/978-3-030-63076-8_2}.
\newblock URL \url{http://dx.doi.org/10.1007/978-3-030-63076-8_2}.

\bibitem[Geiping et~al.(2020)Geiping, Bauermeister, Dr{\"o}ge, and
  Moeller]{geiping2020inverting}
Jonas Geiping, Hartmut Bauermeister, Hannah Dr{\"o}ge, and Michael Moeller.
\newblock Inverting gradients-how easy is it to break privacy in federated
  learning?
\newblock In \emph{Advances in Neural Information Processing Systems},
  volume~33, pages 16937--16947, Online, 2020. Curran Associates, Inc.
\newblock URL
  \url{https://proceedings.neurips.cc/paper_files/paper/2020/file/c4ede56bbd98819ae6112b20ac6bf145-Paper.pdf}.

\bibitem[Wei et~al.(2020)Wei, Liu, Loper, Chow, Gursoy, Truex, and
  Wu]{wei2020framework}
Wenqi Wei, Ling Liu, Margaret Loper, Ka-Ho Chow, Mehmet~Emre Gursoy, Stacey
  Truex, and Yanzhao Wu.
\newblock \emph{A Framework for Evaluating Client Privacy Leakages in Federated
  Learning}, chapter~27, page 545–566.
\newblock Springer International Publishing, Cham, Switzerland, 2020.
\newblock ISBN 9783030589516.
\newblock \doi{10.1007/978-3-030-58951-6_27}.
\newblock URL \url{http://dx.doi.org/10.1007/978-3-030-58951-6_27}.

\bibitem[Guha et~al.(2019)Guha, Talwalkar, and Smith]{guha2019one}
Neel Guha, Ameet Talwalkar, and Virginia Smith.
\newblock One-shot federated learning.
\newblock \emph{arXiv preprint arXiv:1902.11175}, 2019.
\newblock URL \url{https://arxiv.org/abs/1902.11175}.

\bibitem[Zhang et~al.(2022)Zhang, Chen, Li, Lyu, Wu, Ding, Shen, and
  Wu]{zhang2022dense}
Jie Zhang, Chen Chen, Bo~Li, Lingjuan Lyu, Shuang Wu, Shouhong Ding, Chunhua
  Shen, and Chao Wu.
\newblock Dense: Data-free one-shot federated learning.
\newblock In \emph{Advances in Neural Information Processing Systems},
  volume~35, pages 21414--21428, New Orleans, LA, USA, November 2022. Curran
  Associates, Inc.
\newblock URL
  \url{https://proceedings.neurips.cc/paper_files/paper/2022/file/868f2266086530b2c71006ea1908b14a-Paper-Conference.pdf}.

\bibitem[Weisberg(2005)]{weisberg2005applied}
Sanford Weisberg.
\newblock \emph{Applied Linear Regression}.
\newblock John Wiley \& Sons, Hoboken, New Jersey, USA, January 2005.
\newblock ISBN 9780471704096.
\newblock \doi{10.1002/0471704091}.
\newblock URL \url{http://dx.doi.org/10.1002/0471704091}.

\bibitem[Montgomery et~al.(2021)Montgomery, Peck, and
  Vining]{montgomery2021introduction}
Douglas~C Montgomery, Elizabeth~A Peck, and G~Geoffrey Vining.
\newblock \emph{Introduction to linear regression analysis}.
\newblock John Wiley \& Sons, Hoboken, New Jersey, USA, 2021.
\newblock ISBN 978-1-119-57872-7.

\bibitem[Menard(2002)]{menard2002applied}
Scott Menard.
\newblock \emph{Applied logistic regression analysis}.
\newblock Number 106. Sage, Thousand Oaks, CA, USA, 2002.
\newblock URL \url{https://doi.org/10.4135/9781412983433}.

\bibitem[Hosmer~Jr et~al.(2013)Hosmer~Jr, Lemeshow, and
  Sturdivant]{hosmer2013applied}
David~W Hosmer~Jr, Stanley Lemeshow, and Rodney~X Sturdivant.
\newblock \emph{Applied logistic regression}, volume 398.
\newblock John Wiley \& Sons, Hoboken, New Jersey, USA, 2013.
\newblock URL \url{http://dx.doi.org/10.1002/9781118548387}.

\bibitem[Hearst et~al.(1998)Hearst, Dumais, Osuna, Platt, and
  Scholkopf]{hearst1998support}
M.A. Hearst, S.T. Dumais, E.~Osuna, J.~Platt, and B.~Scholkopf.
\newblock Support vector machines.
\newblock \emph{IEEE Intelligent Systems and their Applications}, 13\penalty0
  (4):\penalty0 18–28, July 1998.
\newblock ISSN 1094-7167.
\newblock \doi{10.1109/5254.708428}.
\newblock URL \url{http://dx.doi.org/10.1109/5254.708428}.

\bibitem[Steinwart and Christmann(2008)]{steinwart2008support}
Ingo Steinwart and Andreas Christmann.
\newblock \emph{Support vector machines}.
\newblock Springer Science \& Business Media, New York, NY, USA, 2008.
\newblock \doi{10.1007/978-0-387-77242-4}.
\newblock URL \url{http://dx.doi.org/10.1007/978-0-387-77242-4}.

\bibitem[Mountrakis et~al.(2011)Mountrakis, Im, and
  Ogole]{mountrakis2011support}
Giorgos Mountrakis, Jungho Im, and Caesar Ogole.
\newblock Support vector machines in remote sensing: A review.
\newblock \emph{ISPRS Journal of Photogrammetry and Remote Sensing},
  66\penalty0 (3):\penalty0 247–259, May 2011.
\newblock ISSN 0924-2716.
\newblock \doi{10.1016/j.isprsjprs.2010.11.001}.
\newblock URL \url{http://dx.doi.org/10.1016/j.isprsjprs.2010.11.001}.

\bibitem[Kuwaranancharoen and Sundaram(2018)]{kuwaran2018location}
Kananart Kuwaranancharoen and Shreyas Sundaram.
\newblock On the location of the minimizer of the sum of two strongly convex
  functions.
\newblock In \emph{2018 IEEE Conference on Decision and Control (CDC)}, page
  1769–1774, Miami Beach, FL, USA, December 2018. IEEE.
\newblock \doi{10.1109/cdc.2018.8619735}.
\newblock URL \url{http://dx.doi.org/10.1109/cdc.2018.8619735}.

\bibitem[Kuwaranancharoen and Sundaram(2020)]{kuwaran2020set}
Kananart Kuwaranancharoen and Shreyas Sundaram.
\newblock On the set of possible minimizers of a sum of known and unknown
  functions.
\newblock In \emph{2020 American Control Conference (ACC)}, volume~1, page
  106–111, Online, July 2020. IEEE.
\newblock \doi{10.23919/acc45564.2020.9147407}.
\newblock URL \url{http://dx.doi.org/10.23919/acc45564.2020.9147407}.

\bibitem[Zhang and Yin(2013)]{zhang2013gradient}
Hui Zhang and Wotao Yin.
\newblock Gradient methods for convex minimization: better rates under weaker
  conditions.
\newblock \emph{arXiv preprint arXiv:1303.4645}, 2013.
\newblock URL \url{https://arxiv.org/abs/1303.4645}.

\bibitem[Zhang and Cheng(2014)]{zhang2015restricted}
Hui Zhang and Lizhi Cheng.
\newblock Restricted strong convexity and its applications to convergence
  analysis of gradient-type methods in convex optimization.
\newblock \emph{Optimization Letters}, 9\penalty0 (5):\penalty0 961–979,
  September 2014.
\newblock ISSN 1862-4480.
\newblock \doi{10.1007/s11590-014-0795-x}.
\newblock URL \url{http://dx.doi.org/10.1007/s11590-014-0795-x}.

\bibitem[Lai and Yin(2013)]{lai2013augmented}
Ming-Jun Lai and Wotao Yin.
\newblock Augmented $\ell_1$ and nuclear-norm models with a globally linearly
  convergent algorithm.
\newblock \emph{SIAM Journal on Imaging Sciences}, 6\penalty0 (2):\penalty0
  1059–1091, January 2013.
\newblock ISSN 1936-4954.
\newblock \doi{10.1137/120863290}.
\newblock URL \url{http://dx.doi.org/10.1137/120863290}.

\bibitem[Zhang(2016)]{zhang2017restricted}
Hui Zhang.
\newblock The restricted strong convexity revisited: analysis of equivalence to
  error bound and quadratic growth.
\newblock \emph{Optimization Letters}, 11\penalty0 (4):\penalty0 817–833,
  July 2016.
\newblock ISSN 1862-4480.
\newblock \doi{10.1007/s11590-016-1058-9}.
\newblock URL \url{http://dx.doi.org/10.1007/s11590-016-1058-9}.

\bibitem[Hastie et~al.(2009)Hastie, Tibshirani, and
  Friedman]{hastie2009elements}
Trevor Hastie, Robert Tibshirani, and Jerome Friedman.
\newblock \emph{The elements of statistical learning: data mining, inference,
  and prediction}.
\newblock Springer Science \& Business Media, New York, NY, USA, 2009.
\newblock \doi{10.1007/978-0-387-84858-7}.
\newblock URL \url{http://dx.doi.org/10.1007/978-0-387-84858-7}.

\bibitem[Ng(2004)]{ng2004feature}
Andrew~Y. Ng.
\newblock Feature selection, l1 vs. l2 regularization, and rotational
  invariance.
\newblock In \emph{Twenty-first international conference on Machine learning -
  ICML ’04}, ICML ’04, page~78, Banff, Alberta, Canada, 2004. ACM Press.
\newblock \doi{10.1145/1015330.1015435}.
\newblock URL \url{http://dx.doi.org/10.1145/1015330.1015435}.

\bibitem[Castano et~al.(2016)Castano, Paksoy, and Zhang]{castano2016angles}
Diego Castano, Vehbi~E. Paksoy, and Fuzhen Zhang.
\newblock Angles, triangle inequalities, correlation matrices and
  metric-preserving and subadditive functions.
\newblock \emph{Linear Algebra and its Applications}, 491:\penalty0 15–29,
  February 2016.
\newblock ISSN 0024-3795.
\newblock \doi{10.1016/j.laa.2014.10.011}.
\newblock URL \url{http://dx.doi.org/10.1016/j.laa.2014.10.011}.

\bibitem[Bubeck(2015)]{bubeck2015convex}
Sébastien Bubeck.
\newblock Convex optimization: Algorithms and complexity.
\newblock \emph{Foundations and Trends{\textregistered} in Machine Learning},
  8\penalty0 (3-4):\penalty0 231--357, 2015.
\newblock \doi{10.1561/2200000050}.
\newblock URL \url{http://dx.doi.org/10.1561/2200000050}.

\bibitem[Nesterov(2018)]{nesterov2018lectures}
Yurii Nesterov.
\newblock \emph{Lectures on Convex Optimization}.
\newblock Springer International Publishing, Cham, Switzerland, 2018.
\newblock ISBN 9783319915784.
\newblock \doi{10.1007/978-3-319-91578-4}.
\newblock URL \url{http://dx.doi.org/10.1007/978-3-319-91578-4}.

\bibitem[Gutman and Peña(2020)]{gutman2021condition}
David~H. Gutman and Javier~F. Peña.
\newblock The condition number of a function relative to a set.
\newblock \emph{Mathematical Programming}, 188\penalty0 (1):\penalty0
  255–294, May 2020.
\newblock ISSN 1436-4646.
\newblock \doi{10.1007/s10107-020-01510-4}.
\newblock URL \url{http://dx.doi.org/10.1007/s10107-020-01510-4}.

\bibitem[Murty and Kabadi(1987)]{murty1985some}
Katta~G. Murty and Santosh~N. Kabadi.
\newblock Some np-complete problems in quadratic and nonlinear programming.
\newblock \emph{Mathematical Programming}, 39\penalty0 (2):\penalty0 117–129,
  June 1987.
\newblock ISSN 1436-4646.
\newblock \doi{10.1007/bf02592948}.
\newblock URL \url{http://dx.doi.org/10.1007/bf02592948}.

\bibitem[Vial(1982)]{vial1982strong}
Jean-Philippe Vial.
\newblock Strong convexity of sets and functions.
\newblock \emph{Journal of Mathematical Economics}, 9\penalty0
  (1–2):\penalty0 187–205, January 1982.
\newblock ISSN 0304-4068.
\newblock \doi{10.1016/0304-4068(82)90026-x}.
\newblock URL \url{http://dx.doi.org/10.1016/0304-4068(82)90026-x}.

\bibitem[Goebel and Rockafellar(2008)]{goebel2008local}
Rafal Goebel and R~Tyrrell Rockafellar.
\newblock Local strong convexity and local lipschitz continuity of the gradient
  of convex functions.
\newblock \emph{Journal of Convex Analysis}, 15\penalty0 (2):\penalty0 263,
  2008.

\bibitem[Yan(2014)]{yan2014extension}
Min Yan.
\newblock Extension of convex function.
\newblock \emph{Journal of Convex Analysis}, 21\penalty0 (4):\penalty0 965,
  2014.

\bibitem[Bottou(2010)]{bottou2010large}
Léon Bottou.
\newblock \emph{Large-Scale Machine Learning with Stochastic Gradient Descent},
  chapter~16, page 177–186.
\newblock Physica-Verlag HD, Heidelberg, German, 2010.
\newblock ISBN 9783790826043.
\newblock \doi{10.1007/978-3-7908-2604-3_16}.
\newblock URL \url{http://dx.doi.org/10.1007/978-3-7908-2604-3_16}.

\bibitem[Kingma and Ba(2017)]{kingma2014adam}
Diederik~P Kingma and Jimmy Ba.
\newblock Adam: A method for stochastic optimization.
\newblock \emph{arXiv preprint arXiv:1412.6980}, 2017.
\newblock URL \url{https://arxiv.org/abs/1412.6980}.

\bibitem[Negahban et~al.(2009)Negahban, Yu, Wainwright, and
  Ravikumar]{negahban2009unified}
Sahand Negahban, Bin Yu, Martin~J Wainwright, and Pradeep Ravikumar.
\newblock A unified framework for high-dimensional analysis of $m$-estimators
  with decomposable regularizers.
\newblock In \emph{Advances in Neural Information Processing Systems},
  volume~22, Vancouver, British Columbia, Canada, 2009. Curran Associates, Inc.
\newblock URL
  \url{https://proceedings.neurips.cc/paper_files/paper/2009/file/dc58e3a306451c9d670adcd37004f48f-Paper.pdf}.

\bibitem[Negahban et~al.(2012)Negahban, Ravikumar, Wainwright, and
  Yu]{negahban2012unified}
Sahand~N. Negahban, Pradeep Ravikumar, Martin~J. Wainwright, and Bin Yu.
\newblock A unified framework for high-dimensional analysis of $m$-estimators
  with decomposable regularizers.
\newblock \emph{Statistical Science}, 27\penalty0 (4), November 2012.
\newblock ISSN 0883-4237.
\newblock \doi{10.1214/12-sts400}.
\newblock URL \url{http://dx.doi.org/10.1214/12-sts400}.

\bibitem[Negahban and Wainwright(2012)]{negahban2012restricted}
Sahand Negahban and Martin~J Wainwright.
\newblock Restricted strong convexity and weighted matrix completion: Optimal
  bounds with noise.
\newblock \emph{The Journal of Machine Learning Research}, 13:\penalty0
  1665--1697, may 2012.
\newblock ISSN 1532-4435.
\newblock URL \url{https://dl.acm.org/doi/10.5555/2188385.2343697}.

\bibitem[Chen et~al.(2001)Chen, Donoho, and Saunders]{chen2001atomic}
Scott~Shaobing Chen, David~L. Donoho, and Michael~A. Saunders.
\newblock Atomic decomposition by basis pursuit.
\newblock \emph{SIAM Review}, 43\penalty0 (1):\penalty0 129–159, January
  2001.
\newblock ISSN 1095-7200.
\newblock \doi{10.1137/s003614450037906x}.
\newblock URL \url{http://dx.doi.org/10.1137/s003614450037906x}.

\bibitem[Fazel(2002)]{fazel2002matrix}
Maryam Fazel.
\newblock \emph{Matrix rank minimization with applications}.
\newblock PhD thesis, PhD thesis, Stanford University, 2002.

\bibitem[Recht et~al.(2010)Recht, Fazel, and Parrilo]{recht2010guaranteed}
Benjamin Recht, Maryam Fazel, and Pablo~A. Parrilo.
\newblock Guaranteed minimum-rank solutions of linear matrix equations via
  nuclear norm minimization.
\newblock \emph{SIAM Review}, 52\penalty0 (3):\penalty0 471–501, January
  2010.
\newblock ISSN 1095-7200.
\newblock \doi{10.1137/070697835}.
\newblock URL \url{http://dx.doi.org/10.1137/070697835}.

\bibitem[Candès and Recht(2012)]{candes2012exact}
Emmanuel Candès and Benjamin Recht.
\newblock Exact matrix completion via convex optimization.
\newblock \emph{Communications of the ACM}, 55\penalty0 (6):\penalty0
  111–119, June 2012.
\newblock ISSN 1557-7317.
\newblock \doi{10.1145/2184319.2184343}.
\newblock URL \url{http://dx.doi.org/10.1145/2184319.2184343}.

\bibitem[Lawson and Hanson(1995)]{lawson1995solving}
Charles~L. Lawson and Richard~J. Hanson.
\newblock \emph{Solving Least Squares Problems}.
\newblock Society for Industrial and Applied Mathematics, Philadelphia, PA,
  USA, January 1995.
\newblock ISBN 9781611971217.
\newblock \doi{10.1137/1.9781611971217}.
\newblock URL \url{http://dx.doi.org/10.1137/1.9781611971217}.

\bibitem[Bj{\"o}rck(1996)]{bjorck1996numerical}
{\AA}ke Bj{\"o}rck.
\newblock \emph{Numerical Methods for Least Squares Problems}.
\newblock Society for Industrial and Applied Mathematics, Philadelphia, PA,
  USA, January 1996.
\newblock ISBN 9781611971484.
\newblock \doi{10.1137/1.9781611971484}.
\newblock URL \url{http://dx.doi.org/10.1137/1.9781611971484}.

\bibitem[Willems(1971)]{willems1971least}
J.~Willems.
\newblock Least squares stationary optimal control and the algebraic riccati
  equation.
\newblock \emph{IEEE Transactions on Automatic Control}, 16\penalty0
  (6):\penalty0 621–634, December 1971.
\newblock ISSN 0018-9286.
\newblock \doi{10.1109/tac.1971.1099831}.
\newblock URL \url{http://dx.doi.org/10.1109/tac.1971.1099831}.

\bibitem[{\AA}str{\"o}m and Eykhoff(1971)]{aastrom1971system}
Karl~Johan {\AA}str{\"o}m and Peter Eykhoff.
\newblock System identification—a survey.
\newblock \emph{Automatica}, 7\penalty0 (2):\penalty0 123–162, March 1971.
\newblock ISSN 0005-1098.
\newblock \doi{10.1016/0005-1098(71)90059-8}.
\newblock URL \url{http://dx.doi.org/10.1016/0005-1098(71)90059-8}.

\bibitem[CHEN et~al.(1989)CHEN, BILLINGS, and LUO]{chen1989orthogonal}
S.~CHEN, S.~A. BILLINGS, and W.~LUO.
\newblock Orthogonal least squares methods and their application to non-linear
  system identification.
\newblock \emph{International Journal of Control}, 50\penalty0 (5):\penalty0
  1873–1896, November 1989.
\newblock ISSN 1366-5820.
\newblock \doi{10.1080/00207178908953472}.
\newblock URL \url{http://dx.doi.org/10.1080/00207178908953472}.

\bibitem[Chen et~al.(1991)Chen, Cowan, and Grant]{chen1991orthogonal}
S.~Chen, C.F.N. Cowan, and P.M. Grant.
\newblock Orthogonal least squares learning algorithm for radial basis function
  networks.
\newblock \emph{IEEE Transactions on Neural Networks}, 2\penalty0 (2):\penalty0
  302–309, March 1991.
\newblock ISSN 1045-9227.
\newblock \doi{10.1109/72.80341}.
\newblock URL \url{http://dx.doi.org/10.1109/72.80341}.

\bibitem[Sayed and Lopes(2006)]{sayed2006distributed}
Ali~H. Sayed and Cassio~G. Lopes.
\newblock Distributed recursive least-squares strategies over adaptive
  networks.
\newblock In \emph{2006 Fortieth Asilomar Conference on Signals, Systems and
  Computers}, volume~19, page 233–237, Pacific Grove, CA, USA, October 2006.
  IEEE.
\newblock \doi{10.1109/acssc.2006.356622}.
\newblock URL \url{http://dx.doi.org/10.1109/acssc.2006.356622}.

\bibitem[Cattivelli et~al.(2008)Cattivelli, Lopes, and
  Sayed]{cattivelli2008diffusion}
F.S. Cattivelli, C.G. Lopes, and A.H. Sayed.
\newblock Diffusion recursive least-squares for distributed estimation over
  adaptive networks.
\newblock \emph{IEEE Transactions on Signal Processing}, 56\penalty0
  (5):\penalty0 1865–1877, May 2008.
\newblock ISSN 1941-0476.
\newblock \doi{10.1109/tsp.2007.913164}.
\newblock URL \url{http://dx.doi.org/10.1109/tsp.2007.913164}.

\bibitem[Mateos et~al.(2009)Mateos, Schizas, and
  Giannakis]{mateos2009distributed}
G.~Mateos, I.D. Schizas, and G.B. Giannakis.
\newblock Distributed recursive least-squares for consensus-based in-network
  adaptive estimation.
\newblock \emph{IEEE Transactions on Signal Processing}, 57\penalty0
  (11):\penalty0 4583–4588, November 2009.
\newblock ISSN 1941-0476.
\newblock \doi{10.1109/tsp.2009.2024278}.
\newblock URL \url{http://dx.doi.org/10.1109/tsp.2009.2024278}.

\bibitem[Mateos and Giannakis(2012)]{mateos2012distributed}
Gonzalo Mateos and Georgios~B. Giannakis.
\newblock Distributed recursive least-squares: Stability and performance
  analysis.
\newblock \emph{IEEE Transactions on Signal Processing}, 60\penalty0
  (7):\penalty0 3740–3754, July 2012.
\newblock ISSN 1941-0476.
\newblock \doi{10.1109/tsp.2012.2194290}.
\newblock URL \url{http://dx.doi.org/10.1109/tsp.2012.2194290}.

\bibitem[Karimi et~al.(2016)Karimi, Nutini, and Schmidt]{karimi2016linear}
Hamed Karimi, Julie Nutini, and Mark Schmidt.
\newblock \emph{Linear Convergence of Gradient and Proximal-Gradient Methods
  Under the Polyak-Łojasiewicz Condition}, chapter~50, page 795–811.
\newblock Springer International Publishing, Cham, Switzerland, 2016.
\newblock ISBN 9783319461281.
\newblock \doi{10.1007/978-3-319-46128-1_50}.
\newblock URL \url{http://dx.doi.org/10.1007/978-3-319-46128-1_50}.

\bibitem[Weir and Mond(1988)]{weir1988pre}
T~Weir and B~Mond.
\newblock Pre-invex functions in multiple objective optimization.
\newblock \emph{Journal of Mathematical Analysis and Applications},
  136\penalty0 (1):\penalty0 29–38, November 1988.
\newblock ISSN 0022-247X.
\newblock \doi{10.1016/0022-247x(88)90113-8}.
\newblock URL \url{http://dx.doi.org/10.1016/0022-247x(88)90113-8}.

\bibitem[Weir and Jeyakumar(1988)]{weir1988class}
T.~Weir and V.~Jeyakumar.
\newblock A class of nonconvex functions and mathematical programming.
\newblock \emph{Bulletin of the Australian Mathematical Society}, 38\penalty0
  (2):\penalty0 177–189, October 1988.
\newblock ISSN 1755-1633.
\newblock \doi{10.1017/s0004972700027441}.
\newblock URL \url{http://dx.doi.org/10.1017/s0004972700027441}.

\bibitem[Yang and Li(2001)]{yang2001properties}
Xin~Min Yang and Duan Li.
\newblock On properties of preinvex functions.
\newblock \emph{Journal of Mathematical Analysis and Applications},
  256\penalty0 (1):\penalty0 229–241, April 2001.
\newblock ISSN 0022-247X.
\newblock \doi{10.1006/jmaa.2000.7310}.
\newblock URL \url{http://dx.doi.org/10.1006/jmaa.2000.7310}.

\bibitem[Rockafellar(1980)]{rockafellar1980generalized}
R.~T. Rockafellar.
\newblock Generalized directional derivatives and subgradients of nonconvex
  functions.
\newblock \emph{Canadian Journal of Mathematics}, 32\penalty0 (2):\penalty0
  257–280, February 1980.
\newblock ISSN 1496-4279.
\newblock \doi{10.4153/cjm-1980-020-7}.
\newblock URL \url{http://dx.doi.org/10.4153/cjm-1980-020-7}.

\bibitem[Jain and Kar(2017)]{jain2017non}
Prateek Jain and Purushottam Kar.
\newblock Non-convex optimization for machine learning.
\newblock \emph{Foundations and Trends{\textregistered} in Machine Learning},
  10\penalty0 (3-4):\penalty0 142--363, 2017.
\newblock \doi{10.1561/2200000058}.
\newblock URL \url{http://dx.doi.org/10.1561/2200000058}.

\bibitem[Bauschke et~al.(2017)Bauschke, Bolte, and
  Teboulle]{bauschke2017descent}
Heinz~H. Bauschke, Jérôme Bolte, and Marc Teboulle.
\newblock A descent lemma beyond lipschitz gradient continuity: First-order
  methods revisited and applications.
\newblock \emph{Mathematics of Operations Research}, 42\penalty0 (2):\penalty0
  330–348, May 2017.
\newblock ISSN 1526-5471.
\newblock \doi{10.1287/moor.2016.0817}.
\newblock URL \url{http://dx.doi.org/10.1287/moor.2016.0817}.

\bibitem[Demmel(1987)]{demmel1987condition}
James~Weldon Demmel.
\newblock On condition numbers and the distance to the nearest ill-posed
  problem.
\newblock \emph{Numerische Mathematik}, 51\penalty0 (3):\penalty0 251–289,
  May 1987.
\newblock ISSN 0945-3245.
\newblock \doi{10.1007/bf01400115}.
\newblock URL \url{http://dx.doi.org/10.1007/bf01400115}.

\bibitem[Fine and Rosenberger(1997)]{fine1997fundamental}
Benjamin Fine and Gerhard Rosenberger.
\newblock \emph{The Fundamental Theorem of Algebra}.
\newblock Springer New York, New York, NY, USA, 1997.
\newblock ISBN 9781461219286.
\newblock \doi{10.1007/978-1-4612-1928-6}.
\newblock URL \url{http://dx.doi.org/10.1007/978-1-4612-1928-6}.

\end{thebibliography}

\newpage
\appendix
\setcounter{equation}{0}
\renewcommand{\theequation}{SM.\arabic{equation}}

\section{Proofs of theoretical results for outer approximation}  \label{sec: proof lemma outer}

\subsection{Proof of Lemma~\ref{lem: 1D analyze}}

\begin{proof}[Proof of Lemma~\ref{lem: 1D analyze}]
Consider part~\ref{lem: 1D case1} of the lemma. Suppose $\x_{\epsilon} = \x_1^* + \epsilon \boldsymbol{e}_1$ for $\epsilon \in \R$. We want to show that
\begin{enumerate}[label=(\alph*)]
    \item If $r \in \big( 0, \; \frac{L}{2 \sigma_2} \big]$ then for all $\epsilon \in \Big( 0, \; \min \big\{ \frac{L}{ \sigma_1}, 2r \big\} \Big)$, we have $\tilphi_1 (\x_{\epsilon}) + \tilphi_2 (\x_{\epsilon}) > \psi (\x_{\epsilon})$.
    \label{case lem: inside}
    \item If $r \in \big( 0, \; \frac{L}{2 \sigma_2} \big)$ then for all $\epsilon \in \Big( - \min \big\{ \frac{L}{\sigma_1}, \frac{L}{\sigma_2} - 2r \big\}, \; 0 \Big)$, we have $\tilphi_1 (\x_{\epsilon}) + \tilphi_2 (\x_{\epsilon}) < \psi (\x_{\epsilon})$.
    \label{case lem: outside1}
    \item If $r = \frac{L}{2 \sigma_2}$ then for all $\epsilon \in (-\infty, 0)$, we have $\x_{\epsilon} \notin (\overline{\B}_1 \cap \overline{\B}_2) \setminus \{ \x_1^*, \x_2^* \}$.
    \label{case lem: outside2}
\end{enumerate}

First, consider case~\ref{case lem: inside}. Suppose $\epsilon \in \big( 0, \; \min \big\{ \frac{L}{ \sigma_1}, 2r \big\} \big)$. Let $x_{\epsilon, 1}$ be the first component of the point $\x_{\epsilon}$. We have $x_{\epsilon, 1} \in \big( -r, \; \min \big\{ -r + \frac{L}{\sigma_1}, \; r \big\} \big)$. Since $r \in \big( 0, \; \frac{L}{2 \sigma_2} \big]$, we have $-r \in \big[ r - \frac{L}{\sigma_2}, r \big)$. Therefore, $\x_{\epsilon} \in (\B_1 \cap \B_2) \setminus \{ \x_1^*, \x_2^* \}$.
By the location of $\x_{\epsilon}$, from \eqref{def: alpha_i}, we get $\alpha_1 (\x_{\epsilon}) = 0$ and  $\alpha_2 (\x_{\epsilon}) = \pi$. Consequently, from \eqref{def: psi}, we obtain $\psi (\x_{\epsilon}) = 0$. Since $\x_{\epsilon} \notin \partial \B_1 \cup \partial \B_2$, from \eqref{def: phi tilde}, we have that $\tilphi_i (\x_{\epsilon}) \in \big( 0, \frac{\pi}{2} \big]$ for $i \in \{1, 2 \}$. This means that $\tilphi_1 (\x_{\epsilon}) + \tilphi_2 (\x_{\epsilon}) > \psi (\x_{\epsilon})$.

Second, consider case~\ref{case lem: outside1}. Suppose $\epsilon \in \big( - \min \big\{ \frac{L}{\sigma_1}, \frac{L}{\sigma_2} - 2r \big\}, \; 0 \big)$. We have $x_{\epsilon, 1} \in \big( \max \big\{ -r - \frac{L}{\sigma_1}, \; r - \frac{L}{\sigma_2} \big\}, \; -r \big)$ which implies that $\x_{\epsilon} \in (\B_1 \cap \B_2) \setminus \{ \x_1^*, \x_2^* \}$.
By the location of $\x_{\epsilon}$, from \eqref{def: alpha_i}, we get $\alpha_1 (\x_{\epsilon}) = \pi$ and  $\alpha_2 (\x_{\epsilon}) = \pi$. Consequently, from \eqref{def: psi}, we obtain $\psi (\x_{\epsilon}) = \pi$. Since $\x_{\epsilon} \notin \{ \x_1^*, \x_2^* \}$, from \eqref{def: phi tilde}, we have that $\tilphi_i (\x_{\epsilon}) \in \big[ 0, \frac{\pi}{2} \big)$ for $i \in \{1, 2 \}$. This means that $\tilphi_1 (\x_{\epsilon}) + \tilphi_2 (\x_{\epsilon}) < \psi (\x_{\epsilon})$.

Third, consider case~\ref{case lem: outside2}. Suppose $\epsilon \in (-\infty, 0)$. If $\x \in \overline{\B}_2$ then $x_1 \in \big[ r - \frac{L}{\sigma_2}, r + \frac{L}{\sigma_2} \big] = [ -r, 3r ]$. However, we have $x_{\epsilon, 1} \in (-\infty, -r)$. This means that $\x_{\epsilon} \notin \overline{\B}_2$ which implies that $\x_{\epsilon} \notin (\overline{\B}_1 \cap \overline{\B}_2) \setminus \{ \x_1^*, \x_2^* \}$. We complete the proof of our claim.

Recall the definition of $\Mup$ and $\Mdo$ in \eqref{def: set M up} and \eqref{def: set M down}, respectively. From our claim, we can see that if $r \in \big( 0, \; \frac{L}{2 \sigma_2} \big]$ then for all $\delta \in \R_{>0}$, there exist $\x_{\text{in}}, \x_{\text{out}} \in \B (\x_1^*, \delta)$ such that $\x_{\text{in}} \in \Mup$, $\x_{\text{in}} \in \Mdo$, $\x_{\text{out}} \notin \Mup$ and $\x_{\text{out}} \notin \Mdo$. 
This implies that $\x_1^* \in \partial \Mup$ and $\x_1^* \in \partial \Mdo$. The analysis for part~\ref{lem: 1D case2} of the lemma follows in an identical manner. 
\end{proof}

\subsection{Proof of Lemma~\ref{lem: point below}}

\begin{proof}[Proof of Lemma~\ref{lem: point below}]
Recall from \eqref{def: alpha_i} that $\alpha_i (\x) = \angle ( \x - \x_i^*, \; \x_2^* - \x_1^* )$ for $i \in \{ 1, 2 \}$. Suppose $\y \notin \{ \x_1^*, \x_2^* \}$. Since $-r \leq x_1 = y_1 \leq r$ and $\Vert \tilde{\mathbf{x}} \Vert > \Vert \tilde{\mathbf{y}} \Vert$, we have $\alpha_1 (\x) \geq \alpha_1 (\y)$ and $\alpha_2(\x) \leq \alpha_2 (\y)$, which implies that
\begin{equation}
    \psi (\x) = 
    \pi - (\alpha_2(\x) - \alpha_1(\x)) 
    \geq \pi - (\alpha_2(\y) - \alpha_1(\y))
    = \psi (\y).
    \label{eqn: alpha ineq}
\end{equation}
On the other hand, since $x_1 = y_1$ and $\Vert \tilde{\mathbf{x}} \Vert > \Vert \tilde{\mathbf{y}} \Vert$, we get 
\begin{equation*}
    \| \x - \x_1^* \| > \| \y - \x_1^* \| > 0
    \quad \text{and} \quad 
    \| \x - \x_2^* \| > \| \y - \x_2^* \| > 0.
\end{equation*}
Using the above inequalities and the definition of \eqref{def: phi tilde}, we get that $\tilphi_i(\x) < \tilphi_i (\y)$ for $i \in \{1,2 \}$. Applying $\tilphi_1 (\x) + \tilphi_2 (\x) \geq \psi (\x)$ and inequality \eqref{eqn: alpha ineq}, we can write
\begin{equation*}
    \psi (\y) \leq \psi (\x)
    \leq \tilphi_1 (\x) + \tilphi_2 (\x) 
    < \tilphi_1 (\y) + \tilphi_2 (\y),
\end{equation*}
which completes the proof.
\end{proof}

\subsection{Proof of Lemma~\ref{lem: point on R_n}}

\begin{proof}[Proof of Lemma~\ref{lem: point on R_n}]
Consider part~\ref{lem: lambda_1} of the lemma. Since $\x \in \partial \B_1$, we get $\| \x - \x_1^* \| = \frac{L}{\sigma_1}$ and thus $\tilphi_1(\x) = 0$ from \eqref{def: phi tilde}. Consider the inequality $\tilphi_1 (\x) + \tilphi_2 (\x) \lessgtr \psi (\x)$.
Substitute $\tilphi_1(\x) = 0$ and $\psi (\x) = \pi - ( \alpha_2 (\x) - \alpha_1 (\x) )$, and take cosine of both sides of the inequality (and use $\cos \tilphi_2 (\x) = \frac{\sigma_2}{L} d_2 (\x)$ from \eqref{def: phi tilde}) to get 
\begin{equation*}
    \frac{\sigma_2}{L} d_2 (\x) \gtrless  - \cos \big( \alpha_2 (\x) - \alpha_1 (\x) \big). 
\end{equation*}
Expand the cosine and substitute the equations \eqref{eqn: cos alpha} and \eqref{eqn: sin alpha} to obtain
\begin{equation}
    \frac{\sigma_2}{L} d_2 (\x) \gtrless  -\frac{x_1^2 + \Vert \tilde{\mathbf{x}} \Vert^2 - r^2}{d_1 (\x) \cdot d_2 (\x)}. 
\label{eqn: point in H ineq}
\end{equation}
Since $\x \in \partial \B_1$, we have $d_1 (\x) = \frac{L}{\sigma_1}$ and $\Vert \tilde{\mathbf{x}} \Vert^2 = \frac{L^2}{\sigma_1^2} - (x_1 + r)^2$ from \eqref{def: set B boundary}. 
Also, from \eqref{def: distance}, we have 
\begin{equation}
    d_2^2 (\x) 
    = (x_1 - r)^2 + \Vert \tilde{\mathbf{x}} \Vert^2 
    = (x_1 - r)^2 + \frac{L^2}{\sigma_1^2} - (x_1 + r)^2 
    = -4r x_1 + \frac{L^2}{\sigma_1^2}.
    \label{eqn: d_2 square}
\end{equation}
Multiply inequality \eqref{eqn: point in H ineq} by $d_1 (\x) \cdot d_2 (\x)$ and then substitute $d_1 (\x)$, $\Vert \tilde{\mathbf{x}} \Vert^2$, and $d_2^2 (\x)$ to get
\begin{align*}
    &\frac{\sigma_2}{\sigma_1} \Big(-4r x_1 + \frac{L^2}{\sigma_1^2} \Big) \gtrless 2r^2 + 2r x_1 - \frac{L^2}{\sigma_1^2}, \\
    \Leftrightarrow \quad 
    &x_1 \Big(2r + 4r \frac{\sigma_2}{\sigma_1} \Big) \lessgtr \frac{\sigma_2}{\sigma_1} \cdot \frac{L^2}{\sigma_1^2} + \frac{L^2}{\sigma_1^2} - 2r^2, \\
    \Leftrightarrow \quad 
    &x_1 \lessgtr \Big(\frac{1 + \beta}{1 + 2 \beta} \Big) \frac{\gamma_1}{2 r} - \frac{r}{1 + 2 \beta} = \lambda_1,
\end{align*}
where $\gamma_1$ and $\beta$ are defined in \eqref{def: gamma beta}. The proof of the second part is similar to the first part. 
\end{proof}

\subsection{Proof of Lemma~\ref{lem: intersection}}

\begin{proof}[Proof of Lemma~\ref{lem: intersection}]
We will prove the case that $i = 1$; however, the proof of the case that $i = 2$ can be obtained using the same approach. Suppose $\x \in \partial \B_1$.
Substituting  $\Vert \tilde{\mathbf{x}} \Vert^2 = \frac{L^2}{\sigma_1^2} - (x_1 + r)^2$, $d_1(\x) = \frac{L}{\sigma_1}$ and \eqref{eqn: d_2 square} into the expression of $\T$ in \eqref{def: set T}, we get
\begin{equation}
    \frac{x_1^2 + \| \tilde{\mathbf{x}} \|^2 - r^2}{d_1^2(\x) \cdot d_2^2(\x)} + \frac{\sigma_1 \sigma_2}{L^2}
	= \frac{\frac{L^2}{\sigma_1^2} -2r x_1 -2r^2}{\frac{L^2}{\sigma_1^2} \big( -4r x_1 + \frac{L^2}{\sigma_1^2} \big) } + \frac{\sigma_1 \sigma_2}{L^2} = 0.
	\label{eqn: sub B1 into T}
\end{equation}
Multiply both sides of the above equation by $\frac{L^2}{\sigma_1^2} \big( -4r x_1 + \frac{L^2}{\sigma_1^2} \big)$ and then use the definition of $\gamma_1$ and $\beta$ in \eqref{def: gamma beta} to get
\begin{displaymath}
	4 \beta r x_1 - \beta \gamma_1 = \gamma_1 -2r x_1 -2r^2,
	\quad \Leftrightarrow \quad
	x_1 = \Big(\frac{1+\beta}{1 + 2 \beta} \Big) \frac{\gamma_1}{2r} - \frac{r}{1+2 \beta} = \lambda_1.
\end{displaymath}
Then, substituting $x_1 = \big(\frac{1+\beta}{1 + 2 \beta} \big) \frac{\gamma_1}{2r} - \frac{r}{1+2 \beta}$ back into the expression of $\partial \B_1$ in \eqref{def: set B boundary}, we get
\begin{align*}
	\Vert \tilde{\mathbf{x}} \Vert^2 &= \gamma_1 - \bigg[ \Big(\frac{1+\beta}{1 + 2 \beta} \Big) \frac{\gamma_1}{2r} - \frac{r}{1+2 \beta} + r \bigg]^2 \\
	&= - \frac{1}{4r^2 (1+2\beta)^2} \big[ (\gamma_1 - 4r^2) (\gamma_1 (1+ \beta)^2 - 4\beta^2 r^2) \big], \\
	\Rightarrow \quad
	\Vert \tilde{\mathbf{x}} \Vert &= \frac{r}{2(1+ 2\beta)} \sqrt{- \Big(\frac{\gamma_1}{r^2} - 4 \Big) \Big(\frac{\gamma_1}{r^2} (1+\beta)^2 - 4 \beta^2 \Big)} = \nu_1.
\end{align*}
This equation is valid only when the term in the square root is a non-negative real number. Equivalently,
\begin{equation*}
    \frac{4 \beta^2}{(1+ \beta)^2} \leq \frac{\gamma_1}{r^2} \leq 4,
    \quad \Leftrightarrow \quad
    \frac{L}{2 \sigma_1} \leq r \leq \frac{L}{2 \sigma_1}  \Big(1+\frac{1}{\beta} \Big) = \frac{L}{2} \Big( \frac{1}{\sigma_1} + \frac{1}{\sigma_2} \Big).
\end{equation*}
Since we multiplied \eqref{eqn: sub B1 into T} by $d_1^2(\x) \cdot d_2^2(\x) = \frac{L^2}{\sigma_1^2} \big( -4r x_1 + \frac{L^2}{\sigma_1^2} \big)$ to obtain the result, $\x_1^*$ or $\x_2^*$ might appear in the intersection $\T \cap \partial \B_1$ even if $\{ \x_1^*, \x_2^* \} \not\subseteq \T$. Therefore, we need to verify that the intersection points are not $\x^*_1$ or $\x^*_2$.
Considering the solution of the equation $\Vert \tilde{\mathbf{x}} \Vert = \nu_1 = 0$ (i.e., $\x$ is on the $x_1$-axis), we see that $r = \frac{L}{2 \sigma_1}$ and $r = \frac{L}{2} \big( \frac{1}{\sigma_1} + \frac{1}{\sigma_2} \big)$ are the candidates that we need to check. Substituting $r = \frac{L}{2 \sigma_1}$ into the equation $x_1 = \lambda_1$, we get $x_1 = \frac{L}{2 \sigma_1} = r$ which is $\x_2^* = (r, \mathbf{0})$ and therefore we conclude that there are no intersection points when $r = \frac{L}{2 \sigma_1}$. Next, substituting $r = \frac{L}{2} \big( \frac{1}{\sigma_1} + \frac{1}{\sigma_2} \big)$ into the equation $x_1 = \lambda_1$, we get $x_1 = \frac{L}{2} \big( \frac{1}{\sigma_1} - \frac{1}{\sigma_2} \big)$ which is a legitimate value of an intersection point.    
\end{proof}

\subsection{Proof of Lemma~\ref{lem: compare x1}}

\begin{proof}[Proof of Lemma~\ref{lem: compare x1}]
To simplify notations in the proof, we define a new variable
\begin{equation}
    \chi_1 \triangleq \frac{L}{\sigma_1 r}
    = \frac{\sqrt{\gamma_1}}{r}. \label{var: chi}
\end{equation}
First, consider part \ref{lem: compare x1 part2} of the lemma. By the definition of $\lambda_1$ in \eqref{var: lambda}, we can rewrite the inequality $\lambda_1 < \frac{L}{\sigma_1} -r$ as $\big(\frac{1 + \beta}{1 + 2 \beta} \big) \frac{\gamma_1}{2 r^2} - \frac{1}{1 + 2 \beta} < \frac{L}{\sigma_1 r}- 1$. Using $\chi_1$ defined in \eqref{var: chi}, the inequality becomes
\begin{equation}
\frac{1}{2} \Big(\frac{1 + \beta}{1 + 2 \beta} \Big) \chi_1^2 - \frac{1}{1 + 2 \beta} < \chi_1 - 1. 
 \label{eqn: lem_ineq}
\end{equation}
Multiplying both sides by $1+ 2 \beta$ and then rearranging the resulting inequality, we get 
\begin{equation*}
    (\chi_1 - 2) \Big( \frac{1}{2} (1 + \beta) \chi_1 - \beta \Big) < 0,
    \quad \Leftrightarrow \quad 
    \frac{2 \beta}{1 + \beta} < \chi_1 < 2.
\end{equation*}
The last equivalence holds since $\frac{2 \beta}{1+ \beta} < 2$. Substituting $\chi_1 = \frac{L}{\sigma_1 r}$ and $\beta = \frac{\sigma_2}{\sigma_1}$, the inequality $\frac{2 \beta}{1 + \beta} < \chi_1$ becomes $r < \frac{L}{2} \big( \frac{1}{\sigma_1} + \frac{1}{\sigma_2} \big)$ and the inequality $\chi_1 < 2$ becomes $r > \frac{L}{2 \sigma_1}$, which completes the proof of part \ref{lem: compare x1 part2}. 

Next, we can see that part \ref{lem: compare x1 part2} of the lemma implies that if $r \in \big( 0, \; \frac{L}{2 \sigma_1} \big]$ then $\lambda_1 \geq \frac{L}{\sigma_1} -r$. By proceeding the algebraic simplification similar to \eqref{eqn: lem_ineq} except using equality instead, we can conclude that given $r \in \big( 0, \; \frac{L}{2 \sigma_1} \big]$, we have $\lambda_1 = \frac{L}{\sigma_1} -r$ only when $r = \frac{L}{2 \sigma_1}$. This completes the proof of part \ref{lem: compare x1 part1}.

The proof of part \ref{lem: compare x1 part3} and part \ref{lem: compare x1 part4} can be carried out using similar steps as the proof given above. 
\end{proof}

\subsection{Proof of Lemma~\ref{lem: boundary}}

\begin{proof}[Proof of Lemma~\ref{lem: boundary}]
Let $\x = ( x_1, \tilde{\mathbf{x}} ) \in \T$, and $\boldsymbol{e}_2' (\x) = \frac{\v (\x)}{ \| \v (\x) \| }$ 
where $\v (\x) = (\x - \x_1^*) - \langle \x - \x_1^*, \boldsymbol{e}_1 \rangle \; \boldsymbol{e}_1$. 
Suppose $\theta (\x) = \frac{1}{2} (\alpha_1(\x) + \alpha_2(\x))$ where $\alpha_i (\x)$ for $i \in \{1, 2\}$ are defined in \eqref{def: alpha_i} and 
\begin{equation}
    \x'_{\epsilon} = \x + \epsilon \big( \cos ( \theta (\x) ) \; \boldsymbol{e}_1 + \sin ( \theta  (\x) ) \; \boldsymbol{e}_2' (\x) \big)
    \quad \text{for} \quad \epsilon \in \R.
    \label{eqn: x epsilon}
\end{equation}
Our claim is that if $r \in \big(0, \; \frac{L}{2} \big( \frac{1}{\sigma_1} + \frac{1}{\sigma_2} \big) \big)$ then
\begin{enumerate}[label=(\roman*)]
    \item for all 
    \begin{equation*}
        \epsilon \in \bigg( \max \Big\{ \max_{i=1,2}  - 2 d_i (\x) \sin  \frac{1}{2} \big( \tilphi_1 (\x) + \tilphi_2 (\x) \big), \;  -\| \tilde{\mathbf{x}} \| \csc ( \theta (\x) ) \Big\} , \; 0 \bigg),
    \end{equation*}
    it holds that $\tilphi_1(\x'_{\epsilon}) + \tilphi_2(\x'_{\epsilon}) > \psi(\x'_{\epsilon})$ (i.e., $\x'_\epsilon \in \Mup$ and $\x'_\epsilon \in \Mdo$), and
    \label{lem: boundary part1}
    
    \item for all $\epsilon \in (0, \infty )$, either $\tilphi_1(\x'_{\epsilon}) + \tilphi_2(\x'_{\epsilon}) < \psi(\x'_{\epsilon})$, or $\tilphi_1 (\x'_{\epsilon})$ or $\tilphi_2 (\x'_{\epsilon})$ is not well-defined (i.e., $\x'_\epsilon \in (\Mup)^c$ and $\x'_\epsilon \in (\Mdo)^c$).
    \label{lem: boundary part2}
\end{enumerate}

Recall from Proposition~\ref{prop: T_n} that $\T = \big\{ \z \in \R^n : \tilphi_1 (\z) + \tilphi_2 (\z) = \psi (\z) \big\}$.
First, we will show that $\boldsymbol{e}_2' (\x)$ is well-defined, i.e., $\v (\x) \neq \mathbf{0}$ if $\x \in \T$. Note that $\v (\x) = \mathbf{0}$ if and only if $\x = \x_1^* + k \boldsymbol{e}_1$ for some $k \in \R$. In fact, this is equivalent to $\x = (k', \mathbf{0})$ for some $k' \in \R$ since $\x_1^* = (-r, \mathbf{0})$.
Suppose $\x = (x_1, \mathbf{0})$. Consider the following cases.
\begin{itemize}
    \item Consider $x_1 \in (-\infty, -r) \cup (r, \infty )$. Then, we have $\psi(\x) = \pi$ (since $\alpha_1 (\x) = \alpha_2 (\x) = \pi$ when $x_1 \in (-\infty, r)$, and $\alpha_1 (\x) = \alpha_2 (\x) = 0$ when $x_1 \in (r, \infty )$). This implies that $\tilphi_1 (\x) = \tilphi_2 (\x) = \frac{\pi}{2}$ since $\tilphi_i \in \big[ 0, \frac{\pi}{2} \big]$ for $i \in \{1, 2\}$. Using the definition of $\tilphi_i$ for $i \in \{ 1, 2 \}$ in \eqref{def: phi tilde}, we obtain that $\x = \x_1^*$ and $\x = \x_2^*$ which contradicts the fact that $r = \frac{1}{2} \| \x_2^* - \x_1^* \| > 0$.
    
    \item Consider $x_1 \in (-r, r)$. Then, we have $\psi (\x) = 0$ (since $\alpha_1 (\x) = 0$ and $\alpha_1 (\x) = \pi$). This implies that $\tilphi_1 (\x) = \tilphi_2 (\x) = 0$ since $\tilphi_i \in \big[ 0, \frac{\pi}{2} \big]$ for $i \in \{1, 2\}$. Using the definition of $\tilphi_i$ for $\in \{ 1, 2 \}$ in \eqref{def: phi tilde}, we obtain that $| x_1 + r | = \frac{L}{\sigma_1}$ and $| x_1 - r | = \frac{L}{\sigma_2}$. Since $x_1 \in (-r, r)$, we have $x_1 = -r + \frac{L}{\sigma_1}$ and $x_1 = r - \frac{L}{\sigma_2}$. This implies that $r = \frac{L}{2} \big( \frac{1}{\sigma_1} + \frac{1}{\sigma_2} \big)$ which contradict our assumption.
    \item Consider $x_1 = -r$ or $x_1 = r$. We have that $\x = \x_1^*$ or $\x = \x_2^*$. However, $\x_1^* \notin \T$ and $\x_2^* \notin \T$ by the definition of $\T$.
\end{itemize}
Since $\x \neq (k, \mathbf{0})$ for all $k \in \R$, we conclude that $\boldsymbol{e}_2' (\x)$ is well-defined.

Next, we will verify that for $\x = (x_1, \tilde{\mathbf{x}}) \in \T$ and $i \in \{1, 2\}$, the statements $ - 2 d_i (\x) \cdot \sin  \frac{1}{2} ( \tilphi_1 (\x) + \tilphi_2 (\x) ) < 0$ and  $-\| \tilde{\mathbf{x}} \| \cdot \csc ( \theta (\x) ) < 0$, which are used to specify a range of $\epsilon$'s value in our claim, hold. Since $\x \notin \{ \x_1^*, \x_2^* \}$, we have $d_i (\x) > 0$ for $i \in \{1, 2\}$. 
In addition, from the above analysis, since $\x \notin \text{span} \{ \boldsymbol{e}_1 \}$, we have $\tilphi_i (\x) \in \big( 0, \frac{\pi}{2} \big)$ for $i \in \{ 1, 2 \}$. Therefore, it holds that $\frac{1}{2} ( \tilphi_1 (\x) + \tilphi_2 (\x)) \in \big( 0, \frac{\pi}{2} \big)$ and then combining the two facts, we verify the first statement.
Since $\x \notin \text{span} \{ \boldsymbol{e}_1 \}$ from the analysis above, we have $\| \tilde{\mathbf{x}} \| > 0$, and $\alpha_i (\x) \in (0, \pi)$ for $i \in \{1, 2 \}$ (from the definition of $\alpha_i$ in \eqref{def: alpha_i}). Therefore, we have $\csc (\theta (\x)) = \csc \big( \frac{1}{2} ( \alpha_1 (\x) + \alpha_2 (\x) ) \big) \in [1, \infty )$. Combining the two facts, we verify the second statement.

Then, given a point $\x \in \T$, we consider the new set of bases $\mathcal{J} (\x) = \{ \boldsymbol{e}_1' (\x), \boldsymbol{e}_2' (\x),$ $\ldots, \boldsymbol{e}_n' (\x) \}$ where $\boldsymbol{e}_1' (\x) = \boldsymbol{e}_1$, $\boldsymbol{e}_2' (\x) = \frac{\v (\x)}{\| \v (\x) \|}$ (as discussed at the beginning of the proof), and $\boldsymbol{e}_3' (\x), \boldsymbol{e}_4' (\x), \ldots,$ $\boldsymbol{e}_n' (\x)$ can be obtained by applying Gram-Schmidt procedure to the set of standard bases. 
Note that $\langle \boldsymbol{e}_1, \boldsymbol{e}_2' (\x) \rangle = 0$ by our construction. We denote $( \cdot, \cdot, \ldots, \cdot)_{\mathcal{J} (\x)}$ as the coordinate (or vector's component) with respect to the set of new bases. Let $\hat{x}_2 = \| \tilde{\mathbf{x}} \| \in \R_{>0}$. Then, by our construction, we have $\x = (x_1, \hat{x}_2, \mathbf{0})_{\mathcal{J} (\x)}$. Let the point 
\begin{equation*}
    \y = \big(x_1 - \hat{x}_2 \cot{ \theta (\x) }, \; \mathbf{0} \big)_{\mathcal{J}(\x)}.
\end{equation*}
Recall that $\x_{\epsilon}'$ is defined as given in \eqref{eqn: x epsilon}. We can write $\y = \x'_{\epsilon}$ with $\epsilon = -\hat{x}_2 \csc \theta (\x)$. We will show that $y_1 = x_1 - \hat{x}_2 \cot{ \theta (\x) } \in (-r, r)$. Since $\alpha_2 (\x) > \alpha_1 (\x)$ (by the fact that $\x \notin \text{span} \{ \boldsymbol{e}_1 \}$), we have
\begin{equation*}
    \cot \alpha_1 (\x) > \cot \theta (\x),
    \quad \Leftrightarrow \quad
    \frac{x_1 + r}{\hat{x}_2} > \cot \theta(\x),
    \quad \Leftrightarrow \quad
    x_1 - \hat{x}_2 \cot{ \theta (\x) } > -r.
\end{equation*}
The last inequality is due to $\hat{x}_2 > 0$. Similarly, since $\alpha_2 (\x) > \alpha_1 (\x)$ (by the fact that $\x \notin \text{span} \{ \boldsymbol{e}_1 \}$), we have
\begin{equation*}
    \cot \alpha_2 (\x) < \cot \theta (\x),
    \quad \Leftrightarrow \quad
    \frac{x_1 - r}{\hat{x}_2} < \cot \theta(\x),
    \quad \Leftrightarrow \quad
    x_1 - \hat{x}_2 \cot{ \theta (\x) } < r.
\end{equation*}

For convenience, let 
\begin{equation}
    a (\x) = \max \Big\{ \max_{i \in \{ 1,2 \}} - 2 d_i (\x) \sin  \frac{1}{2} \big( \tilphi_1 (\x) + \tilphi_2 (\x) \big), \; - \| \tilde{\mathbf{x}} \| \csc  \theta (\x)  \Big\}.
    \label{var: a(x)}
\end{equation}
Next, we will show that for all $\epsilon \in ( a(\x), \; 0 )$, we have $\alpha_1 (\x'_{\epsilon}) < \alpha_1 (\x )$ and $\alpha_2 (\x'_{\epsilon}) > \alpha_2 (\x )$, and
for all $\epsilon \in (0, \infty )$, we have $\alpha_1 (\x'_{\epsilon}) > \alpha_1 (\x )$ and $\alpha_2 (\x'_{\epsilon}) < \alpha_2 (\x )$ which will lead to proving parts~\ref{lem: boundary part1} and \ref{lem: boundary inclusion part2}, respectively, of our claim.
Consider the triangle formed by the points $\x_1^*$, $\x$, and $\y$. Suppose $\epsilon \in ( a(\x), \; 0 )$. Then, the point $\x'_{\epsilon}$ is on the line segment connecting $\y$ and $\x$. Since for a given $\z \in \R^n \setminus \{ \x_1^* \}$, $\alpha_1 (\z) = \angle (\z - \x_1^*, \; \boldsymbol{e}_1)$ (from the definition of $\alpha_1$ in \eqref{def: alpha_i}) and $y_1 > -r$ (with all other entries of $\y$ are zeros by its definition), we have $\alpha_1 (\x'_{\epsilon}) < \alpha_1 (\x)$. 
Next, suppose $\epsilon \in (0, \infty )$. Then, the point $\x'_{\epsilon}$ is on the ray from $\y$ to $\x$ but not on the line segment connecting $\y$ and $\x$ which implies that $\alpha_1 (\x'_{\epsilon}) > \alpha_1 (\x)$. 
Now, consider the triangle formed by the points $\x_2^*$, $\x$, and $\y$. By using similar argument as before (with the fact that $\alpha_2 (\z) = \angle (\z - \x_2^*, \; \boldsymbol{e}_1)$ for a given $\z \in \R^n \setminus \{ \x_2^* \}$, and $y_1 < r$), we can verify that $\alpha_2 (\x'_{\epsilon}) < \alpha_2 (\x )$.

We will show that for all $\epsilon \in ( a(\x), \; 0 )$ and $i \in \{1 ,2 \}$, we have $\| \x'_{\epsilon} - \x_i^* \| < \| \x - \x_i^* \|$, and for all $\epsilon \in (0, \infty )$ for $i \in \{1 ,2 \}$, we have $\| \x'_{\epsilon} - \x_i^* \| > \| \x - \x_i^* \|$.
Suppose $\epsilon \in ( a(\x), \; 0 )$. Then, we have $\epsilon > - 2 d_1 (\x) \sin  \frac{1}{2} \big( \tilphi_1 (\x) + \tilphi_2 (\x) \big)$. Using $\tilphi_1 (\x) + \tilphi_2 (\x) = \pi - (\alpha_2 (\x) - \alpha_1 (\x))$ and $\sin \big( \frac{\pi}{2} - z \big) = \cos z$ for all $z \in \R$, we can write 
\begin{equation*}
    \epsilon > -2 d_1(\x) \cos \frac{1}{2} \big( \alpha_2 (\x) - \alpha_1 (\x) \big),
    \quad \Leftrightarrow \quad
    \epsilon + 2 d_1 (\x) \cos \big( \theta (\x) - \alpha_1 (\x) \big) > 0.
\end{equation*}
Expanding the cosine and using $d_1 (\x) \cos \alpha_1 (\x) = x_1 +r$ and $d_1 (\x) \sin \alpha_1 (\x) = \hat{x}_2$ from \eqref{eqn: cos alpha} and \eqref{eqn: sin alpha}, respectively, we obtain $\epsilon + 2(x_1 + r) \cos \theta (\x) + 2 \hat{x}_2 \sin \theta (\x) > 0$. Multiplying the inequality by $\epsilon$ (which is negative) and then adding $(x_1 + r )^2 + \hat{x}_2^2$ to both sides, we get
\begin{equation*}
    (x_1 + \epsilon \cos \theta (\x) +r )^2 
    + (\hat{x}_2 + \epsilon \sin \theta (\x) )^2
    < (x_1 + r )^2 + \hat{x}_2^2,
\end{equation*}
which is $\| \x'_{\epsilon} - \x_1^* \| < \| \x - \x_1^* \|$. 
Next, suppose $\epsilon \in (0, \infty )$. Recall the definition of $d_i(\x)$ for $i \in \{1, 2 \}$ in \eqref{def: distance}. Since $0 < \alpha_1 (\x) \leq \alpha_2 (\x) < \pi$ (by the fact that $\x \notin \text{span} \{ \boldsymbol{e}_1 \}$), we have
\begin{equation*}
    \cos \big( \theta (\x) - \alpha_1 (\x) \big) > 0,
    \quad \Leftrightarrow \quad
    d_1 (\x) \cos \alpha_1(\x) \cos \theta(\x) 
    + d_1 (\x) \sin \alpha_1(\x) \sin \theta(\x) > 0.
\end{equation*}
Since $d_1 (\x) \cos \alpha_1 (\x) = x_1 +r$ and $d_1 (\x) \sin \alpha_1 (\x) = \hat{x}_2$ from \eqref{eqn: cos alpha} and \eqref{eqn: sin alpha}, respectively, we obtain $(x_1 + r) \cos \theta(\x) + \hat{x}_2 \sin \theta(\x) > 0$. Using the assumption that $\epsilon > 0$, we can write
\begin{equation*}
    \frac{\epsilon}{2} + (x_1 + r) \cos \theta(\x) + \hat{x}_2 \sin \theta(\x) > 0.
\end{equation*}
Multiplying the above inequality by $2 \epsilon$ (which is positive), then adding $(x_1 + r )^2 + \hat{x}_2^2$ to both sides, and rearranging, we get $\| \x'_{\epsilon} - \x_1^* \| > \| \x - \x_1^* \|$. By using similar steps as above, we can also show that $\| \x'_{\epsilon} - \x_2^* \| < \| \x - \x_2^* \|$ for $\epsilon \in ( a(\x), \; 0 )$ and $\| \x'_{\epsilon} - \x_2^* \| > \| \x - \x_2^* \|$ for $\epsilon \in (0, \infty )$.

Here, we will prove part~\ref{lem: boundary part1} of our claim. Suppose $\epsilon \in ( a(\x), \; 0 )$. From the above analysis, we have $\alpha_1 (\x'_{\epsilon}) < \alpha_1 (\x )$, $\alpha_2 (\x'_{\epsilon}) > \alpha_2 (\x )$ and $\| \x'_{\epsilon} - \x_i^* \| < \| \x - \x_i^* \|$ for $i \in \{ 1, 2 \}$.
Since $\alpha_1 (\x'_{\epsilon}) < \alpha_1 (\x )$ and $\alpha_2 (\x'_{\epsilon}) > \alpha_2 (\x )$, we obtain that $\psi (\x'_{\epsilon}) < \psi (\x)$ by the definition of $\psi$ in \eqref{def: psi}. For $i \in \{1 ,2 \}$, since $\| \x'_{\epsilon} - \x_i^* \| < \| \x - \x_i^* \|$, we obtain that $\tilphi_i (\x'_{\epsilon}) > \tilphi_i (\x)$ by the definition of $\tilphi_i$ in \eqref{def: phi tilde}. Therefore, it holds that
\begin{equation*}
    \tilphi_1 (\x'_{\epsilon}) + \tilphi_2 (\x'_{\epsilon})
    > \tilphi_1 (\x) + \tilphi_2 (\x) = \psi (\x) > \psi (\x'_{\epsilon}).
\end{equation*}

Here, we will prove part~\ref{lem: boundary part2} of our claim. Suppose $\epsilon \in (0, \infty )$. From the above analysis, we have $\alpha_1 (\x'_{\epsilon}) > \alpha_1 (\x )$, $\alpha_2 (\x'_{\epsilon}) < \alpha_2 (\x )$ and $\| \x'_{\epsilon} - \x_i^* \| > \| \x - \x_i^* \|$ for $i \in \{ 1, 2 \}$.
By using similar argument as the proof of part~\ref{lem: boundary part1}, we get $\psi (\x'_{\epsilon}) > \psi (\x)$. However, for $i \in \{1 ,2 \}$, $\| \x'_{\epsilon} - \x_i^* \| > \| \x - \x_i^* \|$ implies that either $\tilphi_i (\x'_{\epsilon}) < \tilphi_i (\x)$, or $\tilphi_i (\x'_{\epsilon})$ is not well-defined. In the case that $\tilphi_i (\x'_{\epsilon}) < \tilphi_i (\x)$ for $i \in \{ 1, 2 \}$, it holds that
\begin{equation*}
    \tilphi_1 (\x'_{\epsilon}) + \tilphi_2 (\x'_{\epsilon})
    < \tilphi_1 (\x) + \tilphi_2 (\x) = \psi (\x) < \psi (\x'_{\epsilon}).
\end{equation*}

Finally, suppose $\x \in \T$. Recall the quantity $a(\x)$ from \eqref{var: a(x)}. For $\delta \in \R_{>0}$, let $\x_{\text{in}} = \x_{\epsilon}'$ with $\epsilon = \max \big\{ a(\x), - \frac{\delta}{2} \big\}$ and $\x_{\text{out}} = \x_{\epsilon}'$ with $\epsilon = \frac{\delta}{2}$. 
From our claim, we have that for all $\delta \in \R_{>0}$, it holds that $\x_{\text{in}}, \x_{\text{out}} \in \B(\x, \delta)$, $\x_{\text{in}} \in \Mup$, $\x_{\text{in}} \in \Mdo$, $\x_{\text{out}} \in ( \Mup )^c$, and $\x_{\text{out}} \in ( \Mdo )^c$. Thus, we conclude that $\x \in \partial \Mup$ and $\x \in \partial \Mdo$.
\end{proof}

\subsection{Proof of Lemma~\ref{lem: boundary inclusion}}

\begin{proof}[Proof of Lemma~\ref{lem: boundary inclusion}]
Consider part~\ref{lem: boundary inclusion part1} of the lemma. From Lemma~\ref{lem: compare x1} part~\ref{lem: compare x1 part2}, we have $\lambda_1 < \frac{L}{\sigma_1} -r$. So, the interval $\big[ \lambda_1, \; -r + \frac{L}{\sigma_1} \big]$ is well-defined.
From the definition of $\beta$ and $\gamma_1$ in \eqref{def: gamma beta}, we have $(1 + \beta) \gamma_1 \geq 0$ and $-4 \beta r^2 < 0$. Combining the two inequalities, we get $(1 + \beta) \gamma_1 > -4 \beta r^2$. By subtracting $2r^2$ from both sides and then rearranging the inequality, we obtain
\begin{equation*}
    \lambda_1 = \Big(\frac{1 + \beta}{1 + 2 \beta} \Big) \frac{\gamma_1}{2 r} - \frac{r}{1 + 2 \beta} > -r. 
\end{equation*}
On the other hand, since $r > \frac{L}{2 \sigma_1}$, we get $-r + \frac{L}{\sigma_1} < r$. Combining the two inequalities yields $\big[ \lambda_1, \; -r + \frac{L}{\sigma_1} \big] \subseteq (-r, r)$. 

For $\x \in \partial \B_1 \cap \H_1^+$, we have $x_1 \in \big[ \lambda_1, \; -r + \frac{L}{\sigma_1} \big]$ (from \eqref{def: set B boundary}) which from above, implies that $\x \in \overline{\B}_1  \setminus \{ \x_1^*, \x_2^* \}$. 
It remains to show that $\x \in \overline{\B}_2$.
By examining the equation describing the set $\partial \B_1$ in \eqref{def: set B boundary} and the inequality describing the set $\overline{\B}_2 = \{ \z \in \R^n : (z_1 - r)^2 + \Vert \tilde{\mathbf{z}} \Vert^2 \leq \gamma_2 \}$ where $\gamma_2$ is defined in \eqref{def: gamma beta}, one can verify that
\begin{equation}
    \partial \B_1 \cap \overline{\B}_2
    = \Big\{ \z \in \partial \B_1: 
    z_1 \in \Big[ \frac{1}{4r} (\gamma_1 -\gamma_2), \; -r + \frac{L}{\sigma_1} \Big] \Big\}.
    \label{eqn: partial B1 cap B2}
\end{equation}
On the other hand, by performing some algebraic manipulation and using the definition of $\beta$ and $\gamma_i$ for $i \in \{ 1,2 \}$ in \eqref{def: gamma beta}, one can verify that
\begin{equation*}
    r \in \bigg( 0, \; \frac{L}{2} \Big( \frac{1}{\sigma_1} + \frac{1}{\sigma_2} \Big) \bigg),
    \quad \Leftrightarrow \quad
    \lambda_1 = \Big(\frac{1 + \beta}{1 + 2 \beta} \Big) \frac{\gamma_1}{2 r} - \frac{r}{1 + 2 \beta} > \frac{1}{4r} (\gamma_1 -\gamma_2).
\end{equation*}
From the definition of $\H_1^+$ in \eqref{def: set H_i} and equation \eqref{eqn: partial B1 cap B2}, this means that $\partial \B_1 \cap \H_1^+ \subset \partial \B_1 \cap \overline{\B}_2 \subseteq \overline{\B}_2$. For part~\ref{lem: boundary inclusion part2} of the lemma, we can proceed in a similar manner as the above analysis.
\end{proof}
\section{Proofs of theoretical results for inner approximation}  \label{sec: proof lemma inner}

\subsection{Proof of Proposition~\ref{prop: function exist n-dim}}
\label{subsec: proof quad}

Before proving Proposition~\ref{prop: function exist n-dim}, we consider an equivalent condition for the existence of a quadratic function with two independent variables satisfying certain properties.

\begin{lemma}
Let $\Q$ be defined as in \eqref{def: Quadratic Coll}.
Suppose we are given $\x^* \in \R^2$, $\x_0 \in \R^2$ such that $\x_0 \neq \x^*$, $\g \in \R^2$, and $\sigma \in \R_{>0}$.  Then, there exists a function $f \in  \Q^{(2)}(\x^*, \sigma)$ with a gradient $\nabla f(\x_0) = \g$ if and only if 
\begin{enumerate}[label=(\roman*)]
\item \label{lem: function exist part 1} $\x_0 \in \overline{\B} \big( \x^*, \frac{\| \g \|}{\sigma} \big)$ and
\item \label{lem: function exist part 2} $\angle ( \g, \; \x_0 - \x^*) \in \{ 0 \} \cup \Big[0, \; \arccos \big( \frac{\sigma}{\Vert \g \Vert} \Vert \x_0 - \x^* \Vert \big) \Big)$.
\end{enumerate}
Note that if $\sigma \Vert \x_0 - \x^* \Vert = \Vert \g \Vert$, then $\Big[0, \; \arccos (\frac{\sigma}{\Vert \g \Vert} \Vert \x_0 - \x^* \Vert) \Big) = \emptyset$.
\label{lem: function exist}
\end{lemma}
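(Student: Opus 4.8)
The plan is to translate the existence of $f\in\Q^{(2)}(\x^*,\sigma)$ into a purely matrix-theoretic statement and then argue both directions. Writing $f(\x;\boldsymbol{Q},\b,c)$ as in \eqref{def: Quadratic Coll}, the constraints $\lambda_{\text{min}}(\boldsymbol{Q})=\sigma$ and $\boldsymbol{Q}\x^*=-\b$ give $\nabla f(\x)=\boldsymbol{Q}(\x-\x^*)$, so with $\boldsymbol{d}\triangleq\x_0-\x^*\neq\boldsymbol{0}$ the claim becomes: there is a symmetric $\boldsymbol{Q}$ with $\lambda_{\text{min}}(\boldsymbol{Q})=\sigma$ and $\boldsymbol{Q}\boldsymbol{d}=\g$ if and only if \ref{lem: function exist part 1} and \ref{lem: function exist part 2} hold. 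Throughout I will use that $\lambda_{\text{min}}(\boldsymbol{Q})=\sigma$ is equivalent to $\boldsymbol{Q}\succeq\sigma\boldsymbol{I}$ together with $\det(\boldsymbol{Q}-\sigma\boldsymbol{I})=0$ (i.e. $\boldsymbol{Q}-\sigma\boldsymbol{I}$ positive semidefinite and singular).

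For necessity, suppose such a $\boldsymbol{Q}$ exists. Since $\boldsymbol{Q}\succeq\sigma\boldsymbol{I}$, I get $\langle\g,\boldsymbol{d}\rangle=\langle\boldsymbol{Q}\boldsymbol{d},\boldsymbol{d}\rangle\geq\sigma\|\boldsymbol{d}\|^2>0$; combining with Cauchy--Schwarz, $\sigma\|\boldsymbol{d}\|^2\leq\langle\g,\boldsymbol{d}\rangle\leq\|\g\|\,\|\boldsymbol{d}\|$ yields $\|\boldsymbol{d}\|\leq\|\g\|/\sigma$, which is \ref{lem: function exist part 1}. For \ref{lem: function exist part 2}, the same chain gives $\cos\angle(\g,\boldsymbol{d})=\langle\g,\boldsymbol{d}\rangle/(\|\g\|\,\|\boldsymbol{d}\|)\geq\sigma\|\boldsymbol{d}\|/\|\g\|$, so $\angle(\g,\boldsymbol{d})\leq\arccos(\tfrac{\sigma}{\|\g\|}\|\boldsymbol{d}\|)$. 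The crucial point is that equality cannot occur unless $\angle(\g,\boldsymbol{d})=0$: if $\langle\g,\boldsymbol{d}\rangle=\sigma\|\boldsymbol{d}\|^2$ then $\langle(\boldsymbol{Q}-\sigma\boldsymbol{I})\boldsymbol{d},\boldsymbol{d}\rangle=0$, and since $\boldsymbol{Q}-\sigma\boldsymbol{I}\succeq0$ this forces $(\boldsymbol{Q}-\sigma\boldsymbol{I})\boldsymbol{d}=\boldsymbol{0}$, i.e. $\g=\boldsymbol{Q}\boldsymbol{d}=\sigma\boldsymbol{d}$, whence $\angle(\g,\boldsymbol{d})=0$. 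This reproduces exactly the half-open interval unioned with $\{0\}$ in \ref{lem: function exist part 2} (and when $\sigma\|\boldsymbol{d}\|=\|\g\|$ the interval degenerates to $\emptyset$, leaving only $\angle(\g,\boldsymbol{d})=0$).

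For sufficiency, I construct $\boldsymbol{Q}$ explicitly. Fix an orthonormal basis $\{\boldsymbol{w}_1,\boldsymbol{w}_2\}$ of $\R^2$ with $\boldsymbol{w}_1=\boldsymbol{d}/\|\boldsymbol{d}\|$, choosing the sign of $\boldsymbol{w}_2$ so that $\langle\g,\boldsymbol{w}_2\rangle\geq0$. Requiring $\boldsymbol{Q}$ symmetric and $\boldsymbol{Q}\boldsymbol{d}=\g$ forces its first column in this basis, namely $Q_{11}=a\triangleq\langle\g,\boldsymbol{d}\rangle/\|\boldsymbol{d}\|^2$ and $Q_{21}=Q_{12}=b\triangleq\langle\g,\boldsymbol{w}_2\rangle/\|\boldsymbol{d}\|\geq0$, and leaves $Q_{22}=c$ free. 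It then remains to pick $c$ so that $\det(\boldsymbol{Q}-\sigma\boldsymbol{I})=(a-\sigma)(c-\sigma)-b^2=0$ while keeping $\boldsymbol{Q}-\sigma\boldsymbol{I}\succeq0$. When $\langle\g,\boldsymbol{d}\rangle>\sigma\|\boldsymbol{d}\|^2$ (equivalently $a>\sigma$, the strict part of \ref{lem: function exist part 2}), the choice $c=\sigma+b^2/(a-\sigma)$ works: the diagonal entries of $\boldsymbol{Q}-\sigma\boldsymbol{I}$ are nonnegative and its determinant vanishes, so $\boldsymbol{Q}-\sigma\boldsymbol{I}\succeq0$ is singular and $\lambda_{\text{min}}(\boldsymbol{Q})=\sigma$. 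When instead $\angle(\g,\boldsymbol{d})=0$, condition \ref{lem: function exist part 1} gives $\|\g\|\geq\sigma\|\boldsymbol{d}\|$ and $\g=(\|\g\|/\|\boldsymbol{d}\|)\boldsymbol{d}$; here $b=0$ and $a=\|\g\|/\|\boldsymbol{d}\|\geq\sigma$, and I take $c=\sigma$, giving $\boldsymbol{Q}=\mathrm{diag}(a,\sigma)$ in this basis with $\lambda_{\text{min}}(\boldsymbol{Q})=\sigma$ and $\boldsymbol{Q}\boldsymbol{d}=\g$. These two cases are exhaustive under \ref{lem: function exist part 1}--\ref{lem: function exist part 2}.

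I expect the main obstacle to be the careful bookkeeping at the boundary of the feasible set, i.e. the exact reason the $\arccos$ endpoint must be excluded and the degenerate case $\sigma\|\boldsymbol{d}\|=\|\g\|$ handled separately; both hinge on the rigidity fact that $\langle(\boldsymbol{Q}-\sigma\boldsymbol{I})\boldsymbol{d},\boldsymbol{d}\rangle=0$ with $\boldsymbol{Q}-\sigma\boldsymbol{I}\succeq0$ forces $\boldsymbol{d}$ to be a $\sigma$-eigenvector. A secondary point to get right is that imposing $\boldsymbol{Q}\boldsymbol{d}=\g$ under symmetry determines two of the three entries and leaves precisely one degree of freedom $c$ with which to steer the smallest eigenvalue down to $\sigma$; this clean count is exactly what the two-dimensionality buys, which is presumably why the general $n$-dimensional Proposition~\ref{prop: function exist n-dim} is reduced to this lemma (by restricting attention to the plane $\mathrm{span}\{\boldsymbol{d},\g\}$) rather than argued directly.
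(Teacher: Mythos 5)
Your proof is correct, and it takes a genuinely different route from the paper's. The paper works with the entries $p_{11},p_{12},p_{22}$ in the standard coordinates, derives the characteristic identity $p_{12}^2=\sigma^2-(p_{11}+p_{22})\sigma+p_{11}p_{22}$, re-parametrizes the gradient by a signed angle $\phi$ via a rotation matrix, and solves explicitly for the second eigenvalue $\mu=\hat L(\hat L-\sigma\cos\phi)/(\hat L\cos\phi-\sigma)$; the strictness of the $\arccos$ bound then comes out of a sign analysis showing $\mu\geq\sigma$ if and only if $\hat L\cos\phi-\sigma>0$, and the converse is handled by plugging explicit entries back in. You instead work in a $\boldsymbol{d}$-aligned orthonormal basis, observe that symmetry together with $\boldsymbol{Q}\boldsymbol{d}=\g$ pins down two of the three entries and leaves exactly one degree of freedom, and replace the eigenvalue formula by the order-theoretic characterization $\boldsymbol{Q}\succeq\sigma\boldsymbol{I}$ with $\det(\boldsymbol{Q}-\sigma\boldsymbol{I})=0$. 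Your necessity argument is notably cleaner: Cauchy--Schwarz gives both bounds at once, and the exclusion of the $\arccos$ endpoint follows from the rigidity fact that $\langle(\boldsymbol{Q}-\sigma\boldsymbol{I})\boldsymbol{d},\boldsymbol{d}\rangle=0$ with $\boldsymbol{Q}-\sigma\boldsymbol{I}\succeq0$ forces $(\boldsymbol{Q}-\sigma\boldsymbol{I})\boldsymbol{d}=\0$, hence $\g=\sigma\boldsymbol{d}$ --- a one-line replacement for the paper's computation of $\mu$. One small point worth making explicit in your sufficiency step: ``nonnegative diagonal plus zero determinant implies positive semidefinite'' is a genuinely $2\times2$ fact (eigenvalues are $0$ and the trace), so it is worth stating that this is where the two-dimensionality is used, consistent with your closing remark that the $n$-dimensional case is reduced to the plane $\mathrm{span}\{\boldsymbol{d},\g\}$. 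The trade-off is that the paper's heavier computation produces closed-form expressions for the matrix entries, but since those formulas are used only inside the proof itself, your approach loses nothing.
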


\begin{proof}
For the forward direction, suppose that $f \in \Q^{(2)} (\x^*, \sigma)$ and $\nabla f(\x_0) = \g$. Since $\Q(\x^*, \sigma) \subset \S(\x^*, \sigma)$, from \eqref{def: strongly cvx}, we have
\begin{equation}
    \| \g \| \; \| \x_0 - \x^* \|
    \geq \langle \nabla f(\x_0), \; \x_0 - \x^* \rangle 
    \geq \sigma \| \x_0 - \x^* \|^2.
    \label{eqn: strongly cvx}
\end{equation}
We then have $\| \x_0 - \x^* \| \leq \frac{ \| \g \|}{\sigma}$ (i.e., $\x_0 \in \overline{\B} \big( \x^*, \frac{\| \g \|}{\sigma} \big)$) which corresponds to part~\ref{lem: function exist part 1}. On the other hand, we can rewrite the second inequality of \eqref{eqn: strongly cvx} as
\begin{equation*}
    \| \g \|  \cos \angle (\g, \; \x_0 - \x^*) \geq \sigma \| \x_0 - \x^* \| > 0,
\end{equation*}
which implies that $\| \g \| \in \R_{>0}$ and $\angle (\g, \; \x_0 - \x^*) \in \big[ 0, \frac{\pi}{2} \big)$.

Since $f \in \Q^{(2)} (\x^*, \sigma)$, we can write $f(\x) = \frac{1}{2} \x^\intercal \boldsymbol{P} \x + \b^\intercal \x + c$ 
where $\boldsymbol{P} = 
\begin{bmatrix}
p_{11}       & p_{12} \\
p_{12}       & p_{22} 
\end{bmatrix} 
\in \mathsf{S}^2$, $\b \in \R^2$, and $c \in \R$. The gradient of $f$ is $\nabla f(\x) = \boldsymbol{P} \x + \b$. Since $\x^*$ is the minimizer of the quadratic function, by substituting $\x^*$ into the gradient equation and using $\nabla f(\x^*) = \mathbf{0}$, we get $\b = - \boldsymbol{P} \x^*$, and we can rewrite the gradient as $\g = \nabla f(\x_0) = \boldsymbol{P} (\x_0 - \x^*)$.
Let $\v = \x_0 - \x^*$ and then rewrite the gradient equation into two equations as follows:
\begin{equation}
\begin{cases} 
    g_1 = p_{11} v_1 + p_{12} v_2, \\ 
    g_2 = p_{12} v_1 + p_{22} v_2,
\end{cases} 
\quad \Leftrightarrow \quad
\begin{cases} 
    p_{12} v_2 = -p_{11} v_1 + g_1, \\ 
    p_{22} v_2^2 = v_1 (p_{11}v_1 - g_1) + g_2 v_2. 
\end{cases}
\label{eqn: g_i}
\end{equation} 
Since $\sigma$ is an eigenvalue of matrix $\boldsymbol{P}$, by expanding the equation $\det (\sigma \boldsymbol{I} - \boldsymbol{P}) = 0$ and rearranging the resulting equation, we get 
\begin{equation}
    p_{12}^2 = \sigma^2 - (p_{11}+p_{22}) \sigma + p_{11}p_{22}. \label{eqn: eigen eq}
\end{equation}
Multiply \eqref{eqn: eigen eq} by $v_2^2$, then substitute $p_{12} v_2$ and $p_{22} v_2^2$ from \eqref{eqn: g_i} into the resulting equation, and rearrange it to get
\begin{equation}
    (\sigma v_1^2 - g_1 v_1 + \sigma v_2^2 - g_2 v_2) p_{11} 
    = \sigma g_1 v_1 + \sigma^2 v_2^2 - \sigma g_2 v_2 - g_1^2.
\label{eqn: p11 old}
\end{equation}

Let the function $\boldsymbol{R}: (-\pi, \pi] \to \R^{2 \times 2}$ be such that $\boldsymbol{R}(\theta) =
\begin{bmatrix}
\cos \theta  & -\sin \theta \\
\sin \theta  & \cos \theta  
\end{bmatrix}$ (i.e., a rotation matrix),
$d = \| \v \| = \| \x_0 - \x^* \|$,
and $\phi = \measuredangle ( \g, \; \v)$ where $\measuredangle (\cdot, \cdot)$ is defined in \eqref{def: angle func}.
Since $\angle (\g, \v) \in \big[ 0, \frac{\pi}{2} \big)$ from the above analysis, we have that $\phi \in \big( - \frac{\pi}{2}, \frac{\pi}{2} \big)$ and we can decompose gradient $\g$ as
\begin{equation}
    \g 
    = \| \g \| \; \boldsymbol{R}(\phi) \; \Big( \frac{\x_0 - \x^*}{\| \x_0 - \x^* \|} \Big)
    = \frac{\| \g \|}{d} \; \boldsymbol{R}(\phi) \; \v.
    \label{eqn: gradient repar}
\end{equation}
For simplicity of notations, we let $\hat{L} = \frac{\Vert \g \Vert }{ d }$ which can be viewed as the norm of the gradient at $\x_0$ (i.e., $\| \nabla f(\x_0) \|$) normalized by distance from the minimizer (i.e., $\| \x_0 - \x^* \|$). Note that since $\| \g \| > 0$ and $d > 0$, we have $\hat{L} > 0$. Then, rewrite the expression \eqref{eqn: gradient repar} into two equations as follows:
\begin{equation}
\begin{cases} 
    g_1 = \hat{L} (v_1 \cos \phi - v_2 \sin \phi),  \\ 
    g_2 = \hat{L} (v_1 \sin \phi + v_2 \cos \phi). 
\end{cases}
\label{eqn: g_i repar}
\end{equation} 
We then substitute \eqref{eqn: g_i repar} into \eqref{eqn: p11 old} to get
\begin{equation}
    d^2 ( \hat{L} \cos \phi - \sigma) p_{11} 
    = ( \hat{L} \cos \phi - \sigma) (\sigma v_2^2 - 2 \hat{L} v_1 v_2 \sin \phi + \hat{L} v_1^2 \cos \phi) 
    + (\hat{L} v_2 \sin \phi)^2.
    \label{eqn: p11}
\end{equation}
Notice the symmetry inherent in the equations that if we interchange the indices between $1$ and $2$ for all the quantities (with $p_{12} = p_{21}$) in equations \eqref{eqn: g_i}, \eqref{eqn: eigen eq}, and \eqref{eqn: g_i repar}, and simultaneously substitute $\phi$ in \eqref{eqn: g_i repar} with $-\phi$, we find that the resulting set of equations remains unchanged. Leveraging this symmetry, we apply it to \eqref{eqn: p11}, leading to
\begin{equation}
    d^2 ( \hat{L} \cos \phi - \sigma) p_{22} 
    = ( \hat{L} \cos \phi - \sigma) (\sigma v_1^2 + 2 \hat{L} v_1 v_2 \sin \phi + \hat{L} v_2^2 \cos \phi) 
    + (\hat{L} v_1 \sin \phi)^2.
    \label{eqn: p22}
\end{equation}
We then add \eqref{eqn: p11} to \eqref{eqn: p22} and simplify the expression using the fact that $\| \v \|^2 = v_1^2 + v_2^2 = d^2$ to obtain
\begin{equation}
    (\hat{L} \cos \phi - \sigma) (p_{11} + p_{22}) 
    = \hat{L}^2 - \sigma^2.
    \label{eqn: p11+p22}
\end{equation}
Let $\mu \in \R$ be the other eigenvalue of matrix $\boldsymbol{P}$. We want to compute $\mu$ in terms of $\sigma$, $\hat{L}$, and $\phi$. Since $\text{Tr}(\boldsymbol{P}) = p_{11} + p_{22} = \sigma + \mu$, using equation \eqref{eqn: p11+p22}, we have 
\begin{equation}
    (\hat{L} \cos \phi - \sigma) \mu 
    = \hat{L} ( \hat{L} - \sigma \cos \phi).
    \label{eqn: maxeig}
\end{equation}
Consider the following cases on the term $\hat{L} \cos \phi - \sigma$.
\begin{itemize}
    \item Suppose $\sigma = \hat{L} \cos \phi$. Then, \eqref{eqn: maxeig} becomes $0 = \hat{L}^2 (1 - \cos^2 \phi)$ which implies that $\phi = 0$.
    \item Suppose $\sigma \neq \hat{L} \cos \phi$. Then, we can rewrite equation \eqref{eqn: maxeig} as
    \begin{equation}
        \mu = \frac{\hat{L} ( \hat{L} - \sigma \cos \phi)}{\hat{L} \cos \phi - \sigma}.
        \label{eqn: maxeig explicit}
    \end{equation}
    Since $\lambda_{\text{min}} (\boldsymbol{P}) = \sigma$, we have that $\mu \geq \sigma$ where $\mu$ is given in \eqref{eqn: maxeig explicit}. Specifically, we will show that
    \begin{multline}
        \text{given} \; \mu \; \text{as expressed in \eqref{eqn: maxeig explicit} with} \; \hat{L} > 0 \; \text{and} \; \sigma > 0, \\
      \text{it holds that} \; \mu \geq \sigma \; \text{if and only if} \; \hat{L} \cos \phi - \sigma > 0.
      \label{state: maxeig}
    \end{multline}
    For the forward direction, suppose $\mu \geq \sigma$ and $\hat{L} \cos \phi - \sigma < 0$. We have
    \begin{equation*}
        \mu = \frac{ \hat{L} (\hat{L} - \sigma \cos \phi) }{ \hat{L} \cos \phi - \sigma } \geq \sigma,
        \quad \Leftrightarrow \quad
        \cos \phi \geq \frac{1}{2} \Big( \frac{\hat{L}}{\sigma} + \frac{\sigma}{\hat{L}} \Big).
    \end{equation*}
    However, since $\hat{L} > 0$ and $\sigma > 0$, we get $\frac{1}{2} \big( \frac{\hat{L}}{\sigma} + \frac{\sigma}{\hat{L}}  \big) \geq 1$. Therefore, we obtain that $\cos \phi = \frac{1}{2} \big( \frac{\hat{L}}{\sigma} + \frac{\sigma}{\hat{L}}  \big) = 1$ which implies that $\phi = 0$ and $\hat{L} = \sigma$. However, we get $\hat{L} \cos \phi - \sigma = 0$ which makes $\mu$ undefined. 
    For the converse, suppose $\mu < \sigma$ and $\hat{L} \cos \phi - \sigma > 0$, we have 
    \begin{equation*}
        \mu = \frac{ \hat{L} (\hat{L} - \sigma \cos \phi) }{ \hat{L} \cos \phi - \sigma } < \sigma,
        \quad \Leftrightarrow \quad
        \cos \phi > \frac{1}{2} \Big( \frac{\hat{L}}{\sigma} + \frac{\sigma}{\hat{L}} \Big).
    \end{equation*}
    However, this is not possible since $\frac{1}{2} \big( \frac{\hat{L}}{\sigma} + \frac{\sigma}{\hat{L}}  \big) \geq 1$ for $\frac{\sigma}{\hat{L}} > 0$ and we have proved the claim.
\end{itemize}
Since $\| \x_0 - \x^* \| \in \big( 0, \; \frac{ \| \g \|}{\sigma} \big]$ (or equivalently $\frac{\sigma d}{ \| \g \|} \in (0, 1]$), the expression $\arccos \big( \frac{\sigma}{\hat{L}} \big)$ is well-defined. 
Combining the two cases ($\sigma = \hat{L} \cos \phi$ and $\sigma \neq \hat{L} \cos \phi$), we have shown that if there exists $f \in \Q(\x^*, \sigma)$ with the gradient $\nabla f(\x_0) = \g$ then from the definition of $\measuredangle (\cdot, \cdot)$ and $\angle (\cdot, \cdot)$ in \eqref{def: angle func}, we have
\begin{align*}
    &\measuredangle (\g, \v)
    \in \{ 0 \} \cup \Big( -\arccos \Big( \frac{\sigma}{\hat{L}} \Big) , \; \arccos \Big( \frac{\sigma}{\hat{L}} \Big) \Big), \\
    \Leftrightarrow \quad
    &\angle (\g, \v) 
    \in \{ 0 \} \cup \Big[ 0, \; \arccos \Big( \frac{\sigma}{\hat{L}} \Big) \Big),
\end{align*}
which corresponds to part~\ref{lem: function exist part 2}.

For the converse, let $f (\x) = \frac{1}{2} \x^\intercal \boldsymbol{P} \x - (\x^*)^\intercal \boldsymbol{P} \x$ and each entry of $\boldsymbol{P}$ will be specified below.
First, suppose that $\sigma < \hat{L}$ (i.e., $\x_0 \in \B \big( \x^*, \frac{\| \g \|}{\sigma} \big)$) and $\phi = \angle (\g, \; \x_0 - \x^*) \in \big[ 0, \; \arccos{ \big(\frac{\sigma}{ \hat{L}} \big)} \big)$. This implies that $\hat{L} \cos \phi - \sigma > 0$.
If $v_2 \neq 0$, let 
\begin{equation}
    p_{12} = - \frac{v_1}{d^2 v_2} ( \hat{L} v_1^2 \cos \phi -2 \hat{L} v_1 v_2 \sin \phi - \hat{L} v_2^2 \cos \phi) 
    - \frac{ (\hat{L}^2 - \sigma^2) v_1 v_2 }{d^2 ( \hat{L} \cos \phi - \sigma)} + \hat{L} \Big( \frac{v_1}{v_2} \cos{\phi} -  \sin{\phi} \Big). 
    \label{eqn: p12}
\end{equation}
We choose $p_{11}$, $p_{12}$, and $p_{22}$ as given in \eqref{eqn: p11}, \eqref{eqn: p12}, and \eqref{eqn: p22}, respectively. Note that $p_{11}$, $p_{12}$, and $p_{22}$ are well-defined since $\hat{L} \cos \phi - \sigma \neq 0$.
If $v_2 = 0$, we choose 
\begin{equation*}
    p_{11} = \hat{L} \cos \phi, \quad
    p_{12} = \hat{L} \sin \phi,
    \quad \text{and} \quad 
    p_{22} = \sigma + \frac{(\hat{L} \sin \phi)^2}{\hat{L} \cos \phi - \sigma}.
\end{equation*}
In both cases ($v_2 \neq 0$ and $v_2 = 0$), one can easily verify that $\nabla f(\x_0) = \boldsymbol{P} \v = \g$ and $\nabla f(\x^*) = \mathbf{0}$. In order to show that $\lambda_{\text{min}} (\boldsymbol{P}) = \sigma$, first, we check that $p_{11}$, $p_{12}$, and $p_{22}$ satisfy \eqref{eqn: eigen eq} which means that $\sigma$ is an eigenvalue of $\boldsymbol{P}$.
Next, using the fact that the other eigenvalue $\mu = \text{Tr} (\boldsymbol{P}) - \sigma = (p_{11} + p_{22}) - \sigma$, we obtain $\mu$ as expressed in \eqref{eqn: maxeig explicit} for both cases. Since $\hat{L} \cos \phi - \sigma > 0$, from the statement \eqref{state: maxeig}, we have $\mu \geq \sigma$.

Now suppose that $\sigma = \hat{L}$ (i.e., $\x_0 \in \partial \B \big( \x^*, \frac{\| \g \|}{\sigma} \big)$) and $\phi = \angle (\g, \; \x_0 - \x^*) = 0$. 
Let the orthogonal matrix $\boldsymbol{T} = \begin{bmatrix}
\frac{\v}{\Vert \v \Vert} & \frac{\v_{\bot}}{\Vert \v_{\bot} \Vert}
\end{bmatrix} \in \R^{2 \times 2}$ where $\v_{\bot} \neq \mathbf{0}$ is a vector perpendicular to vector $\v$ and $\boldsymbol{P} = \sigma \boldsymbol{T} \boldsymbol{T}^\intercal = \sigma \boldsymbol{I}$.
We have $\nabla f(\x_0) = \boldsymbol{P} \v = \sigma \v = \| \g \| \cdot \frac{\v}{ \| \v \|}$. However, since $\angle ( \g, \; \v ) = 0$, we have $\nabla f(\x_0) = \g$. We also have $\nabla f(\x^*) = \mathbf{0}$ and $\lambda_{\text{min}} (\boldsymbol{P}) = \sigma$ by our construction of $f$. Thus, for both cases, we obtain that $f \in \Q^{(2)} (\x^*, \sigma)$ and $\nabla f(\x_0) = \g$.
\end{proof}

\begin{proof}[Proof of Proposition~\ref{prop: function exist n-dim}]
For given points $\x^* \in \R^n$, $\x_0 \in \R^n$ such that $\x_0 \neq \x^*$ and vector $\g \in \R^n$, let 
\begin{equation}
    \boldsymbol{e}_1' = \frac{ \x_0 - \x^*}{\| \x_0 - \x^* \|}, \quad
    \boldsymbol{e}_2' = 
    \begin{cases}
    \frac{\g - \langle \g, \boldsymbol{e}_1' \rangle  \boldsymbol{e}_1' }{\| \g - \langle \g, \boldsymbol{e}_1' \rangle  \boldsymbol{e}_1' \|}
    \quad &\text{if} \quad \g \notin \text{span}( \{\boldsymbol{e}_1' \}), \\
    \u \; \text{where} \; \u \in \mathcal{N} ( (\boldsymbol{e}_1')^\intercal )
    \;\; &\text{otherwise},
    \end{cases}
    \label{var: e_1' e_2'}
\end{equation}
and $\boldsymbol{E} = \begin{bmatrix}
\boldsymbol{e}_1' & \boldsymbol{e}_2'
\end{bmatrix} \in \R^{n \times 2}$.
We let $\x_{\g} = \x^* + \g$ to be the point generated by vector $\g$ and 
\begin{equation*}
    \mathcal{P}_2 = \{ \x \in \R^n : \x
    = \x^* + \boldsymbol{E} \boldsymbol{s}
    \quad \text{for some} \; \boldsymbol{s} \in \R^2 \}
    = \x^* + \mathcal{R}(\boldsymbol{E})
\end{equation*}
to be a $2$-D plane in $\R^n$. One can verify that $\boldsymbol{e}_1'$ and $\boldsymbol{e}_2'$ are orthonormal, and $\{ \x^*, \x_0, \x_{\g} \} \subset \mathcal{P}_2$.
We let $\boldsymbol{T}: \R^2 \to \mathcal{P}_2$ to be such that $\boldsymbol{T}( \boldsymbol{s} ) = \x^* + \boldsymbol{E} \boldsymbol{s}$. Since function $\boldsymbol{T}$ is bijective, we then have $\boldsymbol{T}^{-1}: \mathcal{P}_2 \to \R^2$ such that $\boldsymbol{T}^{-1} (\x) = \boldsymbol{E}^\intercal (\x - \x^*)$.
Next, consider a property of the norm and angle function which we will use in the subsequent analysis. Using $\boldsymbol{E}^\intercal \boldsymbol{E} = \boldsymbol{I}$ and that $\boldsymbol{T}$ is bijective, we have that for all $i \in \{1, 2, 3, 4 \}$, $\boldsymbol{s}_i \in \R^2$ and $\x_i \in \mathcal{P}_2$, 
\begin{itemize}
    \item \textit{distance invariance:} $\| \boldsymbol{T}( \boldsymbol{s}_1 ) - \boldsymbol{T}( \boldsymbol{s}_2 ) \| 
    = \| \boldsymbol{s}_1 - \boldsymbol{s}_2 \|$ 
    and $\| \boldsymbol{T}^{-1} ( \x_1 ) - \boldsymbol{T}^{-1} ( \x_2 ) \| 
    = \| \x_1 - \x_2 \|$, and
    \item \textit{angle invariance:} $\angle ( \boldsymbol{T}( \boldsymbol{s}_1 ) - \boldsymbol{T}( \boldsymbol{s}_2 ), \; \boldsymbol{T}( \boldsymbol{s}_3 ) - \boldsymbol{T}( \boldsymbol{s}_4 ))
    = \angle (\boldsymbol{s}_1- \boldsymbol{s}_2, \; \boldsymbol{s}_3 - \boldsymbol{s}_4)$ 
    and $\angle ( \boldsymbol{T}^{-1} ( \x_1 ) - \boldsymbol{T}^{-1} ( \x_2 ), \; \boldsymbol{T}^{-1} ( \x_3 ) - \boldsymbol{T}^{-1} ( \x_4 ))
    = \angle (\x_1- \x_2, \; \x_3 - \x_4)$.
\end{itemize}

For the forward direction, suppose $f \in \Q^{(n)} (\x^*, \sigma)$ and $\nabla f(\x_0) = \g$. Let $f( \x ) = \frac{1}{2} \x^\intercal  \boldsymbol{P} \x + \b^\intercal \x + c$ for some $\boldsymbol{P} \in \mathfrak{S}^n$, $\b \in \R^n$, and $c \in \R$ that satisfies the conditions. 
Let $\tilde{f}(\boldsymbol{s}) = f (\x^* + \boldsymbol{E} \boldsymbol{s})$. We then have
\begin{equation}
    \tilde{f}(\boldsymbol{s}) 
    = \frac{1}{2} \boldsymbol{s}^\intercal \boldsymbol{E}^\intercal \boldsymbol{PEs} 
    + (\boldsymbol{Px}^* + \b)^\intercal \boldsymbol{Es}
    + \Big(\frac{1}{2} ( \x^*)^\intercal \boldsymbol{Px}^* + \b^\intercal \x^* + c \Big),
    \label{eqn: f_tilde}
\end{equation}
and 
\begin{equation*}
    \nabla \tilde{f} ( \boldsymbol{T}^{-1} (\x_0) ) 
    = \boldsymbol{E}^\intercal \big( \boldsymbol{PE} \boldsymbol{T}^{-1} (\x_0) + \boldsymbol{Px}^* + \b \big)
    = \boldsymbol{E}^\intercal \big( \boldsymbol{P} \boldsymbol{T}( \boldsymbol{T}^{-1} (\x_0) ) + \b \big)
    = \boldsymbol{E}^\intercal \g,
\end{equation*}
where the second and third equalities are obtained by using the transformation $\boldsymbol{T} (\boldsymbol{s}) = \x^* + \boldsymbol{Es}$ at $\boldsymbol{s} = \boldsymbol{T}^{-1} (\x_0)$ and $\nabla f(\x_0) = \boldsymbol{Px}_0 + \b = \g$, respectively.
Similarly, we have $\nabla \tilde{f} ( \boldsymbol{T}^{-1} (\x^*) ) = \mathbf{0}$. Let $\tilde{\sigma}$ be the strong convexity parameter of $\tilde{f}$. 
We conclude that $\tilde{f} \in \Q^{(2)} (\boldsymbol{T}^{-1} (\x^*), \; \tilde{\sigma} )$ and $\nabla \tilde{f} ( \boldsymbol{T}^{-1} (\x_0) ) = \boldsymbol{E}^\intercal \g$. However, from \eqref{eqn: f_tilde}, we have that
\begin{equation}
    \tilde{\sigma} 
    = \lambda_{\text{min}}( \boldsymbol{E}^\intercal \boldsymbol{PE} ) 
    = \min_{\boldsymbol{s} \in \R^2} \frac{\boldsymbol{s}^\intercal ( \boldsymbol{E}^\intercal \boldsymbol{PE ) s}}{\boldsymbol{s}^\intercal \boldsymbol{s}}
    = \min_{\x \in \mathcal{R}(\boldsymbol{E}) } \frac{\x^\intercal \boldsymbol{Px}}{ \x^\intercal \x}
    \geq \min_{\x \in \R^n} \frac{\x^\intercal \boldsymbol{Px}}{ \x^\intercal \x}
    = \sigma.
    \label{eqn: sigma compare}
\end{equation}
Using Lemma~\ref{lem: function exist}, the distance invariance of $\boldsymbol{T}^{-1}$ and \eqref{eqn: sigma compare}, we have
\begin{equation*}
    \| \x_0 - \x^* \|
    = \| \boldsymbol{T}^{-1} (\x_0) - \boldsymbol{T}^{-1} (\x^*) \|
    \leq \frac{\| \boldsymbol{E}^\intercal \g \|}{\tilde{\sigma}}
    = \frac{\| \boldsymbol{T}^{-1}(\x_{\g}) - \boldsymbol{T}^{-1} (\x^*) \|}{\tilde{\sigma}}
    \leq \frac{\| \g \|}{\sigma}.
\end{equation*}
In addition, using Lemma~\ref{lem: function exist}, the angle invariance of $\boldsymbol{T}^{-1}$ and \eqref{eqn: sigma compare}, we have
\begin{align*}
    \angle (\g, \; \x_0 - \x^*) 
    &= \angle \big( \boldsymbol{T}^{-1}(\x_{\g}) - \boldsymbol{T}^{-1} (\x^*), \; \boldsymbol{T}^{-1} (\x_0) - \boldsymbol{T}^{-1} (\x^*) \big) \\
    &= \angle \big( \boldsymbol{E}^\intercal \g, \; \boldsymbol{T}^{-1} (\x_0) - \boldsymbol{T}^{-1} (\x^*)  \big) \\
    &\in \{ 0 \} \cup \bigg[ 0, \; \arccos \Big( \frac{\tilde{\sigma}}{ \| \boldsymbol{E}^\intercal \g \|} \| \boldsymbol{T}^{-1} (\x_0) - \boldsymbol{T}^{-1} (\x^*) \| \Big) \bigg) \\
    &\subseteq \{ 0 \} \cup \bigg[ 0, \; \arccos \Big( \frac{\sigma}{\| \g \|} \| \x_0 - \x^* \| \Big) \bigg),
\end{align*}
which complete the proof of the forward direction of the proposition.

For the converse, suppose $\| \x_0 - \x^* \| \leq \frac{\| \g \|}{\sigma}$ and $\angle ( \g, \x_0 - \x^*) \in \Big[0,  \arccos \big( \frac{\sigma}{\Vert \g \Vert} \cdot \Vert \x_0 - \x^* \Vert \big) \Big) \cup  \{ 0 \}$. 
Using similar techniques as above, we can write 
\begin{itemize}
    \item $\| \boldsymbol{T}^{-1} (\x_0) - \boldsymbol{T}^{-1} (\x^*) \| \leq \frac{\| \boldsymbol{E}^\intercal \g \|}{\sigma}$ and 
    \item $\angle ( \boldsymbol{E}^\intercal \g, \; \boldsymbol{T}^{-1} (\x_0) - \boldsymbol{T}^{-1} (\x^*) ) \in \Big[0, \; \arccos \big( \frac{\sigma}{\Vert \boldsymbol{E}^\intercal \g \Vert}  \Vert \boldsymbol{T}^{-1} (\x_0) - \boldsymbol{T}^{-1} (\x^*) \Vert \big) \Big) \cup  \{ 0 \}$. 
\end{itemize}
Using Lemma~\ref{lem: function exist}, we have that there exists a function $f \in \Q^{(2)} (\boldsymbol{T}^{-1} (\x^*), \; \sigma)$ with the gradient $\nabla f( \boldsymbol{T}^{-1} (\x_0) ) = \boldsymbol{E}^\intercal \g$.
Let $f: \R^2 \to \R$ be such that $f(\boldsymbol{s}) = \frac{1}{2} \boldsymbol{s}^\intercal \boldsymbol{P} \boldsymbol{s} + \b^\intercal \boldsymbol{s} + c$ where $\boldsymbol{P} \in \mathfrak{S}^2$, $\b \in \R^2$ and $c \in \R$. To satisfy the conditions of $f$, we have $\mathbf{0} = \nabla f( \boldsymbol{T}^{-1} (\x^*)) = \nabla f( \mathbf{0} ) = \b$,
\begin{equation}
    \boldsymbol{E}^\intercal \g 
    = \nabla f( \boldsymbol{T}^{-1} (\x_0) ) 
    = \nabla f( \boldsymbol{E}^\intercal (\x_0 - \x^*) )
    = \boldsymbol{PE}^\intercal (\x_0 - \x^*),
    \label{eqn: grad transform}
\end{equation}
and $\lambda_{\text{min}} (\boldsymbol{P}) = \sigma$. 

To construct a quadratic function $\tilde{f}: \R^n \to \R$ satisfying $\tilde{f} \in \Q^{(n)} (\x^*, \sigma)$ and $\nabla \tilde{f} (\x_0) = \g$, recall the expression of $\boldsymbol{e}_1'$ and $\boldsymbol{e}_2'$ from \eqref{var: e_1' e_2'}.
Let $\{ \tilde{\v}_1, \tilde{\v}_2, \ldots, \tilde{\v}_{n-2} \}$ be a set of unit vectors in $\R^n$ for which $\begin{bmatrix} \boldsymbol{e}_1' & \boldsymbol{e}_2' & \tilde{\v}_1 & \tilde{\v}_2 & \cdots & \tilde{\v}_{n-2} \end{bmatrix} \in \R^{n \times n}$ is orthogonal.
Let $\widetilde{\v} = 
\begin{bmatrix}
\tilde{\v}_1 & \tilde{\v}_2 & \cdots & \tilde{\v}_{n-2}
\end{bmatrix} \in \R^{n \times (n-2)}$,
$\widetilde{\boldsymbol{\Lambda}} = \text{diag} (\tilde{\sigma}_1, \tilde{\sigma}_2, \ldots, \tilde{\sigma}_{n-2}) \in \R^{(n-2) \times (n-2)}$ such that $\{ \tilde{\sigma}_1, \tilde{\sigma}_2, \ldots, \tilde{\sigma}_{n-2} \}$ is chosen to satisfy $\widetilde{\boldsymbol{\Lambda}} \succeq \sigma \boldsymbol{I}$, and
$\tilde{f}: \R^n \to \R$ be such that 
\begin{equation*}
    \tilde{f} (\x) 
    = \frac{1}{2} \x^\intercal \boldsymbol{Qx} 
    + (- \boldsymbol{Qx}^* + \boldsymbol{Eb})^\intercal \x 
    + \Big( \frac{1}{2} (\x^*)^\intercal \boldsymbol{Qx}^* - \b^\intercal \boldsymbol{E}^\intercal \x^* + c \Big),
\end{equation*}
where $\boldsymbol{Q} = \boldsymbol{EPE}^\intercal + \widetilde{\v} \widetilde{\boldsymbol{\Lambda}} \widetilde{\v}^\intercal \in \R^{n \times n}$.
Note that $\boldsymbol{E}^\intercal \boldsymbol{E} = \boldsymbol{I}$, $\widetilde{\v}^\intercal \widetilde{\v} = \boldsymbol{I}$ and $ \widetilde{\v}^\intercal \boldsymbol{E} = \mathbf{0}$. 
Since $\b = \mathbf{0}$, we also have $\nabla \tilde{f} (\x^*) = \boldsymbol{Q} \x^* + (- \boldsymbol{Q} \x^* + \boldsymbol{Eb}) = \mathbf{0}$ and $\nabla \tilde{f} (\x_0) = \boldsymbol{Q} (\x_0 - \x^*)$. Using
\begin{equation*}
    \boldsymbol{EE}^\intercal \g
    = \boldsymbol{E} \big( \boldsymbol{E}^\intercal (\x_{\g} - \x^*) \big)
    = \boldsymbol{E T}^{-1} (\x_{\g})
    = \boldsymbol{T} ( \boldsymbol{T}^{-1} (\x_{\g})) - \x^*
    = \g,
\end{equation*}
$\widetilde{\v}^\intercal (\x_0 - \x^*) = \mathbf{0}$ (since $\x_0 - \x^* \in \text{span} (\{ \boldsymbol{e}_1' \})$)
and \eqref{eqn: grad transform}, we can write 
\begin{equation*}
    \nabla \tilde{f} (\x_0) = \boldsymbol{Q} (\x_0 - \x^*)
    = \boldsymbol{EPE}^\intercal (\x_0 - \x^*) + \widetilde{\v} \widetilde{\boldsymbol{\Lambda}} \widetilde{\v}^\intercal (\x_0 - \x^*) 
    = \boldsymbol{EE}^\intercal \g 
    = \g.
\end{equation*}
It remains to show that $\lambda_{\text{min}} (\boldsymbol{Q}) = \sigma$.

Suppose $\boldsymbol{P} \in \mathfrak{S}^2$ can be decomposed as $\boldsymbol{P} = \boldsymbol{V \Lambda} \v^\intercal$ where $\v \in \R^{2 \times 2}$ is a matrix whose $i$-th column is the (unit) eigenvector $\v_i$ of $\boldsymbol{P}$, and $\boldsymbol{\Lambda}$ is the diagonal matrix whose diagonal elements are the corresponding eigenvalues.
We can rewrite $\boldsymbol{Q} = \boldsymbol{EPE}^\intercal + \widetilde{\v} \widetilde{\boldsymbol{\Lambda}} \widetilde{\v}^\intercal$ as
\begin{equation}
    \boldsymbol{Q} 
    = \boldsymbol{E} (\boldsymbol{V \Lambda} \v^\intercal) \boldsymbol{E}^\intercal + \widetilde{\v} \widetilde{\boldsymbol{\Lambda}} \widetilde{\v}^\intercal =
    \begin{bmatrix}
    \boldsymbol{E V} & \widetilde{\v}
    \end{bmatrix}
    \begin{bmatrix}
    \boldsymbol{\Lambda} & \mathbf{0} \\ 
    \mathbf{0} & \widetilde{\boldsymbol{\Lambda}}
    \end{bmatrix}
    \begin{bmatrix}
    \v^\intercal \boldsymbol{E}^\intercal \\ \widetilde{\v}^\intercal
    \end{bmatrix}.
    \label{eqn: eigendecom Q}
\end{equation}
Since $\begin{bmatrix} \boldsymbol{E V} & \widetilde{\v} \end{bmatrix}$ is an orthogonal matrix (using the fact that $(\boldsymbol{E V})^\intercal \widetilde{\v} = \mathbf{0}$ and $\| \boldsymbol{E v}_i \| = 1$ for $i \in \{1, 2\}$), we have that equation \eqref{eqn: eigendecom Q} is the eigendecomposition of $\boldsymbol{Q}$. By our construction of $\widetilde{\boldsymbol{\Lambda}}$ and $\lambda_{\text{min}} (\boldsymbol{P}) = \sigma$, we have $\lambda_{\text{min}} (\boldsymbol{Q}) = \sigma$. Thus, we conclude that $\tilde{f} \in \Q^{(n)} (\x^*, \sigma)$ with $\nabla \tilde{f} (\x_0) = \g$.
\end{proof}

\subsection{Proof of Theorem~\ref{thm: down BD}}
\label{subsec: proof inner}

\begin{proof}[Proof of Theorem~\ref{thm: down BD}]
\textbf{Part \ref{thm: down BD part1}:} $r \in \big( 0, \; \frac{L}{2 \sigma_1} \big]$. The proof of the characterization of the boundary $\partial \Mdo$ can be carried out in the same manner as in the proof of part~\ref{thm: up BD part1} of Theorem~\ref{thm: up BD}. We are left to show the property related to the set $\Mdo$ and the characterization of $( \Mdo)^{\circ}$. By the definition of $\Mdo$ in \eqref{def: set M down} and $\partial \Mdo = \T \sqcup \{ \x_1^*, \x_2^* \}$, we have that $\partial \Mdo \subset ( \Mdo )^c$. Therefore, we can conclude that $\Mdo$ is open. Since $\Mdo$ is open, $( \Mdo)^{\circ} = \Mdo = \widetilde{\T}$, where $\tilde{\T}$ is defined in \eqref{def: set T tilde}.

\textbf{Part \ref{thm: down BD part2}:} $r \in \big( \frac{L}{2 \sigma_1}, \frac{L}{2 \sigma_2} \big]$. Consider points in the set $(\H_1^-)^c$. By proceeding the same steps as in the proof of \eqref{eqn: T empty}, we obtain that 
\begin{equation}
    \T \cap (\H_1^-)^c = \emptyset.
    \label{eqn: T empty copy}
\end{equation}
By using the same reasoning as in the corresponding part of the proof of Theorem~\ref{thm: up BD} part~\ref{thm: up BD part2}, we obtain the results, which are similar to \eqref{eqn: M cap H} and \eqref{eqn: nc thm part2.1}, that
\begin{equation}
    \Mdo \cap (\H_1^-)^c = \overline{\B}_1 \cap (\H_1^-)^c
    \quad \text{and} \quad
    \partial \Mdo \cap (\H_1^-)^c 
    = \partial \B_1 \cap (\H_1^-)^c.
    \label{eqn: sc thm part2.1}
\end{equation}
Then, recall the definition of $\lambda_1$ and $\nu_1$ from \eqref{var: lambda} and \eqref{var: nu}, respectively. Consider points in the set $\H_1^+ \cap \H_1^-$, i.e., $\{ \z \in \R^n : z_1 = \lambda_1 \}$. Considering three disjoint regions in $\H_1^+ \cap \H_1^-$: $\| \tilde{\mathbf{z}} \| > \nu_1$, $\| \tilde{\mathbf{z}} \| < \nu_1$, and $\| \tilde{\mathbf{z}} \| = \nu_1$, we obtain similar results as in the corresponding part of the proof of Theorem~\ref{thm: up BD} part~\ref{thm: up BD part2} summarized as follows:
\begin{equation}
    \begin{cases}
    \{ \z \in \R^n : z_1 = \lambda_1, \; \| \tilde{\mathbf{z}} \| > \nu_1 \} \subseteq ((\Mdo)^c)^{\circ} \cap \T^c, \\
    \{ \z \in \R^n : z_1 = \lambda_1, \; \| \tilde{\mathbf{z}} \| < \nu_1 \} \subseteq (\Mdo)^{\circ} \cap \T^c, \\
    \{ \z \in \R^n : z_1 = \lambda_1, \; \| \tilde{\mathbf{z}} \| = \nu_1 \} = \C_1 \subseteq \partial \Mdo \cap \T.
    \end{cases}
    \label{eqn: region z1 = lambd1}
\end{equation}
Combining these results, we get a similar result as in \eqref{eqn: nc thm part2.2}, i.e.,
\begin{equation}
    \partial \Mdo \cap ( \H_1^+ \cap \H_1^-) 
    = \C_1
    = \T \cap ( \H_1^+ \cap \H_1^-).
    \label{eqn: sc thm part2.2}
\end{equation}
From the first equation in \eqref{eqn: sc thm part2.1}, \eqref{eqn: region z1 = lambd1}, and $\C_1 \subseteq (\Mdo)^c$ (since $\C_1 \subseteq \T$), we have
\begin{equation}
\begin{aligned}
    \Mdo \cap \H_1^+
    &= \big[ \Mdo \cap (\H_1^-)^c \big] \cup \big[ \Mdo \cap ( \H_1^+ \cap \H_1^- ) \big] \\
    &= \big[ \overline{\B}_1 \cap (\H_1^-)^c \big] \cup \{ \z \in \R^n : z_1 = \lambda_1, \; \| \tilde{\mathbf{z}} \| < \nu_1 \}.
\end{aligned}
\label{eqn: M down RHS}
\end{equation}
Next, consider points in the set $(\H_1^+)^c$. Recall the definition of $\varphi$ from \eqref{def: varphi}. By using the same reasoning as in the proof of \eqref{eqn: part 2 claim 2} and \eqref{eqn: nc thm part2.3}, we obtain that
\begin{equation}
    \partial (\B_1 \cap \B_2) \cap (\H_1^+)^c 
    \subseteq \{ \z \in \R^n :
    \varphi(\z) < 0 \} \cup \{ \x_1^* \},
    \label{eqn: bd + half plane}
\end{equation}
and
\begin{equation}
    \partial \Mdo \cap (\H_1^+)^c 
    = \big[  \T  \cap ( \H_1^+ )^c \big] \sqcup \{ \x^*_1 \},
    \label{eqn: sc thm part2.3}
\end{equation}
respectively. Using \eqref{eqn: T empty copy}, we can write $\T = \big[ \T \cap ( \H_1^+ \cap \H_1^-) \big] \sqcup \big[ \T \cap ( \H_1^+ )^c \big]$, and combining the second equation in \eqref{eqn: sc thm part2.1}, equation \eqref{eqn: sc thm part2.2} and equation \eqref{eqn: sc thm part2.3} together yields the characterization of $\partial \Mdo$:
\begin{equation*}
    \partial \Mdo 
    = \big[ \partial \B_1 \cap (\H_1^-)^c \big] \sqcup \overbrace{\big[ \T \cap ( \H_1^+ \cap \H_1^-) \big] \sqcup \big[  \T \cap ( \H_1^+ )^c \big]}^{= \T} \sqcup \{ \x^*_1 \}.
\end{equation*}
For the characterization of $( \Mdo )^{\circ}$, we can also use the same technique as in the proof of Theorem~\ref{thm: up BD} part~\ref{thm: up BD part2}.

For the property related to the set $\Mdo$, recall the definition of $\C_1$ from \eqref{def: set C_i}. Using equation \eqref{eqn: M down RHS} and $\{ \z \in \R^n : z_1 = \lambda_1, \; \| \tilde{\mathbf{z}} \| \leq \nu_1 \} = \overline{\B}_1 \cap (\H_1^+ \cap \H_1^-)$, we have 
\begin{equation}
    (\Mdo \cup \C_1) \cap \H_1^+ 
    = (\Mdo \cap \H_1^+) \cup \C_1 
    = \big[ \overline{\B}_1 \cap (\H_1^-)^c \big] \cup \big[ \overline{\B}_1 \cap (\H_1^+ \cap \H_1^-) \big]
    = \overline{\B}_1 \cap \H_1^+,
    \label{eqn: down2 set prop 1}
\end{equation}
which is closed. Next, from \eqref{eqn: bd + half plane}, we can write 
\begin{equation*}
    \Mdo \subseteq \{ \z \in \R^n : \varphi(\z) \geq 0 \} \setminus \{ \x_1^* \}
    \subseteq \big( \partial (\B_1 \cap \B_2) \big)^c \cup (\H_1^+)^c.
\end{equation*}
Using the fact that $\Mdo \subseteq \overline{\B}_1 \cap \overline{\B}_2$, the above inclusion implies that $\Mdo \cap (\H_1^+)^c \subseteq (\B_1 \cap \B_2) \cap (\H_1^+)^c$. Suppose $\x \in \Mdo \cap (\H_1^+)^c$. However, from the definition of $\Mdo$ in \eqref{def: set M down}, we can write 
\begin{equation}
    \Mdo \cap (\H_1^+)^c 
    = \{ \z \in \R^n: \varphi (\z) > 0 \} \cap (\H_1^+)^c 
    = \{ \z \in \B_1 \cap \B_2: \varphi (\z) > 0 \} \cap (\H_1^+)^c.
    \label{eqn: down2 set prop 2}
\end{equation}
Since $\varphi$ is continuous, and $\B_1 \cap \B_2$ and $(\H_1^+)^c$ are open, there exists $\epsilon \in \R_{>0}$ such that for all $\x_0 \in \B(\x, \epsilon)$ such that $\x_0 \in \{ \z \in \R^n: \varphi (\z) > 0 \} \cap (\B_1 \cap \B_2) \cap (\H_1^+)^c = \Mdo \cap (\H_1^+)^c$. Thus, we conclude that $\Mdo \cap (\H_1^+)^c$ is open.

\textbf{Part \ref{thm: down BD part3}:} $r \in \Big( \frac{L}{2 \sigma_2}, \; \frac{L}{2} \big( \frac{1}{\sigma_1} + \frac{1}{\sigma_2} \big) \Big)$. In this case, we can use similar argument as in the proof of part~\ref{thm: down BD part2} to show the following statements.
\begin{equation*}
    \begin{cases}
    \partial \Mdo \cap (\H_1^-)^c = \partial \B_1 \cap (\H_1^-)^c, 
    \quad \text{(similar to proving \eqref{eqn: sc thm part2.1})} \\
    \partial \Mdo \cap (\H_2^+)^c = \partial \B_2 \cap (\H_2^+)^c, 
    \quad \text{(similar to proving \eqref{eqn: sc thm part2.1})} \\
    \partial \Mdo \cap ( \H_1^+ \cap \H_1^-) = \T \cap ( \H_1^+ \cap \H_1^-), 
    \quad \text{(similar to proving \eqref{eqn: sc thm part2.2})} \\
    \partial \Mdo \cap ( \H_2^+ \cap \H_2^-) = \T \cap ( \H_2^+ \cap \H_2^-), 
    \quad \text{(similar to proving \eqref{eqn: sc thm part2.2})} \\
    \partial \Mdo \cap ( \H_1^+ \cup \H_2^- )^c = \T \cap ( \H_1^+ \cup \H_2^- )^c.
    \quad \text{(similar to proving \eqref{eqn: sc thm part2.3})} \\
    \end{cases}
\end{equation*}
Combining these equations, we obtain the characterization of $\partial \Mdo$. For the characterization of $( \Mup )^{\circ}$, we can use the same technique as shown in the proof of Theorem~\ref{thm: up BD} part~\ref{thm: up BD part2}.
For the property related to the set $\Mdo$, recall the definition of $\C_i$ from \eqref{def: set C_i}. Using a similar approach to part~\ref{thm: down BD part2}, we can show that 
\begin{equation*}
    \begin{cases}
    (\Mdo \cup \C_1) \cap \H_1^+ = \overline{\B}_1 \cap \H_1^+
    \;\; \text{which is closed}, \quad \text{(similar to proving \eqref{eqn: down2 set prop 1})} \\
    (\Mdo \cup \C_2) \cap \H_2^- = \overline{\B}_2 \cap \H_2^-
    \;\; \text{which is closed}, \quad \text{(similar to proving \eqref{eqn: down2 set prop 1})} \\
    \begin{aligned}
    \Mdo \cap ( \H_1^+ \cup \H_2^- )^c = \{ \z \in \B_1 \cap \B_2: \varphi (\z) > 0 \} \cap ( \H_1^+ \cup \H_2^- )^c \\
    \text{which is open}. \quad \text{(similar to proving \eqref{eqn: down2 set prop 2})} \\
    \end{aligned}
    \end{cases}
\end{equation*}

\textbf{Part~\ref{thm: down BD part4}:} $r = \frac{L}{2} \big( \frac{1}{\sigma_1} + \frac{1}{\sigma_2} \big)$. In this case, we obtain that $\overline{\B}_1 \cap  \overline{\B}_2  = \Big\{ \Big( \frac{L}{2} \big(\frac{1}{\sigma_1} - \frac{1}{\sigma_2} \big), \; \mathbf{0} \Big) \Big\}$. 
Suppose $\x = \Big( \frac{L}{2} \big(\frac{1}{\sigma_1} - \frac{1}{\sigma_2} \big), \; \mathbf{0} \Big)$. Since $\Mdo \subseteq \overline{\B}_1 \cap  \overline{\B}_2$, we only need to check the point $\x$. However, the point $\x \in \mathcal{X}$ where $\mathcal{X}$ is defined in \eqref{def: set X}, and we obtain the result due to the definition of $\Mdo$.

\textbf{Part~\ref{thm: down BD part5}:} $r \in \Big( \frac{L}{2} \big( \frac{1}{\sigma_1} + \frac{1}{\sigma_2} \big), \; \infty \Big)$. Since $r > \frac{L}{2} \big(\frac{1}{\sigma_1} + \frac{1}{\sigma_2} \big)$, we have $\overline{\B}_1 \cap  \overline{\B}_2 = \emptyset$. Since $\Mdo \subseteq \overline{\B}_1 \cap  \overline{\B}_2$, we conclude that $\Mdo = \emptyset$.
\end{proof}
\section{Proof of Theorem~\ref{thm: correspondance}}  \label{sec: proof-correspondance}

\begin{lemma}  \label{lem: measure-T}
Consider the set $\T \subset \R^n$ defined in \eqref{def: set T}. It holds that the set $\T$ has measure zero.
\end{lemma}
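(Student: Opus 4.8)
The plan is to show that $\T$ is contained in the zero set of a single nonzero polynomial on $\R^n$, and then invoke the standard fact that the zero set of a nonzero polynomial has Lebesgue measure zero.

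First, recall from the Remark following the definition of $\Mup$ that $\T \subseteq (\overline{\B}_1 \cap \overline{\B}_2) \setminus \{\x_1^*, \x_2^*\}$, so that $d_1(\x), d_2(\x) > 0$ for every $\x \in \T$. For such $\x$, the defining equation of $\T$ in \eqref{def: set T} reads $A(\x) = B(\x)$, where
\begin{equation*}
A(\x) = \frac{\|\x\|^2 - r^2}{d_1^2(\x) d_2^2(\x)} + \frac{\sigma_1\sigma_2}{L^2}, \qquad B(\x) = \sqrt{\frac{1}{d_1^2(\x)} - \frac{\sigma_1^2}{L^2}}\sqrt{\frac{1}{d_2^2(\x)} - \frac{\sigma_2^2}{L^2}}.
\end{equation*}
Since $A(\x) = B(\x)$ implies $A^2(\x) = B^2(\x)$, multiplying through by $d_1^4(\x) d_2^4(\x) > 0$ and using that $\|\x\|^2$, $d_1^2(\x)$, $d_2^2(\x)$ are polynomials in the coordinates of $\x$, I obtain the inclusion $\T \subseteq \{\x \in \R^n : P(\x) = 0\}$, where
\begin{equation*}
P(\x) = \Big[ (\|\x\|^2 - r^2) + \tfrac{\sigma_1\sigma_2}{L^2} d_1^2(\x) d_2^2(\x) \Big]^2 - d_1^2(\x) d_2^2(\x)\Big(1 - \tfrac{\sigma_1^2}{L^2} d_1^2(\x)\Big)\Big(1 - \tfrac{\sigma_2^2}{L^2} d_2^2(\x)\Big)
\end{equation*}
is a polynomial in $\R[x_1, \ldots, x_n]$.

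The main step is to verify that $P$ is not the zero polynomial, for which it suffices to exhibit a single point where $P$ does not vanish. Evaluating at the origin (where $d_1^2 = d_2^2 = r^2$ and $\|\x\|^2 = 0$) gives, after a short computation, $P(\boldsymbol{0}) = r^6(\sigma_1 - \sigma_2)^2 / L^2$, which is strictly positive whenever $\sigma_1 \neq \sigma_2$ (recall $r > 0$). In the symmetric case $\sigma_1 = \sigma_2$, I instead evaluate at a point of the form $(0, t, \boldsymbol{0})$ on the axis perpendicular to $\x_2^* - \x_1^*$; there $d_1^2 = d_2^2 = r^2 + t^2$, and a direct factorization yields $P(0, t, \boldsymbol{0}) = 4 t^2 \big( \tfrac{\sigma_1^2}{L^2}(r^2 + t^2)^2 - r^2 \big)$, which is nonzero for all but finitely many $t$. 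In either case $P \not\equiv 0$.

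Finally, a nonzero polynomial on $\R^n$ vanishes only on a set of Lebesgue measure zero; this follows by induction on $n$ together with Fubini's theorem, since for almost every choice of the first $n-1$ coordinates the restriction is a nonzero univariate polynomial with finitely many roots. Hence $\{P = 0\}$ has measure zero, and therefore so does its subset $\T$. The main obstacle is precisely the non-triviality of $P$: because the top-degree terms of the two bracketed expressions cancel identically, a leading-coefficient argument fails, so the explicit pointwise evaluations above — in particular the separate treatment of the symmetric case $\sigma_1 = \sigma_2$ — are essential.
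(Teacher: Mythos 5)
Your proof is correct, but it takes a genuinely different route from the paper. The paper fixes $\|\x\|^2 = c$ (together with the middle coordinates), reduces the defining equation of $\T$ to a cubic in $\hat{x}_1 = 2rx_1$ so that each such fiber contains at most three points, and then integrates via an ad hoc change of variables $v_n = \|\x\|^2$ with Jacobian $\frac{1}{2|x_n|}$ and a Fubini argument. You instead clear denominators and square once to embed $\T$ in the zero set of an explicit polynomial $P \in \R[x_1,\dots,x_n]$, verify $P \not\equiv 0$ by direct evaluation, and invoke the standard fact that a nonzero polynomial vanishes on a Lebesgue-null set. Your route is more modular and arguably more robust: it sidesteps the paper's coordinate change (whose Jacobian is singular on $\{x_n = 0\}$) and its claim that the fiber equation is always a nondegenerate cubic, and the only genuine work it requires is the non-triviality of $P$, which you handle carefully — the evaluation $P(\0) = r^6(\sigma_1-\sigma_2)^2/L^2$ and the factorization $P(0,t,\0) = 4t^2\big(\tfrac{\sigma_1^2}{L^2}(r^2+t^2)^2 - r^2\big)$ both check out, and your observation that the degree-$8$ terms cancel (so a leading-coefficient argument fails) correctly explains why the pointwise witnesses are needed. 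The paper's approach yields slightly finer information (finiteness of the fibers over fixed $\|\x\|^2$), but that is not needed for the lemma. One trivial caveat: your witness for the symmetric case $\sigma_1 = \sigma_2$ lives on the $x_2$-axis and so presumes $n \geq 2$; this matches the context in which the lemma is used (Theorem~\ref{thm: correspondance} relies on Proposition~\ref{prop: function exist n-dim}, stated for $n \geq 2$), and the paper's own proof implicitly makes the same assumption.
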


\begin{proof}
First, we aim to demonstrate that if $\x = ( x_1, \Tilde{\mathbf{x}} ) \in \T$ and $\| \x \|^2$ is fixed to be a constant, then $x_1$ has at most three real solutions. Formally, for any $c \in \R$, it follows that $\big| \{ x_1 \in \R : \x \in \T, \; \| \x \|^2 = c \} \big| \leq 3$.

Suppose $\x \in \T$ with $\| \x \|^2 = c$ for some constant $c \in \R$. Recall that $\gamma_i = \frac{L^2}{\sigma_i^2}$ as defined in \eqref{def: gamma beta}. From the algebraic equation defining $\T$ in \eqref{def: set T}, we can express
\begin{equation}
    \frac{\| \x \|^2 - r^2}{d_1^2 (\x) \cdot d_2^2 (\x) } + \frac{1}{\sqrt{\gamma_1 \gamma_2}}
    = \sqrt{\frac{1}{d_1^2 (\x)} - \frac{1}{\gamma_1}} \cdot \sqrt{\frac{1}{d_2^2 (\x)} - \frac{1}{\gamma_2}}.
    \label{eqn: T-non-square}
\end{equation}
The above equation is, in fact, equivalent to
\begin{equation}
    \left( \frac{\| \x \|^2 - r^2}{d_1^2 (\x) \cdot d_2^2 (\x)} + \frac{1}{\sqrt{\gamma_1 \gamma_2}} \right)^2
    = \left( \frac{1}{d_1^2 (\x)} - \frac{1}{\gamma_1} \right) 
    \left( \frac{1}{d_2^2 (\x)} - \frac{1}{\gamma_2} \right),
    \label{eqn: T-square}
\end{equation}
where $\x \in \R^n$ satisfies $\frac{\| \x \|^2 - r^2}{d_1^2 (\x) \cdot d_2^2 (\x)} + \frac{1}{\sqrt{\gamma_1 \gamma_2}} \geq 0$. Simplifying the expression in \eqref{eqn: T-square}, we obtain that
\begin{equation}
    \frac{(\| \x \|^2 - r^2)^2}{d_1^2 (\x) \cdot d_2^2 (\x)} 
    + \frac{2}{\sqrt{\gamma_1 \gamma_2}} \cdot \left( \| \x \|^2 - r^2 \right) 
    = 1 - \frac{1}{\gamma_1} d_1^2(\x) - \frac{1}{\gamma_2} d_2^2 (\x).
    \label{eqn: T-square 2}
\end{equation}
From the definition of $d_1 (\x)$ and $d_2 (\x)$ in \eqref{def: distance}, we have the following equations:
\begin{align}
\begin{cases}
    d_1^2 (\x) \cdot d_2^2 (\x) = (\| \x \|^2 + r^2)^2 - 4 r^2 x_1^2, \\
    \frac{1}{\gamma_1} d_1^2(\x) + \frac{1}{\gamma_2} d_2^2 (\x) 
    = \left( \frac{1}{\gamma_1} + \frac{1}{\gamma_2} \right) (\| \x \|^2 + r^2) 
    + \left( \frac{1}{\gamma_1} - \frac{1}{\gamma_2} \right) 2 r x_1.
\end{cases} \label{eqn: d1d2-derivatives} 
\end{align}
Substituting \eqref{eqn: d1d2-derivatives} into \eqref{eqn: T-square 2}, we obtain that
\begin{equation*}
    1 - \frac{(\| \x \|^2 - r^2)^2}{(\| \x \|^2 + r^2)^2 - 4 r^2 x_1^2} 
    = \frac{2}{\sqrt{\gamma_1 \gamma_2}} (\| \x \|^2 - r^2) + \left( \frac{1}{\gamma_1} + \frac{1}{\gamma_2} \right) (\| \x \|^2 + r^2) + \left( \frac{1}{\gamma_1} - \frac{1}{\gamma_2} \right) 2 r x_1.
\end{equation*}
Rearranging the above equation and using $\| \x \|^2 = c$ yields
\begin{equation}
    \frac{4 r^2 x_1^2 - 4 r^2 c}{4 r^2 x_1^2 - (c + r^2)^2} 
    = \left( \frac{1}{\gamma_1} - \frac{1}{\gamma_2} \right) 2 r x_1
    + \left[ \frac{2}{\sqrt{\gamma_1 \gamma_2}} (c - r^2) + \left( \frac{1}{\gamma_1} + \frac{1}{\gamma_2} \right) (c + r^2) \right].
    \label{eqn: T-square 3}
\end{equation}
For simplicity, we let $c_1 = 4 r^2 c$, $c_2 = (c + r^2)^2$, $c_3 = \frac{1}{\gamma_1} - \frac{1}{\gamma_2}$ and $c_4 = \frac{2}{\sqrt{\gamma_1 \gamma_2}} (c - r^2) + \left( \frac{1}{\gamma_1} + \frac{1}{\gamma_2} \right) (c + r^2)$. Additionally, we define $\hat{x}_1 = 2r x_1$. With these definitions, we can rewrite \eqref{eqn: T-square 3} as follows:
\begin{equation*}
    \frac{\hat{x}_1^2 - c_1}{\hat{x}_1^2 - c_2} = c_3 \hat{x}_1 + c_4.
\end{equation*}
Since the above equation involves solving a polynomial of degree three, according to the fundamental theorem of algebra \cite{fine1997fundamental}, there are at most three real solutions for $\hat{x}_1 \in \R$ (and thus $x_1$) that satisfy the equation. This implies that there are at most three $x_1$ values that satisfy \eqref{eqn: T-non-square} when $\| \x \|$ is held constant, proving the claim.

Let $\mu$ represent the Lebesgue measure on $\R^n$. We aim to demonstrate that $\mu (\T) = 0$. Since $\T \subseteq \overline{\B}_1 \cap \overline{\B}_2$, where $\B_1$ and $\B_2$ are defined in \eqref{def: set B}, it follows that $\T \subseteq \overline{\B} (\mathbf{0}, R)$, where $R = L \left( \frac{1}{\sigma_1} + \frac{1}{\sigma_2} \right)$.

For a given set $\mathcal{A} \subseteq \R^n$, we define the indicator function $\mathbbm{1}_{\mathcal{A}} (\x) \triangleq \begin{cases} 1 \;\; &\text{if} \; \x \in \mathcal{A}, \\ 0 &\text{otherwise} \end{cases}$. The Lebesgue measure of the set $\T$ can be expressed as
\begin{equation}
    \mu (\T) 
    = \int_{\overline{\B}_1 \cap \overline{\B}_2} \mathbbm{1}_{\T} (\x) \diff \x
    = \int_{\overline{\B} (\mathbf{0}, R)}  \mathbbm{1}_{\T} (\x) \diff \x.
    \label{eqn: measure-T}
\end{equation}
Let $\x = (x_1, x_2, \ldots, x_n) \in \R^n$ and $\v = (v_1, v_2, \ldots, v_n) \in \R^n$. We define the coordinate transformation as follows: $v_i = x_i$ for $i \in \{ 1, 2, \ldots, n-1 \}$ and $v_n = \| \x \|^2$. Computing the partial derivatives, we have:
\begin{align*}
    &\frac{\partial x_i}{\partial v_i} = 1 \quad \text{for} \; i \neq n,
    \qquad 
    \frac{\partial x_i}{\partial v_j} = 0 \quad \text{for} \; i \neq j \; \text{and} \; i \neq n, \\
    &\frac{\partial x_n}{\partial v_j} = - \frac{v_j}{x_n} \quad \text{for} \; j \neq n,
    \qquad 
    \frac{\partial x_n}{\partial v_n} = \frac{1}{2} \cdot \frac{1}{x_n}.
\end{align*}
From the computed partial derivatives, the Jacobian of the transformation $\x (\v)$ can be expressed as follows:
\begin{equation*}
    \frac{\partial \x}{\partial \v} 
    \triangleq \det \left( \left[ \frac{\partial x_i}{\partial v_j} \right] \right)
    = \frac{1}{2} \cdot \frac{1}{x_n}.
\end{equation*}
Utilizing this Jacobian, we can reformulate $\mu(\T)$ from \eqref{eqn: measure-T} as
\begin{equation*}
    \int_0^{R^2} \int_{- \sqrt{v_n}}^{\sqrt{v_n}} \int_{- \sqrt{v_n - v_{n-1}^2}}^{\sqrt{v_n - v_{n-1}^2}}  \cdots \int_{- \sqrt{v_n - v_{n-1}^2 - \ldots - v_2^2}}^{\sqrt{v_n - v_{n-1}^2 - \ldots - v_2^2}}
    \left( \frac{1}{2} \cdot \frac{1}{| x_n |} \mathbbm{1}_{\T} (\v) \right)
    \diff v_1 \cdots \diff v_{n-2}  \diff v_{n-1} \diff v_{n}.
\end{equation*}
Given the first claim, the indicator function $\mathbbm{1}_{\T} (\v)$ is zero almost everywhere for a fixed $(v_2, v_3, \ldots, v_{n-1}, v_n) = (x_2, x_3, \ldots, x_{n-1}, \| \x \|^2 )$. Consequently, the innermost integral (i.e., integral with respect to $v_1$) evaluates to zero, leading to the conclusion that $\mu (\T) = 0$.
\end{proof}

\begin{proof}[Proof of Theorem~\ref{thm: correspondance}]
Let 
\begin{equation}
\begin{split}
    \mathcal{A}_1 
    &= \left\{ \x \in \R^n : \tilphi_1 (\x) + \tilphi_2 (\x) > \psi(\x) \right\} \cap (\B_1 \cap \B_2), \\
    \mathcal{A}_2 
    &= \left\{ \x \in \R^n : \tilphi_1 (\x) + \tilphi_2 (\x) = \psi(\x) \right\} \cup \partial (\B_1 \cap \B_2)
    = \T \cup \partial (\B_1 \cap \B_2).
\end{split}  \label{eqn: set-A1-A2}
\end{equation}
Since $\mathcal{X} \subseteq \B_1 \cap \B_2$ (from the definition of $\mathcal{X}$ in \eqref{def: set X}) and applying Proposition~\ref{prop: M inclusion}, we can establish the relationship between $\mathcal{A}_1$, $\mathcal{A}_2$, and $\M$ as:
\begin{equation}
    \mathcal{A}_1 \subseteq \Mdo \subseteq \M \subseteq \Mup 
    \subseteq \left\{ \x \in \R^n : \tilphi_1 (\x) + \tilphi_2 (\x) \geq \psi(\x) \right\} \cap (\overline{\B_1 \cap \B_2})
    = \mathcal{A}_1 \cup \mathcal{A}_2.
    \label{eqn: chain-A1-A2-M}
\end{equation}

First, suppose $\x \in \mathcal{A}_1$. We aim to show that $| \F_{\S} (\x) | = \mathfrak{c}$. Following a similar approach as in the proof of Proposition~\ref{prop: angle sc}, we can establish the existence of a vector $\g \in \R^n$ such that $\angle (\g, \u_1 (\x)) \in \Phi_1 (\x)$, $\angle (- \g, \u_2 (\x)) \in \Phi_2 (\x)$, $\langle \g, \boldsymbol{e}_i \rangle = 0$ for $i \in \{ 3, 4, \ldots, n \}$ and $\| \g \| = L$, where $\Phi_i (\x)$ for $i \in \{ 1, 2 \}$ are defined in \eqref{def: Phi}. However, since $\x \in \B_1 \cap \B_2$, we have $\tilphi_1 (\x) \neq 0$ and $\tilphi_2 (\x) \neq 0$. This implies that the sets of angles $\Phi_1 (\x)$ and $\Phi_2 (\x)$ do not degenerate to $\{ 0 \}$.

Let $\boldsymbol{R}_n: ( - \pi, \pi ] \to \R^{n \times n}$ be a rotation matrix such that $\boldsymbol{R}_n (\theta) = \begin{bmatrix} \boldsymbol{R} (\theta) & \mathbf{0} \\ \mathbf{0} & \boldsymbol{I} \end{bmatrix}$, where $\boldsymbol{R} (\theta) = \begin{bmatrix} \cos \theta & - \sin \theta \\ \sin \theta & \cos \theta \end{bmatrix} \in \R^{2 \times 2}$ is the rotation matrix in two-dimensional space.
It follows that there exists $\delta' \in \R_{>0}$ such that for all $| \delta | \leq \delta'$, we have $\angle ( \Tilde{\g} (\delta), \u_1 (\x) ) \in \Phi_1 (\x)$ and $\angle ( - \Tilde{\g} (\delta), \u_2 (\x) ) \in \Phi_2 (\x)$, where $\Tilde{\g} (\delta) = \boldsymbol{R}_n (\delta) \g$. 
It follows that for any $\delta \in [ - \delta', \delta' ]$, according to Corollary~\ref{cor: function exist union}, there exist functions $f_i ( \; \cdot \; ; \delta) \in \bigcup_{\hat{\sigma} \geq \sigma_i} \Q (\x_i^*, \hat{\sigma})$ for $i \in \{ 1, 2 \}$ such that $\Tilde{\g} (\delta) = \nabla f_1 ( \x; \delta) = - \nabla f_2 (\x; \delta)$ and $\| \nabla f_1 (\x ; \delta) \| = \| \nabla f_2 (\x ; \delta) \| = L$.

Let $\F_{\Q} (\x)$ be the set defined as 
\begin{equation*}
    \F_{\Q} (\x) = \bigg\{ (f_1, f_2) \in \bigcup_{\hat{\sigma} \geq \sigma_1} \Q (\x_1^*, \hat{\sigma}) \times \bigcup_{\hat{\sigma} \geq \sigma_2} \Q (\x_2^*, \hat{\sigma}): 
    \nabla f_1 (\x) = - \nabla f_2 (\x), \; \| \nabla f_1 (\x) \| = \| \nabla f_2 (\x) \| \leq L \bigg\}.
\end{equation*}
Thus, each angle $\delta \in [ - \delta', \delta' ]$ corresponds to at least one pair $( f_1, f_2 ) \in \F_{\Q} (\x) \subset \F_{\S} (\x)$. Furthermore, if there exists another angle $\Tilde{\delta} \in [ - \delta', \delta' ]$ with $\Tilde{\delta} \neq \delta$, it corresponds to a different pair $( \Tilde{f}_1, \Tilde{f}_2 ) \in \F_{\Q} (\x) \subset \F_{\S} (\x)$ since no quadratic function can have two different gradients at a given point. Given that the set $[-\delta', \delta']$ has the cardinality of the continuum for $\delta' \in \R_{>0}$, we can conclude that $| \F_{\S} (\x) | = \mathfrak{c}$, as asserted.

Next, let's consider the set $\mathcal{A}_2$ defined in \eqref{eqn: set-A1-A2}. Given Lemma~\ref{lem: measure-T}, which establishes that $\mu (\T) = 0$, and the fact that $\partial (\B_1 \cap \B_2) \subseteq \partial \B_1 \cup \partial \B_2$ along with $\mu (\partial \B_1 ) = \mu (\partial \B_2 ) = 0$, we can utilize the monotonicity of $\mu$ to deduce:
\begin{equation*}
    \mu (\mathcal{A}_2) = \mu \left( \T \cup \partial (\B_1 \cap \B_2) \right)
    \leq \mu (\T) + \mu (\partial \B_1) + \mu (\partial \B_2) = 0.
\end{equation*}
Hence, we can conclude that $\mu (\mathcal{A}_2) = 0$.

Combining this with the fact that $\M \subseteq \mathcal{A}_1 \cup \mathcal{A}_2$ from \eqref{eqn: chain-A1-A2-M}, along with the earlier established result $| \F_{\S} (\x) | = \mathfrak{c}$ for $\x \in \mathcal{A}_1$, we derive that the equation $| \F_{\S} (\x) | = \mathfrak{c}$ holds almost everywhere on $\M \subset \R^n$.
\end{proof}

\end{document}